\newcounter{hours}\newcounter{minutes}
\newtheorem*{rep@theorem}{\rep@title}
\newcommand{\newreptheorem}[2]{%
\newenvironment{rep#1}[1]{%
 \def\rep@title{#2 \ref{##1}}%
 \begin{rep@theorem}}%
 {\end{rep@theorem}}}
\theoremstyle{theorem}
\newtheorem{thm}{Theorem}[section]
\newtheorem{lem}[thm]{Lemma}
\newtheorem{mainthm}{Theorem}
\theoremstyle{definition}
\theoremstyle{remark}                  
\def\E{{\mathbb E}}
\def\P{{\mathbb P}}
\def\R{{\mathbb R}}
\def\real{{\mathbb R}}
\def\bf 1{{\text{\raisebox{2pt}{$\chi$}}}}
\def\integer{{\mathbb Z}}
\def\indicator{{\mathbf 1}}
\def\ep{\varepsilon}
\def\e{\varepsilon}
\def\tr{\textnormal{tr}}
\DeclareMathOperator*{\osc}{osc}
\DeclareMathOperator*{\argmin}{\arg\!\min}
\def\argmin{\mathop{\arg\,\min}\limits}%
\numberwithin{equation}{section}
\begin{document}
\title{Quantitative homogenization of elliptic pde with random oscillatory boundary data}
\author{William M. Feldman, Inwon Kim}
\address{Department of Mathematics, UCLA, Los Angeles, CA 90024, USA}
\email{wfeldman10@math.ucla.edu, ikim@math.ucla.edu}
\thanks{W. M. Feldman and I. C. Kim both have been partially supported by the NSF grant DMS-1300445}
\author{Panagiotis E.  Souganidis} 
\thanks{P. E. Souganidis was partially supported by the NSF grants DMS-0901802 and DMS-1266383.}
\address{Department of Mathematics, The University of Chicago, Chicago, IL 60637, USA}
\email{souganidis@math.uchicago.edu}

\begin{abstract}
We study the averaging behavior of nonlinear uniformly elliptic partial differential equations  with random Dirichlet or Neumann boundary data oscillating on a small scale. Under conditions on  the operator, the data  and  the random media leading to concentration of measure, we prove an almost sure and local uniform  homogenization result with a rate of convergence in probability.

\end{abstract}

\maketitle
\section{Introduction}

 In this article we investigate the averaging behavior of the solutions to nonlinear uniformly elliptic partial differential equations  with random Dirichlet or Neumann boundary data oscillating on a small scale. Under conditions on the operator, the data  and the random media leading to  concentration of measure, we prove an almost sure and local uniform homogenization result with a rate of convergence in probability. 

\medskip

 In particular,  we consider the Dirichlet and Neumann boundary value problems 
\begin{equation}\label{main0}
\left\{
\begin{array}{lll}
F(D^2u^\e) = 0 &\hbox{ in }& U,\\[1.mm]
u^\e
= g(\cdot, \frac{\cdot}{\e}, \omega) &\hbox{ on }& \partial U,
\end{array}
\right.
\end{equation}
and 
\begin{equation}\label{main00:N}
\left\{
\begin{array}{lll}
F(D^2u^\e) = 0 &\hbox{ in }& U\setminus K,\\[1mm]
\partial_{\nu} u^\e
=g(\cdot, \frac{\cdot}{\e}, \omega) &\hbox{ on }& \partial U,\\[1mm]
u^\e = f &\hbox{ on } & \partial K,
\end{array}
\right.
\end{equation}
where  $U$ is a smooth bounded domain in $\R^d$ with $d \geq 2$, $K$ is a compact subset of $U$, $\nu$ is the inward normal, $F$ is positively homogeneous of degree one and uniformly elliptic, $f$ is continuous on $K$ and
$g=g(x,y,\omega)$ 
is bounded and Lipshitz continuous in $x,y$ uniformly in  $\omega$ belonging to a probability space $(\Omega,\mathcal{F},\P)$, and, for each fixed $x\in U$, stationary with respect to the translation action of $\real^d$ on $\Omega$ and strongly mixing  with respect to $(y,\omega)$ (the precise assumptions are  given in Section~\ref{sec: prob setting}).  

 \medskip
 
   We show that there exist a deterministic continuous functions $\overline{g}_D, \overline g_N :\partial U \to \real$ such that, as $\e \to 0$, the solutions $u^\e=u^\e(\cdot,\omega)$ of \eqref{main0} and \eqref{main00:N} converge almost surely and locally uniformly in $ U$ (with a rate in probability) to the unique solution $\overline{u}$ of respectively 
\begin{equation}\label{main} 
\left\{
\begin{array}{lll}
F(D^2\overline{u}) = 0 &\hbox{ in }& U,\\[1.5mm]
\overline{u} = \overline{g}_D &\hbox{ on }& \partial U,
\end{array}
\right.
\end{equation}
and 
\begin{equation}\label{mainN} 
\left\{
\begin{array}{lll}
F(D^2\overline{u}) = 0 &\hbox{ in }& U\setminus K,\\[1.5mm]
\partial_\nu\overline{u} = \overline{g}_N &\hbox{ on }& \partial U,\\[1.5mm]
\overline u=f  &\hbox{ on } & \partial K.
\end{array}
\right.
\end{equation}
\smallskip

\noindent  The homogenized boundary condition $\overline{g}$ (here and in the rest of the paper we omit the subscript and always denote the homogenized boundary condition by $\overline g$) depend on $F$, $\nu$, $d$ and the random field $g$.  The rate of convergence depends on the regularity of $U$, the continuity and mixing properties of $g$, the dimension $d$, the ellipticity ratio of $F$ and, in the case of the Neumann problem,  the bounds of $f$.  

\medskip

 We discuss next heuristically what happens as $\e\to 0$ in the Dirichlet problem \eqref{main0}.   
 It follows from the up to the boundary continuity of the solutions to \eqref{main0} that, close to the boundary,  $u^\e$ typically has unit size oscillations over distances of order $\e$. Therefore any convergence to a deterministic limit must be occurring outside of some shrinking boundary layer, where the solution remains random and highly oscillatory even as $\e \to 0$.  In order to analyze the behavior of the $u^\e$ near a point $x_0 \in \partial U$ with inner normal $\nu$ we ``blow up'' $u^\e$ to scale $\e$, that is we consider
 $$v^\e(y,\omega) = u^\e(x_0+\e y,\omega).$$
If homogenization holds,  then $v^\e(R\nu,\omega)$ should converge to $\overline{g}(x_0)$ for $R>0$ sufficiently large to escape the boundary layer. Noting that the random function $v^\e(\cdot,\omega)$ is uniformly continuous, as $\e\to 0$, we can approximate $v^\e(\cdot,\omega)$ by the solution of the half-space problem, obtained after  ``blowing up" in the tangent half-space at $x_0$,
\begin{equation}\label{eqn: cell}
\left\{
\begin{array}{lll}
F(D^2v) = 0  & \text{ in } & \{ y\in \real^d: y\cdot\nu > 0\}\\[1.5mm]
v(\cdot,\omega) = \psi(\cdot,\omega)  & \text{ on } & \{ y\in \real^d: y\cdot\nu=0\}.
\end{array}
\right.
\end{equation}
Formally, based on the problem satisfied by $v^\e(\cdot,\omega)$, we expect that,
\begin{equation}\label{eqn: ve to v}
 |v^\e(y,\omega) - v(y,\omega)| \to 0 \ \hbox{ as } \ \e \to 0 \ \hbox{ when } \ \psi(y,\omega) = g(x_0,y+\e^{-1}x_0,\omega).
 \end{equation} 
With the expectation that $u^\e(x+\e R\nu,\omega)$ can be approximated, for small $\e$ and large $R$, by $v(R\nu,\omega)$, we are led to consider, for  random fields $\psi$ satisfying assumptions similar to $g$ for fixed $x$,  the existence of an almost sure limit, as $y \cdot \nu \to \infty$, of $v(y,\omega)$. This is the analogue, in our setting, of the cell problem in classical homogenization theory.  
\medskip

We say that the cell problem \eqref{eqn: cell} has a solution, if there exists a constant $\mu = \mu(\nu, F, \psi)$, often referred  to as the ergodic constant, such that 
\begin{equation}\label{convergence}
 \lim_{R \to \infty} v(R\nu,\omega) = \mu \ \hbox{ almost surely.}
  \end{equation}
  As indicated above there are two main steps in the argument.  The first is to show the existence of the limit \eqref{convergence} for the cell problem.  This is where all the assumptions on $F$  and the randomness come in (see Section~\ref{sec: prob setting}).  The second is to show that the approximation of $u^\e$ near the boundary by the cell problem \eqref{eqn: cell} holds with quantitative estimates so that the convergence results for the cell problem can be used to identify the boundary condition for the general domain problem.
  
  \medskip
  
 Similar heuristic arguments lead to the Neumann cell problem, which is to show that there exists a deterministic (ergodic) constant $\mu = \mu(\nu,F,\psi)$ such that, if $v_R (\cdot,\omega)= v_{R,\nu}$ is the unique bounded solution of
   \begin{equation}\label{eqn: neumann cell0}
\left\{
\begin{array}{lll}
F(D^2 v_R) = 0 &\hbox{ in } &\{ y \in \real^d: 0<y\cdot\nu <2R\},\\[1.5mm]
\partial_{\nu}v_R(\cdot,\omega) = \psi(\cdot,\omega) &\hbox{ on } & \{y \in \real^d: y\cdot\nu = 0\},\\[1.5mm]
v_R(\cdot,\omega) = 0&\hbox{ on } &\{y \in \real^d: y\cdot\nu = 2R\},
\end{array}
\right. 
\end{equation}
then
\begin{equation}\label{cell:N0}
\lim_{R\to\infty} \dfrac{v_R(R\nu,\omega)}{R} = \mu \ \hbox{ almost surely.}
\end{equation}
\smallskip

 In the Neumann problem,  it is the gradient $Du^\ep$ of the solution to \eqref{main00:N} that has an oscillatory boundary layer,  outside of which it should be approaching a deterministic constant far from the Neumann part of the boundary.

 \medskip
 
 In order to have any hope for the convergence described in \eqref{convergence} and \eqref{cell:N0}, it is necessary to impose  assumptions on the randomness.  To motivate them, we consider the Dirichlet cell problem \eqref{eqn: cell} in the linear case where we can represent $v(R\nu,\omega)$, using the Poisson kernel, as
 $$ v(R\nu,\omega) = \int_{\{y \in \real^d: y\cdot\nu = 0\}} P(Re_d,y) \psi(y,\omega) dy.$$
 Since $P(R\nu,y) \sim R^{1-d}$ on $B_R \cap \{y \in \real^d: y\cdot \nu = 0\}$, $v(R\nu,\omega)$ is essentially the average of the boundary values on $B_R \cap \{ y \in \real^d: y\cdot \nu = 0\}$.  In this case the convergence, as $R\to \infty$, is a consequence of the ergodic theorem as long as the boundary data satisfies two important assumptions, namely stationarity and ergodicity, which we describe next.
 
 \medskip
 
 Firstly the distribution of $\psi(y,\omega)$ should not depend on $y$, in other words $\psi(y,\omega)$ must be stationary with respect to translations parallel to the hyperplane $\{y \in \real^d: y\cdot \nu = 0\}$.  On the other hand, in view of   \eqref{eqn: ve to v},  we actually need to consider the cell problem for translations of the boundary data parallel to $\nu$ as well and so we actually require stationarity of $\psi$ with respect to all $\real^d$ translations.  Without some assumption of this form one can easily construct examples for which the limit will not exist. 
 
 \medskip
 
For reference it is useful to consider the periodic version of our problem.  In this case, there is a $\integer^d$ translation action under which the underlying probability space, which is the torus $\mathbb{T}^d$, is stationary and ergodic.  When the normal direction $\nu$ is rational, the boundary values are not stationary with respect to the translations parallel to $\nu$.  
 
 \medskip
 
 One possible way to address this problem (see Section \ref{prelim} for more discussion of this version of the problem) is to consider data defined on a hyperplane which is stationary with respect to an $\real^{d-1}$ or $\integer^{d-1}$ action, and then define data on pieces of the boundary of the general domain by ``lifting up from the hyperplane".  This is in contrast to the setting described already which we refer to as assigning boundary data by ``restricting from the whole space".  More specifically, one could take $\psi(y,\omega)$ on $\real^{d-1}$ and a diffeomorphism $\zeta$ from an open subset of $\real^{d-1}$ to an open subset of $\partial U$ and then define the boundary data, for the general domain problem, by
  \begin{equation}\label{eqn: prj up}
   g^\e(x,\omega) = \psi(\e^{-1}\zeta^{-1}(x),\omega).
   \end{equation}

   \medskip
 
 Secondly, we need to assume  that  the action of the translations of $\real^d$ on $\Omega$ is ergodic.  Indeed some assumption on the long range decorrolation of the values of $\psi(y,\omega)$, ergodicity being the weakest is always necessary in to prove a law of large numbers/ergodic theorem-type of result.  The exact form of the ergodic behavior actually turns out to be quite a delicate issue for boundary data homogenization because it does not necessarily restrict to lower dimensional subspaces.  
 
 \medskip
 
 An instructive way to understand this difficulty is again to consider the periodic version of the problem. In this case, the translations parallel to $\{y\in\real^d:   y \cdot \nu=0 \}$ are not ergodic when the normal direction is rational; for more discussion see Choi and Kim \cite{ChoiKim12} and Feldman \cite{Feldman13}.  It turns out, however,  that it is enough that most (in an appropriate sense) directions yield an ergodic action.  
 
 \medskip
 
 It is not at all clear to the authors what kind of generalization of the periodicity assumption would yield this kind of homogenization for almost every direction.  Again if we take boundary data on the general domain by ``lifting up from hyperplanes" as in \eqref{eqn: prj up}, the concern above 
 is not an issue.  On the other hand, if one assumes a more quantitative decay of correlations like strong mixing, the translation action of the $d-1$ dimensional subspaces $\partial P_{\nu}$ on $\Omega$ will be ergodic and the resulting rate of homogenization is uniform in the normal direction.

\medskip

We also remark that, as is always the case in random homogenization, we lack the compactness of the periodic setting.  All the  major assumptions are used to overcome the above difficulties, that is   stationarity and ergodicity on hyperplanes and lack of compactness.

\medskip

Qualitative homogenization results in the stochastic setting for elliptic equations with oscillations in the interior of the domain typically  rely on identifying a quantity which controls, via Alexandrov-Bakelman-Pucci (ABP for short)-type inequalities, the asymptotic behavior of the solution and whose ergodic properties can be studied using the sub-additive ergodic theorem;  see Caffarelli, Souganidis and Wang \cite{CSW05} and, later, Armstrong and Smart \cite{CS10}.  
\medskip

It is not clear to the authors whether such a quantity exists for boundary homogenization.  This is related to the fact that it is not known whether there exists an estimate analogous to the ABP-inequality  controlling solutions to boundary value problems for linear elliptic equations with bounded measurable coefficients in terms of a measure theoretic norm of the boundary data. Relatedly it is known that the harmonic measure for  linear  non-divergence form operators with only bounded measurable coefficients may be singular with respect to the surface measure on the boundary and, in fact, may be  supported on a set of lower Hausdorff dimension (see Wu \cite{Wu96} or Caffarelli, Fabes and Kenig \cite{CFK81} for the divergence form case.)

\medskip

On the other hand, an argument, which is more in the spirit of the linear problem, works well here since it turns out that the value of the solutions of the cell problems can still be thought of as Lipschitz ``nonlinear averages'' of the boundary values.  This observation lends itself to using tools from the theory of concentration of measure,  which,  generally speaking, provide estimates in probability for the concentration  of Lipschitz functions of many independent random variables about their mean. There is more detailed discussion about this later in the paper.

 \medskip

 To give an idea of the type of results we obtain, we state informally the main theorems without any technical assumptions.  Exact statements are  given in the next section.  In the case that $F$ is either convex or concave, there is a very powerful concentration inequality due to Talagrand which allows the homogenization result to hold without any additional assumptions on $F$.  Without convexity/concavity,  the available concentration inequalities either lead to a restriction on the ellipticity ratio of $F$ or depend on stronger assumptions on the random media.  We state the results separately depending on this property of $F$.

\begin{mainthm}\label{thm:main0}
Let $F$ be positively homogeneous of degree one, uniformly elliptic and either convex or concave. Assume that the stationary random field $\psi$ is bounded, uniformly H\"{o}lder continuous and satisfies a strong mixing condition with sufficient decay.  Then the Dirichlet and Neumann cell problems \eqref{eqn: cell} and \eqref{eqn: neumann cell0} homogenize. 
\end{mainthm}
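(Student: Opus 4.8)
The plan is to establish \eqref{convergence} for the Dirichlet cell problem in detail and then indicate the parallel argument for \eqref{cell:N0}; the backbone is to realize $v(R\nu,\omega)$ as a \emph{convex Lipschitz} ``nonlinear average'' of nearly independent pieces of $\psi$, deduce concentration about the mean $m_R:=\E[v(R\nu,\cdot)]$, show $m_R\to\mu$, and conclude almost sure convergence. First I would localize: barrier and harmonic-measure estimates for the uniformly elliptic operator $F$ in the half space $P_\nu=\{y\cdot\nu>0\}$ show that replacing $\psi$ by $0$ on $\{y\cdot\nu=0\}\setminus B_{MR}$ changes $v$ at the point $R\nu$ by at most $CM^{-\alpha}$, with $\alpha>0$ depending only on $d$ and the ellipticity of $F$; so $v(R\nu,\omega)$ depends, up to $CM^{-\alpha}$, only on $\psi|_{B_{MR}\cap\{y\cdot\nu=0\}}$. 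Comparison together with the bound on solutions of the Pucci extremal equations by a Poisson-type kernel gives
\[
\bigl|v[\psi_1](R\nu)-v[\psi_2](R\nu)\bigr|\le \int_{\{y\cdot\nu=0\}} w_R(y)\,\bigl|\psi_1(y)-\psi_2(y)\bigr|\,\d y,
\]
with $w_R\ge0$, $\int w_R\le C$ and $\|w_R\|_\infty\le CR^{1-d}$ (hence $\|w_R\|_{L^2}\le CR^{(1-d)/2}$). Finally, convexity of $F$ makes the solution operator $\psi\mapsto v[\psi]$ convex — if $u_1,u_2$ solve the equation then $F(D^2\tfrac{u_1+u_2}{2})\le0$, so $\tfrac{u_1+u_2}{2}$ is a supersolution and dominates $v[\tfrac{\psi_1+\psi_2}{2}]$ — so $\psi\mapsto v[\psi](R\nu)$ is convex; if $F$ is concave one works with $-v$.

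Next, for the concentration step, fix an intermediate scale $\ell=\ell(R)$ with $1\ll\ell\ll R$, partition $B_{MR}\cap\{y\cdot\nu=0\}$ into $\sim(MR/\ell)^{d-1}$ cubes of side $\ell$, and discard corridors of comparable width so that, by the strong mixing hypothesis, the restrictions of $\psi$ to distinct cubes are close in law to independent; the error in replacing them by a genuinely independent family is controlled by the mixing rate and is small under the ``sufficient decay'' assumption. By the previous paragraph each cube alters $v[\psi](R\nu)$ by at most $\|w_R\|_\infty\ell^{d-1}\le C(\ell/R)^{d-1}$, so the resulting functional of the block variables is convex with Euclidean Lipschitz constant $O\bigl((\ell/R)^{(d-1)/2}\bigr)$; Talagrand's concentration inequality for convex Lipschitz functions of bounded independent variables then gives
\[
\P\bigl(|v(R\nu,\cdot)-m_R|\ge t\bigr)\le C\exp\!\bigl(-c\,t^2(R/\ell)^{d-1}\bigr)+(\text{mixing error}),
\]
and optimizing in $\ell$ yields a rate tending to $0$. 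Covering $\{y\cdot\nu=R\}\cap B_{CR}$ by a net and using interior H\"{o}lder continuity of $v$ upgrades this to: with high probability $|v(y,\omega)-m_R|\le\eta_R$ for all such $y$, with $\eta_R\to0$.

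To see $m_R$ converges, note that on $\{y\cdot\nu>R\}$ the function $v(\cdot,\omega)$ solves $F(D^2\cdot)=0$ with boundary data $v(\cdot,\omega)|_{\{y\cdot\nu=R\}}$, which is within $\eta_R$ of the constant $m_R$ on a large ball, and — localizing again at $R\nu$ — the value at $2R\nu$ is determined up to $CM^{-\alpha}$ by that data near $R\nu$; since constants solve the equation, comparison gives $|v(2R\nu,\omega)-m_R|\le\eta_R+CM^{-\alpha}$ with high probability, and taking expectations with $M=M(R)\to\infty$ slowly yields $|m_{2R}-m_R|\le\varepsilon_R$ with $\sum_k\varepsilon_{2^kR_0}<\infty$. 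Thus $m_{2^kR_0}\to\mu$ for a deterministic $\mu=\mu(\nu,F,\psi)$, interior regularity of the fixed function $v(\cdot,\omega)$ gives $m_R\to\mu$ for all $R$, and since $\P(|v(2^kR_0\nu,\cdot)-\mu|\ge\delta)$ is summable in $k$, Borel--Cantelli together with interior H\"{o}lder continuity gives $v(R\nu,\omega)\to\mu$ almost surely, i.e.\ \eqref{convergence}. For the Neumann cell problem one repeats the scheme with $v_R$ in place of $v$: the Neumann kernel of the slab has mass $\sim R$, so $R^{-1}v_R[\psi](R\nu)$ is again a bounded convex Lipschitz average of $\psi$ (convexity inherited as above, now using that the midpoint of two solutions carries the averaged Neumann data and vanishes on $\{y\cdot\nu=2R\}$), the localization and concentration estimates carry over, and a two-scale comparison between slabs of widths comparable to $R$ and $2R$ yields convergence of $R^{-1}m_R$ and hence \eqref{cell:N0}.

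The main obstacle is the combination of the localization estimate and the mixing-to-independence reduction. One needs barrier and harmonic-measure estimates for the \emph{nonlinear} operator $F$ in the half space (and the slab) sharp enough to produce the $R^{1-d}$ sup-bound on $w_R$, for this is exactly what makes the block functional $O\bigl((\ell/R)^{(d-1)/2}\bigr)$-Lipschitz and thereby lets the concentration bound beat both the net in the concentration step and the summation in the mean-convergence step. Quantifying the cost of passing from the strongly mixing field to an exactly independent one — so that Talagrand's inequality, whose natural setting is product measures, applies with summable error — is the other delicate point, and it is here, rather than in the law of large numbers itself, that strong mixing with sufficient decay (rather than mere ergodicity) is indispensable.
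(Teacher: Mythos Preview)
Your strategy---convexity of the solution map, a Lipschitz bound on $\psi\mapsto v(R\nu)$, Talagrand-type concentration, then a dyadic Cauchy argument for the means---is precisely the paper's. Two points deserve correction.

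First, the bound $\|w_R\|_\infty\le CR^{1-d}$ is the Laplacian rate and is \emph{not} available for general uniformly elliptic $F$; there is no Poisson kernel in the nonlinear setting. What the paper proves (Lemma~\ref{decorrelate}) is that changing the boundary data on a unit cube at lattice site $j$ alters $v(R\nu)$ by at most $CR(|j|^2+R^2)^{-(\beta+1)/2}$, where $\beta=\beta(\lambda,\Lambda)\in[\tfrac{\lambda}{\Lambda}d-1,\tfrac{\lambda}{\Lambda}(d-1)]$ is the homogeneity exponent of the singular solution of $\mathcal{P}^+_{\lambda,\Lambda}$ in the half-space (Theorem~\ref{thm:fundamental}). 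The barrier is built by summing translates of this singular solution, and the subadditivity of $\mathcal{P}^+_{\lambda,\Lambda}$ is what makes the sum a supersolution. This gives $\ell^1$-Lipschitz constant $CR^{-\beta}$, and interpolating against the $\ell^\infty$-bound from the maximum principle yields $\ell^2$-Lipschitz constant $CR^{-\beta/2}$. You explicitly flag the $R^{1-d}$ bound as ``exactly what makes'' the concentration and summation work, but this is too pessimistic: in the convex case \emph{any} $\beta>0$ suffices, since the concentration then reads $\exp(-cR^\beta t^2)$ and both the net union bound and the dyadic sum $\sum_k(\log 2^k)^{1/2}2^{-k\beta/2}$ converge.

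Second, the block decomposition and the ``mixing-to-independence'' reduction are unnecessary, and this is where most of the difficulty you anticipate lies. The paper instead applies Samson's extension of Talagrand's inequality (Theorem~\ref{concentration2}), which gives the concentration bound \emph{directly} for convex Lipschitz functionals of $\phi$-mixing sequences: the mixing enters only through the constant $\rho=\sum_{i}\phi(|i|)^{1/2}$, finite under \eqref{hyp: mixing}, and one obtains $\P(|f_R-\E f_R|>t)\le C\exp(-cR^\beta t^2/\rho^2)$ with no approximation by product measures. The Neumann case is handled the same way with the radial barrier \eqref{fundamental:N} in place of the half-space singular solution.
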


\begin{mainthm}\label{thm:main01}
Let $F$ be positively homogeneous of degree one and uniformly elliptic and $\psi$ be a bounded and uniformly H\"{o}lder continuous stationary random field.\\
(i) Under a restriction on the ellipticity ratio of $F$ and  if $\psi$ satisfies a strong mixing condition with sufficient decay,  the  cell problems \eqref{eqn: cell} and \eqref{eqn: neumann cell0} homogenize.\\
(ii) If the random media is of ``random checkboard"-type, stationary with respect to the $\integer^{d-1}$ translation action on $\{ y \in \real^d: y \cdot \nu = 0\}$, and has  a log-Sobolev inequality, then the cell problems \eqref{eqn: cell} and \eqref{eqn: neumann cell0} homogenize without any assumption on the ellipticity ratio of $F$.
\end{mainthm}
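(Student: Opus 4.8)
The plan is, for each fixed normal direction $\nu$, to establish \eqref{convergence} by showing that (a) the means $m_R := \E\, v(R\nu,\cdot)$ form a Cauchy sequence, with limit $\mu = \mu(\nu,F,\psi)$, and (b) $v(R\nu,\cdot)$ concentrates around $m_R$, with a rate that is summable along the geometric scales $R = 2^k$. Granting (a) and (b), convergence in probability follows, and the almost sure statement is obtained by a Borel--Cantelli argument: interior Krylov--Safonov estimates make $v(\cdot,\omega)$ H\"{o}lder on $\{2^k \le y\cdot\nu \le 2^{k+1}\}$ on scale $2^k$, so for any $\ep>0$ it suffices to control $v$ at a net of a bounded number of points on the segment $[2^k\nu, 2^{k+1}\nu]$ --- a number independent of $k$ --- and the summable-in-$k$ bound from (b) then closes the argument. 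The Neumann cell problem \eqref{cell:N0} is treated by the identical scheme, with $v_R(R\nu,\cdot)/R$ in place of $v(R\nu,\cdot)$ and the Neumann, linear-growth analogues of the kernel estimates below.

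The heart of the matter --- common to both parts (i) and (ii) as well as to the convex/concave case --- is the ``nonlinear averaging'' structure of $v(R\nu,\cdot)$. Using barriers and the comparison principle for \eqref{eqn: cell} (for merely uniformly elliptic $F$ one compares with linear equations with bounded measurable coefficients), I would first show that $v(R\nu,\cdot)$ depends, up to an exponentially small error, only on the restriction of $\psi$ to $B_{CR}\cap\{y\cdot\nu=0\}$, and then that it is Lipschitz in these boundary values in the quantitative sense that altering $\psi$ on an $O(1)$ block at tangential distance $r$ changes $v(R\nu,\cdot)$ by at most $c(r)\norm{\delta\psi}_\infty$, where the decay of $c(\cdot)$ is governed by the boundary H\"{o}lder exponent $\alpha = \alpha(d,\Lambda/\lambda)$ of $F$ (for the Laplacian this is the $L^2$-normalized Poisson kernel). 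Partitioning $B_{CR}\cap\{y\cdot\nu=0\}$ into $N\sim R^{d-1}$ such blocks exhibits $v(R\nu,\cdot)$ as a Lipschitz function of $N$ bounded coordinates whose weighted $\ell^2$ Lipschitz constant is $V_R^{1/2}$ with $V_R = \sum_{\text{blocks}} c(r_{\text{block}})^2$; for the Laplacian $V_R \sim R^{-(d-1)}$, and in general $V_R \sim R^{-\theta}$ with $\theta = \theta(d,\alpha) > 0$ degrading as the ellipticity worsens. When $F$ is convex or concave this function is moreover convex or concave in $\psi$ --- a supremum or infimum of linear functionals --- which is exactly what lets Talagrand's inequality apply with no further hypothesis in Theorem \ref{thm:main0}.

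With this in hand the two parts diverge only in the concentration step. For part (i) the blocks are weakly dependent, and I would apply a bounded-differences / transportation concentration inequality adapted to strongly mixing fields (in the spirit of Marton and Samson), which yields $\P(\abs{v(R\nu,\omega) - m_R} > t) \le 2\exp(-ct^2/V_R)$ with a constant inflated by the mixing coefficients; for the resulting bound to be summable against the net of the first step one needs $V_R^{-1} = R^{\theta}$ to grow fast enough, which forces $\theta$, hence $\alpha$, hence $\Lambda/\lambda$, into a near-Laplacian range --- this is the ellipticity restriction. For part (ii) the checkerboard cells are genuinely independent, so the assumed log-Sobolev inequality tensorizes over them, and the Herbst argument converts the bound on $V_R$ into the same sub-Gaussian estimate, now with no smallness of $\theta$ required --- which is precisely why hypothesis (ii) dispenses with the ellipticity restriction. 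For the convergence of the means I would argue directly, since no subadditive ergodic theorem is available here: comparing $v$ on the slab $\{y\cdot\nu>\rho\}$ with $\rho\sim\sqrt{R}$ against the shifted half-space problems whose boundary data are $v(\cdot,\omega)|_{\{y\cdot\nu=\rho\}}$ perturbed by its oscillation, and using the tangential stationarity of the random field $y'\mapsto v(y'+\rho\nu,\omega)$, squeezes $v(R\nu,\omega)$ between two quantities of mean $m_{R-\rho}\pm o(1)$; feeding in the concentration bound to replace realizations by their means shows $\abs{m_R - m_{R-\rho}} \to 0$, so $m_R$ is Cauchy.

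The main obstacle is the first of these steps: proving the quantitative localization together with the Lipschitz-in-the-blocks bound with as sharp a decay $c(r)$ as possible for non-convex $F$, and then matching the metric in which $v(R\nu,\cdot)$ is Lipschitz to the metric in which the mixing (respectively log-Sobolev) inequality is phrased --- the inevitable mismatch is exactly what produces the ellipticity restriction in (i) and is what the log-Sobolev hypothesis of (ii) is engineered to absorb. A secondary difficulty is making the convergence-of-means step self-contained, relying only on hyperplane stationarity and the concentration estimate just established, rather than on the sub-additive ergodic theorem used for interior homogenization.
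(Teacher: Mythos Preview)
Your proposal is correct in its overall architecture --- block-Lipschitz estimate on $v(R\nu,\cdot)$, concentration inequality, Cauchy-ness of means --- and matches the paper's strategy. A few differences in implementation are worth noting. First, the block-sensitivity bound $c(r)$ is obtained in the paper not via the boundary H\"older exponent but via the homogeneity exponent $\beta$ of the half-space singular solution of Armstrong--Sirakov--Smart (Theorem~\ref{thm:fundamental}): translating that barrier to each block gives the sharp weight $c_k \sim R(R^2+|k|^2)^{-(\beta+1)/2}$ (Lemma~\ref{decorrelate}), and the ellipticity restriction in part~(i) is precisely $2\beta > d-1$, equivalently $\lambda/\Lambda > (d+1)/(2d)$ by \eqref{eqn: beta}. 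Second, the localization error is polynomial (Lemma~\ref{lem: localization}), not exponential; this is harmless since the $\ell^2$ tail of the block weights already decays, but your stated ``exponentially small'' is incorrect. Third, for the convergence of the means the paper does not run a slab-comparison argument but instead fixes a single good realization $\omega \notin E^{\geq M}$ --- one on which the concentration bound holds simultaneously at a polynomial-in-$N$ net of points at height $N$ for all dyadic $N\ge M$ --- and then uses interior Lipschitz decay plus the localization lemma to transfer the value at height $N$ to height $2N$; since $\mu_N$ and $\mu_{2N}$ are deterministic, the resulting estimate $|\mu_{2N}-\mu_N| \lesssim (\log N)^{1/2} N^{-\hat\beta/2}$, obtained on this single $\omega$, holds unconditionally. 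This is both simpler than your slab scheme and delivers an explicit rate. Finally, in part~(ii) the paper does not assume the checkerboard cells are independent --- only that the joint law has LSI, which covers correlated (e.g.\ Gaussian) examples --- so the tensorization step is replaced by the LSI hypothesis itself.
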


\begin{mainthm}\label{thm:main02}
Under the assumptions of either Theorem~\ref{thm:main0} or Theorem~\ref{thm:main01} part (i),  the general domain Dirichlet and Neumann boundary value problems \eqref{main0} and \eqref{main00:N} homogenize to  \eqref{main} and \eqref{mainN}, with the homogenized boundary condition determined by the corresponding cell problems \eqref{eqn: cell} and \eqref{eqn: neumann cell0}. 

\end{mainthm}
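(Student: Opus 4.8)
The plan is to treat Theorem~\ref{thm:main02} as the deterministic reduction of the general domain problems \eqref{main0} and \eqref{main00:N} to the cell problems of Theorem~\ref{thm:main0} and Theorem~\ref{thm:main01}(i), carried out by a two-scale barrier construction together with the comparison principle for $F$. For a boundary point $x_0\in\partial U$ with inner normal $\nu_0$, freeze the slow variable and set $\overline g(x_0):=\mu\big(\nu_0,F,g(x_0,\cdot,\omega)\big)$, the ergodic constant of the cell problem \eqref{eqn: cell} (resp.\ \eqref{eqn: neumann cell0}) with data $\psi(y,\omega)=g(x_0,y,\omega)$; the a.s.\ limit \eqref{convergence} (resp.\ \eqref{cell:N0}) shows this is deterministic. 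The first real point is to prove $\overline g$ is continuous on $\partial U$, which amounts to a stability estimate for $\mu(\nu,F,\psi)$ under small rotations of $\nu$ and under Lipschitz perturbations of $\psi$; I would derive it from the same comparison arguments that give the cell problem convergence, since a small rotation of the half-space and a small perturbation of the boundary data change the solution of \eqref{eqn: cell} by a controlled amount on the scales that matter for \eqref{convergence}. Granting this, the smoothness of $U$ supplies boundary barriers, so \eqref{main} and \eqref{mainN} have unique continuous viscosity solutions $\overline u$, and it remains to compare $u^\e$ with $\overline u$.

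The heart of the argument is a quantitative boundary layer estimate. Fix $x_0\in\partial U$ and an intermediate scale $r=r(\e)$ with $\e\ll r\ll 1$, to be optimized below. Since $U$ is smooth, in $B_r(x_0)$ the domain differs from the tangent half-space $x_0+\{y\cdot\nu_0>0\}$ by an $O(r^2)$ displacement in the normal direction. In $U\cap B_r(x_0)$ I would compare $u^\e$ with $x\mapsto v\big(\tfrac{x-x_0}{\e},\omega\big)$, where $v$ solves \eqref{eqn: cell} on $x_0+\{y\cdot\nu_0>0\}$ with data $\psi(y,\omega)=g(x_0,y+\e^{-1}x_0,\omega)$; by stationarity of $g$ in the fast variable this $\psi$ has the law of $g(x_0,\cdot,\omega)$, so \eqref{convergence} together with the rate from Theorem~\ref{thm:main0}/Theorem~\ref{thm:main01}(i) applies, with escape scale $R\sim r/\e$. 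Comparing the two functions by the comparison principle for $F$, I insert barriers absorbing, in turn: (a) the $O(r^2)$ domain mismatch, which moves the flat boundary and hence changes $v\big(\tfrac{\cdot-x_0}{\e},\omega\big)$ by $O(r^2/\e)$; (b) the boundary data error $|g(x,x/\e,\omega)-g(x_0,x/\e,\omega)|=O(r)$ from the Lipschitz dependence of $g$ in its first slot; and (c) the lateral and far boundary values on $U\cap\partial B_r(x_0)$, including the thin sliver adjacent to the corner $\partial U\cap\partial B_r(x_0)$ on which $v$ is still oscillatory rather than close to $\mu$, controlled by an $F$-harmonic barrier that vanishes at the interior point $x_0+r\nu_0$ and dominates $\|g\|_\infty$ on that sliver, whose width is itself a power of $\e$ by the geometry. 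Optimizing $r=r(\e)$ against the cell problem rate gives
\[
\big|u^\e(x_0+r\nu_0,\omega)-\overline g(x_0)\big|\le \mathrm{err}(\e),
\]
with $\mathrm{err}(\e)\to 0$ almost surely and with a rate in probability; equivalently $|u^\e(x,\omega)-\overline g(\pi(x))|\le\mathrm{err}(\e)$ whenever $\mathrm{dist}(x,\partial U)=r(\e)$, $\pi(x)$ being the nearest point of $\partial U$.

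Passing from this to global convergence is then routine. Write $U_\rho:=\{x\in U:\mathrm{dist}(x,\partial U)>\rho\}$ and let $\overline u^{\pm}$ solve $F(D^2\overline u^{\pm})=0$ in $U$ with $\overline u^{\pm}=\overline g\pm\eta$ on $\partial U$; since adding a constant does not change $D^2$, $\overline u^{+}-\overline u^{-}\equiv 2\eta$, so $\overline u^{\pm}\to\overline u$ uniformly as $\eta\to0$. Because in the interior the equation for $u^\e$ is exactly the homogenized equation $F(D^2\cdot)=0$ with no oscillation, the comparison principle applied in $U_{r(\e)}$ --- using the previous paragraph to bound $u^\e$ on $\partial U_{r(\e)}$ --- sandwiches $u^\e$ between small perturbations of $\overline u^{\pm}$, giving $\|u^\e-\overline u\|_{L^\infty(U_{r(\e)})}\le\mathrm{err}(\e)+o(1)$. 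Letting $r(\e)\to 0$ yields the claimed a.s.\ local uniform convergence in $U$, and the rate in probability is inherited from $\mathrm{err}(\e)$, i.e.\ from Theorem~\ref{thm:main0}/Theorem~\ref{thm:main01}(i), after a union bound over a fine net of boundary points that uses the uniformity of the cell problem rate in $x_0$ and $\nu_0$. The Neumann problem \eqref{main00:N}, \eqref{mainN} is handled in the same way with \eqref{eqn: neumann cell0} replacing \eqref{eqn: cell}: one works with $Du^\e$ in the boundary layer, the flux condition is matched by the same construction but with barriers of linear growth $\mu R$, and the fixed data $f$ on $\partial K$ enters only through the global bound on $u^\e$ and hence through the dependence of the rate on $\|f\|_\infty$.

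The main obstacle is the quantitative boundary layer comparison of the second paragraph. Two aspects are delicate: the uniformity of the cell problem rate over all admissible normals $\nu_0$ and frozen base points $x_0$, needed both for the continuity of $\overline g$ and for a rate uniform in $x_0$; and the control in step (c) of the lateral boundary contribution, where, in the absence of any ABP-type bound for boundary data (as emphasized in the introduction), one must work entirely with barriers exploiting the half-space geometry and the boundedness of $g$. Getting the four errors --- the domain mismatch $O(r^2/\e)$, the Lipschitz error $O(r)$, the width of the oscillatory sliver, and the cell problem rate at scale $r/\e$ --- to combine, for a suitable choice of $r=r(\e)$, into an honest rate in probability is where the real work of the proof lies.
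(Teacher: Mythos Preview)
Your overall architecture matches the paper's: freeze the slow variable, compare $u^\e$ near $x_0$ with the half-space cell solution, invoke the cell-problem rate, cover $\partial U$ by a finite net, use a union bound, and push the estimate into $U_{r(\e)}$ by comparison. The continuity of $\overline g$ (your first paragraph) and the net/Borel--Cantelli passage (your third paragraph) are exactly what the paper does.

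The gap is in step (c). With a single scale $r$, you compare $u^\e$ and $v\big(\tfrac{\cdot-x_0}{\e}\big)$ in $B_r(x_0)\cap U$ and evaluate at $x_0+r\nu_0$. On the lateral boundary $\partial B_r\cap U$ you only know $|u^\e|,|v|\le \|g\|_\infty$; your ``thin sliver'' idea does not help, because away from the corner you still have no information about $u^\e$---that is precisely what you are trying to estimate. Since the evaluation point $x_0+r\nu_0$ is at distance $\sim r$ from this lateral piece (the same as the radius of the region), no $F$-harmonic barrier can make that $O(\|g\|_\infty)$ contribution small there. The paper resolves this by introducing a \emph{second} scale: in rescaled variables one compares in a cylinder of size $L$ but evaluates at height $R\ll L$. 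The localization barrier (Lemma~\ref{lem: localization}) then damps the lateral contribution to $O(R/L)$, while the domain mismatch on the shifted flat face is $O(L^2\e)$; optimizing $L=(R/\e)^{1/3}$ yields
\[
|v^\e_{x_0}(y,\omega)-v_{\nu_{x_0}}(y,\tau_{x_0/\e}\omega)|\lesssim \e^{1/3}R^{2/3}\quad\text{on }K(R,\nu_{x_0}),
\]
after which the cell-problem concentration at scale $R$ is applied. In your notation, you need an outer radius $r'\gg r$ for the comparison region while still evaluating at $x_0+r\nu_0$; the ``lateral'' error is then $O(r/r')$, and the four errors you list become $r'^2/\e$, $r'$, $r/r'$, and the cell rate at $r/\e$, which can be jointly optimized. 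Once you insert this second scale, the rest of your outline goes through as in the paper.
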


See Section~\ref{prelim} for more precise results.  We expect that  methods very similar to the ones presented here will also show that the homogenization of the cell problems \eqref{eqn: cell} and \eqref{eqn: neumann cell0} for boundary data as, for example in Theorem~\ref{thm:main01} part~(ii), will yield a corresponding version of Theorem \ref{thm:main02} for general domains with boundary data defined by ``lifting up from the hyperplane" as in \eqref{eqn: prj up} (see Section~\ref{prelim} for more details).

\medskip

 Our approach can be extended to equations with nonhomogeneous of degree one nonlinearities and  spatial oscillations inside the domain as long as the latter do not ``interact'' with the second-order derivatives, because the lower order terms scale out during the ``blow up" procedure that leads to the cell problems. 
 
 \medskip
 
 Understanding the homogenization in the presence  of random oscillations in both the second order term and the boundary data is still open.  The main obstacle appears to be the fact that the methods applied to study the homogenization in the interior and on the boundary do not provide sufficient control to deal with the additional boundary layer created by the combined oscillations.

\medskip

We present next a short review of the existing literature. Since there is a large body of work concerning the homogenization of elliptic pde, we organize this review  around oscillatory and non-oscillatory boundary value problems.  In each case we  discuss general qualitative homogenization results and quantitative error estimates. 

\medskip

Classical references for homogenization of linear (divergence and non-divergence form) operators in periodic media are the books Benssousan, Lions, Papanicolaou \cite{BPL78} and Jikov, Koslov, Ole{\u\i}nik \cite{JKO94}, while for fully nonlinear problems we refer to Evans \cite{Evans89} and Caffarelli  \cite{C99}.  The most general nonlinear result, without any convexity assumptions on $F$, is due to Caffarelli and Souganidis \cite{CS10} using $\delta$ viscosity solutions. 

\medskip

The first results in random media for linear divergence and non-divergence form operators are due to Papanicolaou and Varadhan \cite{PV79, PV82} and Kozlov \cite{kozlov}.  Nonlinear variational problems were studied by Dal Maso and Modica \cite{DMM86}.  The first general nonlinear result was obtained by Caffarelli, Souganidis and Wang \cite{CSW05}, who introduced a sub-additive quantity, based  on a family of auxiliary obstacle problems, which controls the behavior of the solutions using the ABP-inequality  and, in view  of the sub-additive ergodic theorem, has an ergodic behavior.  Error estimates for linear divergence and non-divergence form equations under Cordes-type assumptions were obtained by Yurinski{\u\i} \cite{Y85,Y88}, while Gloria and Otto \cite{GO11}, Gloria, Neukamm and Otto \cite{GNO} and Marahrens and Otto \cite{MO}  proved optimal rates of convergence in the discrete setting  for linear divergence form elliptic pde.  For nonlinear non-divergence form equations the first error estimate (logarithmic rate) was obtained by \cite{CS10} in the strong mixing setting;  under some assumptions this was upgraded recently to an algebraic error by Armstrong and Smart \cite{AS13}.

\medskip

Much less is known about the homogenization of oscillatory boundary data. In the linear divergence form case with co-normal Neumann boundary data the homogenization is proved in the book \cite{BPL78}.  In the periodic setting, some special cases were discussed in Arisawa\cite{Arisawa03}, the Neumann problem in a special half-space setting with periodic boundary data was studied  by Barles, Da Lio, Lions and Souganidis \cite{BDLS08} (some results were obtained earlier by Tanaka \cite{T84} using probabilistic methods), and recently, for general domains and  rotationally invariant equations,  by Choi and  Kim \cite{ChoiKim12} and Choi, Kim and Lee \cite{CKL12}. Qualitative results for the oscillatory Dirichlet problems in periodic media were obtained  by Barles and Mironescu \cite{BarlesMironescu12} in the half-space setting and, recently, by Feldman \cite{Feldman13} in general domains.   In the periodic case, the analysis for   general domains requires either a careful geometric analysis, such as in \cite{ChoiKim12}, to obtain a uniform modulus of continuity,  or an argument as in \cite{Feldman13} that ignores discontinuities of the data in small parts of the boundary.  We also point out the recent results of Garet-Varet and Masmoudi \cite{GVM11, GVM12} about systems of divergence-form operators with oscillatory Dirichlet data in periodic media with error estimates. In a similar vein are the results of Kenig, Lin and Shen \cite{KLS12} on the rate of convergence for interior homogenization with Dirichlet or Neumann boundary conditions in Lipschitz domains. 

\medskip

As far as we know, the results of this paper are the first concerning homogenization of nonlinear, and, for general domains, even of linear  Dirichlet and Neumann problems with random oscillatory data.  Our approach is based on measuring,  using appropriate barriers, the fluctuation of the interior values of the solutions to the cell problems in terms of the randomness on the boundary.  Once such control has been established, we use tools from the theory of concentration inequalities to prove estimates on the deviation of the interior values of the solutions from their mean.  Estimates from the elliptic theory and the maximum principle are then used to show that the means converge to a deterministic constant.  The different versions of our results in the convex and non-convex cases are due to the nature of the available concentration estimates.  Once the homogenization of the cell problem for half spaces in any direction has been established, we employ  estimates from the elliptic theory to make rigorous the ``blow up" argument described earlier to show that homogenization occurs for the general domain problem. We also discuss separately the cell problems of, what we called above, the ``lifting up from the hyperplane'' problem. The arguments in this case are similar but, since the  probabilistic setting is simpler, we are able to obtain general results as far as $F$ is concerned.

\medskip

The paper is organized as follows:  In Section \ref{prelim} we introduce the notation and some conventions, the assumptions on the equations,  the probabilistic setting, the concentration inequalities we use in the paper and some useful technical facts about uniformly elliptic pde.  Then we present  the precise statements of the results.  In Section \ref{cell} we begin with an outline of the proof of the homogenization of the Dirichlet cell problem, then we continue with the details which consist of two steps.  The first is the Lipschitz estimates of solutions of the cell problem  in terms of the boundary data and the second is the probabilistic arguments based on the aforementioned concentration results.  The  presentation  is divided into two parts depending on the assumptions on $F$ and the randomness.  We also prove the  continuity of the homogenized boundary condition and the homogenization of the cell problem in the  ``lifting up from the hyperplane'' setting.  Section \ref{general} contains the proof of homogenization for the Dirichlet problem in a general domain.  The Neumann problem is discussed in Section \ref{neumann}.  The structure of the proof mirrors that of the Dirichlet problem.

\subsection*{Acknowledgements}  The first author would like to thank Nicholas Cook for many helpful discussions related to the subject of concentration inequalities.

\section{Preliminaries, Assumptions and Statements of Results}\label{prelim}
\subsection*{Notation and some terminology/conventions}

We denote by $\mathcal{M}^d$, $\tr M$ and $I_d$ the class of $d \times d$ symmetric matrices with real entries, the trace of $M\in \mathcal{M}^d$ and the the $d\times d$ identity matrix respectively. We write $Q$ for the unit cube $[0,1)^{d-1}$.  For each $\nu \in S^{d-1}$, the sphere in $\real^d$, $P_{\nu}:=\{x \in \real^d: x\cdot \nu>0\}$,  $\Pi_{\nu}:=\{x \in \real^d: 0<x\cdot \nu <1\}$ and for $r>0$ we call $\Pi_\nu^r = r \Pi_\nu$.  We call $B_r(x_0) = \{ x \in \real^d: |x-x_0| < r\}$ and $B_r$ and $B^+_r$ refer to $B_r(0)$ and $B_r(0)\cap P_{e_d}$ respectively. Given $\nu \in S^{d-1}$, we also write $x'=x-x\cdot \nu$, and, for  $L>0$,  we use the cylinders $\textup{Cyl}_{\nu,L} = \{x\in \real^d: |x'| \leq L\} \times \{ x\in \real^d: 0 \leq x \cdot \nu \leq L\}.$ Moreover, $2^{\mathbb{N}}$ is the set of dyadic numbers, ${\mathbf 1}_A$ is the indicator function of the set $A$ and $C^{0,\alpha}(D)$, $C^{0,1}(D)$, $C^{1,\alpha}(D)$ and $C^2(D)$ are the spaces of $\alpha$-H\"{o}lder continuous, Lipschitz continuous, continuously differentiable with $\alpha$-H\"{o}lder continuous derivatives and twice continuously differentiable functions $f:D\to \real$ with norms respectively  $\|f\|_{C^{0,\beta}(D)}$, $\|f\|_{C^{0,1}(D)}$, $\|f\|_{C^{1,\alpha}(D)}$ and $\|f\|_{C^{2}(D)}$.  We write  $\osc_D f = \sup_D f - \inf_D f$ for the oscillation over $D$ of the continuous function $f$  and  $\|f\|_{\infty, D}$ for the $L^\infty$ norm of the  bounded function  $f:D \to \real$; if there is no ambiguity for the domain, for simplicity, we only write $\|f\|_{\infty}.$ If $U$ is an open subset  of $\real ^d$, then, for $r>0$, $U_r:=\{ x\in U: \textup{dist}(x, \partial U) >r \},$ where $\textup{dist}$ is the usual Euclidean distance. 
For $a\in \real$,  $a_+$ and $a_-$ are respectively  the positive and negative parts of $a$. Given a metric space $X$, $\mathcal B(X)$ is the $\sigma$-algebra of the Borel sets of $X$ associated with the metric.  On product spaces, for example $\Xi = X^{\integer^n}$, the notation $\mathcal{B}(\Xi)$ will refer to the cylinder $\sigma$-algebra of Borel sets.  For bounded linear operators $L:\ell^2(\integer^n) \to  \ell^2(\integer^n)$ we write $|L|_{\ell^2}$ for the usual operator norm.  

\medskip
We say that constants are universal, if they only depend on the underlying parameters of the problem, which are the ellipticity ratio of $F$, the dimension $d$, the constants associated with the $C^2$-property of $U$ and the mixing conditions.  Also note that constants may change, without explicitly said,  from line to line. Given two quantities $A$ and $B$ we write
$ A \lesssim B \ \hbox{ when } \ A \leq CB \ \hbox{ for some universal } C.$  If $A \lesssim B$ and $B \lesssim A$, then we write $A \sim B$.

\medskip 

Throughout the paper subsolutions, supersolutions and solutions should be interpreted in the Crandall-Lions viscosity sense.  We refer to Crandall, Ishii and Lions \cite{CIL} for the facts we use throughout the paper about viscosity solutions.  In the special case of uniformly elliptic second order equations the book of Caffarelli and Cabr{\'e} \cite{CC95} is a useful reference.

\medskip

We work on a probability space $(\Omega,\mathcal{F},\P)$.  If $(\Xi,\mathcal{G})$ is a measurable space, $\xi : \Omega \to \Xi$ is a random variable, if it is a measurable mapping in the sense that $ \xi^{-1}(E) \in \mathcal{F}$ for all $E \in \mathcal{G}$.  When $\Xi = \real$ we write $\E \xi = \int \xi d\P$ when the integral is defined. If $f:D\times \Omega \to \real,$ we write $\E f(x)$ for $\int_\Omega f(x,\omega) d\P(\omega).$  If $\Xi$ has a metric space structure and $\mathcal{G} = \mathcal{B} (\Xi)$, then the composition of a continuous function $f: \Xi \to \real$ with a random variable $\xi$, $f\circ \xi : \Omega \to \real$, is still measurable mapping with respect to  $\mathcal{B} (\real)$.  This justifies the fact that the various functions on $\Omega$ we consider throughout the paper are indeed random variables.  Finally, $\sigma(O_a :a\in A)$ is the smallest $\sigma$ algebra containing the collection  $(O_a)_{a\in A} \subset \mathcal F$.

 \subsection*{The probabilistic setting and concentration inequalities}\label{sec: prob setting}
 We are given a probability space $(\Omega , \mathcal{F}, \P)$,  which is endowed with the action of the group of  $\real^d$-translations 
 $( \tau_y )_{y \in \real^d}$ such that, for  each $y \in \real^d$, $\tau_y : \Omega \to \Omega$ is measurable and measure preserving.  More specifically, for every measurable set $A \in \mathcal{F}$ and every $y,z \in \real^d$, 
 $$ \P(\tau_y(A))= \P(A) \ \hbox{ and } \ \tau_z\tau_y = \tau_{y+z}, \ \tau_0 = \textup{id}. $$
 We say that the action is ergodic if, for any $E \in \mathcal{F}$,
  $$ \hbox{ if } \ \tau_x E = E \ \hbox{ for all } \ x \in \real^d, \ \hbox{ then } \ \P(E) \in \{0,1\}.$$
 We will also use the above notions in the context of a $\integer^n$-action on a probability space and  the definitions are analogous.  
 
 \medskip
 
 Given a bounded subset  $O$ of $\real^d$ and a random field $f: \real^d \times \Omega \to \real$, $\mathcal{G}(O)$ is the $\sigma$-algebra  
  $$\mathcal{G}(O) : = \sigma (\{f(y,\cdot) : y \in O\}).$$
The  $\phi$-mixing rate function 
 \begin{equation}\label{mixing}
  \phi(r) : = \sup\{| \P(E | F) - \P(E)| : \textup{dist}(O,V)\geq r, \ E \in \mathcal{G}(O), \ F \in \mathcal{G}(V)  \ \hbox{with} \  \P(F) \neq 0\}
  \end{equation}
measures of the decorrelation of the values of the random field $f$.  If we want to emphasize the dependence on the random field $f$, we write $\phi_f$.

\medskip

We say that 
  \begin{equation}\label{phi mixing}
  \hbox{ the random field $f$ is  $\phi$-mixing if } \  \phi(r) \to 0 \ \hbox{ as } \ r \to \infty.  
  \end{equation}
 We will also use the $\phi$-mixing condition in the context of random fields on a $\integer^n$-lattice; the definition is completely analogous.  
 
  \medskip
  
 \noindent  Let $A$ be a complete separable metric space and $\mathcal{B}({A})$ the associated Borel $\sigma$-algebra. Fix $n \in \mathbb{N},$  let $m$ be a probability measure on $\Xi:= {A}^{\integer^{n}}$ with the cylinder $\sigma$-algebra and assume that the random field, $(X_j)_{j \in \integer^n}$, on $\integer^n$ given by the coordinate maps  satisfies the $\integer^n$ -lattice version of the $\phi$-mixing condition \eqref{phi mixing}. 
 \medskip
 
We say that  $f: \Xi \to \real$ is Lipschitz with respect to the $\alpha$-weighted Hamming distance with weight  $\alpha \in \ell^2(\integer^n)$ if
 \begin{equation}\label{alpha hamming}
  |f(X)-f(Y)| \leq \sum_{ i \in \integer^n} \alpha_i  \mathbf 1_{X_i \neq Y_i}. 
  \end{equation}
  
  \medskip
  
  We will be making use of the following concentration inequalities in the strong mixing setting due to Marton \cite{Marton03} and Samson \cite{Samson00}.  This first  is about functions which are Lipschitz with respect to the Hamming distance \eqref{alpha hamming}.  

  \begin{thm}\label{concentration1}
There exist  positive constants $c,C$ depending only on $n$ such that,  for any $f: \Xi \to \real$ which is $1$-Lipschitz with respect to the $\alpha$-weighted Hamming distance,
 $$ m(\{|f-\int f dm| > t\}) \leq C \exp \left(-\frac{ct^2}{|\alpha|_{\ell^2}(\sum_{i \in \integer^n}\phi(|i|)^{1/2})^2}\right).$$
 \end{thm}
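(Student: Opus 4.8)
The plan is to derive the inequality from a transportation cost inequality for $m$ adapted to the weighted Hamming metric, in the spirit of Samson \cite{Samson00} and Marton \cite{Marton03}, and then to pass from that inequality to a sub-Gaussian tail bound by the entropy--duality argument of Bobkov and G\"{o}tze. Before anything else I would reduce to functions of finitely many coordinates: given $f$ as in the statement and $N\in\mathbb N$, let $f_N$ be the conditional expectation of $f$ under $m$ given $(X_i)_{|i|\le N}$; then $f_N$ satisfies \eqref{alpha hamming} with the truncated weight $(\alpha_i\indicator_{\{|i|\le N\}})_i$, $\int f_N\,dm=\int f\,dm$, and $f_N\to f$ in $L^2(m)$ and $m$-a.s.\ along a subsequence by martingale convergence, so it is enough to prove the bound with $N$-independent constants for each $f_N$. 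Thus I may assume $f$ depends only on $X_i$, $i\in\Lambda$, for some finite box $\Lambda\subset\integer^n$, which in particular makes available a total order on the relevant index set.

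The core step is the coupling estimate: there is a universal constant such that for every probability measure $\mu\ll m$ there is a coupling $\pi$ of $\mu$ and $m$, with $(X,Y)\sim\pi$, satisfying
$$\Big(\sum_{i\in\Lambda}\pi(X_i\ne Y_i)^2\Big)^{1/2}\ \le\ \sqrt{2}\,|\Gamma|_{\ell^2}\,\big(D(\mu\,\|\,m)\big)^{1/2},$$
where $D(\mu\|m)=\int\log\tfrac{d\mu}{dm}\,d\mu$ is the relative entropy and $\Gamma=(\Gamma_{ij})_{i,j\in\Lambda}$ is the mixing matrix $\Gamma_{ii}=1$, $\Gamma_{ij}=\phi(|i-j|)^{1/2}$ for $i\ne j$. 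One proves this by disintegrating $\mu$ and $m$ into their successive single-coordinate conditional laws along the chosen order and constructing $\pi$ greedily, coordinate by coordinate, so that at each step the two coupled coordinates coincide up to the total variation distance between the corresponding conditional laws; the mixing hypothesis \eqref{mixing} controls the dependence of each conditional law of $m$ on the far-away coordinates already revealed, which produces the off-diagonal decay $\phi(|i-j|)^{1/2}$, while Pinsker's inequality and the chain rule for entropy (the conditional entropies accumulated along the order sum to $D(\mu\|m)$) assemble the per-coordinate bounds into the displayed $\ell^2$ inequality. I expect this to be the main obstacle: in contrast to the one-dimensional sequential setting, the coupling here must be run against the genuinely two-sided, $\integer^n$-indexed mixing condition \eqref{mixing}, and it is in organizing the order on $\Lambda$ and controlling the resulting combinatorics that the dependence of the final constants on $n$ appears.

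Granting this, the conclusion follows by soft arguments. For the fixed $f$, any $\mu\ll m$ and any coupling $\pi$ of $\mu$ and $m$, the Lipschitz bound \eqref{alpha hamming} and Cauchy--Schwarz give
$$\int f\,d\mu-\int f\,dm=\int\big(f(X)-f(Y)\big)\,d\pi\le\sum_{i\in\Lambda}\alpha_i\,\pi(X_i\ne Y_i)\le|\alpha|_{\ell^2}\Big(\sum_{i\in\Lambda}\pi(X_i\ne Y_i)^2\Big)^{1/2};$$
optimizing over $\pi$ and inserting the coupling estimate yields $\int f\,d\mu-\int f\,dm\le\sqrt{2}\,|\alpha|_{\ell^2}\,|\Gamma|_{\ell^2}\,(D(\mu\|m))^{1/2}$, and likewise for $-f$. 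By the Bobkov--G\"{o}tze variational formula for relative entropy this transportation inequality is equivalent to $\int e^{\lambda(f-\int f\,dm)}\,dm\le\exp\big(\tfrac12\lambda^2|\alpha|_{\ell^2}^2|\Gamma|_{\ell^2}^2\big)$ for all $\lambda\in\real$; concretely, testing the transportation inequality against the tilted measures $d\mu_\lambda\propto e^{\lambda f}\,dm$ reduces it to the differential inequality $H'(\lambda)^2\le2|\alpha|_{\ell^2}^2|\Gamma|_{\ell^2}^2(\lambda H'(\lambda)-H(\lambda))$ for $H(\lambda)=\log\int e^{\lambda(f-\int f\,dm)}\,dm$, whose solution with $H(0)=H'(0)=0$ obeys $H(\lambda)\le\tfrac12\lambda^2|\alpha|_{\ell^2}^2|\Gamma|_{\ell^2}^2$, and Chernoff's inequality then gives $m(|f-\int f\,dm|>t)\le2\exp(-t^2/(2|\alpha|_{\ell^2}^2|\Gamma|_{\ell^2}^2))$. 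It remains only to bound $|\Gamma|_{\ell^2}$: since $\Gamma$ is a compression of the convolution operator on $\ell^2(\integer^n)$ with kernel $k(0)=1$, $k(i)=\phi(|i|)^{1/2}$ for $i\ne0$, Schur's test (equivalently Young's convolution inequality) gives $|\Gamma|_{\ell^2}\le\sum_{i\in\integer^n}\phi(|i|)^{1/2}$, which is finite under the assumed decay of $\phi$. Substituting and letting $N\to\infty$ gives the asserted bound.
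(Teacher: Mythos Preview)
The paper does not prove Theorem~\ref{concentration1}; it is quoted from Marton \cite{Marton03} and Samson \cite{Samson00} and used as a black box. Your outline is precisely the Marton--Samson strategy: build a Marton-type coupling coordinate by coordinate, use $\phi$-mixing to control the dependence of each conditional law on previously revealed coordinates, assemble a weighted-Hamming transportation--entropy inequality with the mixing matrix $\Gamma$, and then pass to a sub-Gaussian Laplace transform bound via the Bobkov--G\"otze duality and Chernoff. So you are reproducing the proof from the cited references rather than offering an alternative, and the steps you list are correct.

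Two remarks. First, your argument yields the exponent $-t^2/(2|\alpha|_{\ell^2}^2|\Gamma|_{\ell^2}^2)$, hence a denominator $|\alpha|_{\ell^2}^2(\sum_i\phi(|i|)^{1/2})^2$, whereas the stated theorem has $|\alpha|_{\ell^2}$ to the first power. Your scaling is the correct one: it is already forced in the independent case by Hoeffding's inequality, and the paper's own application of the theorem in Section~\ref{cell} (computing $\sum_k\alpha_k^2\lesssim R^{d-1-2\beta}$ and arriving at the exponent $R^{2\beta-(d-1)}t^2$) implicitly uses $|\alpha|_{\ell^2}^2$. The statement as printed carries a typo. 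Second, your flag about the $\integer^n$-indexed coupling is warranted: Samson's published argument is written for a linearly ordered index set, and to run it here one fixes an arbitrary total order on the finite box $\Lambda$ and checks that the two-sided $\phi$-mixing condition \eqref{mixing} still bounds the relevant conditional total-variation distances by $\phi(|i-j|)$. The resulting $\Gamma$ is then entrywise dominated by the $\integer^n$-convolution kernel and your Schur bound $|\Gamma|_{\ell^2}\le\sum_i\phi(|i|)^{1/2}$ applies, the ordering contributing only a dimensional constant. That is where the real work hides, but your sketch locates it correctly.
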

 
 \medskip
\noindent  Suppose additionally that ${A}$ is a closed convex subspace of some separable Banach space with norm $\| \cdot \|$ and,   for $1\leq p < \infty$, define the $ \ell^p$-distances  on $\Xi^{\integer^n}$ by
$$ |X-Y|_{\ell^p} := (\sum_{j \in \integer^n} \|X_j-Y_j\|^p)^{1/p} \ \hbox{ for } \ X,Y \in \Xi^{\integer^n}.$$

\smallskip

 Concentration inequalities with respect to the Hamming, or $\ell^0$, distance on a product space (which is just $d_{\ell^0}(X,Y) = \sum \mathbf 1_{X_i \neq Y_i}$) are often suboptimal when the underlying space also has a Euclidean, that is  $\ell^2$,  distance.  This is essentially due to the fact that being $1$-Lipschitz with respect to $\ell^0$-distance on the Hamming cube $\{0,1\}^n$ is worse by a factor of $\sqrt{n}$ than being $1$-Lipschitz with respect to $\ell^2$-distance. For functions which are $1$-Lipschitz with respect to the  $\ell^2$-distance and convex, there is  the following result of Samson \cite{Samson00}, which is an extension of Talagrand's concentration inequality. 
 \begin{thm}\label{concentration2}
There exist positive constants  $c,C$ that depend only on $n$ such that, 
  for any convex and $1$-Lipschitz with respect to $\ell^2$-metric  $f: \Xi \to \real$, 
\begin{equation}\label{eqn: talagrand} 
m(\{|f-\int f dm| > t\}) \leq C \exp \left(-\frac{ct^2}{(\sum_{i \in \integer^n}\phi(|i|)^{1/2})^2}\right). 
\end{equation}
 \end{thm}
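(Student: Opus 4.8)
The plan is to follow the transportation--information approach of Samson \cite{Samson00}, which is the natural route for carrying Talagrand's convex distance inequality from i.i.d.\ sequences to $\phi$-mixing ones. First, I would encode the mixing into a linear operator: let $\Gamma = (\Gamma_{ij})_{i,j\in\integer^n}$ be the symmetric matrix with $\Gamma_{ij} = \phi(|i-j|)^{1/2}$, using the convention $\phi(0)=1$. Since $\Gamma$ acts on $\ell^2(\integer^n)$ as convolution against $(\phi(|k|)^{1/2})_{k\in\integer^n}$, Young's inequality gives $|\Gamma|_{\ell^2}\le\sum_{k\in\integer^n}\phi(|k|)^{1/2}$, which is finite under the assumed decay of $\phi$; it therefore suffices to establish \eqref{eqn: talagrand} with $|\Gamma|_{\ell^2}^2$ in the role of $(\sum_{i\in\integer^n}\phi(|i|)^{1/2})^2$. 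A standard approximation argument then lets us assume that $f$ depends on only finitely many coordinates, say those indexed by a finite box $\Lambda\subset\integer^n$ (restricting the index set does not increase the mixing coefficients), so that we may work with the law $m_\Lambda$ of $(X_j)_{j\in\Lambda}$ on $A^\Lambda$.

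The analytic core is a $\Gamma$-weighted transportation-cost inequality for $m_\Lambda$: writing, for a coupling $\pi$ of a probability measure $\nu\ll m_\Lambda$ with $m_\Lambda$, $a_i(\pi,y) = \pi(\{X_i\neq y_i\}\,|\,Y=y)$ and $a(\pi,y) = (a_i(\pi,y))_{i\in\Lambda}$,
\begin{equation}\label{eqn: prop transport}
\inf_{\pi}\;\Big(\E_\nu\big[\,|\Gamma\,a(\pi,\cdot)|_{\ell^2}^2\,\big]\Big)^{1/2}\;\le\;\sqrt{2\,H(\nu\,\|\,m_\Lambda)},
\end{equation}
where $H(\cdot\,\|\,\cdot)$ is the relative entropy. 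I would prove \eqref{eqn: prop transport} by induction on $|\Lambda|$: disintegrate $\nu$ and $m_\Lambda$ along one coordinate; pay for that coordinate by its one-dimensional entropy via the Marton coupling (a consequence of Pinsker's inequality); use the $\phi$-mixing hypothesis to control, in the correctly weighted $\ell^2$ sense, the discrepancy between the conditional laws of the remaining block under $\nu$ and under $m_\Lambda$; and close with the inductive hypothesis. Propagating the conditional couplings through this recursion is exactly what produces the off-diagonal weights $\phi(|i-j|)^{1/2}$, and hence the operator norm $|\Gamma|_{\ell^2}$.

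Granting \eqref{eqn: prop transport}, one combines it with convexity in the manner of Talagrand. For measurable $B\subset A^\Lambda$ with $m_\Lambda(B)>0$, apply \eqref{eqn: prop transport} to $\nu = m_\Lambda(\cdot\,|\,B)$, for which $H(\nu\,\|\,m_\Lambda) = \log(1/m_\Lambda(B))$: an optimal coupling moves each $x$ to a point of $B$ whose coordinate-disagreement profile with $x$ has controlled $\Gamma$-weighted $\ell^2$ norm. Since $f$ is convex and $1$-Lipschitz for the Euclidean metric --- the step at which convexity converts the crude combinatorial cost into the true $\ell^2$ cost, through the bound $f(x)-\inf_B f \le \sup_{|\alpha|_{\ell^2}\le 1}\inf_{y\in B}\sum_i \alpha_i\,{\mathbf 1}_{x_i\neq y_i}$ valid for convex Lipschitz $f$ --- one obtains a Talagrand-type integrability estimate $\E_{m_\Lambda}[\exp(c\,|\Gamma|_{\ell^2}^{-2}\,d_T(\cdot,B)^2)]\le m_\Lambda(B)^{-1}$, with $d_T$ the convex distance to $B$. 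Taking $B$ to be a median sublevel set of $f$, so that $m_\Lambda(B)\ge 1/2$, gives the upper tail about the median; the lower deviations are controlled by the same transportation inequality, and the usual passage from the median to the mean then yields \eqref{eqn: talagrand}, with constants depending only on $n$.

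The principal difficulty is the induction behind \eqref{eqn: prop transport}: one must show that the dependence created whenever a coordinate is coupled --- which perturbs the conditional law of \emph{every} remaining coordinate --- accumulates only at the rate $\phi(|i-j|)^{1/2}$, and in the $\ell^2$ rather than the $\ell^1$ or Hamming sense, so that the total cost is governed by $|\Gamma|_{\ell^2}$ and not by the much larger $|\Gamma|_{\ell^1}$. This quadratic bookkeeping of the mixing rate, together with the convexity input of the previous paragraph, is precisely what distinguishes Theorem~\ref{concentration2} from the Hamming-distance statement of Theorem~\ref{concentration1}, for which only $\phi(|i|)^{1/2}$-summability of the weights is needed and no convexity is required.
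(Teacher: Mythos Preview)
The paper does not prove Theorem~\ref{concentration2}; it is stated as a background result due to Samson \cite{Samson00} (an extension of Talagrand's convex-distance inequality to $\phi$-mixing sequences) and is simply cited, without proof, in the preliminaries. Your sketch follows Samson's transportation-cost approach --- encoding the mixing in the operator $\Gamma$, proving a $\Gamma$-weighted Marton-type transportation inequality by induction on the number of coordinates, and then feeding it into Talagrand's convexity argument --- which is indeed the route taken in the cited reference, so there is nothing further to compare.
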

  
 \medskip
 
  Talagrand's inequality is very powerful but it requires convexity.  This assumption  appears to be, in general, necessary. However,  the known counterexamples without convexity do involve some irregularity of the underlying measure. 
  
 \medskip 
   A concentration inequality like \eqref{eqn: talagrand} does hold for Gaussian measures without the convexity assumption on the Lipschitz function $f$.  A less stringent assumption, which also yields concentration for general Lipschitz functions, is the so-called  log-Sobolev inequality that we describe next.
  
\medskip

A probability measure $m$ on $\mathcal B(\real^n)$ is said to  have  a  log-Sobolev inequality (LSI for short), if there exists $\rho>0$ so that, for any   locally continuously differentiable $f : \real^n \to \real$,   
 \begin{equation}
 \int f^2 \log \tfrac{f^2}{\int f^2 dm} dm \leq \frac{1}{2\rho} \int \sum_{j=1}^n |\partial_j f|^2 d m.
 \end{equation}

 The LSI can be defined on infinite product spaces as well, for example, $\real^{\integer^n}$. It turns out that measures which have log-Sobolev inequality satisfy the following  dimension independent concentration property; for a proof we refer to Ledoux  \cite{ledoux}.
 \begin{thm}\label{concentration0}
 Suppose that $m$ has LSI with constant $\rho$. For any   $1$-Lipschitz with respect to the $\ell^2$-metric  $f : \real^{n} \to \real$, 
  $$ m (\{ |f-\int f d m |>t \}) \leq 2\exp(-2\rho t^2).$$
 The same result holds on infinite product spaces like $\real^{\integer^n}$ as well.
 \end{thm}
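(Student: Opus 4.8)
The plan is to run the Herbst argument, which extracts Gaussian concentration from a log-Sobolev inequality through a first-order differential inequality for the logarithmic moment generating function. First I would reduce to the case where $f$ is smooth, bounded, and satisfies $|\grad f| \le 1$ pointwise: mollify to gain smoothness, truncate to gain boundedness, and recover the general statement at the very end by monotone/dominated convergence in the exponential-moment bound together with Fatou's lemma in the tail estimate. After subtracting its mean I may also assume $\int f \, dm = 0$.

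Next I would introduce the Laplace transform $H(\lam) = \int e^{\lam f}\, dm$, which is smooth on $\real$ with $H(0)=1$ and $H'(0) = \int f\, dm = 0$, and apply the LSI to the test function $g = e^{\lam f/2}$. Since $\sum_j |\partial_j g|^2 = \tfrac{\lam^2}{4}|\grad f|^2 e^{\lam f} \le \tfrac{\lam^2}{4} e^{\lam f}$, the LSI becomes
\begin{equation*}
\lam H'(\lam) - H(\lam)\log H(\lam) \;\le\; \frac{\lam^2}{8\rho}\, H(\lam) ,
\end{equation*}
and, writing $\Lam(\lam) = \log H(\lam)$ and dividing by $\lam^2 H(\lam)$, this is precisely $\tfrac{d}{d\lam}\big(\Lam(\lam)/\lam\big) \le \tfrac{1}{8\rho}$, valid separately on $\{\lam>0\}$ and $\{\lam<0\}$. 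Because $\Lam(\lam)/\lam \to H'(0) = 0$ as $\lam\to 0$, integrating gives $\Lam(\lam) \le \lam^2/(8\rho)$, i.e. $\int e^{\lam f}\, dm \le e^{\lam^2/(8\rho)}$ for all $\lam \in \real$. A Chernoff bound then finishes it: for $t>0$, minimizing $e^{-\lam t + \lam^2/(8\rho)}$ over $\lam>0$ at $\lam = 4\rho t$ yields $m(f > t) \le e^{-2\rho t^2}$; applying the same to $-f$ and taking a union bound gives the stated $2e^{-2\rho t^2}$ after undoing the centering.

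For the infinite product case I would invoke the sub-additivity of entropy, which makes the log-Sobolev inequality tensorize with the same constant $\rho$, so that the product measure on $\real^{\integer^n}$ satisfies the LSI with constant $\rho$ whenever each coordinate does (and in any case this is the hypothesis in the infinite-dimensional statement). The Herbst argument above then applies verbatim, with a constant independent of the number of coordinates involved, to any $\ell^2$-Lipschitz function of finitely many coordinates; a general $\ell^2$-Lipschitz $f$ is handled by approximating it with the conditional expectations $f_N = \E[\,f \mid X_j : |j|\le N\,]$, which remain $1$-Lipschitz in the $\ell^2$-metric, and passing to the limit via martingale convergence and Fatou's lemma.

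The only genuinely delicate point is the reduction step: one must check that a merely $\ell^2$-Lipschitz $f$ — possibly unbounded, and in the infinite-dimensional case defined on a product space — can be approximated by smooth (cylinder) functions for which the pointwise bound $|\grad f|\le 1$ is preserved, and that the limiting operations in the exponential-moment and tail estimates are legitimate. In finite dimensions this is routine mollification; in infinite dimensions it is exactly the cylinder-approximation scheme just described, and it is there that the dimension-free nature of the LSI constant is essential.
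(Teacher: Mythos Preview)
Your argument is correct: this is the classical Herbst argument, and the constants check out (with the paper's normalization of the LSI, the exponential moment bound $\int e^{\lam f}\,dm \le e^{\lam^2/(8\rho)}$ optimizes at $\lam = 4\rho t$ to give $e^{-2\rho t^2}$). The paper does not supply its own proof of this theorem but simply refers to Ledoux \cite{ledoux}; the Herbst argument you outline is exactly the standard proof found there, so there is nothing further to compare.
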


We list next as lemmas two important properties of the log-Sobolev inequalities which are used  to construct large classes of measures with LSI. The first  is the so-called tensorization property, which says that LSI is preserved under taking product measures; we refer again to  \cite{ledoux} for more explanations.

 \begin{lem}\label{lem: tensorization}
Fix  $N \in \mathbb{N}$ and assume that,  for $1\leq j \leq N$,  the measures  $m_j$ on $\real^{n_j}$ have LSI with constants $\rho_j$.  Then the product measure $m = m_1 \otimes \cdots \otimes m_N$ on the product space $\real^{n_1+\cdots n_N}$ also has LSI with constant $\rho= \min \rho_j$.
 \end{lem}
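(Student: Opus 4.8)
The plan is to prove the two-factor case $N=2$ and deduce the general statement by a one-line induction: writing $m=(m_1\otimes\cdots\otimes m_{N-1})\otimes m_N$, the inductive hypothesis gives LSI for $m_1\otimes\cdots\otimes m_{N-1}$ with constant $\min_{j<N}\rho_j$, and the two-factor result applied to this measure together with $m_N$ yields LSI for $m$ with constant $\min(\min_{j<N}\rho_j,\rho_N)=\min_j\rho_j$. So fix $m=m_1\otimes m_2$ on $\real^{n_1}\times\real^{n_2}$, write points as $(x,y)$, and for bounded nonnegative $g$ on the product set $\mathrm{Ent}_m(g):=\int g\log g\,dm-(\int g\,dm)\log(\int g\,dm)$, so that the LSI for a measure $\mu$ with constant $\rho$ reads $\mathrm{Ent}_\mu(f^2)\le\tfrac1{2\rho}\int|\nabla f|^2\,d\mu$ for locally $C^1$ functions $f$ with finite right-hand side.

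The key point is the \emph{subadditivity of entropy}: for such $g$,
\[
\mathrm{Ent}_m(g)\ \le\ \int_{\real^{n_2}}\mathrm{Ent}_{m_1}(g(\cdot,y))\,dm_2(y)\ +\ \int_{\real^{n_1}}\mathrm{Ent}_{m_2}(g(x,\cdot))\,dm_1(x),
\]
where the inner entropies are taken in one variable with the other frozen. This follows from the exact product chain rule $\mathrm{Ent}_m(g)=\int\mathrm{Ent}_{m_1}(g(\cdot,y))\,dm_2(y)+\mathrm{Ent}_{m_2}(G)$, with $G(y):=\int g(x,y)\,dm_1(x)$, which is a direct computation from Fubini and the definition; combining it with Jensen's inequality gives the claim, since $G$ is the $m_1$-average of the functions $y\mapsto g(x,y)$ and $h\mapsto\mathrm{Ent}_{m_2}(h)$ is convex (being the relative entropy of $h\,dm_2$ with respect to $m_2$), so that $\mathrm{Ent}_{m_2}(G)\le\int\mathrm{Ent}_{m_2}(g(x,\cdot))\,dm_1(x)$.

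To conclude, let $f$ be locally $C^1$ on $\real^{n_1+n_2}$ with $\int|\nabla f|^2\,dm<\infty$ (the general case reduces to bounded, compactly supported $f$ by truncation and mollification), and apply the displayed inequality with $g=f^2$. For $m_2$-a.e. $y$ the slice $x\mapsto f(x,y)$ is $C^1$, so the LSI for $m_1$ gives $\mathrm{Ent}_{m_1}(f(\cdot,y)^2)\le\tfrac1{2\rho_1}\int|\nabla_x f(x,y)|^2\,dm_1(x)$; integrating in $y$ and applying Fubini bounds the first term by $\tfrac1{2\rho_1}\int|\nabla_x f|^2\,dm$, and symmetrically the second by $\tfrac1{2\rho_2}\int|\nabla_y f|^2\,dm$. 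Since $|\nabla f|^2=|\nabla_x f|^2+|\nabla_y f|^2$ and $\max(\tfrac1{2\rho_1},\tfrac1{2\rho_2})=\tfrac1{2\min(\rho_1,\rho_2)}$, adding these yields $\mathrm{Ent}_m(f^2)\le\tfrac1{2\min(\rho_1,\rho_2)}\int|\nabla f|^2\,dm$, i.e. the LSI for $m$ with constant $\min(\rho_1,\rho_2)$. This finishes the two-factor case and hence the proof.

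There is no genuine obstacle here: the one substantive input is the subadditivity of entropy, which rests only on the exact product chain rule and convexity of the entropy functional; the rest is bookkeeping and a routine approximation to legitimize the test-function class. I would also remark that the same argument, via monotone convergence along cylinder functions, extends the statement to countable products such as $\real^{\integer^n}$, which is the form invoked in Theorem~\ref{concentration0}.
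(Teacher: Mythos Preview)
Your proof is correct and is the standard tensorization argument via subadditivity of entropy. Note that the paper itself does not prove this lemma at all; it simply states it and refers to Ledoux \cite{ledoux}, so there is no in-paper proof to compare against. Your argument is essentially the one found in that reference.
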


The second is that the pushforward $\phi_\# m$ of a measure $m$ with LSI under a Lipschitz transformation $\phi$ still has a log-Sobolev inequality;
recall that $\phi_\# m(E) := m(\phi^{-1}E)$ for any $E \in \mathcal B(\real^n)$. 

\begin{lem}\label{pushforward}
Let $m$ be a measure on $\real^n$ with LSI and $\phi: \real^n \to \real^n$  a (globally) Lipschitz and locally continuously differentiable function. 
Then the pushforward measure  $\phi_{\#}m$ of  $m$ by $\phi$ also has log-Sobolev inequality.
\end{lem}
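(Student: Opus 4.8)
The plan is to deduce the log-Sobolev inequality for $\phi_\#m$ from that of $m$ by a change of variables, using the chain rule together with the global Lipschitz bound on $D\phi$ to absorb the extra cost coming from the composition. Write $L$ for the Lipschitz constant of $\phi$ and $\rho$ for the LSI constant of $m$; the target is to show that $\phi_\# m$ has LSI with constant $\rho/L^2$.

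First I would fix a locally continuously differentiable $f:\real^n\to\real$ with $\int f^2\, d\phi_\#m=:c>0$, and recall the defining identity $\int g\, d\phi_\#m=\int (g\circ\phi)\, dm$ for nonnegative Borel $g$. Applying this with $g=f^2$ and with $g=f^2\log(f^2/c)$, the left-hand side of the LSI for $\phi_\#m$ becomes exactly the entropy functional for $m$ evaluated at $(f\circ\phi)^2$, namely
\[
\int f^2\log\frac{f^2}{\int f^2\, d\phi_\#m}\, d\phi_\#m
=\int (f\circ\phi)^2\log\frac{(f\circ\phi)^2}{\int (f\circ\phi)^2\, dm}\, dm .
\]
Since $\phi$ is locally $C^1$ and $f$ is locally $C^1$, the composition $f\circ\phi$ is locally $C^1$, so the LSI for $m$ applies and bounds the right-hand side by $\frac{1}{2\rho}\int|\nabla(f\circ\phi)|^2\, dm$.

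Next I would estimate the gradient. By the chain rule $\nabla(f\circ\phi)(x)=D\phi(x)^{\top}\,\nabla f(\phi(x))$, and because $\phi$ is $C^1$ and globally $L$-Lipschitz, the operator norm of $D\phi(x)$ is at most $L$ at every point, so $|\nabla(f\circ\phi)(x)|^2\le L^2\,|\nabla f(\phi(x))|^2$. Integrating against $m$ and changing variables back,
\[
\int|\nabla(f\circ\phi)|^2\, dm\le L^2\int (|\nabla f|^2\circ\phi)\, dm=L^2\int|\nabla f|^2\, d\phi_\#m .
\]
Combining the three displays yields $\int f^2\log(f^2/\int f^2\, d\phi_\#m)\, d\phi_\#m\le \frac{L^2}{2\rho}\int\sum_{j=1}^n|\partial_j f|^2\, d\phi_\#m$, which is precisely the LSI for $\phi_\#m$ with constant $\rho/L^2$. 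The argument transfers verbatim to infinite product spaces such as $\real^{\integer^n}$, using the coordinatewise version of the chain rule.

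This is a soft change-of-variables argument, so I do not anticipate a genuine obstacle; the only point needing care is the admissibility of the test functions. One should verify that $f\circ\phi$ is a legitimate test function for the LSI of $m$, which it is under the stated hypothesis that $\phi$ is locally $C^1$, and, if the LSI of $m$ is assumed only for a dense class of functions (bounded Lipschitz, or $C_c^\infty$), approximate $f$ by truncation and mollification and pass to the limit via Fatou on the entropy side and dominated convergence on the Dirichlet-form side. In the merely Lipschitz (non-$C^1$) case one would in addition invoke Rademacher's theorem and the Poincar\'e inequality implied by LSI to justify the chain rule $m$-almost everywhere, but that refinement is not required here.
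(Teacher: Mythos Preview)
Your argument is correct and matches the paper's own proof essentially line for line: change variables to rewrite the entropy of $f$ with respect to $\phi_\#m$ as the entropy of $f\circ\phi$ with respect to $m$, apply the LSI for $m$, then use the chain rule and the global Lipschitz bound on $D\phi$ to absorb the Jacobian factor and change variables back. The paper records the resulting constant as $\rho/\sup|D\phi|_{\ell^2}^2$, which is exactly your $\rho/L^2$.
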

\begin{proof} 

Let  $f:\real^n \to \real$ be locally continuously differentiable. Writing, for notational convenience, $\tilde{f}: = f \circ \phi$, we find
 \begin{align*}
 \int f^2 \log \tfrac{f^2}{\int f^2 d\phi_\#m} d\phi_\#m &= \int \tilde{f}^2 \log \tfrac{\tilde{f}^2}{\int \tilde{f}^2 dm} d m  \leq \frac{1}{2\rho} \int |D\tilde{f}|^2 d m \\
& \leq \frac{\sup |D\phi|_{\ell^2}^2}{2\rho} \int |Df\circ \phi|^2 d m = \frac{\sup |D\phi|_{\ell^2}^2}{2\rho} \int |Df|^2 d \phi_{\#}m.
 \end{align*}
 \end{proof}
 
Combining Lemma~\ref{lem: tensorization}  and Lemma~\ref{pushforward}, it is possible to construct large classes of measures with LSI starting from some simple examples of such measures, like the uniform and Gaussian measures on the unit interval  and $\real$ respectively, which are the prototypical examples of measures with log-Sobolev inequality; see  \cite{ledoux}. 
 
 \medskip

For example, the distribution of $X  = (X_1,\dots,X_n) \in \real^n$, with  the $X_i$'s i.i.d. Gaussians with mean $0$ and variance $1$,  has LSI with the same constant $\rho$ as the distribution of the $X_i$'s.  Moreover, given a $n \times n$ matrix $\sigma$,  the random variable  $Y = \sigma X$  is also Gaussian with covariance matrix $\Sigma=(\E Y_iY_j)_{1\leq i,j \leq N}$ and its distribution  $m_{\sigma}$  on  $\real^n$ has LSI, since
 \begin{equation}
 \int f^2 \log \tfrac{f^2}{\int f^2 dm_\sigma} dm_\sigma \leq \frac{|\Sigma|_{\ell^2}^2}{2\rho} \int |Df|^2 d m_\sigma.
 \end{equation} 

\medskip
     
Based on the discussion above  we can construct now some random fields on $\integer^{d-1}$ with LSI.  
\medskip

Let  $X = (X_j)_{j \in \integer^{d-1}}$ be i.i.d. and such that $\textup{law}(X_0)$ has log-Sobolev inequality on $\real$.  Then, by Lemma~\ref{lem: tensorization},  the law, $m$, of $X$ on $\real^{\integer^{d-1}}$ has LSI. 
  
  \medskip

In the Gaussian case we can accommodate a more general mixing condition. Let $(Y_j)_{j \in \integer^{d-1}}$ be Gaussian random variables and  $ Y'= \phi(Y)$, where $\phi$ is Lipschitz with respect to $| \cdot |_{\ell^2}$ and $|\phi |_{\ell^\infty} \leq 1$.  If the covariance $\Sigma$  of  the  $Y_j$'s, with elements  $\Sigma_{ij} = \E Y_iY_j$,  has a finite operator norm as  a map  from ${\ell^2}$ to  $\ell^2$, then the law, $m$, of $Y'$ has LSI on $\real^{\integer^{d-1}}$. The finiteness of  $|\Sigma |_{\ell^2}$  follows from the summability in $\integer^{d-1}$ of the covariances,  since
$$ |\Sigma|_{\ell^2}^2  \leq  (\max_i \sum_j |\Sigma_{ij}| ).$$

\smallskip

We note that from the above example we may infer, at least heuristically, that the assumption of a log-Sobolev inequality amounts to some regularity of the $1$-dimensional distributions combined with a summable decay of correlations.

  \smallskip

\subsection*{The assumptions} We present next our assumptions. For  the convenience of the reader we divide them into separate groups.

 \smallskip
  
{\it The domain.} We assume that 
\begin{equation}\label{domain1}
 U \subset \real^d  \  \text{ has \  $C^2$-boundary},  
 \end{equation}
and 
\begin{equation}\label{K}
K \  \text{ is a compact subset of $U$}.
\end{equation}
The regularity of $\partial U$ means, in particular,  that, for each $x \in \partial \Omega$, there exists  an inward normal direction $\nu_x$, $r_0>0$ and $M>0$ such that, for all $0<r<r_0$ and all $x \in \partial U$,
\begin{equation}\label{domain2}
 d((x+\partial P_\nu) \cap \overline{B_r}, \partial U \cap \overline{B_r}) \leq Mr^2.
 \end{equation}
 
\smallskip

{\it The nonlinearity.} 
We assume that $F : \mathcal{M}^d \to \real$  
is uniformly elliptic, that is there exist $\Lambda>\lambda>0$ such that
\begin{equation}\label{op1}
 \lambda \text{tr}( N) \leq F(M)-F(M+N) \leq \Lambda \text{tr}( N) \ \hbox{ for all } M,N \in \mathcal{M}^d \ \hbox{ such that } N \geq 0,
\end{equation}
and homogeneous, that is 
\begin{equation}\label{op2}
F(0)=0.
\end{equation}

\smallskip

Typical examples of $F$'s satisfying \eqref{op1} and \eqref{op2} are  the convex Hamilton-Jacobi-Bellman  and the non convex Hamilton-Jacobi-Isaacs nonlinearities 
$$ F(M) = \sup_{\alpha \in \mathcal{A}}[ - \text{tr}(A^\alpha M)] \ \hbox{ with  } \ \lambda I \leq A^\alpha \leq \Lambda I \ \text{for all}  \ \alpha \in \mathcal{A} $$
and 
$$ F(M) = \inf_{\beta \in \mathcal{B}}\sup_{\alpha \in \mathcal{A}} [-\text{tr}(A^{\alpha\beta}M)] \ \hbox{ where } \ \lambda I \leq A^{\alpha\beta} \leq \Lambda I \ \text{ for all} \  (\alpha,\beta) \in \mathcal{A}\times \mathcal{B},$$
which arise respectively in  the theories  of stochastic control of diffusion processes  and  zero-sum stochastic differential games in both cases  without running cost.
 
\medskip

In fact all  $F$'s  satisfying  \eqref{op1} and \eqref{op2} have a max-min representation as above and, hence, are one-positively homogeneous, that is  
\begin{equation}\label{op3}
F(tM) = tF(M) \  \text{ for all} \  t>0 \ \text{ and } \ M \in \mathcal{M}^d.
\end{equation}

\medskip

{\it The boundary data for the Neumann problem.}  As far as the deterministic (non-random) boundary condition on $K$ is concerned we assume what is sufficient for  the Neumann problems \eqref{main00:N} and \eqref{mainN} to have unique solutions, that is
\begin{equation}\label{f}
f:K\to \real   \ \text{ is bounded and continuous.} 
\end{equation}

\medskip

{\it The random media and data.}  We make different assumptions depending on whether the boundary data is the restriction of a function defined on the whole space or is obtained by lifting from a hyperplane.  To avoid  repetition we state some properties for functions defined on $\real^n$ with $n=d$ in the former   and $n=d-1$ in the latter cases.

\medskip

We begin with the boundary data  $\psi : \real^n \times \Omega \to \real$ arising in the cell problems. We assume that 
\begin{equation}\label{takis-1}
\psi \ \text{  is  measurable with respect to} \   \mathcal{B}(\real^n) \otimes \mathcal{F},
\end{equation}
stationary in $(y,\omega)$ with respect to the group action $(\tau_y )_{y \in \real^d}$, that is, for all $y \in \real^n$,
  \begin{equation}\label{eqn: stationarity psi}
  \psi(y,\omega) = \psi(0,\tau_y\omega),
  \end{equation}
and bounded uniformly in $\omega \in \Omega$, that is there exists $C>0$ such that 
\begin{equation}\label{psi bdd}
|\psi(y,\omega)| \leq C \ \text{ for all} \  y \in \real^n \ \text{and } \omega \in \Omega. 
\end{equation}

\medskip

When $n=d$, which is  the case of the cell problem \eqref{eqn: cell},  we also assume that $\psi$ is Lipschitz continuous uniformly in $\omega$,  that is there exists  $C>0$ such that,   for all $y,w \in  \real^n$ and $\omega \in \Omega$, 
  \begin{equation}\label{eqn: continuity psi}
  |\psi(y,\omega)-\psi(w,\omega)| \leq C|y-w|;
   \end{equation}
we remark  that, with only minor changes, we may assume that $\psi$ is  H\"{o}lder or even just uniformly continuous in $y$. We leave it up to the interested reader to fill in the details.
\medskip

Furthermore, as mentioned in the introduction, we need some mixing assumption on the random field $\psi$.  We assume that 
\begin{equation}\label{hyp: mixing}
 \psi(\cdot,\cdot)  \  \text{ is \  $\phi$-mixing 
 with a rate $\phi$ such that} \    \rho = \int_0^\infty \phi(r)^{1/2} r^{d-2} dr <+\infty;
 \end{equation}  
 the value of this integral, $\rho$, will be considered a universal constant in the following sections.

 \medskip
 
 In the case that  the boundary data is assigned  by lifting from a hyperplane, that is  when $n = d-1$,  the condition on the boundary data is based on LSI.  
 
 \medskip
 
 We assume that there  exists  a collection of identically distributed $(X_j)_{j \in \integer^{d-1}}$ such that, if  $X=(X_j)_{j \in \integer^{d-1}}$, then 
 \begin{equation}\label{takis-2}
\text{ the measure $\text{law}(X)$ on $\real^{\integer^{d-1}}$ has LSI with constant $\rho$}
\end{equation}
 and
 \begin{equation} \label{psi3}
 \psi(y,\omega) =  X_j(\omega) \ \hbox{ when }  y \in j +[0,1)^{d-1};
 \end{equation}
this is just the random checkerboard boundary data. Note that in this case we only have $\integer^{d-1}$ stationarity. 

\medskip

Next we state the conditions on the random boundary data $g: \partial U \times \real^d \times \Omega \to \real$. We assume that
\begin{equation}\label{takis-3}
\text{ $g$ is measurable with respect to $\mathcal{B}(\real^d) \otimes \mathcal{B}(\real^d) \otimes \mathcal{F},$}
\end{equation}
and bounded and Lipschitz continuous uniformly in $\omega \in \Omega$, that is there exists  $C>0$ such that,   for all $(x,y),(z,w) \in \partial U \times \real^d$ and $\omega \in \Omega$, 
  \begin{equation}\label{eqn: continuity g}
   |g(x,y,\omega) | \leq C \ \hbox{ and } \ |g(x,y,\omega)-g(z,w,\omega)| \leq C(|x-z|+|y-w|) .
   \end{equation}
We remark again that, with only minor changes, we may assume that $g$ is  H\"{o}lder or even just uniformly continuous in $(x,y)$; we leave it up to the interested reader to fill in the details. 
\medskip

Finally we assume that, for each fixed $x_0 \in \partial U,$
\begin{equation}\label{stationary g}
g(x_0, \cdot, \cdot)  \ \text{ is stationary in $(y, \omega)$} 
\end{equation}
and 
\begin{equation}\label{takis-4}
\text{ $g(x_0, \cdot, \cdot)$  satisfies \eqref{hyp: mixing} with constants  uniform in $x_0.$}
\end{equation}

\subsection*{Some pde background}
We recall and prove  some background results from the theory of fully nonlinear uniformly elliptic equations that we will need for the proofs in the sequel. First we discuss \eqref{eqn: cell}  and  then \eqref{eqn: neumann cell0}. Then we introduce a selection criterion to identify uniquely a  particular solutions to 
 \eqref{eqn: cell}  and  then \eqref{eqn: neumann cell0} when $\psi$ is discontinuous.
 
 \smallskip

In what follows it is convenient to use  the extremal Pucci operators $\mathcal{P}^{\pm}_{\lambda,\Lambda}$ associated with $F$ defined by 
\begin{equation}\label{Pucci}
\mathcal{P}^+_{\lambda,\Lambda}(N):= \Lambda \text{tr}(N_+) - \lambda \text{tr}(N_-) \ \hbox{ and } \ \mathcal{P}^-_{\lambda,\Lambda}(N):= \lambda \text{tr}(N_+) - \Lambda \text{tr}(N_-);
\end{equation}
it follows from \eqref{op1} and \eqref{op2} that,  
for all $M,N\in \mathcal{M}^d$ ,
 \begin{equation}\label{eqn: ellipticity w/ pucci}
 -\mathcal{P}^+_{\lambda,\Lambda}(M-N)\leq F(M)-F(N) \leq -\mathcal{P}^-_{\lambda,\Lambda}(M-N).
 \end{equation}

\medskip

{\it The Dirichlet Problem   \eqref{eqn: cell}}.
The first two results are an oscillation decay lemma, which is a consequence of the interior Lipschitz estimates for solutions to elliptic pdes, and a boundary Lipshitz estimate;  for the proofs we refer to 
Caffarelli and Cabre \cite{CC95}.

\begin{lem}\label{lem:osc_decay}\textup{(Oscillation decay/Interior Lipschitz estimate)}
Assume \eqref{op1}, \eqref{op2}, \eqref{psi bdd} and \eqref{eqn: continuity g}   and let $v$ be the unique bounded solution of \eqref{eqn: cell}. Then, for $R>1$ and $y, z \in \partial P_{\nu}+R\nu,$ 
$$
\sup_{|y-z|\leq 1}|v(y, \omega) - v(z,\omega)| \leq  CR^{-1}{\rm osc}_{ \partial P_{\nu}} \psi. 
$$
\end{lem}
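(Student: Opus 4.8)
The plan is to reduce the oscillation bound to a standard interior Lipschitz estimate for uniformly elliptic equations, rescaled to the correct length scale. Since $v$ solves $F(D^2v)=0$ in the half-space $P_\nu$ with $|v|\le \osc_{\partial P_\nu}\psi + \text{const}$, the real content is that at height $R\nu$ we are at distance $\sim R$ from the boundary hyperplane, so interior regularity kicks in with a gain of $R^{-1}$.

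First I would normalize: after subtracting the constant $\inf_{\partial P_\nu}\psi$ (which changes neither $F(D^2v)=0$, by \eqref{op2} and \eqref{op3}, nor the oscillation) and rotating so that $\nu = e_d$, we may assume $0\le \psi \le \osc_{\partial P_\nu}\psi =: \omega_0$ on $\{x_d=0\}$, and hence, by the maximum principle, $0\le v\le \omega_0$ on all of $P_{e_d}$. Fix a point $p = z_0 + R e_d$ with $z_0$ in the hyperplane; then $B_{R/2}(p)\subset P_{e_d}$. Apply the interior Lipschitz (gradient) estimate for solutions of $F(D^2v)=0$ — which holds because $F$ is uniformly elliptic, so $v$ is an $\mathcal{P}^\pm_{\lambda,\Lambda}$ sub/supersolution by \eqref{eqn: ellipticity w/ pucci}, and $C^{1,\alpha}$ interior estimates from Caffarelli--Cabr\'e apply — on the ball $B_{R/2}(p)$:
\[
\|Dv\|_{L^\infty(B_{R/4}(p))} \le \frac{C}{R}\,\|v\|_{L^\infty(B_{R/2}(p))} \le \frac{C}{R}\,\omega_0 .
\]
Finally, for $y,z \in \partial P_\nu + R\nu$ with $|y-z|\le 1$ (and $R>1$), the whole segment $[y,z]$ lies in $B_{R/4}(p)$ for an appropriate choice of base point $p$ on that segment, so integrating $Dv$ along the segment gives $|v(y,\omega)-v(z,\omega)| \le |y-z|\,\|Dv\|_{L^\infty} \le CR^{-1}\omega_0$, which is the claim. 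The measurability in $\omega$ is automatic since each estimate is pointwise in $\omega$ and $v(\cdot,\omega)$ is the unique bounded solution.

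The only slightly delicate point — and the one I would be most careful about — is making sure the interior estimate is applied at the right scale. A naive application of a fixed-radius interior estimate gives no decay; one must use that the distance to the boundary is comparable to $R$ and exploit the scaling $v_R(x) := v(Rx)$, which again solves $F(R^2 D^2 v_R)=0$, i.e. (by \eqref{op3}) $F(D^2 v_R)=0$, on a fixed-size ball around a point at distance $\sim 1$ from $\{x_d=0\}$; the interior Lipschitz estimate on that fixed ball then reads $\|Dv_R\|_{L^\infty}\lesssim \|v_R\|_{L^\infty}\le \omega_0$, and unscaling produces the $R^{-1}$ factor. Everything else — the reduction to $0\le v\le\omega_0$, the choice of ball, the integration along the segment — is routine, so I would not belabor it; a one-paragraph argument invoking \cite{CC95} for the interior $C^{1,\alpha}$ estimate and the Pucci comparison \eqref{eqn: ellipticity w/ pucci} suffices.
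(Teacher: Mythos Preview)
Your proposal is correct and is exactly the argument the paper has in mind: the paper does not give a proof but simply says the lemma ``is a consequence of the interior Lipschitz estimates for solutions to elliptic pdes'' and refers to Caffarelli--Cabr\'e \cite{CC95}. Your write-up (normalize so that $0\le v\le \osc\psi$, use that the point $R\nu$ is at distance $R$ from $\partial P_\nu$, and apply the interior $C^{1,\alpha}$ estimate at scale $R$ via the rescaling $v_R(x)=v(Rx)$ and the homogeneity \eqref{op3}) is precisely that. One minor remark: the interior $C^{1,\alpha}$ estimate you invoke requires that $v$ solve $F(D^2v)=0$ for a uniformly elliptic $F$, not merely that $v$ lie between Pucci sub- and supersolutions (the latter only gives $C^\alpha$); you use this correctly, but the parenthetical justification via \eqref{eqn: ellipticity w/ pucci} is not the reason the estimate holds.
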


\begin{lem}\label{lem:boundary_holder}\textup{(Boundary Lipshitz)}
Assume \eqref{op1} and \eqref{op2}.  If $u \in C(\overline{B_1^+})$ solves   $F(D^2u) = 0$ in $B_1^+$ and $u=h \in C^{0,1}(\partial P_{e_d} \cap B_1),$ then $u \in C^{0,1}(\overline{B_{1/2}^+})$ and there exists $C=C(n,\lambda, \Lambda)>0$ such that 
$$
\|u\|_{C^{0,1}(\overline{B^+_{1/2}})} \leq C ( \sup_{B_1^+}|u| + \|h\|_{C^{0,1}(\partial P_{e_d} \cap B_1)}).
$$

\end{lem}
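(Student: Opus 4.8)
The plan is to reduce the problem to two standard ingredients in the Caffarelli--Cabr\'e theory of viscosity solutions to uniformly elliptic equations: (i) a boundary $C^{1,\alpha}$-type comparison argument using barriers, and (ii) the interior Lipschitz (indeed $C^{1,\alpha}$) estimates together with a scaling/translation argument. First I would normalize: by subtracting the affine extension of the boundary data and using that $F$ is translation-invariant in the sense of \eqref{op1}--\eqref{op2}, one may reduce to the case where $h(0)=0$ and $\|h\|_{C^{0,1}(\partial P_{e_d}\cap B_1)}$ controls the linear growth of $h$ away from the origin. The key barrier observation is that for a boundary point $x_0 \in \partial P_{e_d}\cap B_{1/2}$, the function $w^{\pm}(x) = \pm C\bigl(|x-x_0|\bigr) + (\text{linear terms})$ can be built so that $\mathcal{P}^{-}_{\lambda,\Lambda}(D^2 w^{+}) \geq 0 \geq \mathcal{P}^{+}_{\lambda,\Lambda}(D^2 w^{-})$ in $B_1^+$, $w^{\pm}$ dominates/is dominated by $h$ on the flat boundary, and $w^{\pm}$ dominates/is dominated by $u$ on the spherical part of $\partial B_1^+$ (here one uses $\sup_{B_1^+}|u|$). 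Then \eqref{eqn: ellipticity w/ pucci} and the comparison principle for viscosity solutions give
\[
|u(x) - h(x_0)| \leq C\bigl(\sup_{B_1^+}|u| + \|h\|_{C^{0,1}(\partial P_{e_d}\cap B_1)}\bigr)\,|x - x_0| \quad \text{for } x \in B_1^+.
\]

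With this linear growth control at flat boundary points in hand, I would then upgrade to the full $C^{0,1}$-norm bound on $\overline{B_{1/2}^+}$ by a now-standard iteration. Fix an interior point $x \in B_{1/2}^+$ and let $r = x\cdot e_d = \mathrm{dist}(x,\partial P_{e_d})$. If $r$ is comparable to $1$ there is nothing to prove; otherwise apply the interior Lipschitz (or $C^{1,\alpha}$) estimate of Caffarelli--Cabr\'e on the ball $B_{r/2}(x)\subset B_1^+$ to $v(y) := u(x + (r/2) y) - h(x_0)$, where $x_0$ is the projection of $x$ onto $\partial P_{e_d}$. The rescaled function solves the same equation (using one-homogeneity \eqref{op3}, the equation $F(D^2 u)=0$ is invariant under $u \mapsto u(x_0 + \rho\,\cdot)$ up to no change), and its sup-norm on $B_1$ is bounded by the right-hand side of the displayed estimate above by a factor $\sim r$; hence $|Du(x)| \lesssim r^{-1}\cdot r \cdot(\sup|u| + \|h\|_{C^{0,1}})$, i.e. a bound uniform in $r$. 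Combining the gradient bound at interior points with the boundary linear growth gives Lipschitz continuity up to $\overline{B_{1/2}^+}$ with the stated constant.

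The main obstacle I expect is the construction of the barrier $w^{\pm}$ that simultaneously (a) has the correct Pucci sign in all of $B_1^+$, (b) lies above/below the Lipschitz data $h$ on the entire flat piece $\partial P_{e_d}\cap B_1$ — not just near $x_0$ — and (c) controls $u$ on the curved part of the boundary purely in terms of $\sup_{B_1^+}|u|$. A convenient choice is a combination of a multiple of $x_d$ (which is $F$-harmonic since $F(0)=0$), a Lipschitz-in-the-tangential-variables term, and a term like $A\bigl(|x'-x_0'|^2 - \gamma x_d^2\bigr)$ or $A\,x_d\,|x'-x_0'|$; the Pucci sign is verified by a direct Hessian computation using \eqref{Pucci} and choosing $\gamma$ depending only on $\lambda,\Lambda,d$, and then $A$ large depending on the data norms. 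This is routine but slightly delicate bookkeeping; since the result is quoted verbatim from \cite{CC95}, in the paper one would simply cite it, but the sketch above is how the argument runs. Lemma~\ref{lem:osc_decay} is then an immediate consequence: apply the boundary Lipschitz estimate after rescaling the half-space problem \eqref{eqn: cell} by the factor $R$ around a point at height $R$ on the normal ray, so that the $C^{0,1}$ data norm picks up the factor $R^{-1}$ and produces the decay $R^{-1}\,\mathrm{osc}_{\partial P_\nu}\psi$.
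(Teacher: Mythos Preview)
The paper gives no proof of this lemma; it simply cites Caffarelli--Cabr\'e \cite{CC95}. Your sketch --- barrier comparison at flat boundary points to obtain the linear growth $|u(x)-h(x_0)|\lesssim (\sup|u|+\|h\|_{C^{0,1}})|x-x_0|$, followed by the interior Lipschitz estimate on balls of radius comparable to $\mathrm{dist}(x,\partial P_{e_d})$ --- is exactly the standard argument found there, and your later remark that a barrier of the form $A(|x'-x_0'|^2-\gamma x_d^2)$ plus linear terms does the job is correct (compare the barrier in Lemma~\ref{lem: localization}).
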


The next  lemma is an important tool for the proofs later in the paper, because it quantifies the error introduced by solving the Dirichlet problem in a half space with a cut-off applied to the boundary data.

\begin{lem}\label{lem: localization} \textup{(Localization)} Fix $\nu \in S^{d-1}$, let $M>0$ and $L>1$ and suppose that $v$ is a solution of
\begin{equation}\label{eqn: localization}
\left\{
\begin{array}{lll}
-\mathcal{P}^+_{\lambda,\Lambda}(D^2v) \leq 0  & \hbox{ in } & P_{\nu} \cap \textup{Cyl}_{\nu,L},\\ [1mm]
v \leq 0 & \hbox{ on } & \partial P_{\nu}\cap \textup{Cyl}_{\nu,L}, \\ [1mm]
v\leq M & \hbox{ on } & \partial \textup{Cyl}_{\nu, L} \cap \overline{P_{\nu}}.
\end{array}\right.
\end{equation}
Then 
$$ v \leq  2\tfrac{\Lambda}{\lambda}dML^{-1} \ \hbox{ in } \ \textup{Cyl}_{\nu,1}.$$
\end{lem}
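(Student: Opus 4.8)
The strategy is to construct an explicit supersolution (barrier) $w$ of the Pucci inequality $-\mathcal P^+_{\lambda,\Lambda}(D^2 w)\ge 0$ on the cylinder $P_\nu\cap\textup{Cyl}_{\nu,L}$ that dominates $v$ on the parabolic-type boundary $\partial(P_\nu\cap\textup{Cyl}_{\nu,L})$, and then invoke the comparison principle for the Pucci operator. After a rotation we may assume $\nu=e_d$, write $t=x\cdot e_d$ for the normal coordinate and $x'$ for the tangential coordinates, so the domain is $\{0<t<L,\ |x'|<L\}$. The barrier should vanish (or be nonpositive) on the lateral part $\{t=0\}$ where $v\le 0$, and should be at least $M$ on the ``far'' part of the boundary, namely on $\{t=L\}$ and on the side $\{|x'|=L\}$, while being small — of size $O(ML^{-1})$ — on $\textup{Cyl}_{\nu,1}$.

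The natural candidate is an affine-plus-quadratic function of the form
\[
w(x) = \frac{M}{L}\,t + \alpha\,\frac{M}{L^2}\bigl(|x'|^2 - c\,t^2\bigr)
\]
for suitable constants $\alpha,c$ depending only on $\Lambda/\lambda$ and $d$; the linear term $\tfrac{M}{L}t$ already handles the bottom face and gives the desired $O(ML^{-1})$ bound near $\{t\sim 1\}$, while the quadratic correction is there to reach height $M$ on $\{|x'|=L\}$ and on $\{t=L\}$ and to make $w$ a supersolution. Computing, $D^2 w = \tfrac{\alpha M}{L^2}\,\mathrm{diag}(2,\dots,2,-2c)$, so $\mathcal P^+_{\lambda,\Lambda}(D^2 w) = \tfrac{\alpha M}{L^2}\bigl(2\Lambda(d-1) - 2\lambda c\bigr)$, which is $\le 0$ — i.e. $-\mathcal P^+_{\lambda,\Lambda}(D^2w)\ge 0$ — precisely when $c\ge \tfrac{\Lambda}{\lambda}(d-1)$; choosing $c=\tfrac{\Lambda}{\lambda}(d-1)$ (or a convenient rounding) makes $w$ a supersolution. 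On $\{t=0\}$ we have $w=\tfrac{\alpha M}{L^2}|x'|^2\ge 0\ge v$. On $\{|x'|=L\}$, $w\ge \tfrac{M}{L}t + \alpha M - \tfrac{\alpha c M}{L^2}t^2$; since $0\le t\le L$ the negative term is at most $\alpha c M$, so taking $\alpha$ so that $\alpha(1-c)$... — more simply, one picks $\alpha$ large enough (depending only on $c$, hence on $\Lambda/\lambda,d$) that $w\ge M$ there, and similarly on $\{t=L\}$ where $w\ge M + \alpha M(1 - c) \cdot(\text{something})$; the bookkeeping is routine and only fixes $\alpha\sim \Lambda/\lambda$. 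Then $v\le w$ on all of $P_\nu\cap\textup{Cyl}_{\nu,L}$ by comparison, and on $\textup{Cyl}_{\nu,1}$ we have $t\le 1$, $|x'|\le 1$, so
\[
w \le \frac{M}{L} + \frac{\alpha M}{L^2} \lesssim \frac{\Lambda}{\lambda}\,d\,\frac{M}{L},
\]
which gives the claimed bound $v\le 2\tfrac{\Lambda}{\lambda}dML^{-1}$ after absorbing the explicit constants (and using $L>1$ so the $L^{-2}$ term is dominated by the $L^{-1}$ term).

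The only real subtlety — and the step I would be most careful about — is the interaction between the quadratic term and the linear term on the lateral face $\{|x'|=L\}$ and top face $\{t=L\}$: one must verify the constants $\alpha,c$ can be chosen simultaneously so that $w$ is a supersolution \emph{and} $w\ge M$ on both ``far'' faces \emph{and} $w\le 0$ (or $\ge v$) on the bottom, all with $\alpha,c$ depending only on $\Lambda/\lambda$ and $d$ and not on $L$ or $M$; the constraint $c\ge \tfrac{\Lambda}{\lambda}(d-1)$ from the supersolution condition competes with wanting $c$ small so that $-\tfrac{\alpha c M}{L^2}t^2$ doesn't spoil $w\ge M$, which is why one needs $\alpha$ correspondingly large. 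Everything else (rotation reduction, the comparison principle for Pucci operators from Caffarelli–Cabré \cite{CC95}, and the final arithmetic) is standard. A cleaner alternative, if the quadratic barrier's constants prove annoying, is to use a radial barrier $w(x)=A\tfrac{M}{L}\bigl(\Phi(|x-x_0|) - \Phi(L)\bigr)$ built from the fundamental solution $\Phi(s)=s^{-\gamma}$ of the Pucci operator (with $\gamma=\gamma(\lambda,\Lambda,d)>0$) centered at a point $x_0$ just below the bottom face, which automatically handles the geometry of the cylinder; but the polynomial barrier above is the most transparent and I would present that one.
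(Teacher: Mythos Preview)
Your approach is exactly the paper's: rotate to $\nu=e_d$, build a quadratic-plus-linear barrier, and invoke comparison. However, your specific parameterization has a genuine problem in the step you flagged as ``routine bookkeeping.'' With $w(x)=\tfrac{M}{L}t+\alpha\tfrac{M}{L^2}(|x'|^2-ct^2)$ and $c=\tfrac{\Lambda}{\lambda}(d-1)\ge 1$, evaluate at the point $x'=0$, $t=L$ on the top face: you get $w=M-\alpha cM<M$ for every $\alpha>0$, and making $\alpha$ larger makes this \emph{worse}. So the claim ``one picks $\alpha$ large enough that $w\ge M$ there'' is false; the free parameter $\alpha$ alone cannot repair the top-face boundary condition once the linear coefficient is fixed at $M/L$.

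The fix, which is what the paper does, is to enlarge the \emph{linear} coefficient rather than $\alpha$: take
\[
\psi(x)=ML^{-2}\bigl(|x'|^2-2\tfrac{\Lambda}{\lambda}(d-1)x_d^2\bigr)+(2\tfrac{\Lambda}{\lambda}(d-1)+1)ML^{-1}x_d,
\]
i.e.\ $\alpha=1$, $c=2\tfrac{\Lambda}{\lambda}(d-1)$, and linear coefficient $(c+1)ML^{-1}$. Then on $\{x_d=L\}$ at $x'=0$ one gets exactly $\psi=M$, on $\{|x'|=L\}$ one checks $\psi\ge M$ for all $0\le x_d\le L$, the supersolution condition $-\mathcal P^+_{\lambda,\Lambda}(D^2\psi)\ge 0$ still holds, and on $\textup{Cyl}_1$ one has $\psi\le (2\tfrac{\Lambda}{\lambda}(d-1)+1)ML^{-1}+ML^{-2}\le 2\tfrac{\Lambda}{\lambda}dML^{-1}$. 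Everything else in your outline is correct.
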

\begin{proof}
Since the Pucci maximal operators are rotation invariant, without loss of generality, we assume that $\nu=e_d$ and, for notational simplicity, write $\textup{Cyl}_L$ for $\textup{Cyl}_{e_d, L}$.  
Consider the  barrier $\psi$
$$ \psi(x) := ML^{-2}(|x'|^2-2\tfrac{\Lambda}{\lambda}(d-1)x_d^2)+(2\tfrac{\Lambda}{\lambda}(d-1)+1)ML^{-1}x_d.$$
It is straightforward to check that $\psi \geq v$ on $\partial \textup{Cyl}_L$ and that $\psi$ is a smooth supersolution of $-\mathcal{P}^+_{\lambda,\Lambda}(D^2\psi) \geq 0 $ in $P_{e_d}$.  From the comparison  of viscosity solution  we have $v \leq \psi$ in $\textup{Cyl}_L$ and the claim follows, since 
$$ \psi \leq 2(\tfrac{\Lambda}{\lambda}(d-1)+1)ML^{-1} \ \hbox{ in } \ \textup{Cyl}_1.$$
\end{proof}

Next we recall a particular case of Theorem 1 from Armstrong, Sirakov and Smart \cite{ASS12}  which yields the existence of singular solutions to nonlinear uniformly elliptic equations.  
\begin{thm}\label{thm:fundamental}
Assume \eqref{op1} and \eqref{op2}. For any $\gamma \in (-1,1)$, there exists a unique constant\\  $\beta =\beta(d,\lambda,\Lambda,\gamma)>\max\{\tfrac{\lambda}{\Lambda}(d-1)-1,0\}$ such that the problem
\begin{equation}\label{eqn: barrier supersoln}
\left\{
\begin{array}{lll}
-\mathcal{P}^+_{\lambda,\Lambda}(D^2\Phi) = 0 & \hbox{ in } & \textup{Cone}_\gamma :=  \{x\in \real^d : x_d > \gamma|x|\}, \\[1mm]
\Phi = 0 & \hbox{ on } & \partial \textup{Cone}_\gamma\setminus\{0\},
\end{array}\right.
\end{equation}
has a unique positive solution $\Phi \in C(\overline{\textup{Cone}}_\gamma\setminus\{0\})$ which is $-\beta$ homogeneous, that is
$ \Phi(x) = |x|^{-\beta}\Phi(x/|x|),$
and satisfies  $\max_{\partial B_1 \cap K_\gamma} \Phi = 1$. 
\end{thm}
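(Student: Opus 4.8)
The statement is a direct citation from Armstrong, Sirakov and Smart \cite{ASS12}, so the role of a proof here is mainly to indicate how the cited result specializes to the present setting; I would not reprove it from scratch. The plan is to recall the structure of the argument in \cite{ASS12}, which constructs self-similar (homogeneous) singular solutions to Pucci-type extremal equations in cones via a reduction to an eigenvalue problem on the spherical cap $\textup{Cone}_\gamma \cap \partial B_1$. Concretely, writing $\Phi(x) = |x|^{-\beta}\phi(x/|x|)$ in polar coordinates, the equation $-\mathcal P^+_{\lambda,\Lambda}(D^2\Phi) = 0$ transforms into a degenerate fully nonlinear elliptic equation for $\phi$ on the spherical cap, of the form $G_\beta(\phi, D\phi, D^2\phi) = 0$ with the Dirichlet condition $\phi = 0$ on the relative boundary of the cap and the normalization $\max \phi = 1$. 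The exponent $\beta$ plays the role of a nonlinear principal eigenvalue: one shows that for each $\beta$ there is at most one normalized positive solution, and that there is a unique value of $\beta$ for which such a solution exists, by a monotonicity/continuity argument in $\beta$ (larger $\beta$ makes the operator "more subsolution-like," so the admissible eigenvalue is pinned down uniquely).

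The key steps, in order, are: (i) record the polar-coordinate change of variables and verify that homogeneous functions of degree $-\beta$ solving the extremal equation correspond to solutions of the reduced spherical problem; (ii) invoke the existence and uniqueness of the nonlinear principal eigenvalue $\beta = \beta(d,\lambda,\Lambda,\gamma)$ and the associated positive eigenfunction from \cite[Theorem~1]{ASS12}, together with its continuity and monotonicity in the aperture parameter $\gamma$; (iii) establish the lower bound $\beta > \max\{\tfrac{\lambda}{\Lambda}(d-1)-1, 0\}$ — positivity is immediate since the solution must be singular at the origin to satisfy homogeneity of negative degree, and the quantitative bound $\tfrac{\lambda}{\Lambda}(d-1)-1$ comes from comparing with the explicit radial subsolution $|x|^{-(\tfrac{\lambda}{\Lambda}(d-1)-1)}$ (the analogue of the harmonic fundamental solution exponent $d-2$, corrected by the ellipticity ratio) on the half-space cone $\gamma = 0$ and using that shrinking the cone only increases the exponent; (iv) transfer regularity: $\phi \in C(\overline{\text{cap}})$ up to the relative boundary (away from $\{0\}$) by boundary Hölder/Lipschitz estimates as in Lemma~\ref{lem:boundary_holder}, which gives $\Phi \in C(\overline{\textup{Cone}}_\gamma \setminus \{0\})$.

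I expect the main obstacle — to the extent there is one, given that the result is cited — to be step (ii): the existence half of the nonlinear eigenvalue problem. Uniqueness of a normalized positive solution for fixed $\beta$ follows from the strong maximum principle and a standard sliding/scaling comparison, but existence requires producing the correct $\beta$, which in \cite{ASS12} is done by a nontrivial degree-theoretic or Perron-type construction on the degenerate spherical operator combined with a careful analysis of how solutions degenerate as $\beta$ crosses the threshold. Since we are quoting this wholesale, the only real work on our side is the bookkeeping in steps (i), (iii) and (iv) — in particular checking that the normalization $\max_{\partial B_1 \cap \textup{Cone}_\gamma}\Phi = 1$ matches the normalization in the cited theorem and that the degree of homogeneity is recorded with the correct sign convention.
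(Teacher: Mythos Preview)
Your proposal is correct and matches the paper's treatment: the paper does not prove this theorem at all but simply cites it as ``a particular case of Theorem~1 from Armstrong, Sirakov and Smart \cite{ASS12}.'' Your recognition that this is a quoted result, together with your brief outline of the spherical eigenvalue reduction from \cite{ASS12}, is appropriate context; no further argument is needed here.
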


When $\gamma = 0$, that is in the half space case,  Leoni \cite{leoni12} shows that
\begin{equation}\label{eqn: beta}
 \frac{\lambda}{\Lambda}(d-1) \geq \beta \geq \frac{\lambda}{\Lambda}d-1;
 \end{equation}
from now on we will always use $\beta$ to refer to the half-space singular solution exponent for the ellipticity ratio $\Lambda/\lambda$.

\medskip

It will be useful  for later to estimate, in the half-space case, the decay of $\Phi$  in the directions $x'$ which are orthogonal to $e_d$.  
\medskip

We first note that  Lemma \ref{lem:boundary_holder} and a simple covering argument yield  that $\Phi$ is Lipschitz with universal constant in $(B_2 \setminus B_{1/2}) \cap K_0$.
 
 \medskip
  
Next  we show that $\Phi({x}/{|x|}) \sim x_d/{|x|}$ and, therefore,
\begin{equation}\label{eqn: decay in tang}
\Phi(x) = |x|^{-\beta}\Phi(x/|x|) \sim x_d/{|x|^{\beta+1}}.
\end{equation}
The Lipschitz estimate and the fact that $\Phi = 0$ on $\partial K_0$ yield 
$$ \Phi(\frac{x}{|x|}) \leq C \hbox{dist}\left(\frac{x}{|x|}, B_2 \setminus B_{1/2} \cap \partial K_0\right) \leq C \frac{x_d}{|x|}.$$
For the other direction we note that $A= \min_{B_2  \cap \{x_d >1/2\}} \Phi>0$ is universal due to the normalization $\max_{\partial B_1 \cap K_0}\Phi = 1$ and the Harnack inequality. Then, for each $x$, let $x' = x-x_de_d$ and $y = \frac{3}{4}e_d+\frac{x'}{|x|}$,  consider the radially symmetric barrier $v$ satisfying
\begin{equation*}
\left\{
\begin{array}{lll}
-\mathcal{P}^+_{\lambda,\Lambda}(D^2v) = 0 & \hbox{ in } & B_{1/2}(y) \setminus B_{1/4}(y), \\
v = 0 & \hbox{ on } & \partial B_{1/2}(y), \\
v = A & \hbox{ on } & \partial B_{1/4}(y), \\
\end{array}\right.
\end{equation*}
and observe that the  maximum principle yields $\Phi \geq v$ in $B_{1/2}(y) \setminus B_{1/4}(y)$. Meanwhile, an explicit computation shows that 
$$ v(\frac{x}{|x|}) \geq cA \frac{x_d}{|x|},$$
and, hence,
$$ \Phi(\frac{x}{|x|}) \geq v(\frac{x}{|x|}) \gtrsim \frac{x_d}{|x|},$$
which proves both directions of \eqref{eqn: decay in tang}.

\medskip

{\it The Neumann Problem \eqref{eqn: neumann cell0}}.  
The first  result is a localization result analogous to Lemma \ref{lem: localization}.

\begin{lem} \label{localization:N}  \textup{(Localization)} Assume \eqref{op1} and \eqref{op2}, fix  $\nu\in S^{d-1}$ and, for $L>0$, \\  let  $u: \Pi_\nu \to \R$ satisfy
$$
\left\{\begin{array}{ll}
-\mathcal{P}^+_{\lambda,\Lambda}(D^2 u) \leq 0,&\hbox{ in } \    \Pi_\nu\\[1mm]
\partial_{\nu} u  \leq 0 &\hbox{ on } \   \partial  P_\nu \cap   B_L ,\\[1mm]
u\leq 0 &\hbox{ on } \    (\partial P_\nu+\nu) \cap B_L\\[1mm]  
u \leq M &\hbox{ on } \  \Pi_\nu \cap \partial B_L.
\end{array}\right.
$$
 Then
$$
|u| \leq (1+(d-1)\frac{\Lambda}{\lambda})\frac{M}{L^2} \ \ \text{on}  \ \ \Pi_\nu \cap \{ |x'| \leq 1\} 
$$
\end{lem}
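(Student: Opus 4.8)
The plan is to mimic the proof of Lemma~\ref{lem: localization}, constructing an explicit smooth supersolution barrier for the mixed Dirichlet–Neumann problem in the slab $\Pi_\nu$, truncated to the cylinder of radius $L$. As before, by rotation invariance of the Pucci operators, I would assume $\nu = e_d$ and write $x = (x', x_d)$ with $x' \in \real^{d-1}$ and $x_d \in (0,1)$ the coordinate along $\nu$. The key point is that the barrier must: (i) be a supersolution of $-\mathcal{P}^+_{\lambda,\Lambda}(D^2\psi) \geq 0$ in $\Pi_{e_d}$, (ii) have $\partial_{e_d}\psi \geq 0$ on $\{x_d = 0\} \cap B_L$ so it dominates $u$ through the Neumann condition there (i.e., $\partial_{e_d}(\psi - u) \geq 0$), (iii) satisfy $\psi \geq 0 \geq u$ on the top face $\{x_d = 1\} \cap B_L$, and (iv) satisfy $\psi \geq M \geq u$ on the lateral boundary $\{|x'| = L\}$. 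Since the tangential directions now span $\real^{d-1}$ but there is only one bounded direction $x_d$, I expect a barrier of the form
$$
\psi(x) = \frac{M}{L^2}\Bigl( |x'|^2 - (d-1)\tfrac{\Lambda}{\lambda}\, x_d(x_d - 2)\Bigr) = \frac{M}{L^2}\Bigl( |x'|^2 - (d-1)\tfrac{\Lambda}{\lambda}\,(x_d^2 - 2x_d)\Bigr),
$$
or a close variant. The Hessian is diagonal with $d-1$ entries equal to $2M/L^2$ in the $x'$ directions and one entry equal to $-2(d-1)(\Lambda/\lambda)M/L^2$ in the $x_d$ direction, so $\mathcal{P}^+_{\lambda,\Lambda}(D^2\psi) = \Lambda(d-1)\tfrac{2M}{L^2} - \lambda (d-1)\tfrac{\Lambda}{\lambda}\tfrac{2M}{L^2} = 0$, giving (i). One checks $\partial_{e_d}\psi(x',0) = \tfrac{M}{L^2}\cdot 2(d-1)\tfrac{\Lambda}{\lambda} \geq 0$, which gives (ii); on $\{x_d = 1\}$, $\psi = \tfrac{M}{L^2}(|x'|^2 + (d-1)\tfrac{\Lambda}{\lambda}) \geq 0$, giving (iii); and on $\{|x'| = L, 0 \le x_d \le 1\}$, $\psi = M + \tfrac{M}{L^2}(d-1)\tfrac{\Lambda}{\lambda}(2x_d - x_d^2) \geq M$, giving (iv). Then the comparison principle for viscosity solutions (applied to $u$, a subsolution of the maximal-operator equation, against $\psi$, a supersolution, with $\psi \geq u$ on the whole parabolic-type boundary of $P_{e_d} \cap B_L$) yields $u \leq \psi$ in $P_{e_d} \cap B_L$, hence on $\Pi_{e_d} \cap \{|x'| \le 1\}$ we get $u \leq \psi \leq \tfrac{M}{L^2}\bigl(1 + (d-1)\tfrac{\Lambda}{\lambda}\bigr)$.

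For the lower bound $u \geq -\bigl(1 + (d-1)\tfrac{\Lambda}{\lambda}\bigr)\tfrac{M}{L^2}$, I would note the hypotheses only control $u$ from above, so I would instead argue that the negative part is controlled by observing $-u$ satisfies a symmetric set of inequalities — but wait, the hypothesis $u \le M$ on $\Pi_\nu \cap \partial B_L$ and $u \le 0$ on the top face do not directly bound $-u$. So the absolute-value conclusion must come from the fact that in the intended application $u$ is a genuine solution (not merely a subsolution) with bounded data, and the statement as literally written presumably intends $-u$ to also satisfy analogous inequalities (with $-\mathcal{P}^-$ replaced appropriately); alternatively the intended reading is that $|u| \le M$ on $\partial B_L$ and the Neumann/Dirichlet data have the stated one-sided signs, so one runs the same barrier argument for $-\psi$ from below. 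I would therefore also record the companion barrier $-\psi$ and note that under the natural two-sided version of the hypotheses one obtains the stated $|u|$ bound; in the cell-problem application $u = v_R$ solves the equation exactly, so applying the lemma to $\pm v_R$ (after rescaling so the slab has width $1$) is legitimate.

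The only real subtlety — and the step I would be most careful about — is the sign bookkeeping in the Neumann condition. The inward normal to $P_{e_d} = \{x_d > 0\}$ at the face $\{x_d = 0\}$ is $+e_d$, so $\partial_\nu u = \partial_{e_d} u$, and the hypothesis $\partial_{e_d} u \le 0$ there must combine with $\partial_{e_d}\psi \ge 0$ to ensure $\partial_{e_d}(\psi - u) \ge 0$ at the boundary, which is exactly the oblique-derivative boundary inequality needed to prevent the difference $\psi - u$ from having an interior minimum escaping to that face; I would invoke the comparison principle for viscosity sub/supersolutions with mixed boundary conditions (as in \cite{CIL}) rather than re-prove it. Everything else — the Hessian computation, the boundary comparisons on the top and lateral faces, and the final evaluation of $\psi$ on $\{|x'| \le 1\}$ — is routine once the barrier is written down, so the write-up would be just as short as that of Lemma~\ref{lem: localization}.
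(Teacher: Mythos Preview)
Your proposal is correct and takes essentially the same approach as the paper: a quadratic barrier with Hessian having $(d-1)$ positive tangential eigenvalues $2M/L^2$ and one negative normal eigenvalue $-2(d-1)\tfrac{\Lambda}{\lambda}\tfrac{M}{L^2}$, chosen so that $\mathcal{P}^+_{\lambda,\Lambda}$ vanishes identically. The paper's barrier is $\varphi(x)=\tfrac{M}{L^2}\bigl(|x'|^2+(d-1)\tfrac{\Lambda}{\lambda}(1-(x\cdot\nu)^2)\bigr)$, whereas yours differs only by a harmless linear term in $x_d$ (your $2x_d - x_d^2$ versus their $1 - x_d^2$); both give the same Hessian, the same Neumann sign at $x_d=0$, and the same final bound $\tfrac{M}{L^2}(1+(d-1)\tfrac{\Lambda}{\lambda})$ on $\{|x'|\le 1\}$. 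Your careful discussion of the Neumann sign convention and of why the two-sided $|u|$ bound really requires applying the argument to $\pm u$ (legitimate in the cell-problem application) is more explicit than the paper, which simply writes one line; the one small slip is that you check (iv) on the cylinder $\{|x'|=L\}$ rather than on $\partial B_L$ as stated, but the barrier is easily seen to dominate $M$ on $\Pi_\nu\cap\partial B_L$ as well (since $|x'|^2 = L^2 - x_d^2 \ge L^2 - 1$ and the $x_d$-term compensates), and in any case the applications in the paper are carried out over cylinders.
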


\begin{proof}
The claim follows from the comparison principle using the barrier 
$$
\varphi(x) = \frac{M}{L^2}\left(|x'|^2+(d-1)\frac{\Lambda}{\lambda} (1-(x\cdot\nu)^2)\right).
$$

\end{proof}
The next result is about up to the boundary regularity of solutions of the Neumann problem. Its proof can be found in Milakis and Silvestre  \cite{MS06}.
\begin{lem}\label{lem:boundary_holder:N}(Boundary $C^{1,\alpha}$-regularity)
Assume \eqref{op1} and \eqref{op2}. For every $\alpha \in (0,1),$ there exists $C=C(d,\lambda,\Lambda,\alpha)>0$ such that, if $v$ solves the Neumann problem
$$
\left\{\begin{array}{ll}
F(D^2v) = 0 &\hbox{ in } B_1^+,  \\[1mm]
 \partial_{x_d} v = h &\hbox{ on }  \partial P_{e_d} \cap B_1,
 \end{array}\right.
$$
then
$$
\|v\|_{C^{0,\alpha}(B_{1/2}^+)} \leq \sup_{B_1}|v| + C\max_{\partial P_{e_d} \cap B_1} |h|. 
$$
 Furthermore,  there is some $\alpha=\alpha(d,\lambda, \Lambda) \in (0,1)$ such that, 
$$
\|v\|_{C^{1,\alpha}(B^+_{1/2})} \leq \sup_{B_1} |v| + C\|h\|_{C^{0,\alpha}({\partial P_{e_d} \cap B_1})}.
$$
\end{lem}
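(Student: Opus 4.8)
Since this boundary regularity statement is classical and a complete proof is given in Milakis--Silvestre \cite{MS06}, I only outline the structure of the argument. The plan is to first dispose of the homogeneous Neumann case by a reflection trick, and then bootstrap to general data by compactness.

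\textbf{Step 1 (reflection when $h\equiv 0$).} Suppose $v$ solves $F(D^2v)=0$ in $B_1^+$ with $\partial_{x_d}v=0$ on $\partial P_{e_d}\cap B_1$, and let $\tilde v(x',x_d):=v(x',|x_d|)$ be the even reflection. Because the normal derivative vanishes on $\{x_d=0\}$, $\tilde v$ is $C^1$ across the hyperplane; writing $R=\textup{diag}(1,\dots,1,-1)$ and noting that $\mathcal{P}^{\pm}_{\lambda,\Lambda}(RMR)=\mathcal{P}^{\pm}_{\lambda,\Lambda}(M)$, one checks with test functions --- using \eqref{eqn: ellipticity w/ pucci} and the $C^1$ matching at $\{x_d=0\}$ --- that $\tilde v$ satisfies $\mathcal{P}^{-}_{\lambda,\Lambda}(D^2\tilde v)\le 0\le \mathcal{P}^{+}_{\lambda,\Lambda}(D^2\tilde v)$ in the viscosity sense in all of $B_1$ (indeed $\tilde v$ solves the uniformly elliptic equation $\min\{F(D^2\tilde v),F(RD^2\tilde v R)\}\le 0\le \max\{F(D^2\tilde v),F(RD^2\tilde v R)\}$). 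The interior Krylov--Safonov and Caffarelli estimates of \cite{CC95} then give $\tilde v\in C^{1,\alpha}(B_{1/2})$ with $\|\tilde v\|_{C^{1,\alpha}(B_{1/2})}\lesssim \sup_{B_1}|v|$, for some universal $\alpha$, which is the claimed bound in this special case.

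\textbf{Step 2 ($C^{0,\alpha}$ for bounded $h$).} One shows, by an oscillation-decay/compactness argument, that there are universal $\rho,\alpha\in(0,1)$ such that $\osc_{B_r^+}v\lesssim r^{\alpha}\big(\sup_{B_1^+}|v|+\max_{\partial P_{e_d}\cap B_1}|h|\big)$ for all small $r$. If this failed there would be solutions $v_k$ in $B_1^+$ with $\sup|v_k|+\max|h_k|\le 1$ and scales $r_k\to 0$ along which the oscillation is too large; rescaling and passing to the limit (uniform precompactness follows from interior estimates plus the a priori boundedness, and viscosity solutions are stable under locally uniform convergence of uniformly elliptic $F_k$) produces a solution in $B_1^+$ with \emph{zero} Neumann data, which by Step 1 is $C^{1,\alpha}$ and hence has the right oscillation decay --- a contradiction. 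Combining the resulting boundary oscillation estimate at points of $\partial P_{e_d}\cap B_1$ with the interior $C^{0,\alpha}$ estimate gives the first inequality of the lemma.

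\textbf{Step 3 ($C^{1,\alpha}$ for H\"older $h$).} This is an improvement-of-flatness iteration. One builds affine functions $\ell_k(x)=a_k+p_k\cdot x'+h(0)\,x_d$ --- the normal slope pinned to $h(0)$ --- with $\osc_{B_{\rho^k}^+}(v-\ell_k)\lesssim \rho^{k(1+\alpha)}\big(\sup_{B_1^+}|v|+\|h\|_{C^{0,\alpha'}}\big)$ and $|p_{k+1}-p_k|\lesssim\rho^{k\alpha}(\cdots)$, by induction on $k$. The normalized functions $w_k(x):=\rho^{-k(1+\alpha)}(v-\ell_k)(\rho^k x)$ solve $F_k(D^2w_k)=0$ in $B_1^+$ with Neumann data whose sup norm is $\lesssim \rho^{k(\alpha'-\alpha)}\to 0$ once $\alpha<\alpha'$ (here $\alpha'$ is the H\"older exponent of $h$ and the affine term absorbs $h(0)$ and the mean value of $h$), so by Step 2 the $w_k$ are precompact and any limit solves a uniformly elliptic equation with zero Neumann data; by Step 1 such a limit is $C^{1,\alpha}$ and thus well approximated in $B_{\rho}^+$ by a tangential affine function, which, transferred back to $w_k$ for $k$ large, provides $\ell_{k+1}$ and closes the induction. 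Summing the geometric series yields the pointwise $C^{1,\alpha}$ control of $v$ along $\partial P_{e_d}\cap B_{1/2}$, and combining this with the interior $C^{1,\alpha}$ estimate of \cite{CC95} gives the full bound $\|v\|_{C^{1,\alpha}(B_{1/2}^+)}\lesssim \sup_{B_1}|v|+\|h\|_{C^{0,\alpha}(\partial P_{e_d}\cap B_1)}$.

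\textbf{Main obstacle.} The delicate point is the viscosity-solution bookkeeping in Step 1: one must verify that a smooth test function touching $\tilde v$ from above (or below) at a point of $\{x_d=0\}$ genuinely forces the Pucci inequality there, which relies crucially on the $C^1$ matching of the reflection and on the invariance $\mathcal{P}^{\pm}_{\lambda,\Lambda}(RMR)=\mathcal{P}^{\pm}_{\lambda,\Lambda}(M)$. The compactness steps are then routine once uniform $C^{0,\alpha}$ bounds and the stability of viscosity solutions under locally uniform convergence of uniformly elliptic operators are in hand.
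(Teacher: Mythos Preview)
The paper does not give a proof of this lemma; it simply cites Milakis and Silvestre \cite{MS06}. Your outline reproduces exactly the reflection-plus-compactness/improvement-of-flatness scheme of \cite{MS06}, so your approach agrees with the paper's (by reference) and is correct.
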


\medskip

As for the decay of oscillations, the estimate is slightly better than for the  Dirichlet problem,  since the oscillations decay up to the boundary.

\begin{lem}\label{lem:osc_decay:N}\textup{(Oscillation decay)}
Assume  \eqref{op1} and \eqref{op2} and  fix $\nu \in S^{d-1}$.  If $v_R$ is  the solution to \eqref{eqn: neumann cell0}, then,
$$
\sup_{|y-z|\leq 1}R^{-1}|v_R(y, \omega) - v_R(z,\omega)| \leq  CR^{-1}\sup_{ \partial P_{\nu}} | \psi | \ \ \text{if } \ \  y  \in   \overline \Pi_\nu^R+\frac{1}{2}R\nu  
$$
and there exists $0<\alpha = \alpha(d, \lambda, \Lambda)$ such that
$$
\sup_{|y-z|\leq 1}|\partial_{\nu}(v_R(y, \omega) - v_R(z,\omega))| \leq  CR^{-\alpha}{\rm osc}_{\partial P_\nu}\psi(y)\quad \hbox{ if  }  \  y  \in    \overline \Pi_\nu^{R}+\frac{1}{2}R\nu.
$$
\end{lem}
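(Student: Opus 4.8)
The plan is to prove both estimates by the same mechanism used for the Dirichlet case (Lemma~\ref{lem:osc_decay}), namely an interior rescaling argument that converts the global bounds on $v_R$ and on its normal derivative into decay estimates, but now exploiting the extra fact that the Neumann boundary is flat and homogeneous so that the oscillations persist all the way down to $\partial P_\nu$. As always we may rotate so that $\nu = e_d$, and write $x = (x',x_d)$. For the first inequality, recall that by the maximum principle applied to \eqref{eqn: neumann cell0} one has $\|v_R\|_\infty \leq C R \sup_{\partial P_\nu}|\psi|$ (one checks this with the linear-in-$x_d$ barrier $x_d \mapsto \sup|\psi|(2R - x_d)$, which is a supersolution of $-\mathcal{P}^+_{\lambda,\Lambda}$ with the correct Neumann sign on $\{x_d=0\}$ and the correct sign on $\{x_d = 2R\}$). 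Then, for $y$ with $R/2 \le y_d \le 3R/2$ (which is what $y \in \overline\Pi_\nu^R + \tfrac12 R\nu$ encodes), the ball $B_{R/4}(y)$ lies in the slab $\{0 < x_d < 2R\}$, so $v_R$ is a solution of $F(D^2 v_R)=0$ in that ball; the interior gradient estimate of Caffarelli--Cabré gives $|D v_R(y)| \lesssim R^{-1}\|v_R\|_{\infty, B_{R/4}(y)} \lesssim \sup_{\partial P_\nu}|\psi|$, and integrating along the segment from $y$ to $z$ with $|y-z|\le 1$ yields $|v_R(y)-v_R(z)| \lesssim \sup_{\partial P_\nu}|\psi|$, i.e. the stated bound after dividing by $R$. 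This half is essentially routine.

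For the second inequality the new ingredient is that one wants to estimate the oscillation of $\partial_\nu v_R$ itself, and get an algebraic rate $R^{-\alpha}$, including for points $y$ whose $x_d$-coordinate can be as small as $R/2$. The natural route is: set $w = \partial_{x_d} v_R$; formally $w$ solves a linearized uniformly elliptic equation (one works with it rigorously via difference quotients $\delta_h v_R = h^{-1}(v_R(\cdot + h e_d) - v_R)$, which are subsolutions/supersolutions of Pucci equations by \eqref{eqn: ellipticity w/ pucci}, and which near $\{x_d=0\}$ satisfy a \emph{zero} Neumann condition because $\partial_{x_d} v_R = \psi$ is independent of $x_d$ — wait, $\psi$ depends on $x'$ but not on $x_d$, so indeed $\partial_{x_d}(\partial_{x_d} v_R) = 0$ on $\{x_d=0\}$ in the appropriate weak sense). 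One has the a priori bound $\|\partial_{x_d} v_R\|_{\infty} \lesssim \sup_{\partial P_\nu}|\psi| + R^{-1}\|v_R\|_\infty \lesssim \sup_{\partial P_\nu}|\psi|$ from Lemma~\ref{lem:boundary_holder:N} rescaled to scale $R$. Then one rescales: put $\tilde v(x) = R^{-1} v_R(Rx)$ on $B_1^+$; $\tilde v$ solves $F_R(D^2 \tilde v) = 0$ (same ellipticity) in $B_1^+$ with $\partial_{x_d}\tilde v = \psi(R x')$ on $\partial P_{e_d}\cap B_1$, and $\|\tilde v\|_\infty \lesssim \sup|\psi|$. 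The boundary $C^{1,\alpha}$ estimate of Lemma~\ref{lem:boundary_holder:N} gives $\|\tilde v\|_{C^{1,\alpha}(B_{1/2}^+)} \lesssim \sup|\psi| + \|\psi(R\,\cdot)\|_{C^{0,\alpha}} $. Using that $\psi$ is uniformly Lipschitz (so $\|\psi(R\cdot)\|_{C^{0,\alpha}} \lesssim R^{1-\alpha}\cdot(\text{stuff})$ on the unit ball) the dilation costs a bounded factor only at the right exponent; undoing the scaling, $[\,D\tilde v\,]_{C^{0,\alpha}(B_{1/2}^+)}$ translates into $|\partial_\nu v_R(y) - \partial_\nu v_R(z)| \lesssim R^{-\alpha}(\,\cdot\,)$ for $|y-z|\le 1$ and $y$ at height comparable to $R$, and chasing the dependence on the boundary data shows the relevant quantity is exactly $\mathrm{osc}_{\partial P_\nu}\psi$ (the oscillation, not the sup, because constants in $\psi$ drop out after one $x_d$-derivative). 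One handles $y$ with $x_d$ down to $R/2$ either directly by the boundary estimate (if $x_d$ is small relative to $R$) or by the interior estimate (if $x_d \sim R$), covering the slab $\overline\Pi_\nu^R + \tfrac12 R\nu$.

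The main obstacle I anticipate is the bookkeeping that makes the normal-derivative oscillation controlled by $\mathrm{osc}_{\partial P_\nu}\psi$ rather than by $\sup|\psi|$, together with getting a clean algebraic exponent $\alpha$ uniformly in $R$: one must argue that subtracting a constant from $\psi$ does not change $\partial_\nu v_R$ (true, since $F(D^2(\cdot)) = 0$ is unaffected by adding a constant to the solution, and the Neumann datum shifts the solution by that constant times the harmonic-type profile but its normal derivative by zero — this needs a short but careful comparison argument using the Hopf-type lemma / the linear-in-$x_d$ barriers), and then apply the $C^{1,\alpha}$ estimate with data $\psi - \fint \psi$ of size $\mathrm{osc}_{\partial P_\nu}\psi$. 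The rescaling-in-$R$ is otherwise standard, and the flatness of the Neumann boundary and the $x_d$-independence of the datum are what make the zero-Neumann condition for the difference quotients legitimate.
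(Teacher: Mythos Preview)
Your first inequality is fine and matches the paper: rescale $\tilde v(x)=R^{-1}v_R(Rx)$, note the linear barrier gives $\|\tilde v\|_\infty\lesssim\sup|\psi|$, and apply an interior Lipschitz bound.

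For the second inequality you overcomplicate things and one of your routes actually fails. The region $\overline\Pi_\nu^R+\tfrac12R\nu=\{R/2\le y\cdot\nu\le 3R/2\}$ lies at distance at least $R/2$ from \emph{both} boundaries of the slab $\{0<y\cdot\nu<2R\}$; there is no ``$x_d$ small'' case, and no need for difference quotients, zero Neumann conditions, or the boundary $C^{1,\alpha}$ estimate. Your attempt to use Lemma~\ref{lem:boundary_holder:N} on $\tilde v$ in $B_1^+$ runs into exactly the trouble you sensed: the Neumann datum becomes $\psi(R\cdot)$, whose $C^{0,\alpha}$ seminorm on $B_1$ scales like $R$ (if $\psi$ is Lipschitz), not like $R^{1-\alpha}$, so the resulting bound on $[D\tilde v]_{C^{0,\alpha}}$ is $\lesssim R$, and after undoing the scaling you get $|\partial_\nu v_R(y)-\partial_\nu v_R(z)|\lesssim R^{1-\alpha}$, which grows. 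The hand-wave ``the dilation costs a bounded factor only at the right exponent'' is not correct.

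The paper's argument is one line: apply the \emph{interior} $C^{1,\alpha}$ estimate of Caffarelli--Cabr\'e directly to $\tilde v$ on a unit-scale ball contained in $\{1/2\le x_d\le 3/2\}$. Since affine functions solve $F(D^2\cdot)=0$, one may first subtract the linear profile $m(x_d-2)$ with $m$ the midpoint of the range of $\psi$; the resulting function has sup-norm $\le\mathrm{osc}_{\partial P_\nu}\psi$ (by the same linear barrier, applied to $\psi-m$), so $[D\tilde v]_{C^{0,\alpha}}\lesssim\mathrm{osc}\,\psi$ on interior unit balls, and undoing the scaling gives $|\partial_\nu v_R(y)-\partial_\nu v_R(z)|\lesssim R^{-\alpha}\mathrm{osc}\,\psi$ for $|y-z|\le 1$. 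Your observation that subtracting a constant from $\psi$ leaves $\partial_\nu v_R$-differences unchanged is exactly this linear-subtraction step, and your mention of the interior estimate ``if $x_d\sim R$'' is the whole proof; the rest of the scaffolding can be removed.
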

\begin{proof}
To prove the first inequality we consider  the rescaled function
$\tilde{v}(y,\omega):=\frac{1}{R} v_R(Ry,\omega) $ and apply Lemma~\ref{lem:boundary_holder:N} to  the Neumann problem in $B_2 \cap P_\nu$ for $|y-z| \leq R^{-1}$.
\smallskip

For the second inequality we apply the interior $C^{1,\alpha}$-regularity result for  solutions to uniformly elliptic operators (see \cite{CC95}) to $\tilde{v}$ and use the fact that 
$
\osc_{2\Pi_\nu} \tilde{v} \leq 2 \osc_{\partial P_\nu} \psi.
$
\end{proof}

\medskip

Finally we introduce the analogue of the singular solutions Theorem~\ref{thm:fundamental} in the Neumann case. Let 
\begin{equation}\label{fundamental:N}
\Phi(x) = |x|^{-\beta}\hbox{ with } \beta = \frac{\lambda}{\Lambda} (d-1)-1.
\end{equation} 

\smallskip
 
An explicit calculation shows  that  $\phi(x) = \Phi(x+e_d)$ satisfies
\begin{equation}\label{singular:N}
\left\{
\begin{array}{lll}
-\mathcal{P}^+_{\lambda,\Lambda}(D^2\phi) = 0 & \hbox{ in } & P_{e_d}, \\[1mm]
-\partial_{e_d}\phi \geq  c_d(1+|x|)^{\frac{\lambda}{\Lambda} (d-1)} & \hbox{ on } & \partial P_{e_d}.
\end{array}\right.
\end{equation} 

\medskip

 {\it Discontinuous Boundary Data.  } We discuss the Dirichlet problem here, but similar arguments apply to the Neumann problem.  In the course of the proof of the homogenization of the cell problem, we will have to consider \eqref{eqn: cell} with bounded but discontinuous boundary data. Such boundary value problems  may not have, in general,  a unique solution unless $F$ has some additional structure. It was shown, for example, in  \cite{Feldman13} that uniqueness holds if the discontinuities are concentrated on a subset of $\partial D$ of Hausdorff dimension less than $d-1$ and if $|\lambda/\Lambda -1|$ is small depending on the dimension of the discontinuity set.   
 
\medskip

 We describe next a selection mechanism we will be using throughout the paper, which allows to talk about a unique solution to \eqref{eqn: cell} satisfying the comparison principle

\medskip

 Since the argument is more general than \eqref{eqn: cell}, we consider the boundary value problem
\begin{equation}\label{takis5}
\begin{cases}
F(D^2u)=0 \ \text{in } \ D,\\[1mm]
u=\phi \ \text{on} \ \partial D;
\end{cases}
\end{equation}
here $D$ is a general domain, which typically in this paper will be $P_{\nu}$ for some $\nu \in S^{d-1}$, and $\phi:  \partial D \to \real$ is bounded but possibly discontinuous. In what follows  we  write $u(\cdot; \phi)$ to denote the solution of the boundary value problem with data $\phi$.
\medskip

A maximal/minimal solution to \eqref{takis5} can be constructed by the classical Perron's method (see, for example, \cite{CIL}). Here we describe an alternative method to select in a unique fashion a solution $u(\cdot; \phi)$, which  is defined as the pointwise  in {$\overline D$} and local uniform in $D$ limit of the (unique) solution $u(\cdot ;\phi_\delta)$ of \eqref{takis5},  for some  appropriately  defined regularization $\phi_\delta$ of $\phi$. Moreover, $u(\cdot; \phi)$ satisfies  the contraction property, that is, 
for any two bounded possibly discontinuous $\phi,\phi'$,
\begin{equation}\label{takis6}
\sup_{D} (u(\cdot;\phi) - u(\cdot;\phi'))_\pm  \leq \sup_{\partial D} (\phi-\phi')_\pm;
\end{equation}
in what follows we say that the so defined  $u(\cdot;\phi)$ satisfies the maximum principle.

\medskip

To this end, given a bounded  possibly discontinuous $\phi:\partial D\to \real$, for $\delta>0$,  we define
$$ \phi_\delta(y): = \max_{z \in \partial D} [ \phi(z) - \frac{1}{\delta}|y-z|].$$
It is immediate that  $ \phi_\delta$  is Lipschitz continuous, $\phi_\delta \geq \phi$ and  the   $ \phi_\delta$'s  decrease, as $\delta \to 0$,  to $\phi$ at every point of continuity of $\phi$.  The standard comparison principle for bounded uniformly continuous solutions to \eqref{takis5} yields that the solutions $u(\cdot ;\phi_\delta)$'s  are  decreasing in $\delta$ and, hence,   for each $y\in \overline D$, we may define  $u(y;\phi)$ by $u(y;\phi):=\lim_{\delta \to 0}u(y;\phi_\delta)$ pointwise in $\overline D$ and locally uniformly  in $D$.  Since the  regularization of $\phi$ is contractive, that is, for any $\phi,\phi'$ as above, $\sup_{\partial D}(\phi_\delta - \phi'_\delta)_\pm \leq \sup_{\partial D}(\phi -\phi')_\pm,$
it follows, again from the comparison principle, that 
$$\sup_{D}(u(\cdot;\phi_\delta) - u(\cdot;\phi'_\delta))_\pm \leq \sup_{\partial D}(\phi_\delta - \phi'_\delta)_\pm \leq \sup_{\partial D} (\phi-\phi')_\pm;$$
letting $\delta \to 0$ on the left hand side above yields \eqref{takis6}. 

\medskip

\subsection*{The results} Now that we have set up the assumptions, we give the precise statements of the main results, Theorems \ref{thm:main0}, \ref{thm:main01} and \ref{thm:main02} from the Introduction.  
\medskip

{ \it The Dirichlet cell problem \eqref{eqn: cell}. }  
The first result assumes convexity/concavity and imposes no restrictions on the ellipticity ratio of $F$, while the second applies to general $F$'s but requires  a dimension dependent upper bound on $\Lambda/\lambda$.

\medskip

In what follows $\beta(\lambda,\Lambda)$ is the homogeneity exponent of the singular solution for the ellipticity class of $F$ in the half plane (see \eqref{eqn: beta}).

\smallskip

\begin{thm}\label{thm: convex}
Suppose that, in addition to \eqref{op1}and  \eqref{op2},  $F$ is either convex or concave and $\psi$ satisfies  \eqref{takis-1}, \eqref{eqn: stationarity psi}, \eqref{psi bdd},  \eqref{eqn: continuity psi} and   \eqref{hyp: mixing}  .  Then, for each $\nu \in S^{d-1}$,  the cell problem \eqref{eqn: cell} has a unique solution $v_\nu$ which concentrates about its mean   with rate
\begin{equation}
\P(\{ \omega : |v_\nu(y'+R\nu,\omega)-\E v_\nu(y'+R\nu)| >t\}) \leq C \exp (-cR^{\hat \beta}t^2) \ \hbox{ for } \ y' \in \partial P_\nu  \ \text{and} \ t>0.
\end{equation}
where $\hat \beta = \beta(\lambda,\Lambda).$  Moreover, the limit  $\mu(\psi,F,\nu):=\lim_{R\to\infty} \E v_\nu(R\nu)$ exists and equals the almost sure  limit of $v_\nu(R\nu,\omega)$, as $R\to\infty$, and, furthermore,  for some universal constant $C = C(d,\lambda,\Lambda,\rho) = C(d,\lambda,\Lambda)\rho^2$ and all $y' \in \partial P_\nu$, 
\begin{equation}\label{eqn: exp conv disc}
 |\E v_\nu(y'+R\nu) - \mu(\psi,F,\nu) | \leq C(\log R)^{1/2}R^{-\hat \beta /2}.
  \end{equation}
\end{thm}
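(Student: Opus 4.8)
The plan is to treat the interior value $v_\nu(R\nu;\cdot)$ as a Lipschitz ``nonlinear average'' of the boundary data, and to prove, in order, the concentration estimate, the existence of $\mu$, and the rate \eqref{eqn: exp conv disc}. Existence and uniqueness of the bounded solution $v_\nu$ of \eqref{eqn: cell} for the Lipschitz stationary field $\psi$ is classical (Perron's method together with the comparison principle for bounded solutions in a half-space). By stationarity \eqref{eqn: stationarity psi} one has $v_\nu(y'+R\nu,\omega)=v_\nu(R\nu,\tau_{y'}\omega)$ for $y'\in\partial P_\nu$, and $\tau_{y'}$ is measure preserving, so $\E v_\nu(y'+R\nu)=\E v_\nu(R\nu)=:m_R$ and it suffices to prove all three assertions at the point $R\nu$.

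The core is a quantitative sensitivity estimate. Partitioning $\partial P_\nu$ into the unit cubes $Q_i$, $i\in\integer^{d-1}$, and writing $v_\nu(R\nu;\psi)=G\big((\psi|_{Q_i})_i\big)$ (the solution of \eqref{eqn: cell} with the resulting, possibly discontinuous, glued data being understood via the selection mechanism), I would establish
$$\big|G((\xi_i)_i)-G((\eta_i)_i)\big|\ \le\ \sum_{i\in\integer^{d-1}}\alpha_i\,\|\xi_i-\eta_i\|_{\infty,Q_i}\qquad\text{with}\qquad \sum_{i\in\integer^{d-1}}\alpha_i^2\ \lesssim\ R^{-\hat\beta}.$$
The weight $\alpha_i$ comes from a barrier argument: the difference $w$ of two cell solutions whose data differ only on $Q_i$ is Pucci-sandwiched, vanishes on $\partial P_\nu\setminus Q_i$, and is bounded by the sup-norm of the perturbation; comparing $w$ on the cylinders $\textup{Cyl}_{\nu,L}$ with a suitably rescaled, downward-shifted copy of the half-space singular solution $\Phi$ from Theorem~\ref{thm:fundamental}, using its tangential decay \eqref{eqn: decay in tang} and discarding the far boundary contribution as $L\to\infty$ via the localization Lemma~\ref{lem: localization}, bounds $\alpha_i$ by the value at $R\nu$ of that barrier; summing the squares over $i$ and exploiting the $-\beta$-homogeneity and \eqref{eqn: decay in tang} of $\Phi$ yields the $R^{-\hat\beta}$ bound. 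Obtaining the sharp exponent $\hat\beta=\beta(\lambda,\Lambda)$ here, rather than a lossy power, is the main technical obstacle.

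Granting the sensitivity estimate, the concentration bound follows from convexity. When $F$ is convex, $\psi\mapsto v_\nu(\cdot;\psi)$ is convex: the half-sum of two solutions is a subsolution by convexity of $F$, hence lies below the solution with the averaged data, and this passes to the selected solution for discontinuous data. Thus $G/(\sum_i\alpha_i^2)^{1/2}$ is convex and $1$-Lipschitz on $\Xi:=C([0,1]^{d-1})^{\integer^{d-1}}$ for the metric $\big(\sum_i\|\xi_i-\eta_i\|_{\infty,Q_i}^2\big)^{1/2}$. The discretized field $(\psi|_{Q_i})_i$ inherits $\phi$-mixing from \eqref{hyp: mixing}, and $\sum_{i\in\integer^{d-1}}\phi(|i|)^{1/2}\sim\int_0^\infty\phi(r)^{1/2}r^{d-2}\,dr=\rho<\infty$, so Samson's inequality (Theorem~\ref{concentration2}) gives $\P(|v_\nu(R\nu)-m_R|>t)\le C\exp\big(-ct^2/(\rho^2\sum_i\alpha_i^2)\big)\le C\exp(-cR^{\hat\beta}t^2/\rho^2)$; for concave $F$ one applies the same to $-v_\nu$.

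Finally I would show the deterministic sequence $m_R$ is Cauchy at the stated rate. Fix $c_0\in(0,1)$, put $R'=(1+c_0)R$, and observe that $v_\nu$ restricted to $\{x\cdot\nu>R\}$ is, by uniqueness of bounded solutions, the solution there with trace $h:=v_\nu|_{\{x\cdot\nu=R\}}$. By stationarity, the concentration bound, and a union bound over a fine net of points in $\{x\cdot\nu=R\}\cap B_L(R\nu)$ (with $L$ a fixed large power of $R$ and Lemma~\ref{lem:osc_decay} used to interpolate between net points), one gets $|h-m_R|\lesssim R^{-\hat\beta/2}(\log R)^{1/2}$ on that ball with overwhelming probability. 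Comparing $v_\nu$ on a size-$L$ cylinder based at $\{x\cdot\nu=R\}$ with the constant $m_R$ and using Lemma~\ref{lem: localization} to discard the trace outside $B_L$ (error $\lesssim R/L$, negligible) gives $|v_\nu(R'\nu)-m_R|\lesssim R^{-\hat\beta/2}(\log R)^{1/2}$ with positive probability; since $v_\nu(R'\nu)$ also concentrates around $m_{R'}$, the deterministic quantity $|m_{R'}-m_R|$ satisfies the same bound. Iterating along $R_k=(1+c_0)^kR$ and summing the geometrically decaying increments produces $\mu:=\lim_k m_{R_k}$ and the estimate \eqref{eqn: exp conv disc}, which extends to all $R$ since each $R$ is within a bounded factor of some $R_k$; the almost sure convergence $v_\nu(R\nu,\omega)\to\mu$ then follows from the concentration bound along integers by Borel--Cantelli together with Lemma~\ref{lem:osc_decay} for the interpolation. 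The secondary difficulty here is the tension between needing the cylinder much larger than the evaluation height (for a small localization error) and controlling $h$ over that larger region, which is resolved precisely by invoking concentration and a union bound rather than the deterministic oscillation decay alone.
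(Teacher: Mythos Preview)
Your overall architecture matches the paper's: discretize the boundary data into cube-restrictions, prove a Lipschitz/sensitivity bound for the map $X\mapsto u(R\nu;X)$, feed this and the convexity of $X\mapsto u(\cdot;X)$ into Samson's inequality (Theorem~\ref{concentration2}) to get the concentration, and then turn concentration into a Cauchy estimate for $m_R$ via a union bound on a net, the oscillation decay Lemma~\ref{lem:osc_decay}, and the localization Lemma~\ref{lem: localization}. The dyadic versus $(1+c_0)$-geometric iteration is cosmetic.

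There is, however, a real gap at the step you yourself flag as ``the main technical obstacle.'' You propose to get the $\ell^2$-Lipschitz constant of $G$ as $(\sum_i\alpha_i^2)^{1/2}$ by Cauchy--Schwarz from the per-cube weights $\alpha_i$. But the barrier argument you describe produces $\alpha_i\sim R(|i|^2+R^2)^{-(\beta+1)/2}$ (this is exactly the computation in the paper's Lemma~\ref{decorrelate}(ii)), and then
\[
\sum_{i\in\integer^{d-1}}\alpha_i^2 \ \sim\ R^{\,d-1-2\beta}\int_{\real^{d-1}}(1+|y|^2)^{-(\beta+1)}\,dy,
\]
which is $R^{-(2\beta-(d-1))}$, not $R^{-\beta}$. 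Using $\Phi$'s $-\beta$-homogeneity and \eqref{eqn: decay in tang} does not help here: those already went into the pointwise bound on $\alpha_i$. So your route yields only $\hat\beta=2\beta-(d-1)$, i.e.\ the \emph{nonconvex} exponent of Theorem~\ref{thm: nonconvex}, and for $\beta<d-1$ (any genuinely nonlinear $F$) it falls short of the claim $\hat\beta=\beta$.

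The paper does \emph{not} bound $\sum_i\alpha_i^2$. Instead it proves the $\ell^2$-Lipschitz estimate directly by interpolating the trivial $\ell^\infty$ bound (comparison principle) with the $\ell^1$ bound (barrier): writing $Z=Z_{>t}+Z_{\le t}$, one uses
\[
|f_R(X+Z)-f_R(X)|\ \le\ |Z_{\le t}|_{\ell^\infty}+CR^{-\beta}|Z_{>t}|_{\ell^1}\ \le\ t+CR^{-\beta}t^{-1}|Z|_{\ell^2}^2,
\]
and optimizing $t=R^{-\beta/2}|Z|_{\ell^2}$ gives $|f_R(X+Z)-f_R(X)|\le CR^{-\beta/2}|Z|_{\ell^2}$. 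This is Lemma~\ref{decorrelate}(iii), and it is precisely what converts the convexity hypothesis into the sharper exponent. You should replace the ``sum of squares'' step by this interpolation argument; once that is done the rest of your outline goes through as written (modulo a harmless sign: with the paper's conventions it is $-F$ convex that makes $X\mapsto u(\cdot;X)$ convex, so one applies Samson to $v_\nu$ or $-v_\nu$ according to whether $-F$ or $F$ is convex).
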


\medskip

\begin{thm}\label{thm: nonconvex}
Assume  that $F$ and $\psi$ satisfy  \eqref{op1}, \eqref{op2}, \eqref{takis-1}, \eqref{eqn: stationarity psi}, \eqref{psi bdd},  \eqref{eqn: continuity psi} and   \eqref{hyp: mixing} and assume that $ 2  \beta (\lambda, \Lambda) -(d-1)>0$.  Then the results of Theorem \ref{thm: convex} hold with  $\hat \beta = 2\beta (\lambda, \Lambda) - (d-1)$.
\end{thm}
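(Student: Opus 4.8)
The plan is to run exactly the same argument as for Theorem~\ref{thm: convex}, identifying the single place where convexity of $F$ is used---the application of Talagrand's inequality (Theorem~\ref{concentration2})---and substituting for it the concentration inequality for functions Lipschitz with respect to a weighted Hamming distance (Theorem~\ref{concentration1}), which needs no convexity. Fix $\nu\in S^{d-1}$; by \eqref{psi bdd}, \eqref{eqn: continuity psi} and Lemmas~\ref{lem:osc_decay}, \ref{lem:boundary_holder} the cell problem \eqref{eqn: cell} has a unique bounded solution $v_\nu$, and by the stationarity \eqref{eqn: stationarity psi} the law of $v_\nu(y'+R\nu,\cdot)$ does not depend on $y'\in\partial P_\nu$, so it suffices to treat $y'=0$. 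Tile the hyperplane $\partial P_\nu\cong\real^{d-1}$ by the unit cubes $Q_j$, $j\in\integer^{d-1}$, and regard $v_\nu(R\nu,\cdot)$ as a function of the random field $X=(X_j)_{j\in\integer^{d-1}}$, $X_j:=\psi(\cdot,\cdot)|_{Q_j}$, valued in the complete separable metric space $\Xi=C(\overline Q)$; the $\phi$-mixing hypothesis \eqref{hyp: mixing} descends to the $\integer^{d-1}$-lattice $\phi$-mixing of $X$, and the measurability of $v_\nu(R\nu,\cdot)$ in $X$ is as discussed in the preliminaries. The analytic core is a Lipschitz-in-the-boundary-data estimate: if $X,Y$ differ only on the cubes indexed by a set $S\subset\integer^{d-1}$, then by the contraction property \eqref{takis6} and the Pucci bound \eqref{eqn: ellipticity w/ pucci} the difference $w=v_\nu(\cdot;\psi_X)-v_\nu(\cdot;\psi_Y)$ satisfies $\pm w\le\overline w$, where $-\mathcal{P}^+_{\lambda,\Lambda}(D^2\overline w)=0$ in $P_\nu$ with data $2\|\psi\|_\infty\sum_{j\in S}\indicator_{Q_j}$; rescaled translates $\Phi_j(x):=A\,\Phi(x-z_j+\nu)$ of the half-space singular solution $\Phi$ of Theorem~\ref{thm:fundamental} ($z_j$ the center of $Q_j$, $A$ universal) are nonnegative $-\mathcal{P}^+_{\lambda,\Lambda}$-supersolutions in $P_\nu$ dominating $2\|\psi\|_\infty\indicator_{Q_j}$ on $\partial P_\nu$ by \eqref{eqn: decay in tang}, and, as $\mathcal{P}^+$ is subadditive, $\sum_{j\in S}\Phi_j\ge\pm w$ everywhere. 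Evaluating at $R\nu$ via \eqref{eqn: decay in tang} shows that $v_\nu(R\nu,\cdot)$ is Lipschitz with respect to the $\alpha$-weighted Hamming distance with
\[
\alpha_j\ \sim\ \|\psi\|_\infty\,\min\bigl\{1,\ R\,(|j|^2+R^2)^{-(\beta+1)/2}\bigr\},\qquad j\in\integer^{d-1}.
\]

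Next I estimate the variance proxy. A polar-coordinates computation gives $|\alpha|_{\ell^2}^2=\sum_{j\in\integer^{d-1}}\alpha_j^2\lesssim\|\psi\|_\infty^2\,R^{(d-1)-2\beta}$, the sum converging since $2\beta>d-1>d-3$; the hypothesis $2\beta-(d-1)>0$ is precisely what makes this bound decay, i.e.\ $|\alpha|_{\ell^2}^2\lesssim R^{-\hat\beta}$ with $\hat\beta=2\beta-(d-1)$. Also $\sum_{i\in\integer^{d-1}}\phi(|i|)^{1/2}<\infty$, being comparable to $\int_0^\infty\phi(r)^{1/2}r^{d-2}\,dr=\rho$ from \eqref{hyp: mixing}. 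Theorem~\ref{concentration1} then gives
\[
\P\bigl(\{\omega:|v_\nu(R\nu,\omega)-\E v_\nu(R\nu)|>t\}\bigr)\ \le\ C\exp\bigl(-cR^{\hat\beta}t^2\bigr),
\]
and by stationarity the same holds with $R\nu$ replaced by $y'+R\nu$ --- this is the concentration statement of Theorem~\ref{thm: convex} with the modified exponent.

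The identification and rate for the mean are then obtained verbatim from the proof of Theorem~\ref{thm: convex}. Using the oscillation-decay Lemma~\ref{lem:osc_decay} together with the concentration bound, the trace of $v_\nu(\cdot,\omega)$ on $\{x\cdot\nu=R\}$ lies within $O\bigl((\log R)^{1/2}R^{-\hat\beta/2}\bigr)$ of the constant $\mu_R:=\E v_\nu(R\nu)$ with probability $\ge 1-CR^{-10}$; solving \eqref{eqn: cell} in $\{x\cdot\nu>R\}$ with this trace as data, comparing with the constant-data ($\equiv\mu_R$) problem, and using stationarity to shift by $R\nu$ yields $|\mu_{2R}-\mu_R|\lesssim(\log R)^{1/2}R^{-\hat\beta/2}$; summing over dyadic scales produces $\mu(\psi,F,\nu):=\lim_R\mu_R$ and the rate \eqref{eqn: exp conv disc}. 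The almost sure convergence $v_\nu(R\nu,\omega)\to\mu$ follows by Borel--Cantelli along $R=2^k$, with Lemma~\ref{lem:osc_decay} filling in between dyadic scales, and uniqueness of $v_\nu$ is the standard comparison principle for \eqref{eqn: cell} with the continuous data \eqref{eqn: continuity psi}.

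I expect the \emph{main obstacle}, and the point deserving the most care, to be the variance estimate and its dependence on $\beta$ --- this is exactly where the non-convex case is weaker. In the convex case one has the representation $v_\nu(x_0;\psi)=\inf_A\int\psi\,d\mu^{x_0}_A$ as an infimum over honest probability measures (the exit laws of the linear operators $\mathrm{tr}(A D^2\cdot)$ in the ellipticity class), so the $\ell^2$-Lipschitz constant is bounded by $\bigl(\sup_A\max_j\mu^{x_0}_A(Q_j)\bigr)^{1/2}\lesssim R^{-\beta/2}$, using $\sum_j\mu^{x_0}_A(Q_j)=1$ together with the Pucci bound $\mu^{x_0}_A(Q_j)\le\Phi_j(x_0)\lesssim R^{-\beta}$, and Talagrand (Theorem~\ref{concentration2}) then yields rate $R^{\beta}$. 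For general $F$ no such representation is available --- the nonlinear ``harmonic measure'' $\sum_j\Phi_j(x_0)$ has no finite total mass when $\beta$ is small --- so one is forced onto the Hamming inequality with the weights $\alpha_j\sim R(|j|^2+R^2)^{-(\beta+1)/2}$ and the lossier bound $|\alpha|_{\ell^2}^2\lesssim R^{(d-1)-2\beta}$, which both produces the exponent $\hat\beta=2\beta-(d-1)$ and forces the restriction $2\beta>d-1$. A secondary technical point is keeping all constants uniform in $\nu\in S^{d-1}$; this is guaranteed by the rotation invariance of $\mathcal{P}^+_{\lambda,\Lambda}$ and of $\Phi$ and by the uniform mixing hypothesis (here \eqref{hyp: mixing}).
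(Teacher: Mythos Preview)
Your proposal is correct and follows essentially the same approach as the paper: the paper proves Theorems~\ref{thm: convex} and~\ref{thm: nonconvex} together, and in the nonconvex case uses precisely the weighted Hamming Lipschitz estimate with weights $\alpha_k \sim R(|k|^2+R^2)^{-(\beta+1)/2}$ (coming from the singular-solution barrier of Theorem~\ref{thm:fundamental}), computes $|\alpha|_{\ell^2}^2 \lesssim R^{d-1-2\beta}$, applies Theorem~\ref{concentration1}, and then runs the identical Cauchy-sequence argument for the means via oscillation decay and the localization Lemma~\ref{lem: localization}. Your informal description of the Cauchy step (``solving in $\{x\cdot\nu>R\}$ with the trace as data'') is a slight paraphrase of the paper's cylinder-localization argument, but the mechanism is the same.
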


\medskip

Under the assumptions that lead to the homogenization of the cell problem,  the ergodic constant (homogenized boundary condition) is continuous  with respect to the normal direction and the boundary data. This a very important property to establish homogenization in general domains.  Its proof relies in a critical way on having a rate of convergence for the cell problem.

\begin{thm}\label{continuity00}
Assume \eqref{op1},  \eqref{op2} and $ 2\beta(\lambda,\Lambda)-(d-1)>0$ if $F$ is neither convex nor concave. 
\begin{enumerate}[(i)]
\item If  $\psi$ and $\psi'$ satisfy \eqref{takis-1}, \eqref{eqn: stationarity psi}, \eqref{psi bdd},  \eqref{eqn: continuity psi} and   \eqref{hyp: mixing} and fix $\nu \in S^{d-1}$, then 
$$|\mu(\psi,F,\nu)-\mu(\psi',F,\nu)| \leq \|\psi-\psi'\|_{\infty, \real^d \times \Omega}. $$
\item There exists $C=C(d,\lambda,\Lambda) >0$ such that, for every $\nu, \nu'  \in S^{d-1}$ and   $\alpha \in (0, \alpha')$ with\\
 $\alpha' = \alpha'(\hat{\beta}) := \frac{\hat{\beta}}{2(1+\hat{\beta})}$, if $\psi$ satisfies  \eqref{takis-1}, \eqref{eqn: stationarity psi}, \eqref{psi bdd},  \eqref{eqn: continuity psi} and   \eqref{hyp: mixing}, then 
$$ |\mu(\psi,F,\nu)-\mu(\psi,F,\nu')| \leq C(\osc \psi)(1+\|D \psi\|_\infty)^{\alpha'}|\nu-\nu'|^{\alpha}.$$
\end{enumerate}
\end{thm}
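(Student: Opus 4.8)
The plan is to establish the two continuity estimates using the quantitative convergence rate \eqref{eqn: exp conv disc} from Theorem \ref{thm: convex}/\ref{thm: nonconvex} together with the maximum principle, in the following way. For part (i), the contraction property \eqref{takis6} (with $D = P_\nu$) immediately gives, for every $R>0$ and a.e.\ $\omega$, that $|v_\nu(R\nu,\omega;\psi) - v_\nu(R\nu,\omega;\psi')| \leq \sup_{\partial P_\nu}|\psi(\cdot,\omega)-\psi'(\cdot,\omega)| \leq \|\psi-\psi'\|_{\infty,\real^d\times\Omega}$; taking $R\to\infty$ and using \eqref{convergence} yields $|\mu(\psi,F,\nu)-\mu(\psi',F,\nu)| \leq \|\psi-\psi'\|_{\infty}$. (If one wants to avoid worrying about which selected solution is meant when $\psi$ is Lipschitz, no regularization is needed here since $\psi,\psi'$ are continuous.)

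For part (ii), the idea is to compare the cell problem in the half-space $P_\nu$ with the one in $P_{\nu'}$ by viewing both solutions inside a common, slightly rotated half-space and paying the price of the geometric mismatch near the boundary. First I would reduce to the case $\nu = e_d$ by rotation invariance of the Pucci operators; write $v = v_{e_d}$ and let $v'$ be the solution of \eqref{eqn: cell} for the direction $\nu'$, also re-expressed in the $e_d$ frame, so that its boundary hyperplane $\partial P_{\nu'}$ makes an angle $\sim|\nu-\nu'|$ with $\partial P_{e_d}$. The two hyperplanes separate by a distance $\lesssim |\nu-\nu'|\,|x'|$ at horizontal distance $|x'|$ from the origin, so the boundary data, being Lipschitz with constant $\lesssim 1+\|D\psi\|_\infty$, differs by $\lesssim (1+\|D\psi\|_\infty)|\nu-\nu'|\,|x'|$ along a sub-hyperplane, while far out it is only controlled by $\osc\psi$. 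I would then choose a truncation radius $L = L(|\nu-\nu'|)$, use the Localization Lemma \ref{lem: localization} to show that the contribution of the boundary data outside $\textup{Cyl}_{e_d,L}$ to $v(R e_d)$ is $\lesssim (\osc\psi) R L^{-1}$ (for $R\lesssim L$), and use the contraction/maximum principle on the cylinder $\textup{Cyl}_{e_d,L}$ to bound the contribution of the near-boundary mismatch by $\lesssim (1+\|D\psi\|_\infty)|\nu-\nu'| L$. Evaluating both $v$ and $v'$ at $R e_d$ with $R\sim L^{?}$ and balancing $(\osc\psi)RL^{-1}$, $(1+\|D\psi\|_\infty)|\nu-\nu'|L$, and the convergence errors $(\log R)^{1/2}R^{-\hat\beta/2}$ from \eqref{eqn: exp conv disc} against one another, with $R \sim |\nu-\nu'|^{-\theta}$ for a suitable $\theta$, should produce the exponent $\alpha < \alpha'(\hat\beta) = \hat\beta/(2(1+\hat\beta))$.

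I expect the main obstacle to be step two of the balancing: making rigorous the comparison of $v$ and $v'$ on the overlapping wedge between the two half-spaces, since neither solution is directly defined on the other's domain and the boundary data must be transported across the tilted hyperplane while staying within the class where the maximum principle \eqref{takis6} applies. The clean way is to build, for each solution, a barrier that dominates it on $\textup{Cyl}_{e_d,L}$ by combining (a) the Lipschitz bound on the boundary data restricted to a common reference hyperplane, giving an $O((1+\|D\psi\|_\infty)|\nu-\nu'|L)$ defect, with (b) the decay estimate \eqref{eqn: decay in tang} for the singular solution $\Phi$ to control the far-field $O(\osc\psi)$ part through the cylinder wall; then Lemma \ref{lem: localization} packages exactly this. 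The secondary technical point is that $R$ must be taken large for \eqref{eqn: exp conv disc} to be useful, yet $R \lesssim L$ for localization, so the optimization genuinely constrains $\alpha$ strictly below $\alpha'$ — hence the open interval $(0,\alpha')$ in the statement — and one should keep track of the $(\osc\psi)$ and $(1+\|D\psi\|_\infty)^{\alpha'}$ prefactors throughout, which forces carrying the explicit dependence of the Lipschitz constant of $\psi$ rather than absorbing it into universal constants.
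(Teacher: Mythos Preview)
Your proposal is correct and follows essentially the same route as the paper: part (i) is the comparison principle, and part (ii) compares $v_\nu$ and $v_{\nu'}$ on a common region by localization (Lemma~\ref{lem: localization}), controlling the boundary-layer mismatch by $(1+\|D\psi\|_\infty)L|\nu-\nu'|$, the far-field by $R/L$, and the distance to $\mu$ by the rate \eqref{eqn: exp conv disc}, then optimizing in $R,L$. The paper's execution of the wedge comparison is slightly cleaner than your barrier sketch: it works directly on the box $E = \{L|\nu'-\nu| < y\cdot\nu < L,\ |y'|\le L\}\subset P_\nu\cap P_{\nu'}$ and uses the boundary Lipschitz estimate (Lemma~\ref{lem:boundary_holder}) for the \emph{solutions} $v,v'$ themselves to show that both are within $CL(1+\|D\psi\|_\infty)|\nu-\nu'|$ of $\psi$ on the bottom face $\{y\cdot\nu = L|\nu-\nu'|\}$, so no singular-solution barrier or transport of boundary data is needed; also, since $F$ is not assumed rotation invariant, the reduction to $\nu=e_d$ is only notational (all estimates depend on $\lambda,\Lambda$ alone), not an application of rotation invariance.
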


\medskip

Lastly we have the following homogenization result for the Dirichlet problem in general domains with an algebraic rate in probability.  
\begin{thm}\label{thm: main}
Assume \eqref{domain1}, \eqref{op1} and \eqref{op2} and define $\hat\beta$ as in either Theorem~\ref{thm: convex} or Theorem~\ref{thm: nonconvex} depending on whether $F$ is convex/concave or not. In addition suppose that the boundary data $g$ satisfies \eqref{takis-3}, \eqref{eqn: continuity g}, \eqref{stationary g}  and \eqref{takis-4}. Let $u^\e$ and  $\overline{u}$ be respectively the solutions to \eqref{main0} and \eqref{main} with Dirichlet boundary data $g(x,x/\ep,\omega)$ and $\overline g(x) := \mu(g(x,\cdot, \cdot),F,\nu_x)$ respectively. 
The $u^\e$ concentrates about its mean in the sense that, for   $ \alpha_0:=\frac{\hat \beta }{4+3\hat \beta }$ and every $p>0$, there exists a sufficiently large universal constant $M_p$ such that
\begin{equation}
 \P(\{ \omega : \sup_{\{x: d(x,\partial U)>\e^{1-2\alpha_0/\hat \beta }\}} |u^\e(x,\omega)-\E u^\e(x)|> M_p(\log\tfrac{1}{\e})^{1/2}\e^{\alpha_0}\}) \lesssim \e^p,
  \end{equation}
and the expected value  $\E u^\e$ converges, as $\e \to 0$,  to $\overline{u}$ with the rate 
\begin{equation}\label{eqn: a est}
\sup_{\{x: d(x,\partial U)>\e^{1-2\alpha_0/\hat \beta} \}}|\E u^\e(x) - \overline{u}(x)| \lesssim (\log\tfrac{1}{\e})^{1/2} \e^{\alpha_0}.
\end{equation}

\end{thm}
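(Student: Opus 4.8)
The plan is to derive both assertions from a single high-probability two-sided barrier estimate. Write $\gamma := 1 - 2\alpha_0/\hat\beta$ and $\delta := \e^{\gamma}$. I will produce an event $\Omega_\e$ with $\P(\Omega_\e)\ge 1-\e^p$ on which
\[
\sup_{x\in U_\delta}\bigl|u^\e(x,\omega)-\overline u(x)\bigr|\le M_p(\log\tfrac1\e)^{1/2}\e^{\alpha_0}.
\]
Granting this, \eqref{eqn: a est} follows by taking expectations (on $\Omega\setminus\Omega_\e$ one loses at most $\|g\|_\infty\e^p$, since $\|u^\e\|_\infty\le\|g\|_\infty$ by the maximum principle), and the concentration bound then follows from $|u^\e(x,\omega)-\E u^\e(x)|\le|u^\e(x,\omega)-\overline u(x)|+|\overline u(x)-\E u^\e(x)|$. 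By the interior gradient estimates for $u^\e$ and $\overline u$ on $U_\delta$ (which are polynomial in $\e^{-1}$), it is enough to prove the barrier estimate on a polynomially fine net of $U_\delta$ and of $\partial U$.

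\textbf{The barriers.} Fix a net point $x\in U_\delta$, let $z\in\partial U$ be a closest boundary point, $\nu:=\nu_z$, set the window scale $r=\e^{\theta}$ (with $\theta\in(0,1)$ to be optimized) and the blow-up height $R:=\delta/\e=\e^{-2\alpha_0/\hat\beta}$. On the half space through $z$ with inner normal $\nu$ we use the cell solution $v_z$ of Theorem~\ref{thm: convex} (or Theorem~\ref{thm: nonconvex}) with boundary data $g(z,\cdot/\e,\omega)$, which has the law required there by stationarity of $g(z,\cdot,\cdot)$. The supersolution $w^+=w^+_\omega$ is obtained by patching together $\overline u$ in the bulk with, near each boundary patch $B_r(z)$, the transplanted cell solution $v_z$ and three corrections: (i) an additive constant $\lesssim r+Mr^2/\e$, which on $\partial U\cap B_r(z)$ absorbs both the replacement of $g(\xi,\xi/\e)$ by $g(z,\xi/\e)$ via \eqref{eqn: continuity g} and the mismatch $\textup{dist}(\xi,\partial P_\nu)\le Mr^2$ between $\partial U$ and its tangent hyperplane \eqref{domain2}, using the boundary Lipschitz bound $\|v_z\|_{C^{0,1}}\lesssim 1/\e$ of Lemma~\ref{lem:boundary_holder}; (ii) a scaled singular solution from Theorem~\ref{thm:fundamental} (with the tangential decay \eqref{eqn: decay in tang}), dominating $\|g\|_\infty$ on the thin set $B_r(z)\setminus P_\nu$ where $v_z$ is undefined and of size $(Mr^2/\delta)^{\hat\beta}$ at depth $\delta$; (iii) the cut-off error from localizing the cell data to $B_r(z)$, which by Lemma~\ref{lem: localization} is of order $\delta/r$ at depth $\delta$. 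The subsolution $w^-$ is built symmetrically, and the comparison principle gives $w^-\le u^\e\le w^+$ in $U$.

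\textbf{Randomness and the choice of exponents.} By Theorem~\ref{thm: convex}/\ref{thm: nonconvex} at height $R$ one has $|\E v_z(R\nu)-\overline g(z)|\lesssim(\log R)^{1/2}R^{-\hat\beta/2}=(\log\tfrac1\e)^{1/2}\e^{\alpha_0}$, while the concentration bound with $t=M_p(\log\tfrac1\e)^{1/2}\e^{\alpha_0}$ gives $\P(|v_z(R\nu,\omega)-\E v_z(R\nu)|>t)\le C\exp(-cR^{\hat\beta}t^2)=C\e^{cM_p^2}$; together with the oscillation decay Lemma~\ref{lem:osc_decay} and a union bound over the polynomially many net points of $\partial U$ and $U_\delta$, choosing $M_p$ large makes the bad probability $\lesssim\e^p$. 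On $\Omega_\e$ the transplanted $v_z$ is within $O((\log\tfrac1\e)^{1/2}\e^{\alpha_0})$ of $\overline g(z)$ near $x$, and $|\overline g(z)-\overline u(x)|\lesssim\delta^{\alpha}\lesssim\e^{\alpha_0}$ for a suitable Hölder exponent $\alpha$ furnished by Theorem~\ref{continuity00}(i)--(ii) together with the Lipschitz dependence of $\nu_z$ on $z$ (so that $\overline g$, hence $\overline u$ up to the boundary, is $\alpha$-Hölder). Collecting the error terms from the barrier — $(\log\tfrac1\e)^{1/2}\e^{\alpha_0}$, $r$, $Mr^2/\e$, $(Mr^2/\delta)^{\hat\beta}$, $\delta/r$ and $\delta^{\alpha}$ — each is $\lesssim(\log\tfrac1\e)^{1/2}\e^{\alpha_0}$ precisely when $\theta$ can be chosen in $[(1+\alpha_0)/2,\ \gamma-\alpha_0]$, i.e. precisely when $\alpha_0\le\hat\beta/(4+3\hat\beta)$; at the endpoint $\alpha_0=\hat\beta/(4+3\hat\beta)$ this interval degenerates to $\theta=(1+\alpha_0)/2=\gamma-\alpha_0$, and the barrier estimate closes.

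\textbf{Main difficulty.} The heart of the matter is the barrier construction in the second step: one must reconcile the curved boundary $\partial U$ with the family of flat tangent half-space cell problems while simultaneously absorbing the curvature mismatch (through the boundary Lipschitz bound $1/\e$ and the singular solutions of Theorem~\ref{thm:fundamental}), localizing the non-stationary, whole-space boundary data (through Lemma~\ref{lem: localization}), and gluing the boundary-layer barrier to $\overline u$ in the interior, all while keeping every resulting error below $\e^{\alpha_0}$. It is exactly here that the \emph{quantitative} rate for the cell problem — rather than mere convergence — is indispensable, both to afford the union bound over the net of boundary points and normal directions and to match $v_z$ to $\overline g(z)$ at a finite height $R$.
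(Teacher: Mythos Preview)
Your core ingredients match the paper's: local comparison of $u^\e$ with the half-space cell solution $v_\nu$ via localization, the cell-problem concentration and expectation estimates together with a union bound over a boundary net, and the H\"older continuity of $\overline g$ from Theorem~\ref{continuity00}. Your error bookkeeping with $r=\e^\theta$, $\delta=\e^\gamma$, $R=\delta/\e$ and $\theta=(1+\alpha_0)/2$ reproduces exactly the paper's exponents: your terms $r^2/\e$ and $\delta/r$ are the paper's $L^2\e$ and $R/L$ (with $L=R^{1/3}\e^{-1/3}$) written in the original scale, and both balance to $\e^{\alpha_0}$ at the claimed $\alpha_0$.

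However, the paper organizes the argument differently in two places, and in both it sidesteps steps you leave genuinely open.

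\emph{Globalization.} You propose a single super/subsolution $w^\pm$ on all of $U$ obtained by ``patching'' $\overline u$ in the bulk with the many local $v_z$'s near the boundary. You do not say how this patching is done, and it is not straightforward: neither $\overline u$ nor $v_z$ is a strict super/subsolution, the patches for different $z$ overlap, and for a fully nonlinear $F$ there is no general mechanism (convex combination, min/max across overlap regions) that produces a global supersolution with the correct boundary data. The paper avoids this entirely by working on the \emph{inner} domain $U_\delta$: it first shows $|u^\e-\E u^\e|$ is small on $\partial U_\delta$ (local comparison with $v_\nu$ in the box $K(R,\nu)$, combined with the spatially uniform concentration Lemma~\ref{cone:D} and a union bound over the finite net $\Gamma_\delta$ of Lemma~\ref{lem: geom lem}), and then propagates to the interior by a single comparison with the auxiliary solution $\widetilde u^\e$ of $F(D^2\widetilde u^\e)=0$ in $U_\delta$, $\widetilde u^\e=\E u^\e$ on $\partial U_\delta$. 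A short tail-integration then shows $|\E u^\e-\widetilde u^\e|\lesssim(\log\tfrac1\e)^{1/2}R^{-\hat\beta/2}$ on $U_\delta$, which closes both estimates.

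\emph{Curvature mismatch.} You handle the ``thin set'' $B_r(z)\cap U\setminus P_\nu$ with a singular-solution barrier of claimed size $(Mr^2/\delta)^{\hat\beta}$. The paper does not need this: it compares $u^\e$ and $v_\nu$ only on the common slab $\{y\cdot\nu>C_0L^2\e\}$ (in blow-up coordinates), uses the boundary Lipschitz estimate Lemma~\ref{lem:boundary_holder} on \emph{each} to bring both within $O(L^2\e)$ of their own boundary data on that hyperplane, and then localizes. This is precisely your correction~(i), and it makes your correction~(ii) unnecessary. Your estimate for~(ii) is in any case unsupported: the relevant homogeneity exponent is $\beta$ (or a cone exponent from Theorem~\ref{thm:fundamental}), not $\hat\beta$, and covering a slab of tangential extent $r$ and thickness $\sim r^2$ by singular solutions and bounding the sum at depth $\delta$ requires an argument you do not give.

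In short: the strategy and the arithmetic are right, but the global-barrier construction as written has a gap. Replacing it by the paper's two-step comparison (boxes on $\partial U_\delta$, then $\widetilde u^\e$ on $U_\delta$) fixes it and simultaneously removes the need for the singular-solution correction.
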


\medskip

{\it The lifting from the hyperplane plane.  }We describe the general setting for the Dirichlet cell problem arising when assigning boundary data by ``lifting from the hyperplane" as in \eqref{eqn: prj up} and then state the result which asserts that the cell problem homogenizes for general $F$ without either a convexity/concavity assumption or a restriction on the ellipticity ratio.
 
\medskip

We assume that 
\begin{equation}\label{diff0}
\text{ $\zeta_0$ is a diffeomorphism  from an open subset of $\real^{d-1}$ to a relatively open subset of $\partial U$,}
\end{equation}
and we extend $\zeta_0$ to a local diffeomorphism $\zeta : \real^d \to \real^d$ by
\begin{equation}\label{diff1}
\zeta(x) = (\zeta_0(x'),x_d\nu_{\zeta(x')}).
\end{equation}

Given a random field $\psi: \real^{d-1}\times \Omega \to \real$, we define the boundary data on the general domain  $U$ by
$$ g^\e(x,\omega) := \psi(\e^{-1}\zeta^{-1}(x),\omega) \ \hbox{ for } \ x \in \partial U;$$
note that here we implicitly extend $\psi$ to map $\psi: \partial U \times \real^d \times \Omega \to \real$ which is  constant in the $y_d$ direction.
\medskip

We study the asymptotic behavior of the rescaling $v^\e(y):= u^\e(x_0 + \e y)$ of $u^\e$, the solution to \eqref{main0} with $g$ as above,  at some $x_0$ in the image of $\zeta_0$, which solves  the boundary value problem which solves the boundary value problem

\begin{equation}
\left\{
\begin{array}{lll}
F(D^2v^\e) = 0 & \hbox{ in } &  \e^{-1}(U-x_0) \\[1mm]
v^\e (y,\omega) = \psi(\e^{-1}\zeta^{-1}(x_0)+(D\zeta^{-1})(x_0)y + w_\e(y),\omega)  & \hbox{ on } &  \e^{-1}\partial(U-x_0);
\end{array}\right.
\end{equation}
with $|w_\e(y)|\leq C\e|y|^2$ for some $C=C(\|\zeta^{-1}\|_{C^2})$.

\medskip
 
The corresponding cell problem is then to solve
\begin{equation}\label{eqn: other cell}
\left\{
\begin{array}{lll}
F(D^2v_{T}) = 0 & \hbox{ in } &  T P_{e_d}, \\[1mm]
v_{T} (y,\omega) = \psi(T^{-1}y,\omega)  & \hbox{ on } &   T \partial  P_{e_d}, 
\end{array}\right.
\end{equation}
where $T : \real^{d} \to \real^{d}$ is an invertible linear map, and to show that there exists an ergodic  constant  $\tilde{\mu}(\psi,F,T)$  such that,  almost surely,  
$$ \tilde{\mu}(\psi,F,T) = \lim_{R \to \infty } v_{T}(R\nu,\omega).$$
We can further normalize $T$ so that $Te_d$ is a unit vector orthogonal to $T\partial P_{e_d}$ as this will not change the definition of $v_T$.  We remark that, at the expense of changing $F$,  we may reduce to the case of $T = I$ by changing variables to $z = T^{-1}y$. Notice that  the ellipticity constants will remain bounded as long as we work with a class of maps with $T$ and $T^{-1}$ bounded. For example, if $T = D\zeta (\zeta^{-1}(x_0))$, the bound will depend only on the properties of the diffeomorphism $\zeta$.  We emphasize that such a transformation need not change $\beta$, since, in this case,  we will use $\Phi(T\cdot)$, which has the same homogeneity as $\Phi$, as a supersolution barrier for the transformed problem.

\medskip

The result  for \eqref{eqn: other cell} is the following theorem.
\medskip

\begin{thm}\label{thm: lsi cell}
Assume that $F$ and $\psi$ satisfy  \eqref{op1}, \eqref{op2},  \eqref{takis-1}, \eqref{eqn: stationarity psi}, \eqref{psi bdd}, and the log-Sobolev assumptions \eqref{takis-2}  and \eqref{psi3}.  Then the conclusions of Theorem~\ref{thm: convex} holds for $v_{T}$ with $\hat \beta = \min\{\beta(\lambda,\Lambda),2\}$ and  constants that depend on $\lambda, \Lambda$ and the bounds for $T$ and $T^{-1}$ .
\end{thm}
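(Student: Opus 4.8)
The plan is to follow the same two-step pattern as in Theorem~\ref{thm: convex} — first a quantitative stability estimate expressing the interior value of the cell problem as a Lipschitz function of the boundary configuration, then a concentration-of-measure argument — except that the relevant concentration tool is now the log-Sobolev inequality (Theorem~\ref{concentration0}) rather than Talagrand's inequality (Theorem~\ref{concentration2}); since Theorem~\ref{concentration0} applies to \emph{every} $\ell^2$-Lipschitz function, with a dimension-free constant, this is exactly what removes both the convexity hypothesis on $F$ and the restriction on $\Lambda/\lambda$. As recorded before the statement, one first reduces to $T=I$: the substitution $z=T^{-1}y$ turns \eqref{eqn: other cell} into a half-space Dirichlet cell problem on $P_{e_d}$, with the same unit checkerboard data \eqref{psi3} on $\partial P_{e_d}$ and a nonlinearity $F_T$ whose ellipticity ratio is controlled in terms of $\Lambda/\lambda$ and the bounds on $T,T^{-1}$, all barriers below being taken of the form $\Phi(T\,\cdot)$ so that the singular-solution exponent stays equal to $\beta$. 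Since the checkerboard is discontinuous, $v_T(\cdot;\psi)$ always denotes the solution selected by the regularization procedure, so that the contraction/maximum principle \eqref{takis6} is available. Two features make a convexity-free statement possible here and should be flagged at the outset: because $\psi$ is constant in the $e_d$-direction there is no loss of ergodicity transverse to the hyperplane and mere $\integer^{d-1}$-stationarity of $(X_j)$ suffices, and the LSI concentration carries no dimensional penalty.

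The central estimate is the following stability bound: fixing $R>1$, the map $X=(X_j)_{j\in\integer^{d-1}}\mapsto v_T(Re_d;\psi_X)$ is Lipschitz with respect to the $\ell^2$-metric on $\real^{\integer^{d-1}}$, with constant $\lesssim R^{-\beta/2}$. The one-cell input comes from comparison: if $X$ and $X'$ differ only in their $j$-th coordinate, then $w:=v_T(\cdot;\psi_X)-v_T(\cdot;\psi_{X'})$ obeys the extremal inequalities \eqref{eqn: ellipticity w/ pucci}, vanishes on $\partial P_{e_d}$ outside the cube $j+[0,1)^{d-1}$, and is bounded by $|X_j-X_j'|$ on that cube; comparing $w$ against $\tfrac{1}{c}|X_j-X_j'|\,\Phi(\cdot-j+e_d)$, where $\Phi$ is the half-space singular solution of Theorem~\ref{thm:fundamental} translated so that its singularity lies a unit distance below the hyperplane (hence $\Phi(\cdot-j+e_d)$ is bounded below by a universal $c>0$ on that cube and nonnegative on the rest of $\partial P_{e_d}$), and using the tangential decay \eqref{eqn: decay in tang}, gives
\[ |v_T(Re_d;\psi_X)-v_T(Re_d;\psi_{X'})| \ \lesssim \ \frac{R}{(R+|j|)^{\beta+1}}\,|X_j-X_j'|, \]
while \eqref{takis6} and the boundedness of $\psi$ always give the global bound $\lesssim\sup_j|X_j-X_j'|$. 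Telescoping over cells and then optimizing the weighted sum against the global cap — placing the $\ell^2$-budget on the $\sim R^{\beta}$ cells nearest the base, where the one-cell weight is $\sim R^{-\beta}$, and controlling the rest by the localization Lemma~\ref{lem: localization} — yields the asserted $\ell^2$-Lipschitz constant $\lesssim R^{-\beta/2}$.

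With this in hand the probabilistic part is routine. By the log-Sobolev assumption \eqref{takis-2} (the prototypical such laws being products and their Lipschitz images, cf.\ Lemmas~\ref{lem: tensorization} and \ref{pushforward}) and the dimension-freeness of Theorem~\ref{concentration0}, applying that theorem to the function above gives $\P(\{|v_T(Re_d)-\E v_T(Re_d)|>t\})\le 2\exp(-cR^{\beta}t^2)$, and the same holds with $Re_d$ replaced by $y'+Re_d$, $y'\in\partial P_{e_d}$, by $\integer^{d-1}$-stationarity. To see that $\mu_R:=\E v_T(Re_d)$ converges, note that on the slice $\{x\cdot e_d=R\}$ the oscillation-decay Lemma~\ref{lem:osc_decay} gives $v_T(\cdot,\omega)$ a modulus of continuity $\lesssim R^{-1}$, so $\E v_T(\cdot)$ is within $CR^{-1}$ of the constant $\mu_R$ on that slice (stationarity pins it to $\mu_R$ at lattice points); a union bound of the pointwise concentration over a polynomially large net of the relevant part of the slice, the remainder being absorbed by Lemma~\ref{lem: localization}, then shows that, off an event of probability $\lesssim R^{C(d-1)}e^{-cR^{\beta}t^2}$, one has $|v_T-\mu_R|\le t+CR^{-1}$ there. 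Comparing $v_T$ with the constant $\mu_{R/2}$ in $\{x\cdot e_d>R/2\}$ via the maximum principle gives $|\mu_R-\mu_{R/2}|\lesssim(\log R)^{1/2}R^{-\beta/2}+R^{-1}$; the deterministic term $R^{-1}$ is precisely why the effective rate is governed by $\hat\beta=\min\{\beta,2\}$, and summing over dyadic scales produces the limit $\mu=\mu(\psi,F,T)$ together with the rate \eqref{eqn: exp conv disc}. Applying the uniform-on-slices bound along $R=2^n$ with $t\sim n^{1/2}2^{-n\beta/2}$ makes the exceptional probabilities summable, so Borel–Cantelli gives $v_T(R\nu,\omega)\to\mu$ almost surely (once $v_T$ above height $2^n$ is pinned near $\mu_{2^n}$ the convergence along all $R$ is automatic); stating every conclusion with the single conservative exponent $\hat\beta=\min\{\beta,2\}$ then finishes the proof.

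The main obstacle is the stability estimate of the second paragraph: building the right barriers (the translated, and for far cells rescaled, singular solutions together with the quadratic barrier underlying Lemma~\ref{lem: localization}), carrying out the optimization that balances the one-cell weights against the global cap, and — crucially, since dimension-independence of the final concentration rate is the whole point of working under the log-Sobolev hypothesis — checking that every constant produced along the way, including those incurred in the reduction to $T=I$, is genuinely universal and free of any dimensional or ellipticity-ratio blow-up.
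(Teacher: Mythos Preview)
Your proposal is correct and follows essentially the same route as the paper. The paper's own argument is extremely brief: it reduces to $T=I$, notes that the checkerboard data \eqref{psi3} makes $v_T(\cdot,\omega)=u(\cdot;X(\omega))$ for the discretized map $u$ of \eqref{eqn: discrete2}, invokes Lemma~\ref{decorrelate}(iii) verbatim for the $\ell^2$-Lipschitz constant $\lesssim R^{-\beta/2}$, applies Theorem~\ref{concentration0}, and then declares that the rest is identical to the proof of Theorems~\ref{thm: convex}--\ref{thm: nonconvex}, with the single caveat that only $\integer^{d-1}$ stationarity is available so the interior oscillation decay $O(R^{-1})$ caps the effective exponent at $\hat\beta=\min\{\beta,2\}$. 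Your steps 2--4 reproduce exactly this.

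One point worth tightening: your sketched derivation of the $\ell^2$-Lipschitz bound via ``placing the $\ell^2$-budget on the $\sim R^\beta$ cells nearest the base and controlling the rest by localization'' is not quite right as written. A location-based split at $L^{d-1}\sim R^\beta$ cells gives, for the far part via Lemma~\ref{lem: localization}, an error of order $|Z|_{\ell^2}/L\sim |Z|_{\ell^2}R^{-\beta/(d-1)}$, which matches $R^{-\beta/2}$ only when $d\le 3$. The paper's proof of Lemma~\ref{decorrelate}(iii) instead truncates \emph{by size}: write $Z=Z_{>t}+Z_{\le t}$, use the $\ell^\infty$ bound \eqref{ eqn: linfty bd} on the small part ($\le t$) and the weighted-$\ell^1$ bound \eqref{eqn: l1 bd} on the large part ($\le CR^{-\beta}t^{-1}|Z|_{\ell^2}^2$ via Chebyshev), then optimize $t=R^{-\beta/2}|Z|_{\ell^2}$. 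Since this lemma is already available, you can simply cite it as the paper does.
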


The proof  of Theorem~\ref{thm: lsi cell} is based  on the concentration inequalities for measures with log-Sobolev inequality and does not depend on the concentration inequalities for the strong-mixing setting Theorems~\ref{concentration1} and \ref{concentration2}. As a result no restrictions are imposed on the ellipticity ratio of $F$.  We expect the a proof very similar to the one of Theorem~\ref{thm: main} will also give a homogenization for general domains with data locally constructed by lifting from hyperplanes.  Note that in order to prove the continuity of $\tilde{\mu}$ with respect to $T$, which is required for the general domain result, we actually need to take a regularized version of the random checkerboard boundary data \eqref{psi3} so that \eqref{eqn: continuity psi} holds.  We leave the details to the reader.

\medskip

{\it The Neumann cell problem \eqref{eqn: neumann cell0}.}  The result when  $F$ either convex or concave is:
\begin{thm}\label{thm: convex N}
Assume that, in addition to \eqref{op1} and \eqref{op2},  $F$ is either convex or concave and $\psi$ satisfies   \eqref{takis-1}, \eqref{eqn: stationarity psi}, \eqref{psi bdd},  \eqref{eqn: continuity psi} and   \eqref{hyp: mixing} .  Then, for each $\nu \in S^{d-1}$,  the cell problem \eqref{eqn: neumann cell0} has a unique solution $v_{\nu,R}$ which concentrates about its mean with rate 
\begin{equation}
\P(\{ \omega :R^{-1}|v_{\nu,R}(y'+R\nu,\omega)-\E v_{\nu,R}(y'+R\nu)| >t\}) \leq C \exp (-cR^{\hat \beta}t^2) \ \hbox{ on  } \partial P_\nu  \ \text{and} t>0. 
\end{equation}
with $\hat \beta = \frac{\lambda}{\Lambda}(d-1)$.  Moreover, the limit  $\mu(\psi,F,\nu):=\lim_{R\to\infty} R^{-1}\E v_{\nu,R}(R\nu)$ exists and equals the almost sure limit of $R^{-1}v_{\nu,R}(R\nu,\omega)$, as $R\to\infty$. Furthermore, for some universal constant $C = C(d,\lambda,\Lambda,\rho) = C(d,\lambda,\Lambda)\rho^2$ and all $y' \in \partial P_\nu,$ 
\begin{equation}\label{eqn: exp conv disc N}
 |\E v_\nu(y'+R\nu) - \mu(\psi,F,\nu) | \leq C(\log R)^{1/2}R^{-\hat \beta /2}. 
  \end{equation}
\end{thm}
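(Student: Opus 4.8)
The plan is to mirror the argument behind the Dirichlet cell Theorems~\ref{thm: convex}--\ref{continuity00}: first a quantitative Lipschitz dependence of the rescaled interior value $R^{-1}v_{\nu,R}(R\nu)$ on the boundary field $\psi$, then Samson's Talagrand-type inequality (Theorem~\ref{concentration2}), then a two-scale comparison turning concentration about the mean into convergence of the mean. Fix $\nu=e_d$. Uniqueness of the bounded $v_{\nu,R}$ for continuous $\psi$ is clear: the difference of two bounded solutions is a subsolution of $-\mathcal{P}^+_{\lambda,\Lambda}=0$ and a supersolution of $-\mathcal{P}^-_{\lambda,\Lambda}=0$ with vanishing Neumann datum on $\{x\cdot\nu=0\}$ and vanishing Dirichlet datum on $\{x\cdot\nu=2R\}$, so Lemma~\ref{localization:N} with $L\to\infty$ forces it to vanish. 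It also suffices to take $y'=0$: by \eqref{eqn: stationarity psi}, $v_{\nu,R}(\cdot+y',\omega)$ and $v_{\nu,R}(\cdot,\tau_{y'}\omega)$ solve the same problem, so $v_{\nu,R}(y'+R\nu,\cdot)$ has the law of $v_{\nu,R}(R\nu,\cdot)$. For the discretization, tile $\partial P_\nu\cong\real^{d-1}$ by unit cubes $Q_j$, $j\in\integer^{d-1}$, and set $X_j:=\psi\big|_{Q_j}\in A:=C(\overline{Q_0})$, a separable Banach space; $(X_j)_j$ is a lattice field whose $\phi$-mixing rate is dominated by $\phi_\psi(\cdot-\sqrt d)$, and \eqref{hyp: mixing} gives $\sum_{i\in\integer^{d-1}}\phi(|i|)^{1/2}\lesssim\rho$. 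Rescaling the slab to unit width and applying Lemma~\ref{localization:N} to the difference of two solutions agreeing on $B_{KR}\cap\partial P_\nu$ shows that freezing $\psi$ outside $\{|x'|\le KR\}$ changes $R^{-1}v_{\nu,R}(R\nu)$ by $O(K^{-2})$; with $K$ a fixed power of $\log R$ this is below any prescribed negative power of $R$, so up to that error $R^{-1}v_{\nu,R}(R\nu)$ is a function $f$ of $(X_j)_{|j|\le KR}$, extended to all of $A^{\integer^{d-1}}$ via the selection mechanism for (possibly discontinuous) assembled data, the Neumann analogue of \eqref{takis6}.

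The key step, and the hard part, is the Lipschitz estimate: perturbing $\psi$ only on $Q_j$, by sup-norm at most $\eta_j$, moves $v_{\nu,R}(R\nu)$ by at most $a_j$ with $\sum_j R^{-2}\eta_j^{-2}a_j^2\lesssim R^{-\hat\beta}$, $\hat\beta=\tfrac{\lambda}{\Lambda}(d-1)$, so that $f$ is $O(R^{-\hat\beta/2})$-Lipschitz for the $\ell^2$-metric on $A^{\integer^{d-1}}$. The difference $w$ of the two solutions satisfies $-\mathcal{P}^+_{\lambda,\Lambda}(D^2w)\le0\le-\mathcal{P}^-_{\lambda,\Lambda}(D^2w)$ in $\{0<x\cdot\nu<2R\}$, vanishes on the cap $\{x\cdot\nu=2R\}$, and has $\partial_\nu w$ supported in $Q_j$ with $|\partial_\nu w|\le\eta_j$. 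One bounds $|w(R\nu)|$ by barriers assembled from the Neumann singular solution \eqref{fundamental:N}--\eqref{singular:N} rescaled and recentered at $j$, namely $\Phi\big((x-j)/R+e_d\big)$, whose inward flux on $\partial P_\nu$ is $\gtrsim R^{-1}(1+|x-j|/R)^{\hat\beta}$, corrected by the polynomial barrier of Lemma~\ref{localization:N} to enforce the zero Dirichlet condition on the cap and to kill the cells with $|j|\gtrsim R$. The cap is essential: it confines the influence of $Q_j$ on $v_{\nu,R}(R\nu)$ to size $\sim R^{1-\hat\beta}$ in the near range $|j|\lesssim R^{\lambda/\Lambda}$, with algebraic decay beyond (a tangential-decay estimate in the spirit of \eqref{eqn: decay in tang}), and summing the squares over $j$ reproduces $R^{2-\hat\beta}$ up to universal constants. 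Getting this book-keeping right, with the exponent exactly $\hat\beta=\tfrac{\lambda}{\Lambda}(d-1)$, is where essentially all of the elliptic input is used and is the main obstacle.

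Given the Lipschitz estimate, concentration is routine. When $F$ is convex (resp.\ concave), averaging sub/supersolutions and the comparison principle --- using that the Neumann boundary operator is linear --- show that $\psi\mapsto v_{\nu,R}(R\nu;\psi)$ is concave (resp.\ convex), and the monotone selection of the first paragraph preserves this. Thus $\pm f$ is convex and $O(R^{-\hat\beta/2})$-Lipschitz for the $\ell^2$-metric, and Theorem~\ref{concentration2} applied to $f/\mathrm{Lip}(f)$, together with $\sum_{i\in\integer^{d-1}}\phi(|i|)^{1/2}\lesssim\rho$ and the truncation error absorbed, gives $\P(|R^{-1}v_{\nu,R}(R\nu,\omega)-\mu_R|>t)\le C\exp(-cR^{\hat\beta}t^2)$ with $\mu_R:=R^{-1}\E v_{\nu,R}(R\nu)$; the case of general $y'$ follows from the stationarity reduction.

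For the mean, I would show $|\mu_{2R}-\mu_R|\lesssim(\log R)^{1/2}R^{-\hat\beta/2}$ by comparing $v_{\nu,R}$ and $v_{\nu,2R}$ on the common slab $\{0<x\cdot\nu<2R\}$: their difference has vanishing Neumann datum and, on $\{x\cdot\nu=2R\}$, the randomly oscillating datum $v_{\nu,2R}(\cdot,2R)$, and by Lemma~\ref{localization:N} its value at $R\nu$ is controlled by a suitably weighted sup of that mismatch; estimating the latter by a union bound over the $O\!\big((\log R)^{O(1)}R^{d-1}\big)$ cells that matter --- each controlled by the previous paragraph, the far ones by the barrier decay and the oscillation bound of Lemma~\ref{lem:osc_decay:N} --- and choosing $t\asymp(\log R)^{1/2}R^{-\hat\beta/2}$, together with the a priori bound $\|v_{\nu,R}\|_\infty\lesssim R$, yields the claim. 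Summing over dyadic $R$ shows $\mu_R$ is Cauchy with limit $\mu=\mu(\psi,F,\nu)$ and $|\mu_R-\mu|\lesssim(\log R)^{1/2}R^{-\hat\beta/2}$, which is \eqref{eqn: exp conv disc N}. Finally, applying the concentration bound with $t=R^{-\hat\beta/4}$ along $R=2^k$ and Borel--Cantelli, and interpolating between dyadic scales via Lemma~\ref{lem:osc_decay:N}, gives $R^{-1}v_{\nu,R}(R\nu,\omega)\to\mu$ almost surely.
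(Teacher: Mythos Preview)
Your overall architecture is exactly the paper's: discretize $\partial P_\nu$ into unit cubes, prove an $\ell^2$-Lipschitz bound for $f_R(X)=R^{-1}v_{\nu,R}(R\nu;X)$, invoke Theorem~\ref{concentration2} (using convexity/concavity of $X\mapsto v_{\nu,R}(R\nu;X)$, which follows from the convexity/concavity of $-F$ and linearity of the Neumann condition), and then run the dyadic Cauchy argument via Lemma~\ref{localization:N} and Lemma~\ref{lem:osc_decay:N}. The Neumann singular solution \eqref{fundamental:N}--\eqref{singular:N} is indeed the right barrier for the single-site estimate.

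There is, however, a genuine gap in your Lipschitz step. You propose to obtain the $\ell^2$-Lipschitz constant by ``summing the squares over $j$'' of the single-site influence coefficients $a_j$. The barrier $\Phi(\cdot - j + e_d)$ gives, for the unnormalized $v_{\nu,R}$, the per-site bound $c_j\sim(|j|^2+R^2)^{-\beta/2}$ with $\beta=\tfrac{\lambda}{\Lambda}(d-1)-1$, so for $f_R=R^{-1}v_{\nu,R}$ one has $\sum_j (R^{-1}c_j)^2 \sim R^{-2}\int_{\real^{d-1}}(|y|^2+R^2)^{-\beta}\,dy \sim R^{\,d-3-2\beta}$ (when the integral even converges, which requires $2\beta>d-1$). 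This equals the claimed $R^{-\hat\beta}=R^{-(1+\beta)}$ only when $\beta=d-2$, i.e.\ $\lambda=\Lambda$. In all other cases your exponent is strictly worse and you would not recover the stated $\hat\beta=\tfrac{\lambda}{\Lambda}(d-1)$. The paper (Lemma~\ref{decorrelate:N}) avoids this by never summing squares: it proves the $\ell^\infty$-Lipschitz bound $|f_R(X)-f_R(Y)|\le |X-Y|_{\ell^\infty}$ (linear barriers of slope $\pm|X-Y|_{\ell^\infty}$) and the $\ell^1$-Lipschitz bound $|f_R(X)-f_R(Y)|\lesssim R^{-(1+\beta)}|X-Y|_{\ell^1}$ (the singular-solution barrier), and then \emph{interpolates} exactly as in the Dirichlet Lemma~\ref{decorrelate}: split $Z=Z_{>t}+Z_{\le t}$, bound the two pieces by $t$ and $R^{-(1+\beta)}t^{-1}|Z|_{\ell^2}^2$ respectively, and optimize in $t$ to get the $\ell^2$ constant $R^{-(1+\beta)/2}=R^{-\hat\beta/2}$. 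This interpolation is the missing idea.

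Two smaller points. First, your barrier description is over-engineered: there is no need to rescale $\Phi$ by $R$ or to ``correct by the polynomial barrier of Lemma~\ref{localization:N} to enforce the zero Dirichlet condition on the cap'' --- since $\Phi>0$, the barrier $u_R(\cdot;X)+\phi$ already dominates $u_R(\cdot;Y)=0$ on $\{x\cdot\nu=2R\}$, and the comparison is immediate. Second, the truncation to $\{|x'|\le KR\}$ is unnecessary: the paper works directly on the infinite product $C(\overline Q)^{\integer^{d-1}}$ and the concentration theorems apply there. Your Cauchy-sequence and Borel--Cantelli paragraphs are fine and match the paper's argument.
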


In the nonconvex/nonconcave  case the result is:
\begin{thm}\label{thm: nonconvex N}
Assume that $F$ and $\psi$ satisfy \eqref{op1}, \eqref{op2},  \eqref{takis-1}, \eqref{eqn: stationarity psi}, \eqref{psi bdd},  \eqref{eqn: continuity psi} and   \eqref{hyp: mixing} and $ \frac{\lambda}{\Lambda} >\frac{1}{2}$.  Then the conclusions  of Theorem \ref{thm: convex N} hold with  $\hat \beta = (\frac{\lambda}{\Lambda}-\frac{1}{2})(d-1)$.
\end{thm}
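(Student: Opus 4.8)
The plan is to follow the same three-step scheme used for the convex/concave Neumann problem, Theorem~\ref{thm: convex N}, and for the nonconvex Dirichlet problem, Theorem~\ref{thm: nonconvex}: first a deterministic \emph{stability estimate} exhibiting the normalized interior value $R^{-1}v_{\nu,R}(y'+R\nu,\omega)$ as a Lipschitz ``nonlinear average'' of the Neumann data $\psi$ on the hyperplane $\{y\cdot\nu=0\}$; second a concentration-of-measure step turning this into a bound on the deviation of $R^{-1}v_{\nu,R}(y'+R\nu,\cdot)$ from its mean; and third an elliptic/maximum-principle argument showing that the means converge, as $R\to\infty$, to a deterministic constant at the advertised rate, which forces the almost sure convergence. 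Uniqueness of the bounded solution $v_{\nu,R}$ of \eqref{eqn: neumann cell0} is standard, since the problem obeys the comparison principle. The only genuinely new ingredient relative to Theorem~\ref{thm: convex N} is that, lacking convexity or concavity of $F$, we may not invoke the Talagrand-type inequality Theorem~\ref{concentration2}; instead we use the weighted-Hamming inequality Theorem~\ref{concentration1}, so the stability estimate has to be performed in the weighted Hamming metric, and this is what produces the degraded exponent $\hat\beta=(\tfrac{\lambda}{\Lambda}-\tfrac12)(d-1)$, which is positive precisely when $\tfrac{\lambda}{\Lambda}>\tfrac12$.

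For the stability estimate I would discretize the hyperplane $\{y\cdot\nu=0\}\cong\real^{d-1}$ into the unit cells $\{i+Q\}_{i\in\integer^{d-1}}$ and regard $\omega\mapsto R^{-1}v_{\nu,R}(y'+R\nu,\omega)$ as a function on $\Xi=A^{\integer^{d-1}}$ of the family $(\psi(\cdot,\omega)|_{i+Q})_{i}$, which by \eqref{psi bdd} and \eqref{eqn: continuity psi} takes values in a fixed bounded subset $A$ of a space of Lipschitz functions on $Q$. The task is to bound the influence $\alpha_i$ of the data on the $i$-th cell: replacing $\psi$ on $i+Q$ and leaving it unchanged elsewhere changes $R^{-1}v_{\nu,R}(y'+R\nu,\cdot)$ by at most $\alpha_i$. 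After rescaling the slab $\{0<x\cdot\nu<2R\}$ to the slab of width two, this amounts to estimating the value near the interior of the slab of the solution of a Neumann problem whose data is a bump of size $\lesssim\|\psi\|_\infty$ supported on a ball of radius $R^{-1}$ at tangential distance $|i|/R$. Dominating this by a translated and dilated copy of the Neumann singular-solution barrier $\phi(x)=\Phi(x+e_d)$ of \eqref{fundamental:N}--\eqref{singular:N}, whose homogeneity is $\beta_N=\tfrac{\lambda}{\Lambda}(d-1)-1$, for cells with $|i|\lesssim R$, and by the localization Lemma~\ref{localization:N} (suitably rescaled, together with a polynomial cutoff $L=L(R)$) for cells with $|i|\gtrsim R$, yields $\alpha_i\lesssim\|\psi\|_\infty\,R^{-1}(R+|i|)^{-\beta_N}$ on the relevant range. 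Summing the squares over the $\sim R^{d-1}$ cells with $|i|\lesssim R$, which dominate, produces $|\alpha|_{\ell^2}\lesssim\|\psi\|_\infty R^{-\hat\beta}$ with $\hat\beta=(\tfrac{\lambda}{\Lambda}-\tfrac12)(d-1)$. Keeping this bookkeeping sharp — and uniform in $\nu\in S^{d-1}$, which is legitimate since every barrier used is a rigid motion of a fixed one — is the main obstacle; in particular the threshold $\tfrac{\lambda}{\Lambda}>\tfrac12$ is exactly the condition under which the squared Neumann-barrier weights, summed over the $R^{d-1}$ near cells, decay in $R$ (indeed $\sum_{|i|\lesssim R}\alpha_i^2\sim\|\psi\|_\infty^2 R^{(d-1)(1-2\lambda/\Lambda)}$).

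For the concentration step I would observe that the restriction of $\psi$ to $\{y\cdot\nu=0\}$ inherits, from \eqref{eqn: stationarity psi} and \eqref{hyp: mixing}, $\integer^{d-1}$-stationarity and the $\phi$-mixing condition with $\sum_{i\in\integer^{d-1}}\phi(|i|)^{1/2}\lesssim\int_0^\infty\phi(r)^{1/2}r^{d-2}\,dr=\rho<\infty$, so the cell field $(\psi(\cdot,\omega)|_{i+Q})_i$ on $\Xi$ satisfies the hypotheses of Theorem~\ref{concentration1}. Since $R^{-1}v_{\nu,R}(y'+R\nu,\cdot)$ is $1$-Lipschitz for the $\alpha$-weighted Hamming distance with $|\alpha|_{\ell^2}\lesssim\|\psi\|_\infty R^{-\hat\beta}$, Theorem~\ref{concentration1} gives
\begin{equation*}
\P\big(\{\,\omega:\ R^{-1}|v_{\nu,R}(y'+R\nu,\omega)-\E v_{\nu,R}(y'+R\nu)|>t\,\}\big)\ \le\ C\exp\!\big(-cR^{\hat\beta}t^{2}/\rho^{2}\big),
\end{equation*}
uniformly in $y'\in\partial P_\nu$ and $\nu\in S^{d-1}$ (the uniformity in $y'$ also following from stationarity, which makes the law of $R^{-1}v_{\nu,R}(y'+R\nu,\cdot)$ independent of $y'$). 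This is the claimed concentration bound with $\hat\beta=(\tfrac{\lambda}{\Lambda}-\tfrac12)(d-1)$.

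Finally, the last step follows the lines of the proof of Theorem~\ref{thm: convex N}: using the oscillation decay Lemma~\ref{lem:osc_decay:N}, the boundary regularity Lemma~\ref{lem:boundary_holder:N} and the localization Lemma~\ref{localization:N} one compares the solution in the slab of width $2R$ with the one in the slab of width $2^{k}R$ and deduces that the means $R^{-1}\E v_{\nu,R}(R\nu)$ form a Cauchy sequence; feeding the concentration bound with the choice $t\sim(\log R)^{1/2}R^{-\hat\beta/2}$ (so that the exceptional probabilities are summable along a geometric sequence of scales) both identifies $\mu(\psi,F,\nu):=\lim_{R\to\infty}R^{-1}\E v_{\nu,R}(R\nu)$ with the almost sure limit of $R^{-1}v_{\nu,R}(R\nu,\omega)$ and gives the rate $(\log R)^{1/2}R^{-\hat\beta/2}$ for the convergence of $R^{-1}\E v_{\nu,R}(y'+R\nu)$ to $\mu(\psi,F,\nu)$, with a constant of the form $C(d,\lambda,\Lambda)\rho^{2}$. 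No restriction on the ellipticity ratio beyond $\tfrac{\lambda}{\Lambda}>\tfrac12$ enters, since the positivity of $\hat\beta$ is all that the argument requires.
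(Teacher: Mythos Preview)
Your proposal is correct and follows the paper's approach essentially verbatim: the deterministic influence bound via the Neumann singular barrier $\Phi(\cdot+e_d)$ (the paper's Lemma~\ref{decorrelate:N}), the concentration step via Theorem~\ref{concentration1} in place of Theorem~\ref{concentration2}, and the Cauchy argument for the means via Lemmas~\ref{lem:osc_decay:N} and~\ref{localization:N} are exactly what the paper does. The only cosmetic difference is that the paper works directly in the slab of width $2R$ and applies the translated barrier $\phi_0(x-j)=2m^{-1}\Phi(x-j+e_d)$ uniformly over all cells (no rescaling to width two, no near/far split with localization); your near/far decomposition is unnecessary since the single barrier already gives $\alpha_j\lesssim R^{-1}(|j|^2+R^2)^{-\beta_N/2}$ for every $j$.
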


As for the Dirichlet problem, under the assumptions that lead to the homogenization of the cell problem,  the ergodic constant is continuous  with respect to the normal direction and the boundary data. This an very important property to establish homogenization in general domains.  Its proof relies in a critical way on the rate of convergence for the cell problem.

\begin{thm} 
\label{continuity:N}
Assume \eqref{op1},  \eqref{op2}  and  $\frac{\lambda}{\Lambda} >\frac{1}{2}$ if $F$ is neither convex nor concave.
\begin{enumerate}[(i)]
\item If  $\psi$ and $\psi'$ satisfy  \eqref{takis-1}, \eqref{eqn: stationarity psi}, \eqref{psi bdd},  \eqref{eqn: continuity psi} and   \eqref{hyp: mixing}, then,  for every $\nu \in S^{d-1}$, 
$$|\mu(\psi,F,\nu)-\mu(\psi',F,\nu)| \leq \|\psi-\psi'\|_{\infty, \real^d \times \Omega}. $$
\item There exists $C=C(d,\lambda,\Lambda) >0$ such that, for every $\nu, \nu'  \in S^{d-1}$ and  $\alpha \in(0,\alpha')$  with  $ \alpha'(\hat{\beta}) := \frac{\hat{\beta}}{2(1+\hat{\beta})},$ if  $\psi$ satisfies  \eqref{takis-1}, \eqref{eqn: stationarity psi}, \eqref{psi bdd},  \eqref{eqn: continuity psi} and   \eqref{hyp: mixing}, then  
$$ |\mu(\psi,F,\nu_1)-\mu(\psi,F,\nu_2)| \leq C |\psi|_{C^{0,\alpha}})\|\psi\|_\infty|\nu_1-\nu_2|^{\alpha}.$$
\end{enumerate}
\end{thm}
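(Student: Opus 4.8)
\emph{Part (i).} The plan is to read off the Lipschitz bound from the comparison principle for \eqref{eqn: neumann cell0} together with a single affine barrier. Writing $v_{\nu,R}(\cdot\,;\psi)$ and $v_{\nu,R}(\cdot\,;\psi')$ for the solutions of \eqref{eqn: neumann cell0} with Neumann data $\psi$ and $\psi'$, $w$ for their difference, and $\delta:=\|\psi-\psi'\|_{\infty,\real^d\times\Omega}$, the inequality \eqref{eqn: ellipticity w/ pucci} shows that $w$ is at once a subsolution of $-\mathcal{P}^+_{\lambda,\Lambda}(D^2w)\le 0$ and a supersolution of $-\mathcal{P}^-_{\lambda,\Lambda}(D^2w)\ge 0$ in the slab $\{0<y\cdot\nu<2R\}$, with $w=0$ on $\{y\cdot\nu=2R\}$ and $\partial_\nu w=\psi-\psi'$ on $\{y\cdot\nu=0\}$. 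The affine function $\bar w(y):=\delta(2R-y\cdot\nu)$ solves the maximal equation, vanishes on $\{y\cdot\nu=2R\}$, and has $\partial_\nu\bar w=-\delta\le\psi-\psi'=\partial_\nu w$ on $\{y\cdot\nu=0\}$; since a Hopf–lemma argument excludes a maximum of $w-\bar w$ on the Neumann face, the maximum principle forces $w\le\bar w$, and the same applied to $-w$ gives $|w|\le\delta(2R-y\cdot\nu)$. Taking expectations, evaluating at $y=R\nu$, dividing by $R$ and letting $R\to\infty$ (by Theorems~\ref{thm: convex N} and~\ref{thm: nonconvex N}, $\mu(\psi,F,\nu)=\lim_R R^{-1}\E v_{\nu,R}(R\nu)$) yields $|\mu(\psi,F,\nu)-\mu(\psi',F,\nu)|\le\delta$.

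\emph{Part (ii).} Here I would follow the scheme of the Dirichlet case of Theorem~\ref{continuity00}(ii), the genuinely new feature being the linear growth in $R$ of the Neumann cell solutions. Using the rotation invariance of the Pucci operators, normalize $\nu_1=e_d$ and set $\theta:=|\nu_1-\nu_2|$, and fix intermediate scales $1\ll R\ll L\ll R/\theta$. On the overlap $S$ of the two slabs $\{0<y\cdot\nu_1<2R\}$ and $\{0<y\cdot\nu_2<2R\}$ — a transversally bounded Lipschitz perturbation of a slab of width $\sim R$ — both $v_{\nu_1,R}$ and $v_{\nu_2,R}$ solve $F(D^2\cdot)=0$, so $w:=v_{\nu_1,R}-v_{\nu_2,R}$ obeys the maximal and minimal Pucci inequalities in $S$. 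On the four slab–faces of $\partial S$ lying in $B_L$, the boundary regularity of \eqref{eqn: neumann cell0} (Lemmas~\ref{lem:boundary_holder:N} and~\ref{lem:osc_decay:N}) combined with the modulus of continuity of $\psi$ controls the mismatch by $\lesssim|\psi|_{C^{0,\alpha}}(\theta L)^\alpha$ — either $\partial_{\nu_i}w$ on the faces $\{y\cdot\nu_i=0\}$ or $w$ itself on $\{y\cdot\nu_i=2R\}$ — while on $\partial B_L\cap S$ one has only the a priori bound $|w|\lesssim R\|\psi\|_\infty$. A rescaled version of the localization Lemma~\ref{localization:N} (cut-off radius $L$, with an affine correction absorbing the sign-changing Neumann datum near the axis) then bounds $R^{-1}|w(Re_d)|$, and, combining with the quantitative rate \eqref{eqn: exp conv disc N} for the cell problem in the directions $\nu_1$ and $\nu_2$,
\[ |\mu(\psi,F,\nu_1)-\mu(\psi,F,\nu_2)|\ \lesssim\ \frac{\|\psi\|_\infty R^2}{L^2}\ +\ |\psi|_{C^{0,\alpha}}(\theta L)^\alpha\ +\ \rho^2(\log R)^{1/2}R^{-\hat\beta/2}, \]
with $\hat\beta$ as in Theorem~\ref{thm: convex N} or~\ref{thm: nonconvex N}. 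Choosing $L$ to balance the first two terms and then $R$ to balance the outcome against the last produces the advertised bound, with Hölder exponent tending to $\alpha'(\hat\beta)=\hat\beta/(2(1+\hat\beta))$ (the logarithm being harmless below the threshold).

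\emph{The main obstacle.} The only step that is genuinely harder than in the bounded Dirichlet setting of Theorem~\ref{continuity00} is the localization estimate for $w$ in Part~(ii): because the Neumann cell solutions grow linearly, the best a priori control available on $\partial B_L$ is of size $R\|\psi\|_\infty$, which is large, so everything hinges on beating it with the decay built into the barrier of Lemma~\ref{localization:N} (or, for a sharper exponent, the half-space singular Neumann solution \eqref{fundamental:N}) and on choosing the two length scales so that the geometric mismatch of the tilted slabs and the rate of homogenization of the cell problem are simultaneously small; moreover the randomness carried by $v_{\nu_1,R}$ and $v_{\nu_2,R}$ over the overlap does not cancel, so there is no bootstrap improvement of the a priori bound. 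Keeping track of how the constants of \eqref{eqn: neumann cell0}'s boundary estimates, of $|\psi|_{C^{0,\alpha}}$ and of $\|\psi\|_\infty$ propagate through this balancing — so that only the powers appearing in the statement survive — is where the bulk of the work lies.
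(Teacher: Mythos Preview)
Your proposal is correct and follows essentially the same route as the paper. Part~(i) is exactly the comparison-principle argument the paper invokes (they say only ``direct consequence of the comparison principle''; your affine barrier $\bar w(y)=\delta(2R-y\cdot\nu)$ is the way to make that precise). For part~(ii) the paper also compares $v_{\nu_1,R}$ and $v_{\nu_2,R}$ on a truncated region in the overlap of the two slabs, uses the $C^{1,\alpha}$ boundary regularity (Lemma~\ref{lem:boundary_holder:N}) to control the Neumann and Dirichlet mismatches on the slab faces inside $B_L$, uses the a~priori bound $|v_i|\le 2R$ on the lateral boundary, applies a rescaled Lemma~\ref{localization:N}, and arrives at the same three-term estimate $|\mu_1-\mu_2|\lesssim L^\alpha|\nu_1-\nu_2|^\alpha + R^2L^{-2} + (\log R)^{1/2}R^{-\hat\beta/2}$ before optimizing in $L$ and $R$. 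The only cosmetic difference is that the paper works in a fixed $\nu_1$-aligned box $E=\{L\theta<y\cdot\nu_1<2R-L\theta,\ |y'|\le L\}$ rather than the full overlap $S$, which slightly streamlines the boundary bookkeeping (on the top face $\{y\cdot\nu_1=2R-L\theta\}$ both $v_i$ are $O(L\theta)$ directly, without needing to track separate $\nu_1$- and $\nu_2$-faces).
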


Finally we have the following  homogenization result for  the general domain problem whenever the cell problem homogenizes.

\begin{thm}\label{thm: main N}
Assume \eqref{K} and \eqref{domain1} and suppose that the assumptions of either Theorem~\ref{thm: convex N} or Theorem~\ref{thm: nonconvex N} hold and define $\hat\beta$ accordingly.  Let $u^\e$ and  $\overline{u}$ be respectively the solutions to \eqref{main00:N} and \eqref{mainN}, with Neumann data $g(x,x/\ep,\omega)$ and $\overline g(x) := \mu(g(x,\cdot, \cdot),F,\nu_x)$.  The $u^\e$  concentrates about its mean in the sense that,  there exists $\alpha_0(\hat{\beta}) >0$ and every $p>0$,  there exists a sufficiently large universal constant $M_p$ such that
\begin{equation}
 \P(\{ \omega :\sup_{x \in { U \setminus K}} |u^\e(x,\omega)-\E u^\e(x)|> M_p(\log\tfrac{1}{\e})^{1/2}\e^{\alpha_0}\}) \lesssim \e^p,
 \end{equation}
and the expected value $\E u^\e$ converges, as $\e \to 0$,  to $\overline{u}$ with rate 
\begin{equation}\label{eqn: a est N}
\sup_{x \in {U \setminus K}}|\E u^\e(x) - \overline{u}(x)| \lesssim (\log\tfrac{1}{\e})^{1/2} \e^{\alpha_0}.
\end{equation}
\end{thm}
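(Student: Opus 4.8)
The plan is to follow the two-scale scheme already used for the Dirichlet problem in Theorem~\ref{thm: main}, modified to account for the fact that here it is the flux $Du^\e$, not $u^\e$, that develops a boundary layer. First I would carry out a quantitative boundary-layer analysis near each point of $\partial U$, matching $u^\e$ to the Neumann cell problem \eqref{eqn: neumann cell0}; then a global comparison would glue this local information to the solution $\overline u$ of the effective problem \eqref{mainN}. The object playing the role of the singular solution of Theorem~\ref{thm:fundamental} from the Dirichlet analysis is the barrier $\phi(x)=\Phi(x+\nu)$ of \eqref{fundamental:N}, \eqref{singular:N}: it is $\mathcal{P}^+_{\lambda,\Lambda}$-harmonic in $P_\nu$ and has inward normal derivative bounded below by $c_d(1+|x|)^{\lambda(d-1)/\Lambda}$ on $\partial P_\nu$, so after rescaling to scale $\e$ and localizing via Lemma~\ref{localization:N} it absorbs the oscillation of the boundary flux against a smooth affine profile, at the cost of a term decaying in the width of the layer.

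Concretely, fix $x_0\in\partial U$ with inward normal $\nu=\nu_{x_0}$ and blow up: $v^\e(y,\omega)=\e^{-1}\bigl(u^\e(x_0+\e y,\omega)-u^\e(x_0,\omega)\bigr)$ solves, by one-homogeneity of $F$, $F(D^2v^\e)=0$ in $\e^{-1}(U-x_0)$ with Neumann data $g\bigl(x_0+\e y,\e^{-1}x_0+y,\omega\bigr)$ on $\e^{-1}(\partial U-x_0)$. Using \eqref{domain2} to replace $\e^{-1}(\partial U-x_0)$ by the half-space $P_\nu$ up to a normal error $O(\e R^2)$ inside $B_R$, \eqref{eqn: continuity g} to freeze the slow variable so the data becomes $\psi(y,\omega):=g(x_0,\e^{-1}x_0+y,\omega)$ up to $O(\e R)$, and Lemmas~\ref{lem:boundary_holder:N}, \ref{lem:osc_decay:N}, \ref{localization:N} with the barrier $\phi$, I would show that $R^{-1}v^\e(R\nu,\omega)$ and $R^{-1}v_{R,\nu}(R\nu,\omega)$ — the rescaled solution of \eqref{eqn: neumann cell0} with data $\psi$ — agree up to $O(\e R+R^{-1})$. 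By stationarity \eqref{stationary g} and \eqref{takis-4}, $\psi(\cdot,\omega)$ has the law of $g(x_0,\cdot,\cdot)$, so Theorems~\ref{thm: convex N}, \ref{thm: nonconvex N} give, off an event of probability $\le C\exp(-cR^{\hat\beta}t^2)$, $|R^{-1}v_{R,\nu}(R\nu,\omega)-\overline g(x_0)|\le C((\log R)^{1/2}R^{-\hat\beta/2}+t)$, where $\overline g(x_0)=\mu(g(x_0,\cdot,\cdot),F,\nu_{x_0})$. With $R=\e^{-\theta}$ and $\delta=\e R$, balancing $\e R$, $R^{-1}$ and $(\log R)^{1/2}R^{-\hat\beta/2}$ fixes $\theta(\hat\beta)\in(0,1)$ and $\alpha_0(\hat\beta)>0$ with
\[
\bigl|u^\e(x_0+\delta\nu,\omega)-u^\e(x_0,\omega)-\delta\,\overline g(x_0)\bigr|\ \le\ C\delta\bigl((\log\tfrac1\e)^{1/2}\e^{\alpha_0}+t\bigr)
\]
off an event of probability $\le C\exp(-c\e^{-\theta\hat\beta}t^2)$.

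I would then globalize. A union bound over an $\e$-net of points of $\partial U$, combined with the oscillation decay of Lemma~\ref{lem:osc_decay:N} — which, unlike the Dirichlet case, holds all the way up to $\partial U$ and is the reason the conclusion is stated on all of $U\setminus K$ — turns this pointwise estimate into the asserted concentration of $u^\e$ about $\E u^\e$. To identify $\lim_\e\E u^\e$ I would rerun the barrier construction with $\overline g$ replaced by $\overline g\pm e(\e)$, $e(\e):=C(\log\tfrac1\e)^{1/2}\e^{\alpha_0}$, producing global sub/supersolutions of \eqref{main00:N} obtained by gluing the boundary-layer correctors to the solutions $\overline u^{\pm}$ of \eqref{mainN} with fluxes $\overline g\pm e(\e)$; here the maximum principle \eqref{takis6}, the continuity of $x_0\mapsto\overline g(x_0)=\mu(g(x_0,\cdot,\cdot),F,\nu_{x_0})$ from Theorem~\ref{continuity:N} (which also absorbs the net discretization and the $O(\e R^2)$ normal error through the H\"older modulus of $\mu$ in $\nu$), and the continuous dependence of the solution of \eqref{mainN} on its Neumann flux (from \eqref{eqn: ellipticity w/ pucci} and $\phi$) give $\overline u^-(x)-o(1)\le\E u^\e(x)\le\overline u^+(x)+o(1)$ uniformly on $U\setminus K$; letting $\e\to0$ yields \eqref{eqn: a est N}. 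The bound on $f$ in \eqref{f} enters only to bound $\sup|u^\e|$, hence the size of the barriers, near $\partial K$.

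The hard part is the boundary-layer step for the Neumann condition. Because $u^\e$ grows linearly in the distance to $\partial U$ and it is the gradient that oscillates, the comparison with the cell problem cannot be made on $u^\e$ itself; one must subtract the affine profile $u^\e(x_0)+\overline g(x_0)(x-x_0)\cdot\nu$ and control the remainder with the localized singular barrier $\phi$, verifying that its $\mathcal{P}^+_{\lambda,\Lambda}$-supersolution property and its normal-derivative lower bound survive both the straightening of $\partial U$ and the freezing of the slow variable in $g$. A second difficulty, special to boundary homogenization, is that there is no ABP-type bound of $u^\e$ by a norm of its Neumann data, so every estimate must pass through explicit barriers together with the concentration inequalities of Section~\ref{sec: prob setting}; making all the exponents close at once — fixing $\alpha_0(\hat\beta)$ and the layer width $\delta=\e^{1-\theta}$ simultaneously — is exactly where \eqref{hyp: mixing}, \eqref{domain2} and \eqref{eqn: continuity g} are used together.
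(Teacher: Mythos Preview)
Your proposal is essentially correct and follows the same two-step architecture as the paper: localize near each boundary point, compare with the Neumann cell problem using Lemmas~\ref{localization:N} and~\ref{lem:boundary_holder:N}, take a net on $\partial U$ and a union bound for concentration, and sandwich $u^\e$ between solutions $\phi^\pm$ of the effective problem with perturbed flux $\overline g\pm e(\e)$. The paper does not separate ``concentration about $\E u^\e$'' from ``convergence of $\E u^\e$'' as you do; instead it defines a bad event $\Omega_\e$ (via Lemma~\ref{cone}) and, for $\omega\notin\Omega_\e$, proves $\phi^-\le u^\e\le\phi^+$ directly by contradiction at the boundary maximum of $u^\e-\phi^+$.

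The one place where the paper proceeds differently from your sketch is precisely the step you call ``the hard part.'' Your blow-up $v^\e(y,\omega)=\e^{-1}(u^\e(x_0+\e y,\omega)-u^\e(x_0,\omega))$ and the claim that $R^{-1}v^\e(R\nu)$ agrees with $R^{-1}v_{R,\nu}(R\nu)$ up to $O(\e R+R^{-1})$ skips over a real issue: $v^\e$ satisfies no boundary condition at height $2R$ matching the Dirichlet condition $v_{R,\nu}=0$ there, and the constant you have subtracted, $u^\e(x_0,\omega)$, is random and a priori uncontrolled. The paper handles this by working instead with the Taylor expansion $T$ of the \emph{deterministic} comparison function $\phi^+$, and inserting an intermediate local problem $w_\e$ (Neumann data $g(x_0,\cdot/\e,\omega)$ on $\partial U\cap B_{L\e^{2/3}}(x_0)$, zero Dirichlet on both the top $\{(\cdot-x_0)\cdot\nu=\e^{2/3}\}$ and the lateral boundary). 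This $w_\e$ can be compared to $u^\e-T$ by Lemma~\ref{localization:N} on one side and to the rescaled cell problem $v_R$ on the other, since both have matching Dirichlet conditions at the top; the contradiction then comes from $\partial_\nu\phi^+\le -\overline g(x_0)-c_1\e^{k'}$ at the assumed maximum point. Your proposal to subtract the affine profile $u^\e(x_0)+\overline g(x_0)(x-x_0)\cdot\nu$ can be made to work, but only after one already knows something about $u^\e$ near the boundary---which is circular unless recast as the paper's contradiction argument.
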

\medskip

We remark that the analogue of Theorem~\ref{thm: lsi cell} will hold in the Neumann case as well although we do not provide the proof as it is a natural adaptation of the other proofs presented.  Again the extension to a result in general domains with Neumann data given by ``lifting up from the hyperplane" should also follow with similar arguments.

\section{The Dirichlet Cell Problem}\label{cell}
Here we present the proofs of Theorem~\ref{thm: convex}, Theorem~\ref{thm: nonconvex},  Theorem~\ref{continuity00} and Theorem~\ref{thm: lsi cell} which are about the existence and properties of the homogenized boundary condition or ergodic constant, that is the asymptotic behavior, as $R \to \infty$, of the solutions to Dirichlet cell problem  \eqref{eqn: cell} and the continuity with respect to the normal directions. 
\medskip

In many places throughout the section we will consider \eqref{eqn: cell}  with discontinuous data. In this case, when we talk about the solution satisfying the maximum/comparison principle  we refer to the one constructed  in the previous section.

\medskip
Since the arguments are rather long, we have divided the section  into three major parts.  The first is about  Theorem~\ref{thm: convex} and Theorem~\ref{thm: nonconvex}, the second deals with Theorem~\ref{continuity00} and the third concerns  Theorem~\ref{thm: lsi cell}.

\medskip

\subsection*{The proofs of Theorem~\ref{thm: convex} and Theorem~\ref{thm: nonconvex}.}
There are two main steps here. The first is  to consider, for $R$ large,  $v_\nu(R\nu,\omega)$ as a function of the boundary data and prove a Lipschitz estimate in order to apply one of the concentration inequalities given in Theorem~\ref{concentration1} and Theorem~\ref{concentration2}. The argument is essentially deterministic and will be the focus of one of the subsections below. In the second step we obtain  a quantitative estimate for the concentration of $v_\nu(y,\omega)$ and  show the convergence of the means $\E v_\nu(y)$, as $y\cdot \nu \to \infty$, to a deterministic constant $\mu(\psi,F,\nu)$. To motivate the arguments we begin with an outline of the proof. 
  
 \medskip

{\it The outline of the proof.} \ 
We consider the cell problem \eqref{eqn: cell}.  For simplicity we take $\nu = e_d$ and boundary data $\psi(y,\omega) = \xi_{y\bmod \integer^{d-1}}(\omega)$, where  $(\xi_k)_{k \in \integer^{d-1}}$ is a stationary ergodic field on $\integer^{d-1}$ with $\mu = \E \xi_0$ and $\sigma^2 = \textup{var}(\xi_0)$. 
Let $v(\cdot; \psi)$ be the solution of the cell problem \eqref{eqn: cell} (note that for notational simplicity we write $v$ instead of  $v_{e_d}$). Then, for large $R$,  we consider the value  $v_R(\psi) = v(Re_d ; \psi)$  as a function $v_R : [-1,1]^{\integer^{d-1}} \to \real$. 

\medskip

To keep the ideas simple, we first describe the argument when the interior equation  is just the Laplacian, $-\Delta$.  Then,  as we already said in the introduction, we write $v_R(\psi)$ as 
$$ v_R(\psi) = \int_{\partial P_{e_d}} P(Re_d,y) \psi(y) dy = \sum_{k \in \integer^{d-1}} a_k  \xi_k,$$
where $P:  P_{e_d} \times \partial P_{e_d}  \to \real$ is the Poisson kernel for the upper half space and 
$$ a_k = a_k(R) := \int_{k+[0,1)^{d-1}} P(Re_d,y)  \ dy,$$
and observe that 
$$\E v_R(\psi) = (\E\xi_0 ) \int_{ \partial P_{e_d}} P(Re_d,y)  \ dy = \mu,$$
and 
$$ a_k   \sim R(R^2+|k|^2)^{-\frac{d}{2}}.$$

\medskip

In particular we see that $v_R(\psi)$ has the form of an ergodic average and, hence, we can apply the ergodic theorem to get
$$ v_R(\psi) - \mu \to 0 \ \  \hbox{ almost surely  as } R \to \infty.$$
When the $\xi_k$'s  are i.i.d., a standard variance estimate yields
$$ \textup{var} ( v_R(\psi)) = \sigma^2\sum_{k \in \integer^{d-1}} a_k^2 \leq \int_{ \partial P_{e_d}} P(Re_d,y)^2 \ dy \leq C_d R^{1-d},$$
which already implies, by Chebyshev's inequality, homogenization in probability.  

\medskip

This is not, however, optimal, since it is possible to  obtain, using Hoeffding's inequality  (see \cite{hoeffding}), the following Gaussian-type concentration about the mean: 
\begin{equation}\label{eqn: hoeffding}
 \P(\{ \omega :|v_R(\psi)-\mu| > t \}) \leq 2 \exp\left(\frac{-t^2}{2\sum_{k \in \integer^d} a_k^2} \right) \leq 2 \exp\left( - c_dR^{d-1}t^2\right).
 \end{equation}
\medskip

Next we describe a way  to generalize the main components of the above argument to the nonlinear setting. 
  
\medskip

Arguing for the moment heuristically, we  suppose that we can linearize $v_R$ around $\psi \equiv 0$ and write
$$ v_R(\psi) = \sum_{k \in \integer^{d-1}} a_k \xi_k + o(|\xi|_{\ell^\infty}),$$
where $a_k: = \partial_{\xi_k}v_R(0)$ and $|\xi|_{\ell^\infty}$ is small.  
\medskip

Fix $j \in \integer^{d-1}$. Taking as boundary data $\xi_k = \indicator_{\{j\}}$  and  using the  strong maximum principle, the positive homogeneity of $F$ and Taylor's expansion, we find 
$$ 0 < v_R(\psi) = \eta^{-1}v_R(\eta \psi) = a_j + \eta^{-1} o(\eta),$$
and, after sending $\eta \to 0$,  $a_j > 0$.  
\medskip

Taking next $\psi \equiv 1$, in which case $v_R(\psi) = 1$, the above argument  yields, after letting $\eta \to 0$, 
$$ |a|_{\ell^{1}} = \sum_{k \in \integer^{d-1}} a_k = 1.$$
To show homogenization then, it is enough to obtain a  vanishing, as $R \to \infty$, bound on the  $\ell^2$-norm of $a_k$,  and,  since, 
$$ \sum_{ k \in \integer^{d-1}} a_k^2 \leq |a|_{\ell^{\infty}}|a|_{\ell^1} \leq |a|_{\ell^{\infty}}, $$
it suffices  to show that, as $R \to \infty$,    $|a|_{\ell^{\infty}} \to 0$.  
\medskip

Of course this depends on a concentration estimate of the form \eqref{eqn: hoeffding} holding in the nonlinear case.  This is a delicate issue related to the difference between the various concentration results stated in Section~\ref{prelim}.

\medskip

This is where the structure of the pde is needed to complete the argument. It is, however, evident from the heuristics above, that,  to obtain the result, it is enough to get bounds on $v_R(\psi)$ for boundary data of the form $\indicator_{k+[0,1)^{d-1}}$.

\medskip

For nonlinear equations we are not aware of any quantity other than $v_R(\psi)$ which controls the homogenization and for which we can prove satisfactory estimates.  Here $v_R(\psi)$ is no longer necessarily a linear or even a subadditive average of the $\xi_k$ and, hence,  it is not possible, as far as we can tell, to apply any version of the ergodic/subadditive ergodic theorem.  
\medskip

We are, however,  able to prove that $v_R(\psi)$ is a Lipschitz function of the variables $\xi_k$ which, as we show,  does not put too much weight on any of them in the sense that
$  |\partial_{\xi_k} v_R|_{\ell^{\infty}}  \to 0 \ \hbox{ as } \  R \to \infty.$
\medskip

In this case, the heuristic argument above suggests that the problem will homogenize.  We do need, however, to deal with the nonlinearity of the equation reflected in the fact that $v_R(\psi)$ is only a Lipschitz,  instead of linear, average of the $\xi_k$.
This is where we use the tools of concentration inequalities to show that the $v_R$'s  concentrate around their mean.

\medskip

{\it The discretized cell problem. }
We present here some  estimates for the solution of the cell problem in terms of the boundary data, which can be seen as the analogue  to the classical $L^p$-estimates for the Poisson kernel of the boundary data in the linear case.  
\medskip

We assume for simplify that $\nu = e_d$, but the proof will apply in general.  
\medskip

Recall that  $Q = [0,1)^{d-1}$  is  the unit cube on $\partial P_{e_d}$ and note that 
$$\partial P_{e_d} = \bigcup_{i \in \integer^{d-1}} (i+ Q).$$

Instead of using $Q$, it is possible to cover $\partial P_{e_d}$ using $rQ$ for some $r>0$ which can be chosen later based on the mixing rate function to optimize the estimates.  Since this would only change constants and not the rate of convergence,  we just take $r=1$ for simplicity.

\medskip
For  $X \in \Xi:=C(\overline{Q})^{\integer^{d-1}}$,  we consider the solution $u : P_{e_d} \times \Xi \to \real$ to 
\begin{equation}\label{eqn: discrete2}
\left\{
\begin{array}{lll}
F(D^2_yu) = 0  & \text{ in } & P_{e_d}, \\[1mm]
u(y;X) = \sum_{i \in \integer^{d-1}} X_i(y-i) \indicator_{i+Q}(y) & \text{ on } & \partial P_{e_d}.
\end{array}
\right.
\end{equation}

\medskip

We remark that, if $-F$ is convex (an analogous claim is true if $-F$ is concave),  then $u(\cdot;X)$ is convex  with respect to $X$. This follows from the classical fact (see \cite{CIL}) that, when $-F$ is convex, linear combinations of supersolutions  of \eqref{eqn: discrete2} are also supersolutions.

\medskip

In Lemma \ref{decorrelate} below, we show that, for each fixed $y \in P_{e_d}$,  by the maximum principle, $u(y;X)$ is continuous in the sup-norm with respect to each component $X_j$, and, hence 
 $u(y;\cdot) : (\Xi,\mathcal{B}(\Xi)) \to (\real, \mathcal{B}(\real))$ is a measurable mapping.  
\medskip
 
  We think of  the value of $u(Re_d;X)$ as a function of $R$ and $X$ and write
\begin{equation}\label{variable} 
f_R(X): =u( Re_d;X) .
\end{equation}\label{eqn: lp norms}
 
We prove Lipshitz estimates for $f_R$  with respect to  the various norms $\ell^p$-norms on $C(\overline{Q})^{\integer^{d-1}}$, which we define next. For $X \in C(\overline{Q})^{\integer^{d-1}}$,  the $\ell^p(\integer^{d-1};C(\overline{Q}))$ norms are given for 
$1\leq p < \infty$ and  $p = \infty$,  by 
\begin{equation}
 |X|_{\ell^p} = \left(\sum_{ i \in \integer^{d-1}} \sup_{y \in Q} |X_i(y)|^p \right)^{1/p}, 
 \end{equation}
 and
 $$ |X|_{\ell^\infty} = \sup_{i \in \integer^{d-1}} \sup_{y \in Q} |X_i(y)|.$$

\medskip

\begin{lem}\label{decorrelate}
Assume \eqref{op1} and \eqref{op2}. The map $f_R : C(\overline{Q})^{\integer^{d-1}} \to \real $ has the following continuity properties
 for  $X,Y \in C(\overline{Q})^{\integer^{d-1}}$ and some universal constant $C=: C(d,\lambda,\Lambda)>0$:
\begin{equation}\label{ eqn: linfty bd}
\hskip-3.5in \text{(i)} \qquad \qquad  
| f_R(X)-f_R(Y)| \leq |X-Y|_{\ell^{\infty}}.
\end{equation}
\begin{equation}\label{eqn: l1 bd} 
\hskip-3.2in \text{(ii)} \qquad \qquad |f_R(X)-f_R(Y)| \leq C R^{-\beta}|X-Y|_{\ell^{1}}.
\end{equation}
\begin{equation}\label{eqn: l2 bd}
\hskip-3.1in \text{(iii)} \qquad \qquad |f_R(X)-f_R(Y)| \leq C R^{-\beta/2}|X-Y|_{\ell^2}.
\end{equation}
\end{lem}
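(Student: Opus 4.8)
The plan is to prove the three estimates using only the comparison principle together with the singular-solution barrier $\Phi$ from Theorem~\ref{thm:fundamental} (with $\gamma=0$, so exponent $\beta$ as in \eqref{eqn: beta}) and its tangential decay \eqref{eqn: decay in tang}. The key point is that replacing one coordinate $X_i$ of the boundary data by $Y_i$ perturbs the boundary datum by an amount $\le |X-Y|_{\ell^\infty}$ supported on the single cube $i+Q$, so by the maximum principle for the constructed solution (the contraction property \eqref{takis6}) the change in $f_R(X)$ is controlled by a solution whose boundary data is $|X-Y|_{\ell^\infty}$ on $i+Q$ and $0$ elsewhere. First I would record (i): since $F(D^2 w)=0$, $w=(X-Y)$-datum admits the constant supersolution $|X-Y|_{\ell^\infty}$ and the constant subsolution $-|X-Y|_{\ell^\infty}$, so $|f_R(X)-f_R(Y)|\le |X-Y|_{\ell^\infty}$ directly from \eqref{takis6}; this also shows the continuity in each $X_j$ needed for measurability of $f_R$.

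The heart of the argument is (ii). By the triangle inequality and a telescoping sum over coordinates it suffices to bound $|f_R(X)-f_R(Y)|$ when $X$ and $Y$ differ in a single coordinate $i$; write $s_i := \sup_{y\in Q}|X_i(y)-Y_i(y)|$. Then the difference of the two solutions is, by \eqref{takis6}, at most the solution $h_i$ of $F(D^2 h_i)=0$ in $P_{e_d}$ with boundary data $s_i\,\indicator_{i+Q}$. I would dominate $h_i$ from above using the singular barrier: for a point $z\in i+Q$ on the boundary, a suitably translated and scaled copy of $\Phi$, namely $x\mapsto c\,s_i\,\Phi(x-z+e_d)$, is a $-\mathcal P^+_{\lambda,\Lambda}$-supersolution in $P_{e_d}$, is $\ge s_i$ on $i+Q$ (for $c$ universal, since $\Phi$ is bounded below on a neighborhood of $e_d$), and is nonnegative on $\partial P_{e_d}$. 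Hence $h_i(Re_d)\le c\,s_i\,\Phi(Re_d - z + e_d)$, and by \eqref{eqn: decay in tang} with $x = Re_d - z + e_d$ (so $x_d\sim R$, $|x|\sim R+|i|$) this gives $h_i(Re_d)\lesssim s_i\, R (R+|i|)^{-(\beta+1)} \le s_i R^{-\beta}$. Summing over $i$ yields (ii) with constant $C=C(d,\lambda,\Lambda)$. I would be a little careful here to use the \emph{sharper} decay $R(R^2+|i|^2)^{-(\beta+1)/2}$ if needed, but $R^{-\beta}$ per coordinate already suffices for the stated bound, since one then also uses $\sum_i R(R^2+|i|^2)^{-(\beta+1)/2}\lesssim 1$; for the crude statement $|f_R(X)-f_R(Y)|\le CR^{-\beta}|X-Y|_{\ell^1}$ one just bounds each term by $s_i R^{-\beta}$ and sums.

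Finally (iii) is an interpolation between (i) and (ii). Given $X,Y$, set $s_i$ as above and use the telescoping/triangle-inequality reduction to single-coordinate changes, bounding the $i$-th contribution by $\min\{s_i,\ C R^{-\beta} s_i\cdot(\text{spatial weight})\}$; more simply, combine the $\ell^\infty$ bound per coordinate with the $\ell^1$ bound to get, for the per-coordinate response $h_i(Re_d)\le \min\{s_i, C R^{-\beta} s_i\}$ — that is not yet $\ell^2$. The correct route is: per coordinate $h_i(Re_d)\le C s_i\, w_i$ with weight $w_i := R(R^2+|i|^2)^{-(\beta+1)/2}$ from \eqref{eqn: decay in tang}, and also $h_i(Re_d)\le s_i$; then $\sum_i h_i(Re_d)\le C\sum_i s_i\, w_i \le C\,|X-Y|_{\ell^2}\,\big(\sum_i w_i^2\big)^{1/2}$ by Cauchy--Schwarz, and $\sum_i w_i^2 = \sum_i R^2(R^2+|i|^2)^{-(\beta+1)} \lesssim R^{-\beta}$ (integrating in $\mathbb R^{d-1}$, convergent since $\beta+1 > \tfrac{d-1}{2}$, which holds by \eqref{eqn: beta} in the relevant range; in general one simply truncates and uses $\beta>0$). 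Taking square roots gives the factor $R^{-\beta/2}$ and hence (iii).

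The main obstacle is (iii): one must be honest about whether $\sum_i w_i^2 \lesssim R^{-\beta}$ requires the tangential decay exponent $\beta+1$ (as opposed to the slower $x_d/|x|^\beta$ one would naively write), which is exactly why \eqref{eqn: decay in tang} — the improved $\Phi(x/|x|)\sim x_d/|x|$ estimate — was proved just above; without the extra power of $x_d/|x|$ the tangential sum would diverge for small $d$. So the real content is the barrier construction feeding the weighted $\ell^2$ sum, and checking its convergence; everything else is comparison-principle bookkeeping.
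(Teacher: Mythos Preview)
Your treatment of (i) and the barrier construction for (ii) are essentially correct (though the intermediate $F$-solution $h_i$ is unnecessary and not quite right as stated --- the difference $u(\cdot;X)-u(\cdot;Y)$ is only a $-\mathcal P^+_{\lambda,\Lambda}$-subsolution, not an $F$-solution, so you should compare it directly with the translated $\Phi$-barrier, which is a $-\mathcal P^+_{\lambda,\Lambda}$-supersolution; this is what the paper does, summing the barriers rather than telescoping).

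The real gap is in (iii). Your Cauchy--Schwarz argument gives
\[
\sum_i s_i w_i \le |X-Y|_{\ell^2}\Big(\sum_i w_i^2\Big)^{1/2},
\]
but with $w_i = R(R^2+|i|^2)^{-(\beta+1)/2}$ one computes
\[
\sum_{i\in\integer^{d-1}} w_i^2 \sim R^2\int_{\real^{d-1}}(R^2+|x|^2)^{-(\beta+1)}\,dx \sim R^{d-1-2\beta},
\]
\emph{not} $R^{-\beta}$. Since $\beta\le\tfrac{\lambda}{\Lambda}(d-1)\le d-1$ by \eqref{eqn: beta}, the resulting exponent $(d-1-2\beta)/2$ is strictly worse than $-\beta/2$ except in the Laplacian case $\beta=d-1$. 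So your route does not prove \eqref{eqn: l2 bd} (and the bound it does give is exactly the one the paper uses later for the nonconvex Hamming-distance concentration, not for the convex $\ell^2$ concentration).

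The paper obtains (iii) by a genuine $(\ell^\infty,\ell^1)$ interpolation at the level of the \emph{full} boundary datum, not per coordinate: write $Z=X-Y$, split $Z=Z_{>t}+Z_{\le t}$ according to whether $\sup_Q|Z_j|>t$, apply (i) to $Z_{\le t}$ and (ii) to $Z_{>t}$, and use $|Z_{>t}|_{\ell^1}\le t^{-1}|Z|_{\ell^2}^2$. This yields $|f_R(X)-f_R(Y)|\le t + CR^{-\beta}t^{-1}|Z|_{\ell^2}^2$, and optimizing $t=R^{-\beta/2}|Z|_{\ell^2}$ gives the claimed $CR^{-\beta/2}|Z|_{\ell^2}$. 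Your per-coordinate $\min\{s_i,CR^{-\beta}s_i\}$ attempt, which you correctly abandoned, was closer in spirit; the point is that the truncation must be done on the whole vector so that the $\ell^\infty$ bound is used once globally, not summed.
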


\begin{proof}  
The first estimate is an immediate consequence of the definition of $f_R$ and the comparison principle.

\medskip

The proof of the $\ell^1$-continuity consists of constructing  a barrier which controls how much $u(Re_d; X)$  can change when  the value of X is altered at a single site $k \in  Z^{d-1}.$ 

\medskip

The main tool  in coming up with such barrier is the 
singular solution $\Phi$ of Theorem~\ref{thm:fundamental} with $\textup{Cone}_\gamma=P_{e_d}$, that is the solution $\Phi$ to 
\begin{equation}\label{singular}
\left\{
\begin{array}{lll}
\mathcal{P}^+_{\lambda,\Lambda}(D^2\Phi) = 0 & \hbox{ in } & P_{e_d}, \\[1mm]
\Phi = 0 & \hbox{ on } & \partial P_{e_d} \setminus \{0\},
\end{array}\right.
\end{equation} 
with  $\max_{\partial B_1 \cap P_{e_d}} \Phi = 1$, which is $\beta$ homogeneous with $\beta = \beta(d,\lambda,\Lambda)$ as in \eqref{eqn: beta}.  Let 
$$\widetilde{\phi}(x) := 2m^{-1}\Phi( x+e_d ),$$
with $m := \min\{ \Phi(x) : x=(x',1) \text{ with } x' \in Q \}>0$, and observe that, in view of the uniform ellipticity and convexity of $\mathcal{P}^+_{\lambda,\Lambda}$,  
$\widetilde{\phi} \in C^{2}(\overline P_{e_d})$
(see \cite{CC95}),  and moreover,  
$$\widetilde{\phi} \geq 1 \hbox{ on }  \partial P_{e_d} \cap Q.$$
Then, for $X,Y \in \ell^{1}(\integer^{d-1})$, let,
$$\phi(x): = \sum_{j \in \integer^{d-1}}  (\sup_Q|X_j-Y_j|)\tilde{\phi}(x-j); $$
note that  the sum, which is  only over sites where $X_j \neq Y_j$,  converges in $\overline  P_{e_d}$ since 
  $X,Y \in \ell^{1}(\integer^{d-1}; C(\overline{Q}))$ and , in view of the regularity of $\widetilde{\phi}$, $\phi \in C^{2,\alpha}(\overline P_{e_d})$.
 
 \medskip
 
 Using the definition and regularity of   $\phi$ and the subadditivity of the maximal operator $\mathcal{P}^+_{\lambda,\Lambda}$ we get  
$$ F(D^2u(y;X)+D^2\phi,y) \geq 0 \ \hbox { in } \ P_{e_d} \  \hbox{ and } \ u(y;X)+\phi(y) \geq u(y;Y) \ \hbox{ on } \partial P_{e_d}.$$

\medskip 

The comparison principle and the homogeneity of $\Phi$ yield, for some universal $C$, 
$$ u(Re_d;Y) \leq u(Re_d;X) + \phi(Re_d) \leq u(Re_d;X) + m^{-1}C\sum_{j \in \integer^{d-1}} R(|j|^2+R^2)^{-(\beta+1)/2}\sup_Q|X_j-Y_j|;$$
it then follows that 
$$|f_R(X)-f_R(Y)| \leq C \sum_{j \in \integer^{d-1}} R(|j|^2+R^2)^{-(\beta+1)/2}\sup_Q|X_j-Y_j|.$$

Finally, employing  H\"{o}lder's inequality, we find for any $p \in [1 ,\tfrac{1}{1-\frac{\beta+1}{d-1}})$ or $p \in [1, \infty]$, if $\beta+1 > d-1$,
$$ |f_R(X)-f_R(Y)| \leq R^{-\beta+ (d-1)/p'}|X-Y|_{\ell^p(\integer^{d-1})},$$
where $p'$ is the H\"{o}lder dual exponent of $p$; note that, using the Poisson kernel for the upper half space, it is possible to derive the same result  as in the linear case with $\beta = d-1$. 

\medskip

For the $\ell^2$-continuity, we consider $X \in C(\overline{Q})^{\integer^{d-1}}$ and $Z \in C(\overline{Q})^{\integer^{d-1}}$ with $|Z|_{\ell^2} < +\infty$ and,  for some $t>0$ to be chosen later,  we write 
$$Z = Z_{>t}+Z_{\leq t} \ \hbox{ where } \ (Z_{>t})_j = Z_j \indicator_{\{\sup_Q |Z_j| >t\}} \ \text{and} \ \ Z_{\leq t} = Z-Z_{>t}.$$ 

Then 
\begin{align*}
 |f_R(X+Z)-f_R(X)|& \leq |f_R(X+Z)-f_R(X+Z_{>t})| + |f_R(X+Z_{>t}) - f_R(X)| \\
 &\leq |Z_{\leq t}|_{\ell^\infty}+CR^{-\beta}|Z_{> t}|_{\ell^1} \\
 & \leq t+CR^{-\beta}t^{-1}|Z|_{\ell^2}^2, 
 \end{align*}
and, after choosing $t = R^{-\beta/2}|Z|_{\ell^2}$,
$$ |f_R(X+Z)-f_R(X)| \leq  CR^{-\beta/2}|Z|_{\ell^2}^2.$$
\end{proof}

\medskip

{\it The homogenization of the cell problem \eqref{eqn: cell}.}  We apply the estimates of Lemma \ref{decorrelate} to the random problem and establish the homogenization of the cell problem \eqref{eqn: cell}
\medskip

As we did before, in what follows,  for simplicity, we assume that $\nu = e_d$, $|\psi| \leq 1$ almost surely, and we work with the solution $v_{e_d}$ of the cell problem
\begin{equation}\label{discrete2}
\left\{
\begin{array}{lll}
F(D^2v_{e_d}) = 0  & \text{ in } & P_{e_d},\\[1mm]
v_{e_d}(\cdot,\omega) = \psi(\cdot,\omega) & \text{ on } & \partial P_{e_d}.
\end{array}
\right.
\end{equation}

\smallskip

 We now transform to the setting of the previous section to apply the concentration results Theorem~\ref{concentration1} and Theorem~\ref{concentration2}.  More precisely, we consider the Banach space $C(\overline{Q})$ with the sup-norm and the associated Borel $\sigma$-algebra $\mathcal{B}(C(\overline{Q}))$ and the measure space $(\Xi, \mathcal{B}(\Xi))$, where, as before  $\Xi: = C(\overline{Q})^{\integer^{d-1}}$, and we define the measurable map $\Psi : \Omega \to \Xi$ given, for each $j\in \integer^{d-1}$,  by
$$\Psi_j(\omega)(\cdot) :=  \psi(j+\cdot, \omega) ,$$
which induces a probability measure ${P}$ on $(\Xi,\mathcal{B}(\Xi))$, the pushforward of $\P$ by $\Psi$ or, equivalently, the law of the random variable $\Psi$. Recall that, by the definition of the pushforward  measure,  for any measurable $f : \Xi \to \real$, $\int_\Xi f(X) d{P}(X) = \int_\Omega f(\Psi) d\P$.

\medskip

It is immediate, in view of the properties of $\psi$ and $\P$, that ${P}$ is stationary with respect to the natural $\integer^{d-1}$ action on $\Xi$.  Furthermore, the random field $\Psi$ on $\integer^{d-1}$ has the $\phi$-mixing property \eqref{phi mixing} adapted to the lattice with the same rate function as $\psi$ up to a dimensional constant, and, in view of \eqref{hyp: mixing},
\begin{equation}\label{eqn: mixing Psi}
 \sum_{i \in \integer^{d-1}} \phi (|i|)^{1/2} \lesssim \rho.
 \end{equation}
 We also remark that, in view of the definition of $\Psi$,  $ u(\cdot; \Psi)=v_{e_d}(\cdot,\omega)   \ \text{on } \ \partial P_{e_d}$. Since  $\psi(\cdot,\omega)$ is continuous for each $\omega$, in view of the uniqueness of the solutions to  the boundary value problem, 
 $$ u(\cdot; \Psi)=v_{e_d}(\cdot,\omega)   \ \text{ on } \ \overline P_{e_d}.$$

\medskip

Furthermore, given the  definition of $P$  and in view of the relationship between $u(\cdot; \Psi)$ and $v_{e_d}(\cdot,\omega)$,  the concentration of $u(R\nu;\Psi)$ with respect to  ${P}$  is the same as concentration of $v_{e_d}(y,\omega)$ with respect  $\P$.  In other words any probability estimates and expectations on $u(\cdot; \Psi)$ with respect to ${P}$ are the same as probability estimates and expectations on $v_{e_d}(\cdot, \cdot)$ with respect to $\P$.

\medskip

We apply next Theorem~\ref{concentration1} or Theorem ~\ref{concentration2} depending on whether $F$ and, hence,  $u(Re_d; \cdot )$ are convex or concave.

\medskip

When $F$ is neither convex nor concave, we need to assume that the ellipticity constants $(\lambda,\Lambda)$ of $F$  are such that
\begin{equation}\label{eqn: beta cond} 
2\beta(\lambda,\Lambda) > d-1,
\end{equation}
where again $\beta$ is the exponent of the upward pointing half-space singular solution for the maximal operator.  In view of \eqref{eqn: beta},   a sufficient condition for \eqref{eqn: beta cond} is
\begin{equation}\label{eqn: condition on beta}
\frac{\lambda}{\Lambda} >\frac{1}{2}\frac{d+1}{d}.
\end{equation} 
In this case, recalling that $|\psi| \leq 1$ (and therefore $|\Psi|_{\ell^\infty} \leq 1$) almost surely, we have, from the proof of Lemma \ref{decorrelate}, the following weighted Hamming distance continuity of $f_R = u(Re_d; \cdot)$:
$$|f_R(X)-f_R(Y)| \lesssim \sum_{k \in \integer^{d-1}} R(|k|^2+R^2)^{-(\beta+1)/2} \indicator_{\{X_k \neq Y_k\}} \ \hbox{ for all  } \ X,Y \in \textup{supp}( P).$$
Since we can bound the $\ell^2$-norm of the coefficients in the above inequality   as follows 
$$\sum_{k \in \integer^{d-1}} R^2(|k|^2+R^2)^{-(\beta+1)} \lesssim R^{d-1-2\beta} \int_{\real^{d-1}} (1+|x|^2)^{-(\beta+1)} dx, $$
applying Theorem \ref{concentration1} and using \eqref{eqn: beta cond} we find
\begin{equation*}\label{averageNC}
\P(\{ \omega : |v_{e_d}(Re_d,\omega)-\E v_{e_d}(Re_d,\cdot)| \geq t \}) = {P}(\{X \in \Xi: |f_R(X)- \E_{{P}} f_R| \geq t \}) \leq C \exp\left(-cR^{2\beta-(d-1)}t^2\right),
\end{equation*}
with  $c = c'/\rho^2$ and  $c'=c'(d,\lambda,\lambda)>0$.

\medskip

When $-F$ is convex,  we use the convexity of the solutions with respect to the boundary data, the $\ell^2$-continuity of Lemma~\ref{decorrelate}  and Theorem~\ref{concentration2} to obtain
\begin{equation*}\label{averageC}
\P(\{ \omega : |v_{e_d}(Re_d,\omega)-\E v_{e_d}(Re_d, \cdot)| \geq t \}) = {P}(\{ X \in \Xi: |f_R(X)- \E_{{P}} f_R| \geq t \})  \leq C \exp\left(-cR^{\beta}t^2\right),
\end{equation*}
with,  as before, $c = c'/\rho^2$ and  $c'=c'(d,\lambda,\lambda)>0$.  The same applies in the concave case.

\medskip

For brevity we  write from now on 
  \begin{equation}
  \hat{\beta} : = \beta(\lambda,\Lambda) \ \hbox{ when } F \hbox{ is either convex or concave and, otherwise, } \ \hat{\beta}: =  2 \beta(\lambda,\Lambda)- (d-1)
  \end{equation}
assuming, of course, for the latter case \eqref{eqn: condition on beta}.

\medskip

We proceed now with the proofs of Theorem~\ref{thm: convex} and Theorem~\ref{thm: nonconvex}.

\medskip

{\it Proof of Theorem~\ref{thm: convex} and Theorem~\ref{thm: nonconvex}.}  It suffices to consider the case $\nu = e_d$ since the proof for general $\nu$ is analogous, and, for simplicity,  we write  $v = v_{e_d}$. Although $v$ is a random function, in what follows, to keep the notation simple, we sometimes omit $\omega$. 
\medskip

Let $N \in 2^{\mathbb{N}}$. The first step in the proof is to show that  the expected average at height $N$
$$
 \mu_N :=  \E u(Ne_d;\Psi) = \E v(Ne_d) = \E v(Ne_d+y') \ \hbox{ for all } \ y' \in \partial P_{e_d}
 $$
 is Cauchy. 

\medskip

For a large constant $A=A(d,\lambda,\Lambda) >> (1+\hat{\beta}/2)(d-1)$ and $k \in \integer^{d-1}$, we define the events 
\begin{equation}\label{bad_set}
 E_{k}^N = \{ \omega \in \Omega: |v(Ne_d+N^{1-\hat{\beta}/2} k,\omega)-\mu_N| \geq A^{1/2}(\log N)^{1/2}N^{-\hat{\beta}/2}\}.
 \end{equation}
\smallskip

In view of the assumed stationarity and either Theorem \ref{concentration1} or \ref{concentration2} we have 
$$ \P(E_k^N ) = \P(E_0^N ) = \P(\{ \omega : |u(Ne_d;\Psi)-\mu_N| \geq A^{1/2}(\log N)^{1/2}N^{-\hat{\beta}/2}\}) \leq C\exp(-cA\log N).$$
\medskip

It is then immediate, from a simple union bound, that, if    $E^N := \bigcup_{ |k | \leq N^{\hat{\beta}}} E_{k}^N,$ then
$$\P(E^N) \leq C N^{\hat{\beta}(d-1)} \exp(-cA\log N).$$

\medskip

We fix  some large $M \in 2^{\mathbb{N}}$ and observe that,  as long as  $A > \hat{\beta}(d-1)/c$,  if $E^{\geq M} := \bigcup_{N \geq M}E^N, $ then 
$$\P(E^{\geq M}) \leq C \sum_{N \geq M} N^{\hat{\beta}(d-1)-cA} <+\infty.$$

\medskip

It follows that, for some universal sufficiently large  $M$,  $P(E^{\geq M})<1$, which, of course, implies that $\P(\Omega \setminus E^{\geq M})>0$; note that on $\Omega \setminus E^{\geq M}$,
$$|v(Ne_d+N^{1-\hat{\beta}/2} k,\omega)-\mu_N| \lesssim (\log N)^{1/2}N^{-\hat{\beta}/2} \ \hbox{ for all }  \ N \geq M \ \hbox{ and } |k| \leq N^{\hat{\beta}}. $$
\smallskip

Then, using the oscillation decay estimates (Lemma~\ref{lem:osc_decay}), for every $N \geq M$, we have
$$ |v(y'+Ne_d, \omega) - \mu_N| \lesssim  N^{1-\hat{\beta}/2}N^{-1}+(\log N)^{1/2}N^{-\hat{\beta}/2} \lesssim (\log N)^{1/2}N^{-\hat{\beta}/2} \ \hbox{ for } \ |y'| \leq N^{1+\hat{\beta}/2}.$$
\smallskip

Moreover the localization Lemma~\ref{lem: localization} gives
$$
|v(2Ne_d,\omega)-\mu_N| \lesssim \sup_{|y'| \leq N^{1+\hat{\beta}/2}} |v(y'+Ne_d,\omega)-\mu_N| +N^{1-(1+\hat{\beta}/2)} 
 \lesssim (\log N)^{1/2}N^{-\hat{\beta}/2}.
$$ 
\smallskip

Combining the previous  two estimates we get
 $$  |\mu_{2N}-\mu_N| \lesssim (\log 2N)^{1/2}(2N)^{-\hat{\beta}/2}+|v(2Ne_d,\omega)-\mu_N| 
 \lesssim (\log N)^{1/2}N^{-\hat{\beta}/2},$$
and,   for every $N,L \geq M$ in $2^{\mathbb{N}}$,
$$|\mu_{N}-\mu_{L} | \lesssim \sum_{K \geq M} (\log K)^{1/2}K^{-\hat{\beta}/2} \lesssim (\log M)^{1/2}M^{-\hat{\beta}/2}.$$
 \smallskip
 
 Thus the sequence $(\mu_N)_{N\in 2^{\mathbb{N}}}$  is Cauchy sequence and, therefore, has a limit $\mu$.
  
 \medskip
 
Next we use the above facts to prove  \eqref{eqn: exp conv disc} for $R>1$ not necessarily dyadic and $y' \in \partial P_{e_d}.$
\medskip

Let $N \in 2^{\mathbb{N}}$ such that $N \leq R \leq 2N$.  The above arguments yield that, if $R$ and, hence, $N$ are sufficiently large depending only universal constants, then for any $y' \in \partial P_{e_d}$
 $$\P(\tau_{-y'}E^{\geq N}) = \P(E^{\geq N})<1/2.$$

\medskip

The convergence rate of $\mu_N$ to $\mu$ and the same localization argument as above yield that for $\omega \not\in \tau_{-y'} E^{\geq N}$, 
 \begin{equation}\label{eqn: gen R}
\begin{array}{l}
  |v(y'+Re_d,\omega) - \mu| = |v(Re_d,\tau_{y'}\omega) - \mu|  \\
 \quad \quad \leq  |v(Re_d,\tau_{y'}\omega) - \mu_N|+ C(\log R)^{1/2}R^{-\hat{\beta}/2} \leq C(\log R)^{1/2}R^{-\hat{\beta}/2},
 \end{array}
  \end{equation}
 since $\tau_{y'} \omega \not\in E^{\geq N}$.  On the other hand, since
 $$\P (\{ \omega : |v(y'+Re_d,\omega)-\E v(y'+Re_d)| > t \}) \leq C\exp (-cR^{\hat{\beta}}t^2),$$
 taking $t = AR^{-\hat{\beta}/2}$, where $A$ is sufficiently large universal, gives 
 $$\P (\{ \omega : |v(y'+Re_d,\omega)-\E v(y'+Re_d)| > t \}) \leq 1/2,$$
and thus,
 $$ \{ |v(y'+Re_d,\omega)-\E v(y'+Re_d)| \leq  A(\log R)^{1/2}R^{-\hat{\beta}/2}\} \cap (\tau_{-y'}E^{\geq N})^C \neq \emptyset.$$
 Evaluating \eqref{eqn: gen R} for an $\omega$ in this intersection, we get 
 $$ |\E v(y'+Re_d) - \mu| \lesssim (\log R)^{1/2}R^{-\hat{\beta}/2}.$$
 
 Arguments along the same  lines also imply that the limit $\lim_{R \to \infty} v(Re_d,\omega) = \mu$ holds pointwise on the set $\Omega_0 = \Omega \setminus \cap_{M\geq 1} E^{\geq M}$ which has probability $1$.  We do not go into the details,  since this fact will not be used in the general domain case.

\qed

\medskip

For the proof of the homogenization of the general domain Dirichlet problem we need an additional estimate which we state and prove next.  Essentially, because the concentration has exponential rate,  we can combine the oscillation decay with the concentration to get uniform estimates on a polynomially large (in $R$) subset of $P_\nu$.
\begin{lem}\label{cone:D}
Under the assumptions of Theorem~\ref{thm: convex} and Theorem~\ref{thm: nonconvex} and for $\nu \in S^{d-1}$ and $R>1$, the solution $v_\nu(\cdot,\omega)$ of the cell problem \eqref{eqn: cell} satisfies the spatially uniform concentration estimate 
\begin{equation}\label{eqn: disc cell unif est}
 \P(\{ \omega :\sup_{y \in K(R,\nu)}|v_\nu(y,\omega)-\E v_\nu(y,\cdot)| > t\}) \leq C R^{d\hat{\beta}/2} \exp(-cR^{\hat{\beta}}t^2),
 \end{equation}
where 
\begin{equation}\label{box}
K(R,\nu): = \{y \in \real^d: R/2 \leq y \cdot \nu \leq 2R,  \ |y-(y\cdot \nu) \nu| \leq 3R\}.
\end{equation}
\end{lem}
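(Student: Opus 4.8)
The plan is to upgrade the single-point concentration estimate already established in the proofs of Theorem~\ref{thm: convex} and Theorem~\ref{thm: nonconvex} to a spatially uniform one over the box $K(R,\nu)$ by a covering argument, exploiting the fact that the concentration rate is Gaussian (exponential in $t^2$) so that one can afford a polynomial-in-$R$ number of bad events in a union bound. As usual I would take $\nu = e_d$ without loss of generality. First I would cover $K(R,e_d)$ by $O(R^{d})$ balls of radius $1$ centered at points $y_j$; more precisely, since $K(R,e_d)$ has diameter comparable to $R$ in each of the $d$ coordinate directions, roughly $C R^{d}$ unit balls suffice. For each center $y_j$, writing $y_j = y_j' + (y_j \cdot e_d) e_d$ with $R/2 \le y_j \cdot e_d \le 2R$, the single-point estimate obtained above (via Theorem~\ref{concentration1} or Theorem~\ref{concentration2}, together with stationarity) gives
$$
\P(\{\omega : |v_\nu(y_j,\omega) - \E v_\nu(y_j)| > t/2\}) \le C\exp(-cR^{\hat\beta}t^2),
$$
using that $y_j \cdot e_d \sim R$ so the rate $(y_j\cdot e_d)^{\hat\beta}$ is comparable to $R^{\hat\beta}$.

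Next I would control the oscillation of $v_\nu(\cdot,\omega)$ and of $\E v_\nu(\cdot)$ within each unit ball. By the oscillation decay estimate Lemma~\ref{lem:osc_decay}, for $y, z$ with $|y-z| \le 1$ and $y \cdot e_d \sim R$ one has $|v_\nu(y,\omega) - v_\nu(z,\omega)| \lesssim R^{-1}\osc_{\partial P_{e_d}}\psi \lesssim R^{-1}$, deterministically in $\omega$; taking expectations, the same bound holds for $\E v_\nu$. Since $R^{-1}$ is far smaller than the target fluctuation scale $R^{-\hat\beta/2}$ (indeed $\hat\beta \le \frac{\lambda}{\Lambda}(d-1) < d-1$, so $\hat\beta/2 < 1$ once $d \ge 2$... and in any case $R^{-1} \lesssim t$ for the relevant range of $t$, or one simply absorbs it), on the event that $|v_\nu(y_j,\omega) - \E v_\nu(y_j)| \le t/2$ for all $j$ we get $\sup_{y \in K(R,e_d)}|v_\nu(y,\omega) - \E v_\nu(y)| \le t/2 + CR^{-1} \le t$ provided $t \ge CR^{-1}$ (and for $t$ smaller than this the claimed bound is trivial since the right-hand side exceeds $1$ for the relevant constants, after adjusting $C$). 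A union bound over the $\le CR^{d}$ centers then yields
$$
\P(\{\omega : \sup_{y\in K(R,e_d)}|v_\nu(y,\omega)-\E v_\nu(y)| > t\}) \le C R^{d}\exp(-cR^{\hat\beta}t^2),
$$
which, after enlarging the constant in the polynomial factor (writing $R^{d} \le R^{d\hat\beta/2}$ is not literally true, so instead one simply records the bound with $R^{d}$ and notes $R^{d} \lesssim R^{d\hat\beta/2}\cdot R^{d(1-\hat\beta/2)}$ — actually the cleanest route is to cover by balls of a larger radius $R^{1-\hat\beta/2}$ so that only $O(R^{d\hat\beta/2})$ of them are needed, and on each the oscillation is $\lesssim R^{1-\hat\beta/2}\cdot R^{-1} = R^{-\hat\beta/2}$, matching the fluctuation scale exactly), gives precisely \eqref{eqn: disc cell unif est}.

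The one genuine subtlety — and the step I would be most careful about — is matching the covering scale to the fluctuation scale: covering by unit balls produces $R^{d}$ balls, which is more than the $R^{d\hat\beta/2}$ claimed, whereas covering by balls of radius $R^{1-\hat\beta/2}$ produces only $R^{d\hat\beta/2}$ balls but requires the oscillation decay to beat $R^{-\hat\beta/2}$ over such balls. Lemma~\ref{lem:osc_decay} gives oscillation $\lesssim (\text{radius})\cdot R^{-1}$ over a ball of a given radius near height $R$, so a ball of radius $R^{1-\hat\beta/2}$ contributes oscillation $\lesssim R^{-\hat\beta/2}$, which is exactly the right size (and can be absorbed into $t$ by adjusting constants, or handled by applying the single-point estimate with $t/2$). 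Thus the correct choice is balls of radius $\sim R^{1-\hat\beta/2}$, giving $\sim R^{d\hat\beta/2}$ centers, each with a single-point bound $C\exp(-cR^{\hat\beta}t^2)$; the union bound then delivers \eqref{eqn: disc cell unif est} directly. The remaining details — verifying the centers can be chosen inside or near $K(R,\nu)$ so the single-point estimate applies with height comparable to $R$, and that for $t$ so small that the right side exceeds $1$ the statement is vacuous — are routine.
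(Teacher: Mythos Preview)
Your proposal is correct and, after your self-correction, takes essentially the same approach as the paper: cover $K(R,\nu)$ by roughly $R^{d\hat\beta/2}$ pieces of diameter $\sim R^{1-\hat\beta/2}$, use the interior Lipschitz estimate to bound the oscillation on each piece by $\lesssim R^{-\hat\beta/2}$, then apply the single-point concentration at the centers and a union bound, handling small $t$ by noting the right-hand side exceeds $1$. The paper parametrizes the cube size as $\e R$ and optimizes in $\e$ at the end (arriving at $\e^2\sim R^{-\hat\beta}$), but this is exactly your choice of scale.
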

\begin{proof}
Again we take $\nu = e_d$ and write $v = v_{e_d}$; the general case follows similarly.   
\medskip

We divide $K(R,e_d)$ into $O (\e^{-d})$ disjoint cubes $\tilde Q_j$ centered at $c(\tilde Q_j)$ of size  $O (\e R).$  On  each of the $\tilde Q_j$'s,  we  use the interior Lipschitz estimates along with the fact that  $d(\tilde Q_j, \partial P_{e_d}) \sim R$ to derive that, almost surely,
$$ \sup_{y \in \tilde Q_j} |v(y,\omega) - v(c(\tilde Q_j),\omega)| \lesssim \e,$$
and, hence,  
$$\sup_{y \in K(R,e_d)}  |v(y,\omega) -\E v(y)| \leq \sup_{j} |v(c(\tilde Q_j),\omega)-\E v(c(\tilde Q_j,\cdot))| + C_0\e.$$

Then, for $t > 2C_0\e$, we have
\begin{align*}
\P(\{ \omega :\sup_{y \in K(R,e_d)}  |v(y,\omega) -\E v(y)|>t\}) &\leq \P(\{ \omega :\sup_{j} |v(c(\tilde Q_j,\omega)-\E v(c(\tilde Q_j))| + C_0\e>t\}) \\
&\leq \P(\{ \omega :\sup_{j} |v(c(\tilde Q_j),\omega)-\E v(c(\tilde Q_j))|>t/2\}) \\
&\leq C \e^{-d}\P(\{ \omega : |v(c(\tilde Q_j),\omega)-\E v(c(\tilde Q_j))|>t/2\}) \\
& \leq C \e^{-d} \exp(-cR^{\hat{\beta}}t^2).
\end{align*}
On the other hand, when $t \leq 2C_0 \e$,  
$$ \P(\{ \omega :\sup_{y \in K(R,e_d)}  |v(y,\omega) -\E v(y)|>t\}) \leq 1 \leq [C^{-1} \e^{d}\exp(cC_0R^{\hat{\beta}} \e^2) ][C\e^{-d}\exp(-cR^{\hat{\beta}}t^2)].$$

Choosing $\e^2 \sim R^{-\hat{\beta}}$ and combining the two  cases yields the claim.

  \end{proof}

\medskip

\subsection*{The continuity properties of the homogenized boundary condition}\label{sec: continuity}
We discuss the continuity properties of $\mu(\nu,F,\psi)$ and, in particular, we prove Theorem~\ref{continuity00}.  Both parts of the theorem  are used to establish the  continuity of the homogenized boundary condition and the homogenization in general domains. In particular, (i) yields  the continuity of $\overline{g}(x) = \mu(g(x,\cdot,\cdot),F,\nu)$ with respect to the large scale $x$-dependence of $g$, while   (ii) implies continuity of $\overline{g}$ with respect to changing normal directions.

\medskip

{\it Proof of Theorem~\ref{continuity00}.  }
Part (i) is a direct consequence of the comparison principle.  
\medskip

To show (ii), we fix   $\nu, \nu' \in S^{d-1}$ and, without loss of generality, we assume  that $\nu \cdot \nu' \geq 1/2$  and, after a rescaling, that $|\psi| \leq 1$.  
\medskip

We estimate  $|\mu -\mu'|$, where $ \mu = \mu(\nu,F,\psi)$ and $ \mu' = \mu(\nu',F,\psi)$, by comparing the solutions $v = v_{\nu}$ and $v' = v_{\nu'}$ of the corresponding cell problems in the intersection of the half spaces $P_{\nu}$ and $P_{\nu'}$.  Here, for simplicity, we do not display in most places the dependence of $v$ and $v'$ on $\omega$.
\medskip

Let us now show that $v(\cdot, \omega) $ and $v'(\cdot, \omega) $ are close in $B_R$,  if their respective domains are close in $B_L$ for $L\gg R$.  Note that,
$$ \sup_{y \in B_L(0) \cap \partial P_{\nu'}} y \cdot \nu \leq L|\nu'-\nu|.$$ 
and, therefore,  
$$E := \{y\in \real^d:  L|\nu'-\nu| <y\cdot \nu < L, \quad  |y-(y\cdot \nu)\nu| \leq L\} \subset P_{\nu} \cap P_{\nu'}.$$

The up to the boundary Lipschitz continuity the $v$ and $v'$ (Lemma~\ref{lem:boundary_holder}) and the (almost sure) Lipschitz continuity of $\psi$, imply that, almost surely,
$$ \sup_{ y \in \partial E \cap \{y \in \real ^d: y\cdot \nu = L|\nu'-\nu| \}}|v(y,\omega) - \psi(y,\omega)| + |v'(y,\omega) - \psi(y,\omega)| \lesssim L(1 + \|D\psi\|_\infty ) |\nu'-\nu|.$$ 

Since, in view of the comparison principle,  $|v(\cdot,\omega)|, |v'(\cdot, \omega)|\leq 1$, we use the localization result Lemma \ref{lem: localization} to get  
$$ |v'(\cdot, \omega)  - v(\cdot, \omega)| \lesssim L(1 + \|D\psi\|_\infty )|\nu'-\nu|+ \frac{R}{L} \ \hbox{ in } \ E \cap B_{2R}.$$

Using the rate of convergence of the expectations from Theorem~\ref{thm: convex} or Theorem~\ref{thm: nonconvex} and taking expectations on both sides of the above inequality we get, 
\begin{align*}
| \mu-\mu'| &\leq |\mu - \E v(R\nu)| + |\mu' - \E v'(R\nu)|+\E |v(R\nu)-v'(R\nu)| \\
&\lesssim L(1 + \|D\psi\|_\infty )|\nu'-\nu|+\frac{R}{L}+(\log R)^{1/2}R^{-\hat{\beta}/2}.
\end{align*}

Choosing  $L = (\log R)^{-1/2}R^{1+\hat{\beta}/2}$ and $R^{-1} = (1+\|\psi\|_\infty)^{1/(1+\hat{\beta})}|\nu-\nu'|^{1/(1+\hat{\beta})}$ to make all the terms in the inequality above comparable size,  we get, for $\alpha' = \frac{\hat{\beta}}{2(1+\hat{\beta})}$
$$ | \mu-\mu'| \lesssim (1+\|\psi\|_\infty)^{\alpha'}\left(\log\tfrac{1}{|\nu-\nu'|}\right)^{1/2}|\nu-\nu'|^{\alpha'}, 
$$
which is the claim.

\qed
\medskip

\subsection*{The ``lifting from the hyperplane'' cell problem.}\label{lsi section}
 We prove  here Theorem~\ref{thm: lsi cell}. In view of the discussion before its statement in the previous section, for simplicity, we assume that $T=I$ and $\nu=e_d$ and we consider the cell problem 
\begin{equation}\label{eqn: other cell 1}
\left\{
\begin{array}{lll}
F(D^2v) = 0 & \hbox{ in } &   P_{e_d}, \\[1mm]
v (\cdot ,\omega) = \psi(\cdot,\omega)  & \hbox{ on } & \partial P_{e_d }.
\end{array}\right.
\end{equation}

It follows from  \eqref{psi3} and the above simplification that
$$\psi(\cdot,\omega):=\sum_{i \in \integer^{d-1}} X_i(\omega) \indicator_{i+Q}(\cdot).$$

Let $u : P_{e_d} \times \real^{\integer^{d-1}}$ be defined as in \eqref{eqn: discrete2}. The 
comparison principle yields 
$$ v (\cdot,\omega) = u(\cdot;X(\omega)) \ \hbox{ in  } \ P_{e_d}.$$
\medskip

We note that Lemma~\ref{decorrelate} still applies  and yields a universal constant  $C$ such that, for all  $Y,Z \in \real^{\integer^{d-1}}$
$$ |u(Re_d;Y)- u(Re_d;Z)| \leq CR^{-\beta/2}|Y-Z|_{\ell^2}.$$

Finally, we recall, again because of \eqref{psi3}, that the law of $X = (X_j)_{j \in \integer^{d-1}}$ on $\real^{\integer^{d-1}}$, which we call ${P}$ in analogy with the previous subsection, has LSI and, hence, from Theorem~\ref{concentration0}, we have  the concentration estimate
\begin{align*}
\P (\{ \omega :   |u(Re_d;X(\omega)) - &\E u(Re_d;X)| > t \})  = \\
&{P}(\{ Y \in \real^{\integer^{d-1}}: |u(Re_d;Y) - \int_{\real^{\integer^{d-1}}} u(Re_d;Z) d{P}(Z)|>t\})\leq C \exp (-cR^{\beta}t^2),
\end{align*}
which is as good as the convex case.  
\medskip

{\it Proof of Theorem~\ref{thm: lsi cell}.} The proof is almost the same as Theorem~\ref{thm: convex} and Theorem~\ref{thm: nonconvex} with one minor difference.  Here, instead of $\real^{d-1}$, we only have  $\integer^{d-1}$ translation invariance of the distribution of the boundary data.
The only consequence of this is that the rate of convergence of $\E v(R e_d)$ is limited also by the interior oscillation decay.
\medskip

\section{Homogenization in General Domains}\label{general}

Since we can only expect homogenization to hold on every compact subset of the domain $U$, we will consider here the rate of convergence in a subdomain $U_{R\e}$
and we will make use  of the additional free parameter $R$.  

\medskip
 We prove next a slightly more general version of Theorem~\ref{thm: main} leaving $R$ free and then explain the choice of $R$ that leads to Theorem~\ref{thm: main}.

\begin{thm}\label{thm: main 1}
Suppose that all the assumptions of  Theorem~\ref{thm: main} hold. Then, as $\ep \to 0,$ almost surely  and for every $x \in U$, $u^\e(x,\omega) \to u(x).$  Furthermore, for any $1<R \lesssim \e^{-\frac{2}{4+3\hat{\beta}}}$,  $u^\e$ concentrates about its mean with the estimate
\begin{equation}
 \P(\{ \omega :\sup_{x \in U_{R\e}} |u^\e(x,\omega)-\E u^\e(x)|>t) \leq C\exp(-cR^{\hat{\beta}}t^2+C\log\tfrac{1}{\e}\}),\\
 \end{equation}
and  the expected value converges to $\overline u$ with the estimate
\begin{equation}\label{eqn: a est 1}
\sup_{x \in U_{R\e}}|\E u^\e(x) - \overline{u}(x)| \lesssim \e^{\frac{1}{3}}R^{\frac{2}{3}}+(\log \tfrac{1}{\e})^{1/2}R^{-\hat{\beta}/2}+(\log \tfrac{1}{\e})^{1/2}(\e R)^{\frac{\hat{\beta}}{2(1+\hat{\beta})}},
\end{equation}
with  constants that depend on $\lambda,\Lambda,d,\rho,$ and the upper bound for $R\e^{\frac{2}{4+3\hat{\beta}}}$. In particular, choosing $R = \e^{-\frac{2}{4+3\hat{\beta}}}$ and $t = (\log \frac{1}{\e})^{1/2}\e^{\frac{1}{4+3\hat{\beta}}}$ yields Theorem~\ref{thm: main}.
\end{thm}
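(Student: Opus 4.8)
The plan is to transport the cell-problem results---homogenization with a rate (Theorems~\ref{thm: convex} and \ref{thm: nonconvex}), the spatially uniform concentration estimate (Lemma~\ref{cone:D}), and the continuity of the ergodic constant (Theorem~\ref{continuity00})---to the general domain by making rigorous the blow-up argument sketched in the introduction. Fix $x_0\in\partial U$ with inner normal $\nu=\nu_{x_0}$ and set $v^\e(y,\omega):=u^\e(x_0+\e y,\omega)$. By \eqref{stationary g}, $\psi(y,\omega):=g(x_0,y+\e^{-1}x_0,\omega)=g(x_0,y,\tau_{\e^{-1}x_0}\omega)$ is stationary and satisfies the mixing bound \eqref{takis-4} uniformly in $x_0$; the solution $v=v_\nu$ of the cell problem \eqref{eqn: cell} with data $\psi$ has ergodic constant $\mu(g(x_0,\cdot,\cdot),F,\nu)=\overline g(x_0)$, and, since $\tau_{\e^{-1}x_0}$ is measure preserving, both $\overline g(x_0)$ and $\E v(R\nu)$ are independent of $\e$.

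\textbf{The one-point estimate.} I would first quantify, for a fixed $x_0$, how close $v^\e$ is to $v$. Pick an intermediate scale $L$ with $1<R\ll L\ll\e^{-1}$. By \eqref{domain2}, $\e^{-1}(U-x_0)$ agrees with $P_\nu$ up to $O(\e L^2)$ inside $\textup{Cyl}_{\nu,L}$, and by \eqref{eqn: continuity g} the data $g(x_0+\e y',\cdot)$ differs from $g(x_0,\cdot)$ by $O(\e L)$ there; fed into the boundary Lipschitz estimate Lemma~\ref{lem:boundary_holder} these give $|v^\e-v|\lesssim\e L^2$ on the portion of $\partial P_\nu\cap\textup{Cyl}_{\nu,L}$ at distance $\gtrsim\e L^2$ from the actual boundary, while $|v^\e-v|\lesssim 1$ on the complementary thin layer and on $\partial\textup{Cyl}_{\nu,L}\cap\overline{P_\nu}$. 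Since $\pm(v^\e-v)$ is a $\mathcal{P}^+_{\lambda,\Lambda}$-subsolution, the localization Lemma~\ref{lem: localization}, rescaled to scale $R$, yields $|v^\e-v|\lesssim\e L^2+R/L$ on $\textup{Cyl}_{\nu,R}$. The choice $L\sim(R/\e)^{1/3}$---admissible because $R\lesssim\e^{-2/(4+3\hat{\beta})}$ and $\tfrac{2}{4+3\hat{\beta}}<\tfrac12$---gives $|v^\e(R\nu,\omega)-v(R\nu,\omega)|\lesssim\e^{1/3}R^{2/3}$, and combining this with $|\E v(R\nu)-\overline g(x_0)|\lesssim(\log R)^{1/2}R^{-\hat{\beta}/2}$ from Theorem~\ref{thm: convex}/\ref{thm: nonconvex} gives, for every $\omega$,
\[ |u^\e(x_0+\e R\nu,\omega)-\overline g(x_0)|\ \lesssim\ \e^{1/3}R^{2/3}+(\log\tfrac1\e)^{1/2}R^{-\hat{\beta}/2}+|v(R\nu,\omega)-\E v(R\nu)|. \]

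\textbf{Globalization and comparison.} Next I would make this uniform over $\partial U$ and pass to the interior. For a $C^2$ domain and $R\e$ small, $x_0\mapsto x_0+R\e\,\nu_{x_0}$ maps $\partial U$ diffeomorphically onto $\partial U_{R\e}$, and the blown-down box $x_0+\e K(R,\nu_{x_0})$ covers an $O(R\e)$-patch of $\partial U_{R\e}$ about the image of $x_0$. Taking a net of $\sim(R\e)^{-(d-1)}$ points in $\partial U$, applying Lemma~\ref{cone:D} at each, and a union bound---the polynomial prefactor $R^{d\hat{\beta}/2}(R\e)^{-(d-1)}$ is at most $e^{C\log(1/\e)}$ since $R\lesssim\e^{-C'}$---one gets, with probability at least $1-C\exp(-cR^{\hat{\beta}}t^2+C\log\tfrac1\e)$,
\[ \sup_{x\in\partial U_{R\e}}|u^\e(x,\omega)-\overline g(\pi(x))|\ \lesssim\ \e^{1/3}R^{2/3}+(\log\tfrac1\e)^{1/2}R^{-\hat{\beta}/2}+(\log\tfrac1\e)^{1/2}(R\e)^{\alpha'}+t, \]
where $\pi(x)\in\partial U$ is the nearest boundary point and the $(R\e)^{\alpha'}$-term, $\alpha'=\tfrac{\hat{\beta}}{2(1+\hat{\beta})}$, controls the change of $\overline g$ between adjacent net points through Theorem~\ref{continuity00}(ii). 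Since $\overline g\in C^{0,\alpha'}(\partial U)$ (again Theorem~\ref{continuity00}, together with $x\mapsto\nu_x$ being $C^1$ on a $C^2$ domain), boundary regularity for $F(D^2\cdot)=0$ makes $\overline u$ H\"older up to $\partial U$, so $|\overline u(x)-\overline g(\pi(x))|\lesssim(\log\tfrac1\e)^{1/2}(R\e)^{\alpha'}$ on $\partial U_{R\e}$. As $u^\e$ and $\overline u$ (plus any constant) both solve $F(D^2\cdot)=0$ in $U_{R\e}$, the comparison principle propagates the boundary bound to all of $U_{R\e}$; this is the concentration estimate. Integrating in $\omega$---using the exponentially small measure of the bad event and the uniform $L^\infty$ bound on $u^\e-\overline u$---and choosing $t\sim(\log\tfrac1\e)^{1/2}R^{-\hat{\beta}/2}$ gives \eqref{eqn: a est 1}, while Borel--Cantelli along a geometric sequence of $\e$'s followed by interpolation in $\e$ (as in the cell-problem proof) gives the almost sure convergence. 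Specializing $R=\e^{-2/(4+3\hat{\beta})}$ balances the three terms of the rate at $\e^{\hat{\beta}/(4+3\hat{\beta})}$ and makes $R\e$ exactly the boundary-layer width in Theorem~\ref{thm: main}, which is the asserted conclusion.

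\textbf{Main obstacle.} The technical heart is the one-point estimate. The cell problem lives on the flat tangent plane with the frozen data $g(x_0,\cdot,\cdot)$, whereas $v^\e$ lives on a curved boundary carrying $x$-dependent data, and on the thin part of $\partial P_\nu$ where the two disagree one has no pointwise control on $v^\e$; Lemma~\ref{lem: localization} is exactly the tool that converts this large-but-thin bad region into a small interior error, but one must choose $R,L,\e$ so that $R$ is deep enough for the cell problem to have averaged yet shallow enough that the $O(\e L^2)$ geometric mismatch is negligible, and all constants must be uniform in $x_0$---this is where the uniform hypotheses \eqref{domain2} and \eqref{takis-4} enter. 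A secondary difficulty is the bookkeeping of the several independent small parameters ($\e$, $R$, the net spacing, the concentration level $t$) so that the polynomial losses from the union bound are absorbed into the $\exp(C\log\tfrac1\e)$ factor and do not degrade the rate.
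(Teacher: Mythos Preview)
Your proposal is correct and tracks the paper's proof closely: the same blow-up at $x_0\in\partial U$, the same localization comparison with the cell problem giving the $\e^{1/3}R^{2/3}$ error via $L\sim(R/\e)^{1/3}$, the same $(R\e)$-net on $\partial U$ with a union bound using Lemma~\ref{cone:D}, and comparison to push the estimate into $U_{R\e}$. Two small points where the paper differs: (i) since $\E u^\e$ does not solve the nonlinear equation, the paper introduces an auxiliary $\tilde u^\e$ solving $F(D^2\tilde u^\e)=0$ in $U_{R\e}$ with boundary data $\E u^\e|_{\partial U_{R\e}}$, then compares $u^\e$ to $\tilde u^\e$ and separately bounds $|\E u^\e-\tilde u^\e|$ by integrating the tail---your direct comparison with $\overline u$ works too but yields concentration about $\overline u$, and you must then triangle back through the expectation estimate to get the stated concentration about $\E u^\e$; (ii) for the Borel--Cantelli step the paper uses a \emph{polynomial} sequence $\e_k=k^{-\gamma}$ rather than a geometric one, because the interpolation in $\e$ relies on $|\e_{k+1}^{-1}-\e_k^{-1}|\lesssim\e_k^{\alpha_0}$, which a geometric sequence does not satisfy.
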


\medskip

If we assigned boundary data by ``lifting up from the hyperplane", then the homogenized boundary condition would be 
$$\overline{g}(x): = \tilde{\mu}(\psi,F, D\zeta(\zeta^{-1}(x))),$$
where $\tilde{\mu}$ comes from the alternate cell problem \eqref{eqn: other cell}.  The only difference in the estimates obtained would be in the continuity of the homogenized boundary condition, which corresponds to the third term in \eqref{eqn: a est 1}.

\medskip

We give first an outline of the strategy of the proof of  Theorem~\ref{thm: main 1}, which follows  from  a series of Lemmas.  We begin  by rescaling at a point $x_0 \in \partial U$ to $u^\e(x_0+\e y)$ and prove an estimate on the difference, in a large box of size $R$ around the origin,  between the rescaled solution in the general domain and the solution of the corresponding cell problem in $P_{\nu_{x_0}}.$  Then we use one of the cell problem homogenization results, that is either  Theorem~\ref{thm: convex} or Theorem~\ref{thm: nonconvex},  to prove an estimate of the concentration of $u^\e(x_0+\e y)$ about its mean and the convergence of $\E u^\e(x_0+\e y)$ to $\overline{g}(x_0)$ on a strip of size $\sim R$,  which is $\sim R$ away from $\partial P_{\nu_{x_0}}$.  Rescaling back to the $\e$ scale,  we use this estimate on a finite subset of $\partial U$,  which is ``$R\e$ dense'' in $\partial U$.  This implies that $u^\e$ concentrates about its mean on the boundary of the slightly smaller domain $U_{R\e}$.  Then the comparison principle gives the concentration in the interior of $U_{R\e}.$  Finally,  the  proof of  the almost sure pointwise convergence involves a careful use of the Borel-Cantelli Lemma.

\medskip

We  turn now to the full details.  We fix $x_0 \in \partial U$ and define the local rescaling of $u^\e$ near $x_0$ by
$$v^\e_{x_0}(y) := u^\e(x_0+\e y),$$
which solves
\begin{equation}
\left\{
\begin{array}{lll}
F(D^2v^\e_{x_0}) = 0 & \hbox{ in } &  \e^{-1}(U-x_0), \\[1mm]
v^\e_{x_0} (y,\omega) = g(x_0+\e y,y,\tau_{x_0/\e}\omega) & \hbox{ on } &  \e^{-1}\partial(U-x_0).
\end{array}\right.
\end{equation}
\smallskip

Let $v_{\nu}$, with $\nu = \nu_{x_0}$, be the solution to  \eqref{eqn: cell}  with boundary data 
$$
\psi(y,\omega):= g(x_0,y,\omega) \ \text{ on } \  \partial P_{\nu},
$$
and observe that, in view of the assumptions our on $g$,  $\psi$ satisfies  the hypotheses  of Theorem~\ref{thm: convex} or Theorem~\ref{thm: nonconvex} and Lemma~\ref{cone:D}.

\medskip

The next lemma provide an estimate for the difference between $v^\e_{x_0}$ and $v_{\nu}$; for its statement recall the definition \eqref{box} of the sets  $K(R,\nu).$
\begin{lem}
Assume the hypotheses of Theorem~\ref{thm: main}. For any $R>1$ and almost surely
$$ |v^{\e}_{x_0}(\cdot,\omega) - v_{\nu_{x_0}}(\cdot,\tau_{x_0/\e}\omega)| \leq C \e^{1/3}R^{2/3} \ \hbox{ in } \ K(R,\nu_{x_0}) \cap \e^{-1}(U-x_0).$$
\end{lem}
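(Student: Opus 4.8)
Fix $\omega$ (the estimate is deterministic once $\omega$ is fixed, since it relies only on the uniform $L^\infty$ and boundary Lipschitz bounds for the two solutions together with the $C^2$ regularity of $\partial U$), write $\omega' := \tau_{x_0/\e}\omega$, $v := v_{\nu_{x_0}}(\cdot,\omega')$ and $v^\e:=v^\e_{x_0}$, and assume after a rotation that $\nu:=\nu_{x_0}=e_d$. By \eqref{eqn: continuity g} and the comparison principle, $\|v^\e\|_\infty,\|v\|_\infty\le C$ with $C$ universal. Introduce a free length scale $L$ (to be optimized, with $4R\le L$). By the $C^2$ regularity of $\partial U$, \eqref{domain2}, the rescaled domain $V:=\e^{-1}(U-x_0)$ is, inside $B_{2L}$, a graph over the tangent plane $\partial P_{e_d}$ whose graphing function is $O(\e L^2)$ in $C^2$; in particular, for $\delta\sim \e L^2$ chosen large enough,
\[
D_L:=\{\,y\in P_{e_d}:\ y\cdot e_d>\delta\,\}\cap \textup{Cyl}_{e_d,L}\subset V\cap P_{e_d},
\]
and $\textup{dist}(y,\partial V)\lesssim \e L^2$ for every $y$ on the ``flat bottom'' $\{y\cdot e_d=\delta\}\cap\textup{Cyl}_{e_d,L}$. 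Both $v$ and $v^\e$ are defined on $\overline{D_L}$ and solve $F(D^2\cdot)=0$ there.

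\textbf{Boundary comparison and localization.} On the flat bottom, estimate $|v^\e-v|$ by passing through the boundary data: since $g(x_0,\cdot,\omega')$ is uniformly Lipschitz, Lemma~\ref{lem:boundary_holder} (applied at unit scale, uniformly along $\partial P_{e_d}$) gives $v\in C^{0,1}$ up to $\partial P_{e_d}$ with universal constant, so $v(y)$ differs by $O(\delta)$ from $v(y-\delta e_d)=g(x_0,y-\delta e_d,\omega')$; likewise, since $\partial V$ has curvature $O(\e)$ and $g(x_0+\e y,y,\omega')$ is uniformly Lipschitz in $y$, the boundary Lipschitz estimate for $V$ gives that $v^\e(y)$ differs by $O(\e L^2)$ from its value $g(x_0+\e\hat y,\hat y,\omega')$ at a nearest point $\hat y\in\partial V$; finally $|g(x_0+\e\hat y,\hat y,\omega')-g(x_0,y-\delta e_d,\omega')|\le C(\e|\hat y|+|\hat y-(y-\delta e_d)|)\lesssim \e L^2$ by \eqref{eqn: continuity g}. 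Hence $|v^\e-v|\lesssim \e L^2$ on the flat bottom, while trivially $|v^\e-v|\le \|v^\e\|_\infty+\|v\|_\infty\le C$ on the lateral and top faces of $\partial D_L$. Now $w:=v^\e-v$ satisfies, by \eqref{eqn: ellipticity w/ pucci} and $F(D^2v^\e)=F(D^2v)=0$, the two inequalities $-\mathcal P^+_{\lambda,\Lambda}(D^2(\pm w))\le 0$ in $D_L$. Subtracting the constant $C\e L^2$ and translating the half-space down by $\delta$, the Localization Lemma~\ref{lem: localization}, rescaled from the unit cylinder to scale $4R$, yields
\[
|w|\ \lesssim\ \e L^2 + \tfrac{R}{L}\qquad\text{on}\ K(R,\nu),
\]
since $K(R,\nu)\subset \textup{Cyl}_{e_d,4R}$ and $K(R,\nu)$ lies above the flat bottom (we need $\delta<R/2$).

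\textbf{Optimization and side conditions.} Minimizing $\e L^2+R/L$ in $L$ gives the choice $L\sim (R/\e)^{1/3}$, for which both terms are $\sim \e^{1/3}R^{2/3}$, proving the claim. It remains to check the constraints used: $4R\le L$, $\delta\sim\e L^2<R/2$, $\e L^2\lesssim 1$ (so the boundary estimates are applied at unit scale) and $\e L<r_0$ (so \eqref{domain2} applies on $B_{2L}$); with $L\sim(R/\e)^{1/3}$ these all reduce to $1<R\lesssim \e^{-1/2}$ for $\e$ small, which holds in the regime of Theorem~\ref{thm: main 1} because $\tfrac{2}{4+3\hat\beta}\le\tfrac12$. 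The one genuinely delicate point is the boundary comparison of the previous paragraph: matching the two boundary data across the thin ($\sim\e L^2$) collar between $\e^{-1}(\partial U-x_0)$ and $\partial P_{\nu}$, while ensuring the boundary Lipschitz estimates are invoked with universal constants uniformly along these (unbounded, slightly curved) boundaries; everything else is the comparison principle plus Lemma~\ref{lem: localization}.
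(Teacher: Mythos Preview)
Your proof is correct and follows essentially the same route as the paper: use the $C^2$ regularity of $\partial U$ to trap the rescaled boundary within an $O(\e L^2)$ slab above $\partial P_{\nu}$, invoke the boundary Lipschitz estimate (Lemma~\ref{lem:boundary_holder}) and the Lipschitz continuity of $g$ in the slow variable to show $|v^\e-v_\nu|\lesssim \e L^2$ on the flat level set $\{y\cdot\nu\sim \e L^2\}$, then apply the Localization Lemma~\ref{lem: localization} together with the trivial $L^\infty$ bound on the lateral and top faces to obtain $|v^\e-v_\nu|\lesssim \e L^2+R/L$ on $K(R,\nu)$, and finally optimize $L\sim (R/\e)^{1/3}$. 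The paper's argument is the same, only terser; your additional bookkeeping of the side constraints ($4R\le L$, $\delta<R/2$, $\e L\lesssim 1$) is a welcome clarification.
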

\begin{figure}[t]
\begin{center}
\includegraphics[scale=0.7, clip = true, trim = 0 0 0 0]{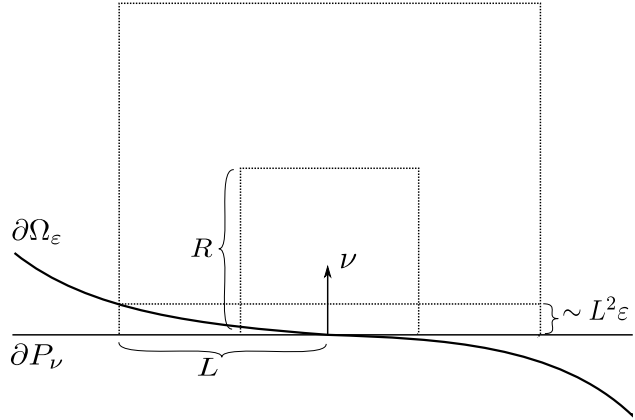}
\end{center}
\caption{Comparing with the solution of the cell problem.}
\label{fig: localize}
\end{figure}
\medskip

We remark that this is the main place where the proof of Theorem \ref{thm: main} would change,  if we assigned boundary data in the general domain by ``lifting up from the hyperplane", in which case the  estimate, with the notation of the previous section,  would instead be
$$|v^\e_{x_0}(\cdot,\omega) - v_{D\zeta (\zeta^{-1}(x_0))}(\cdot, \tau_{\e^{-1}\zeta^{-1}(x_0) \bmod \integer^{d-1}}\omega)| \leq C \e^{1/3}R^{2/3}  \ \hbox{ in } \ K(R,\nu_{x_0}) \cap \e^{-1}(U-x_0).$$

\begin{proof}
We omit reference to $x_0$,  since it is fixed for now.  
\medskip

We estimate the difference between $v^\e$ and $v_\nu$ by a localization argument in a region where $\partial U$ is close to its tangent hyperplane at $x_0$.  For a succint presentation we call $U^\e = \e^{-1}(U-x_0)$.
\medskip

It follows from \eqref{domain1} that  there exists some sufficiently large $C_0=C_0(U)$ depending on the $C^2$-regularity  of $\partial U$ such that, for any $L>R$, 
$$   \{x\in \real^d:  x \cdot \nu \geq C_0L^2 \e\} \subset U^\e \cap B_L(0) \subset \{ x\in \real^d:  x \cdot \nu \geq -C_0L^2 \e\}.$$
Note that, due to the up to the boundary Lipschitz continuity of $v^\e$, we have   
\begin{equation}\label{estimate0}
|v^\e(y,\omega) - g(x_0+\e y, y,\tau_{x_0/\e}\omega)| \leq CL^2 \e \ \hbox{ on } \ \{ x \in \real^d: x \cdot \nu = C_0L^2 \e\},
\end{equation}
and 
\begin{equation}\label{estimate01}
|v_\nu(y,\tau_{x_0/\e}\omega) - \psi( y,\tau_{x_0/\e}\omega)| \leq CL^2 \e \ \hbox{ on } \ \{ x \in \real^d:  x \cdot \nu = C_0L^2 \e\}.
\end{equation}
\medskip

Using the previous two inequalities as well as, the, uniform in  in $(y,\omega)$,  Lipshitz continuity of $g(\cdot,y,\omega)$ and the definition of $\psi$ we obtain
\begin{equation}\label{estimate02}
|v_\nu(\cdot ,\tau_{x_0/\e}\omega) - v^\e(\cdot ,\omega)| \leq CL^2 \e +C\e \lesssim L^2 \e \ \hbox{ on }  \  C_0L^2 \Pi \cap U_\e.
\end{equation}

\medskip

Fix $R>0$ with $R \ll L$ . Then using \eqref{estimate02} and the localization result (Lemma \ref{lem: localization}),  we conclude that
$$|v^\e( \cdot,\omega) - v_\nu(\cdot,\tau_{x_0/\e}\omega)| \lesssim L^2\e+\tfrac{R}{L} \ \hbox{ in } \  K(R,\nu) \cap U^\e ,$$
and choosing $L = R^{\frac{1}{3}}\e^{-\frac{1}{3}}$, so that both terms in the above estimate are the same size, we find 
\begin{equation}\label{comparison}
|v^\e(\cdot ,\omega) - v_\nu(\cdot,\tau_{x_0/\e}\omega)| \lesssim \e^{\frac{1}{3}}R^{\frac{2}{3}} \ \hbox{ in } \ K(R,\nu) \cap U^\e.
\end{equation}
\end{proof}

Next we combine \eqref{comparison} with the estimate for the cell problem given in either  Theorem~\ref{thm: convex} or Theorem~\ref{thm: nonconvex} depending on our assumptions.  

\medskip

From the previous estimates we have,   for $y \in K(R,\nu_{x_0}) \cap \e^{-1}(U-x_0)$,
\begin{equation}\label{eqn: pt var est}
|v^\e(y, \omega)-\E v^\e(y)| \leq  C_1\e^{\frac{1}{3}}R^{\frac{2}{3}}+|v_{\nu}(y, \tau_{x_0/\e}\omega) - \E v_{\nu}(y) | 
\end{equation}
and
\begin{equation}\label{eqn: pt exp est}
|\E v^\e(y) - \overline{g}(x_0)| \leq |\E v^\e(y) - \E v_{\nu}(y)|+|\E v_{\nu}(y)- \overline{g}(x_0)| \lesssim \e^{\frac{1}{3}}R^{\frac{2}{3}}+(\log R)^{1/2}R^{-\hat{\beta}/2}.
\end{equation}
Using  \eqref{eqn: pt var est} we derive, for $t>2C_1\e^{1/3}R^{2/3}$,  the following  uniform in $K(R,\nu_{x_0})$ concentration estimate of $v^\e$ about its mean:
\begin{align*}
\P(\{ \omega :\sup_{y \in K(R,\nu_{x_0})} |v^\e(y,\omega)-\E v^\e(y)|>t\}) &\leq \P(\{ \omega :\sup_{y \in K(R,\nu_{x_0})}|v_{\nu}(y,\tau_{x_0/\e}\omega) - \E v_{\nu}(y) |>t/2\})\\ 
&\leq CR^{d\hat{\beta}/2}\exp(-cR^{\hat{\beta}}t^2).
\end{align*}
On the other hand,  if $t\leq2C_1\e^{1/3}R^{2/3}$, then,  for $R \leq M \e^{-\frac{2}{4+3\hat{\beta}}}$,  we have 
$$C\exp(-cR^{\hat{\beta}}t^2)\geq C\exp(-4cC_1^2M^{\hat{\beta}+4/3}) \geq C\exp(-4cC_1^2M^{\hat{\beta}+4/3})\P(\sup_{y \in K(R,\nu_{x_0})}|v^\e(y)-\E v^\e(y)|>t).$$
Combining the two last inequalities gives  the following Lemma.
\begin{lem}\label{lem: est near a pt}
Assume the hypotheses of Theorem~\ref{thm: main}. For every $x_0 \in \partial U$ and $1<R \leq M \e^{-\frac{2}{4+3\hat{\beta}}}$, $v^\e$  concentrates about its mean uniformly in $K(R,\nu_{x_0})$ with rate 
$$\P(\{ \omega :\sup_{y \in K(R,\nu_{x_0})}|v^\e(y,\omega)-\E v^\e(y)|>t\})\leq C(M,d,\lambda,\Lambda)R^{d\hat{\beta}/2}\exp(-cR^{\hat{\beta}}t^2), $$
and its expectation $\E v^\e$  converges, again uniformly in $K(R,\nu_{x_0})$, to $\overline g(x_0)$ with rate 
$$ \sup_{y \in K(R,\nu_{x_0})} |\E v^\e(y) - \overline{g}(x_0)| \lesssim \e^{\frac{1}{3}}R^{\frac{2}{3}}+(\log R)^{1/2}R^{-\hat{\beta}/2}.$$
\end{lem}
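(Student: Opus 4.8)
The plan is to assemble Lemma~\ref{lem: est near a pt} from the two pointwise estimates \eqref{eqn: pt var est} and \eqref{eqn: pt exp est} already derived above, which package the localization bound \eqref{comparison} together with the cell-problem homogenization (Theorem~\ref{thm: convex} or Theorem~\ref{thm: nonconvex}) and the spatially uniform concentration of the cell problem over the box $K(R,\nu_{x_0})$ (Lemma~\ref{cone:D}). The second assertion of the lemma is then immediate: \eqref{eqn: pt exp est} holds for every $y\in K(R,\nu_{x_0})\cap\e^{-1}(U-x_0)$ with a right-hand side independent of $y$, so taking the supremum over $y$ gives exactly $\sup_y|\E v^\e(y)-\overline g(x_0)|\lesssim\e^{1/3}R^{2/3}+(\log R)^{1/2}R^{-\hat\beta/2}$.

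For the concentration estimate I would split on the size of $t$. When $t>2C_1\e^{1/3}R^{2/3}$, inequality \eqref{eqn: pt var est} forces $\sup_{y\in K(R,\nu_{x_0})}|v_\nu(y,\tau_{x_0/\e}\omega)-\E v_\nu(y)|>t/2$ on the event in question; since $\tau_{x_0/\e}$ is measure preserving, the probability of the latter is controlled by Lemma~\ref{cone:D}, which yields the bound $CR^{d\hat\beta/2}\exp(-cR^{\hat\beta}t^2)$ after adjusting $c$ to absorb the factor $t/2$. When $0<t\le 2C_1\e^{1/3}R^{2/3}$, I would use the trivial bound $\P(\cdots)\le 1$ and check that the claimed right-hand side is bounded below by a positive constant depending only on $M,d,\lambda,\Lambda$: one has $R^{\hat\beta}t^2\le 4C_1^2R^{\hat\beta+4/3}\e^{2/3}$, and the exponent $\tfrac{2}{4+3\hat\beta}$ in the standing constraint $R\le M\e^{-2/(4+3\hat\beta)}$ is chosen precisely so that $R^{\hat\beta+4/3}\e^{2/3}\le M^{\hat\beta+4/3}$; hence $\exp(-cR^{\hat\beta}t^2)\ge\exp(-4cC_1^2M^{\hat\beta+4/3})$, and multiplying by $R^{d\hat\beta/2}\ge 1$ gives $1\le C(M,d,\lambda,\Lambda)\,R^{d\hat\beta/2}\exp(-cR^{\hat\beta}t^2)$. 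Combining the two ranges of $t$ produces the stated inequality.

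The only genuinely delicate point is the bookkeeping in the small-$t$ regime, namely verifying that the exponent $\tfrac{2}{4+3\hat\beta}$ in the upper bound on $R$ is exactly what makes $R^{\hat\beta+4/3}\e^{2/3}$ bounded; equivalently, this is the threshold at which the localization error scale $\e^{1/3}R^{2/3}$ from \eqref{comparison} ceases to be dominated by the natural concentration width $R^{-\hat\beta/2}$. Everything else is a direct quotation of the estimates established just above, together with the measure-preserving property of the translation action and the normalization $\overline g(x_0)=\mu(g(x_0,\cdot,\cdot),F,\nu_{x_0})$. I expect no further obstacle, since the substantive analytic work (the comparison bound, the cell-problem concentration, and the continuity of $\overline g$) has already been carried out.
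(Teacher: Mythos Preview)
Your proposal is correct and follows essentially the same argument as the paper: the expectation bound is read off directly from \eqref{eqn: pt exp est}, and the concentration estimate is obtained by splitting at $t=2C_1\e^{1/3}R^{2/3}$, invoking \eqref{eqn: pt var est} together with Lemma~\ref{cone:D} (and the measure-preserving property of $\tau_{x_0/\e}$) for large $t$, and using the trivial bound $\P\le 1$ together with the constraint $R\le M\e^{-2/(4+3\hat\beta)}$ to make $R^{\hat\beta+4/3}\e^{2/3}\le M^{\hat\beta+4/3}$ for small $t$. Your verification of the exponent arithmetic in the small-$t$ regime is exactly the point, and matches the paper's computation.
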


\medskip

The next step is   to put a finite net of points on $\partial U$ and use the concentration estimate of the cell problem along with \eqref{eqn: pt var est} and a union bound to get a concentration estimate on the entire boundary of a subdomain of $U$.  

\medskip

For each $\delta>0$, choose  a finite subset ${\Gamma}_\delta$  of $\partial U$  such that
\begin{equation}\label{gf}
\partial U \subset \cup_{x \in \Gamma_\e} B(x,\delta)  \ \hbox{ and } \ |\Gamma_\delta| \lesssim \delta^{-(d-1)}.
\end{equation}
The following lemma provides a cover of $\partial U_\delta$ consisting of sets centered at points on  ${\Gamma}_\delta$.

 \begin{lem}\label{lem: geom lem}
Assume \eqref{domain1}. For sufficiently small  $\delta>0$, depending on the $C^2$-regularity  of $U$, and ${\Gamma}_\delta$
as in \eqref{gf},  $$ \partial U_\delta \subset \cup_{x_0 \in \Gamma_\delta} [x_0+\delta K(1,\nu_{x_0})].$$
 \end{lem}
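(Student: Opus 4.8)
The plan is to establish the asserted inclusion pointwise: I fix an arbitrary $z\in\partial U_\delta$ and exhibit a point $x_0\in\Gamma_\delta$ with $z\in x_0+\delta K(1,\nu_{x_0})$. Two facts about $\partial U$ will be used, both consequences of \eqref{domain1}. First, the unit inward normal $\nu$ is Lipschitz continuous on the compact hypersurface $\partial U$, say with constant $L$. Second, by \eqref{domain2} there are $M,r_0>0$ such that whenever $x,w\in\partial U$ and $|x-w|\le r<r_0$ one has $|(w-x)\cdot\nu_x|=\textup{dist}\bigl(w,\ x+\partial P_{\nu_x}\bigr)\le Mr^2$, i.e. $\partial U$ is quadratically close to its tangent hyperplane at $x$. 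I will also take $\delta$ small enough -- depending only on the $C^2$-regularity of $U$ (that is, on $L$, $M$, $r_0$ and the reach of $\partial U$) -- that the nearest-point projection onto $\partial U$ is well defined and single valued on $\{x:\textup{dist}(x,\partial U)\le\delta\}$; with this choice $\partial U_\delta=\{x\in U:\textup{dist}(x,\partial U)=\delta\}$, and every such $x$ has the representation $x=w+\delta\nu_w$, where $w\in\partial U$ is its unique nearest point.

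Given $z\in\partial U_\delta$, let $w\in\partial U$ be its nearest point, so that $z=w+\delta\nu_w$, and use \eqref{gf} to choose $x_0\in\Gamma_\delta$ with $|w-x_0|<\delta$. I claim this $x_0$ works, i.e. $z-x_0\in\delta K(1,\nu_{x_0})$. For the component along $\nu_{x_0}$, write $(z-x_0)\cdot\nu_{x_0}=(w-x_0)\cdot\nu_{x_0}+\delta\,\nu_w\cdot\nu_{x_0}$; the first term is at most $M\delta^2$ in absolute value by the quadratic flatness above (applied with $r=\delta<r_0$), while $\nu_w\cdot\nu_{x_0}=1-\tfrac12|\nu_w-\nu_{x_0}|^2\in\bigl[1-\tfrac12L^2\delta^2,\,1\bigr]$ by the Lipschitz bound, so
\[ \tfrac{\delta}{2}\ \le\ \delta\bigl(1-M\delta-\tfrac12L^2\delta^2\bigr)\ \le\ (z-x_0)\cdot\nu_{x_0}\ \le\ \delta(1+M\delta)\ \le\ 2\delta \]
as soon as $\delta$ is small. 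For the component orthogonal to $\nu_{x_0}$, simply $|z-x_0|\le|z-w|+|w-x_0|<2\delta$, so its norm is $<2\delta\le 3\delta$. Recalling the definition \eqref{box} of $K(R,\nu)$ (scaled by $\delta$, with $R=1$), these two bounds say precisely that $z-x_0\in\delta K(1,\nu_{x_0})$, i.e. $z\in x_0+\delta K(1,\nu_{x_0})$, which proves the lemma.

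The only step I expect to need genuine care is the first paragraph: namely the fact that, for $\delta$ below the reach of the $C^2$ hypersurface $\partial U$, the $\delta$-level set of $\textup{dist}(\cdot,\partial U)$ coincides with $\partial U_\delta$ and is parametrized by $w\mapsto w+\delta\nu_w$. This is the standard tubular-neighbourhood (normal exponential map) fact for $C^2$ -- indeed $C^{1,1}$ -- domains, and it is exactly there that the hypothesis that $\delta$ is small in terms of the $C^2$-regularity of $U$ enters; everything after it is the elementary two-term estimate displayed above. As a minor remark, if one wanted extra room in the constants one could instead take $\Gamma_\delta$ to be a $c\delta$-net for a small universal $c$, which affects neither the conclusion nor the bound $|\Gamma_\delta|\lesssim\delta^{-(d-1)}$; but this turns out to be unnecessary.
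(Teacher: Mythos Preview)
Your proof is correct and follows essentially the same approach as the paper: take $z\in\partial U_\delta$, write $z=w+\delta\nu_w$ for the nearest boundary point $w$, pick $x_0\in\Gamma_\delta$ with $|w-x_0|<\delta$, and verify the normal and tangential conditions defining $\delta K(1,\nu_{x_0})$ using the $C^2$-regularity of $\partial U$. Your version is slightly more explicit about the tubular-neighbourhood step and the quantitative constants, but there is no substantive difference.
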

 \begin{proof}
For each  $x\in \partial U_\delta$, there exists $\bar x \in \partial U$ such that  $x-\bar x = \delta\nu_{x'}$. 
\medskip

Let $ \pi_{\Gamma_\delta} x := \argmin \{ y \in \Gamma_\delta: |y-\bar x|\},$ and observe, that in view of  \eqref{gf},    
$|\pi_{\Gamma_\delta}x-\bar x| \leq 2\delta$. 
\medskip

Then
  $$ x-\pi_{\Gamma_\delta}x = (x-\bar x)+(\bar x-\pi_{\Gamma_\delta}x) = \delta \nu_{\pi_{\Gamma_\delta}x}+ \delta(\nu_{\bar x}-\nu_{\pi_{\Gamma_\delta}x})+(\bar x-\pi_{\Gamma_\delta}x).$$
 Since, in view of \eqref{domain1},  $\nu \in C^1(\partial U ;S^{d-1}),$ for $\delta$ sufficiently small depending on the $C^2-$ regularity of $U$,
  $$[\delta(\nu_{\bar x}-\nu_{\pi_{\Gamma_\delta}x})+(\bar x-\pi_{\Gamma_\delta}x)] \cdot \nu_{\pi_{\Gamma_\delta}x} \leq C\delta^2 \ \hbox{ and } \ |\delta(\nu_{\bar x}-\nu_{\pi_{\Gamma_\delta}x})+(\bar x-\pi_{\Gamma_\delta}x)| \leq 3\delta,$$
and, hence,  always for sufficiently small $\delta$,
  $$ x \in \pi_{\Gamma_\delta}x+\delta K(1,\nu_{\pi_{\Gamma_\delta}x}).$$
 \end{proof}
 
 \medskip
 
Letting $\delta = \e R$ in  the previous lemma and using  the concentration estimate at each point of $\Gamma_\delta$, we get 
\begin{align*}
\P( \{ \omega :\sup_{x\in \partial U_\delta}|u^\e(x,\omega)-\E u^\e(x)| >t \}) &\leq \P( \{ \omega : \sup_{x_0 \in \Gamma_\delta}\sup_{y \in  K(R,\nu_{x_0})}|u^\e(x_0+\e y,\omega)-\E u^\e(x_0+\e y)| >t \}) \\
&\lesssim (R \e)^{-(d-1)}\sup_{x_0 \in \Gamma_\e}\P(\{ \omega :\sup_{y \in  K(R,\nu_{x_0})}|u^\e(x_0+\e y,\omega)-\E u^\e(x_0+\e y)| >t \})  \\
& \lesssim  (R \e)^{-(d-1)}R^{d\hat{\beta}/2}\exp(-cR^{\hat{\beta}}t^2). 
\end{align*}
Then using Lemma~\ref{lem: est near a pt} as well as the up to the boundary modulus of continuity of $\overline{u}$, which is controlled by the modulus of continuity of $\overline{g}$ from Lemma~\ref{continuity00}, we find
\begin{align*}
 \sup_{x \in \partial U_\delta} |\E u^\e(x)-\overline{u}(x)| &\leq \sup_{x \in \partial U_\delta}\left( |\E u^\e(x)-\overline{g}(\pi_{\Gamma_\delta}x)|+|\overline{g}(\pi_{\Gamma_\delta}x)-\overline{u}(x)| \right)\\
 &\leq \sup_{x_0 \in \Gamma_\delta}\sup_{y \in  K(R,\nu_{x_0})}| \E u^\e(x_0+\e y) - \overline{g}(x_0)|+C(\log \tfrac{1}{\delta})^{1/2}\delta^{\alpha'} \\
 & \lesssim \e^{\frac{1}{3}}R^{\frac{2}{3}}+(\log R)^{1/2}R^{-\hat{\beta}/2}+(\log \tfrac{1}{\e})^{1/2}(\e R)^{\alpha'};
 \end{align*}
recall that  $\pi_{\Gamma_\delta} x$ is any point $x_0 \in \Gamma_\delta$ such that $x \in x_0+\delta K(1,\nu_{x_0})$, which, in view of Lemma \ref{lem: geom lem},  is well defined on $\partial U_\delta$.

\medskip

We consider next the boundary value problem 
\begin{equation}
\left\{
\begin{array}{lll}
F(D^2\widetilde{u}^\e) = 0 & \hbox{ in } & U_\delta, \\[1mm]
\widetilde{u}^\e(x) = \E u^{\e}(x,\omega) & \hbox{ on } & \partial U_\delta.
\end{array}\right.
\end{equation}
Its solution  $\widetilde{u}^\e$ can be thought as the ``extension  by  $F$''   of $\E u^{\e}\indicator_{\partial U_\delta}$  to the interior of $U_\delta$, 
\medskip

Employing the  comparison principle in $U_\delta$ yields 
$$ \P( \{ \omega :\sup_{x\in  U_\delta}|u^\e(x,\omega)-\widetilde{u}^\e(x)| >t \})  \leq \P(\{ \omega : \sup_{x\in \partial U_\delta}|u^\e(x,\omega)-\E u^\e(x,\omega)| >t \}) \lesssim  (R \e)^{-(d-1)}R^{d\hat{\beta}/2}\exp(-cR^{\hat{\beta}}t^2), $$
and 
$$ \sup_{x\in  U_\delta} |\widetilde{u}^\e(x)-\overline{u}(x)| \lesssim \e^{\frac{1}{3}}R^{\frac{2}{3}}+(\log R)^{1/2}R^{-\hat{\beta}/2}+(\log \tfrac{1}{\e})^{1/2}(\e R)^{\alpha'}.$$
\medskip

To complete the proof, it is only necessary  to  replace in the above two estimates $\widetilde{u}^\e(x)$ by $\E u^\e(x)$.  
\medskip

For this we argue as follows: For each  $x \in U_\delta$ and  $A>1$ to be chosen, we have
\begin{align*}
|\E u ^\e(x)-\widetilde{u}^\e(x)| &\leq \E|u ^\e(x)-\widetilde{u}^\e(x)|  = \int_0^\infty \P(\{ \omega :|u ^\e(x,\omega)-\widetilde{u}^\e(x)|>t\}) dt \\
& \lesssim \int_0^\infty [(R \e)^{-(d-1)}R^{d\hat{\beta}/2}\exp(-cR^{\hat{\beta}}t^2)] \wedge 1 dt \\
& \lesssim AR^{-\hat{\beta}/2}(\log \tfrac{1}{\e})^{1/2}+\int_{AR^{-\hat{\beta}/2}(\log \frac{1}{\e})^{1/2}}^\infty (R \e)^{-(d-1)}R^{d\hat{\beta}/2}\exp(-cR^{\hat{\beta}}t^2) dt \\
& \lesssim AR^{-\hat{\beta}/2}(\log \tfrac{1}{\e})^{1/2} + \int_{A(\log \frac{1}{\e})^{1/2}}^\infty (R \e)^{-(d-1)}R^{d\hat{\beta}/2}R^{-\hat{\beta}/2}\exp(-ct^2) dt \\
& \lesssim AR^{-\hat{\beta}/2}(\log \tfrac{1}{\e})^{1/2} +(R \e)^{-(d-1)}R^{d\hat{\beta}/2}R^{-\hat{\beta}/2}\e^{cA} \\
& \lesssim R^{-\hat{\beta}/2}(\log \tfrac{1}{\e})^{1/2},
\end{align*}
where for the last inequality to hold we chose $A$ large depending only on universal constants.

\medskip

Finally we discuss the almost sure convergence.

\begin{lem}
Assume the hypotheses of Theorem~\ref{thm: main}. There exists a measurable $\Omega_0 \subset \Omega$ with $\P(\Omega_0) = 1$ such that, as $\e\to 0$,
$ u^\e(x,\omega) \to \overline{u}(x) \ \hbox{ for all $x \in U$ and all $\omega \in \Omega_0$} .$
\end{lem}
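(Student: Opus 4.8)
The plan is to derive the almost sure convergence from the two quantitative estimates of Theorem~\ref{thm: main 1}, via the Borel--Cantelli lemma applied along the geometric sequence $\e_n := 2^{-n}$, together with a net argument that upgrades convergence along $(\e_n)$ to convergence for the full continuum limit $\e\to 0$. Set $\gamma := \tfrac{2}{4+3\hat\beta}$, $R_n := \e_n^{-\gamma}$ and $\delta_n := R_n\e_n = \e_n^{1-\gamma}$, so that $\delta_n\to 0$, $R_n>1$ and $R_n \le \e^{-2/(4+3\hat\beta)}$ for every $\e\in[\e_{n+1},\e_n]$ (because $\e\le\e_n$). Plugging this $R_n$ into the expectation estimate \eqref{eqn: a est 1} and using $U_{\delta_n}\subseteq U_{R_n\e}$ for $\e\le\e_n$ and $\gamma<\tfrac12$ gives a purely deterministic bound
$$ \sup_{x\in U_{\delta_n}}|\E u^\e(x) - \overline{u}(x)| \;\lesssim\; \varepsilon_n := \e_n^{1/3}R_n^{2/3} + \big(\log\tfrac1{\e_n}\big)^{1/2}R_n^{-\hat\beta/2} + \big(\log\tfrac1{\e_n}\big)^{1/2}\delta_n^{\hat\beta/2(1+\hat\beta)} \;\longrightarrow\; 0 $$
uniformly for $\e\in[\e_{n+1},\e_n]$. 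Hence only the random fluctuation $u^\e-\E u^\e$ must be controlled almost surely, and it suffices to do so on $U_{\delta_n}$ uniformly over the whole dyadic block $\e\in[\e_{n+1},\e_n]$.

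To handle the block at once, I would cover $[\e_{n+1},\e_n]$ by $J_n\lesssim 2^n/t_n$ points $\e_{n,1},\dots,\e_{n,J_n}$ with spacing $\lesssim t_n2^{-2n}$, where $t_n\to0$ is chosen below. For $\e$ in the cell of $\e_{n,j}$, the comparison principle for \eqref{main0} (adding the constant $\sup_{\partial U}|g(\cdot,\cdot/\e,\omega)-g(\cdot,\cdot/\e_{n,j},\omega)|$) together with the uniform Lipschitz continuity \eqref{eqn: continuity g} of $g$ in its second argument yields
$$ \sup_{U}|u^\e(\cdot,\omega)-u^{\e_{n,j}}(\cdot,\omega)| \;\le\; C\,\diam(U)\,\frac{|\e-\e_{n,j}|}{\e\,\e_{n,j}} \;\le\; t_n . $$
Consequently, if $\sup_{x\in U_{\delta_n}}|u^\e(x,\omega)-\overline{u}(x)| > 2t_n + C\varepsilon_n$ for some $\e$ in the block, then (using also $U_{\delta_n}\subseteq U_{R_n\e_{n,j}}$ and the bound on $\varepsilon_n$) one of the events $\{\sup_{x\in U_{\delta_n}}|u^{\e_{n,j}}(x,\omega)-\E u^{\e_{n,j}}(x)|>t_n\}$ occurs. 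By the concentration estimate of Theorem~\ref{thm: main 1} and a union bound over $j$, this event has probability at most $C\,2^{Dn}\exp(-cR_n^{\hat\beta}t_n^2)$, where the constant $D=D(d,\hat\beta,\gamma)$ absorbs $J_n\sim 2^n/t_n$, the polynomial prefactor $(R_n\e_{n,j})^{-(d-1)}R_n^{d\hat\beta/2}$, the factor $\e_{n,j}^{-C}$, and $t_n^{-1}\le R_n^{\hat\beta/2}$.

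Now choose $t_n := \big((D+2)n\,(cR_n^{\hat\beta})^{-1}\big)^{1/2}$; then $t_n\to 0$ (since $R_n^{\hat\beta}=\e_n^{-\gamma\hat\beta}$ is a fixed positive power of $\e_n^{-1}$), while the bound above becomes $\le C2^{-2n}$, so $\sum_n$ of these probabilities is finite. By Borel--Cantelli there is a measurable $\Omega_0\subseteq\Omega$ with $\P(\Omega_0)=1$ such that for every $\omega\in\Omega_0$ there is $N(\omega)$ with
$$ \sup_{0<\e\le\e_{N(\omega)}}\ \sup_{x\in U_{\delta_n}}|u^\e(x,\omega)-\overline{u}(x)| \;\le\; 2t_n + C\varepsilon_n \qquad\text{for all } n\ge N(\omega), $$
because each such $\e$ lies in some block $[\e_{n+1},\e_n]$ with $n\ge N(\omega)$. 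Since $\delta_n\to0$, any fixed $x\in U$ lies in $U_{\delta_n}$ for all large $n$, and since $2t_n+C\varepsilon_n\to0$ we conclude $u^\e(x,\omega)\to\overline{u}(x)$ as $\e\to0$, for every $x\in U$ and every $\omega\in\Omega_0$.

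The genuinely delicate point is precisely this last upgrade from the sequence $(\e_n)$ to the continuum: the net of approximating scales has size $J_n$ growing like $2^n$, and one must check it is still overwhelmed by the Gaussian concentration factor $\exp(-cR_n^{\hat\beta}t_n^2)$. This is possible only because $R_n^{\hat\beta}$ is a positive power of $\e_n^{-1}$, so that $cR_n^{\hat\beta}t_n^2$ can be taken $\gg n$ with $t_n$ still tending to $0$; everything else is bookkeeping with the estimates of Theorem~\ref{thm: main 1}. (One could alternatively reduce first to a countable dense set of $x\in U$ using the interior Hölder estimates for the uniformly bounded family $\{u^\e(\cdot,\omega)\}$, but the continuum-in-$\e$ difficulty remains and is dealt with as above.)
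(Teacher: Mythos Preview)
Your proof is correct and follows essentially the same strategy as the paper: reduce to controlling the random fluctuation $u^\e-\E u^\e$ via the concentration estimate of Theorem~\ref{thm: main 1}, use the Lipschitz dependence of $u^\e$ on $\e$ (from the comparison principle and the Lipschitz continuity of $g$) to discretize the $\e$-axis, and apply Borel--Cantelli. The only cosmetic difference is in the discretization: you use the dyadic sequence $\e_n=2^{-n}$ together with a finer net of $\sim 2^n/t_n$ points inside each block $[\e_{n+1},\e_n]$, whereas the paper chooses a single polynomial sequence $\e_k=k^{-\gamma}$ with $\gamma\le 1/(1+\alpha_0)$ so that consecutive points already satisfy $|\e_{k+1}^{-1}-\e_k^{-1}|\le c\e_k^{\alpha_0}$, making the sequence itself serve as the net; this is slightly more economical but otherwise equivalent to your argument.
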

\begin{proof}
In view of the fact that we already know that $\E u^\e \to \overline{u}$, it suffices to show that $|u^\e(x,\omega) - \E u^\e| \to 0$ almost surely, a fact that typically follows from a
Borel-Cantelli-type argument.
\medskip
 
Since, however,  we can apply the latter only  along sequences $\e_n \to 0$,  we first need to measure the dependence of $u^\e$ on $\e$.  
\medskip

The assumptions on $g$ and $U$ yield a universal constant $C$ such that,  for any $\e, \e' >0$ and for all $x \in \partial U,$
$$ |g(x, \frac{x}{\e},\omega) - g(x,\frac{x}{\e'},\omega)| \leq C|x| |\tfrac{1}{\e}-\tfrac{1}{\e'}| \leq C |\tfrac{1}{\e}-\tfrac{1}{\e'}|,$$
and, hence, using the comparison principle, we find that, for all $x\in U$,
$$|u^\e(x) - u^{\e'}(x)|\leq C |\tfrac{1}{\e}-\tfrac{1}{\e'}|.$$


Let $\delta(\e) = \e^{1-2\alpha_0/\hat \beta }$.  Then, as long as $|\tfrac{1}{\e}-\tfrac{1}{\e'}| \leq  c\e^{\alpha_0}$ and $\frac{\e'}{\e} \geq (2/3)^{1/\alpha_0}$ with $c$ universal and independent of $p$, and, without loss of generality, $M_p >1$,  the estimate proved above yields 
\begin{align*} 
\P(\{ \omega : \sup_{U_\delta} |u^{\e'}(x,\omega) - \E u^{\e'}(x)| > 3M_p \e'^{\alpha_0} \}) &\leq \P(\{ \omega :\sup_{U_\delta} |u^{\e}(x,\omega) - \E u^{\e}(x)| > 2M_p \e^{\alpha_0}-C|\tfrac{1}{\e}-\tfrac{1}{\e'}| \}) \\
& \leq \P(\{ \omega : \sup_{U_\delta} |u^{\e}(x,\omega) - \E u^{\e}(x)| > M_p \e^{\alpha_0} \}) \lesssim \e^p.
\end{align*} 
Now we just choose a sequence $\e_k \to 0$ such that $\e_{k+1}^{-1} - \e_k^{-1} \leq c \e_k^{\alpha_0}$.  For example, we  take $\e_k = k^{-\gamma}$, since, as  long as $\gamma \leq \min\{ c , \frac{1}{1+\alpha_0}\}$, 
$$ \e_{k+1}^{-1} = (k+1)^{\gamma} \leq k^\gamma+\gamma k^{\gamma-1} = \e_k^{-1}+\gamma \e_k^{\frac{1}{\gamma}-1} \leq \e_k^{-1} + c \e_k^{\alpha_0} \ \text{and} \ \  k^\gamma/(k+1)^\gamma  \to 1  \ \text{ as} \  k \to \infty, $$
and thus $\e_{k+1}/\e_k >(2/3)^{1/\alpha_0}$ for $k$ large enough universal. 
\medskip

Then,  for  $p > \frac{1}{\gamma}$,  we find
$$ \sum_{k} \P(\{ \omega :\sup_{U_\delta} |u^{\e_k}(x,\omega) - \E u^{\e_k}(x)| > M_p \e_k^{\alpha_0} \}) \leq \sum_k k^{-\gamma p}< + \infty,$$
which, by the  Borel-Cantelli lemma, yields 
$$ \P(\{ \omega :\sup_{U_\delta} |u^{\e_k}(x, \omega) - \E u^{\e_k}(x)| > M_p \e_k^{\alpha_0}  \ \hbox{ for infinitely many } k\}) = 0,$$
and, by the previous estimates,
\begin{align*}
 \P(\{ \omega :\sup_{U_\delta}& \  |u^{\e}(x, \omega) - \E u^{\e}(x)| > 3M_p \e^{\alpha_0} \hbox{ for arbitrarily small } \e \}) \leq \\
 &\P(\{ \omega : \sup_{U_\delta} |u^{\e_k}(x,\omega) - \E u^{\e_k}(x)| > M_p \e_k^{\alpha_0} \ \hbox{ for infinitely many } k\}) = 0.
 \end{align*}
Note  that, in fact,  we have now proven the following stronger result:  There exists an almost surely positive  random variable $\e_0: \Omega \to \real_+$ such that 
 $$ \sup_{U_{\delta(\e)}}  |u^{\e}(x,\omega) - \E u^{\e}(x)| \leq 3M_p \e^{\alpha_0} \ \hbox{ for } \ \e \leq \e_0(\omega).$$
\end{proof}

\section{Neumann problem}\label{neumann}

Here we prove the results about the Neumann problem with random oscillatory data, that is the existence of the ergodic constant (Theorem~\ref{thm: convex N} and Theorem~\ref{thm: nonconvex N}), its continuity (Theorem~\ref{continuity:N}) and the homogenization in general domains (Theorem~\ref{thm: main N}).   The methods are very similar to those used for the Dirichlet problem, but there are a few differences.  Where the proofs parallel or the same as the Dirichlet case, we simply outline the arguments or refer to the previous sections.

\medskip

We note that throughout the section we take $\beta(\lambda,\Lambda) = \frac{\lambda}{\Lambda}(d-1)-1$ to be the homogeneity exponent of the  fundamental solution  of the Neumann problem ( \eqref{fundamental:N}). 

\medskip

\subsection*{The Discretized Cell Problem }
Similarly  to the Dirichlet cell problem, we consider the solution of the Neumann cell problem \eqref{eqn: neumann cell0} as a function of the boundary data and prove Lipschitz estimates in the appropriate norms and for  discretized data.
\medskip

For simplicity we take again $\nu=e_d$, and, for    $X\in \Xi=C(\bar{Q})^{\integer^{d-1}},$ let  $u_R(y) = u_R(y;X)$ be the solution of 
\begin{equation}\label{eqn: cut off:N}
\left\{
\begin{array}{lll}
F(D_y^2u_R) = 0  & \text{ in } &  \Pi_{e_d}^{2R},\\[1mm]
\partial_{e_d} u_R= \Sigma_{i\in\integer^{d-1}} X_i(\cdot-i) \bf 1_{i+Q} & \text{ on } & \partial P_{e_d}, \\[1mm]
u_R=0& \text{ on } & \partial P_{e_d}+2Re_d;
\end{array}
\right.
\end{equation}
notice that since, in general, \eqref{eqn: cut off:N} does not have a unique solution due to the discontinuity of the boundary data, here we choose a unique $u_R$, using an approximation procedure similar to the one in Section~$2$, so that it satisfies a comparison principle. 
\medskip

With  the $u_R$  as above, let  $f_R: \Xi \to \R$  be given by 
\begin{equation}\label{variable N} 
f_R(X): =\frac{1}{R}u_R(Re_d;X).
\end{equation}
In the next lemma, which  similar to Lemma~\ref{decorrelate}, we discuss the continuity properties of $f_R$.
\begin{lem}\label{decorrelate:N}
Assume \eqref{op1} and \eqref{op2} and  let $\beta$ as in \eqref{fundamental:N}.  Then $f_R$ has the following continuity properties for  $X,Y \in C(\overline{Q})^{\integer^{d-1}}$ and some universal constant $C=: C(d,\lambda,\Lambda)>0$:
\begin{equation}\label{ eqn: linfty bd N}
\hskip-3.5in \text{(i)} \qquad \qquad  
| f_R(X)-f_R(Y)| \leq |X-Y|_{\ell^{\infty}}.
\end{equation}
\begin{equation}\label{eqn: l1 bd N} 
\hskip-2.98in \text{(ii)} \qquad \qquad |f_R(X)-f_R(Y)| \leq C R^{-(1+\beta)}|X-Y|_{\ell^{1}}.
\end{equation}
\begin{equation}\label{eqn: l2 bd N}
\hskip-2.9in \text{(iii)} \qquad \qquad |f_R(X)-f_R(Y)| \leq C R^{-(1+\beta)/2}|X-Y|_{\ell^2}.
\end{equation}
\end{lem}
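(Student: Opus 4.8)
The plan is to mirror the proof of Lemma~\ref{decorrelate} for the Dirichlet problem, replacing the half-space singular solution of Theorem~\ref{thm:fundamental} by the Neumann fundamental solution $\phi(x) = \Phi(x+e_d) = |x+e_d|^{-\beta}$ from \eqref{fundamental:N}--\eqref{singular:N}, and accounting for the extra factor $R^{-1}$ in the definition \eqref{variable N} of $f_R$ and the finite slab geometry of $\Pi_{e_d}^{2R}$.

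\smallskip

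\emph{Step (i): the $\ell^\infty$-estimate.} This is the softest part. If $|X-Y|_{\ell^\infty} \le \kappa$ then the function $\kappa x_d$ (more precisely, a small modification respecting the Dirichlet condition on $\partial P_{e_d}+2Re_d$; one may simply use that $w = u_R(\cdot;X) - u_R(\cdot;Y)$ solves a linear uniformly elliptic equation with $\partial_{e_d} w \le \kappa$ on the bottom, $w = 0$ on the top, and $-\mathcal P^+_{\lambda,\Lambda}(D^2w)\le 0$) can be compared, using the barrier $\kappa\,\frac{x_d(4R - x_d)}{2R}$ or the linear barrier $\kappa(2R - x_d)\cdot\frac{1}{?}$; in fact the cleanest choice is the barrier $\psi(x) = \kappa\big(2R\cdot\tfrac{\Lambda}{\lambda}\big)^{?}$ --- rather than chase constants, I would just observe that the comparison principle with the explicit barrier $\kappa\big(c_d R - c_d R^{-1}(x\cdot e_d)^2 \big)$ used in Lemma~\ref{localization:N} gives $|u_R(Re_d;X) - u_R(Re_d;Y)| \le C R\,|X-Y|_{\ell^\infty}$, and dividing by $R$ gives (i). (The $\ell^\infty$ bound is not sharp in $R$ but that does not matter; it is only used for the truncation argument in (iii).)

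\smallskip

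\emph{Step (ii): the $\ell^1$-estimate.} This is the heart of the lemma. Given $X,Y$, set $\phi_j(x) = (\sup_Q|X_j - Y_j|)\,\phi(x-j) = (\sup_Q|X_j - Y_j|)\,\Phi(x - j + e_d)$ and $\Psi(x) = \sum_{j\in\integer^{d-1}} \phi_j(x)$, the sum converging on $\overline{P_{e_d}}$ because $X,Y\in\ell^1(\integer^{d-1};C(\overline Q))$ and $\Phi$ decays. By \eqref{singular:N}, $-\mathcal P^+_{\lambda,\Lambda}(D^2\phi_j)=0$ in $P_{e_d}$ and $-\partial_{e_d}\phi_j \ge c_d(\sup_Q|X_j-Y_j|)(1+|x-j|)^{\frac{\lambda}{\Lambda}(d-1)} \ge c_d\sup_Q|X_j-Y_j|$ on $i+Q$ when $i=j$ — the pointwise lower bound on $-\partial_{e_d}\phi$ over the unit cube $j+Q$ is a universal constant $c>0$ (here I use, as in the Dirichlet case, that $\min$ of $-\partial_{e_d}\phi(x)$ over $x\in j+Q$ is bounded below independently of $j$, in fact $\ge c_d$). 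Hence $u_R(\cdot;X) + \Psi$ is a supersolution of $-\mathcal P^+_{\lambda,\Lambda}\ge 0$ in $\Pi_{e_d}^{2R}$ with $\partial_{e_d}(u_R(\cdot;X)+\Psi) \le \partial_{e_d}u_R(\cdot;Y)$ on $\partial P_{e_d}$ and $u_R(\cdot;X)+\Psi \ge 0 = u_R(\cdot;Y)$ on $\partial P_{e_d}+2Re_d$; by the comparison principle (applied to the regularized, uniquely-defined solutions as in Section~2) $u_R(\cdot;Y) \le u_R(\cdot;X) + \Psi$ in $\Pi_{e_d}^{2R}$, and symmetrically. Evaluating at $Re_d$,
\[
|u_R(Re_d;X) - u_R(Re_d;Y)| \le \Psi(Re_d) = \sum_{j\in\integer^{d-1}} (\sup_Q|X_j-Y_j|)\,|Re_d - j + e_d|^{-\beta} \le C R^{-\beta}\,|X-Y|_{\ell^1},
\]
using $|Re_d - j + e_d|^{-\beta} = ((R+1)^2 + |j|^2)^{-\beta/2} \le C R^{-\beta}$ for each $j$ (recall $\beta = \frac{\lambda}{\Lambda}(d-1)-1$, which may be negative, so one should write $((R+1)^2+|j|^2)^{-\beta/2}\le C R^{-\beta}$ when $\beta\ge 0$ and handle $\beta<0$ by noting the bound still holds since $|Re_d-j+e_d|\ge R$; actually for $\beta<0$ this fails for large $j$, so more care is needed and one instead bounds $|Re_d-j+e_d|^{-\beta}$ termwise and uses that in the $\ell^1$-norm only the sites $j$ with $X_j\ne Y_j$ contribute — one may restrict attention, after a harmless cutoff, to $|j|\lesssim R$ since the other sites are screened by the Dirichlet condition on the top face). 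Dividing by $R$ then yields $|f_R(X)-f_R(Y)| \le C R^{-(1+\beta)}|X-Y|_{\ell^1}$, which is (ii).

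\smallskip

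\emph{Step (iii): the $\ell^2$-estimate.} This follows formally from (i) and (ii) by the same truncation trick as in Lemma~\ref{decorrelate}: write $Z = X-Y = Z_{>t} + Z_{\le t}$ with $(Z_{>t})_j = Z_j\mathbf 1_{\{\sup_Q|Z_j|>t\}}$, so $|f_R(X)-f_R(Y)| \le |Z_{\le t}|_{\ell^\infty} + CR^{-(1+\beta)}|Z_{>t}|_{\ell^1} \le t + CR^{-(1+\beta)}t^{-1}|Z|_{\ell^2}^2$, and optimize $t = R^{-(1+\beta)/2}|Z|_{\ell^2}$ to get $|f_R(X)-f_R(Y)| \le C R^{-(1+\beta)/2}|Z|_{\ell^2}$. (As in the Dirichlet case there is a stray factor of $|Z|_{\ell^2}$ versus $|Z|_{\ell^2}^?$ in the statement; I would reproduce exactly the bookkeeping of Lemma~\ref{decorrelate}, whose conclusion (iii) is stated as $\le CR^{-\beta/2}|X-Y|_{\ell^2}$ and proved via $t+CR^{-\beta}t^{-1}|Z|_{\ell^2}^2$, matching here with $\beta$ replaced by $1+\beta$.)

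\smallskip

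\emph{Main obstacle.} The genuinely new point compared to the Dirichlet lemma is handling the Dirichlet face at height $2R$ together with the Neumann face at height $0$: the singular solution $\phi$ is defined on the whole half-space, does not vanish at $x_d = 2R$, and $\beta = \frac{\lambda}{\Lambda}(d-1)-1$ can be small or even negative, so the naive sum $\sum_j |Re_d-j+e_d|^{-\beta}$ may diverge. The fix I expect to need is to first localize: replace $u_R(\cdot;X)$ on a cylinder $\mathrm{Cyl}_{e_d,L}$ with $L\gg R$, use Lemma~\ref{localization:N} to absorb the contribution of far-away boundary sites into an $O(R^{?}/L^2)$ error, and only then run the barrier argument with finitely many terms $|j|\lesssim L$; sending $L\to\infty$ the localization error vanishes because $u_R$ itself is bounded by $\|X\|_{\ell^\infty}$ times $R$ (by (i)) on the relevant region. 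Equivalently, one can use a truncated barrier $\Psi_L = \sum_{|j|\le L}\phi_j$ plus the localization barrier from Lemma~\ref{localization:N} for the remainder. This is the one place where the finite-slab geometry of \eqref{eqn: cut off:N} and the sign of $\beta$ force a modification of the Dirichlet argument; everything else is a line-by-line transcription.
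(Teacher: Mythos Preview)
Your argument follows the paper's: for (ii) and (iii) the steps are identical --- build the barrier $\phi=\sum_j(\sup_Q|X_j-Y_j|)\,\phi_0(\cdot-j)$ from the Neumann fundamental solution, apply comparison, evaluate at $Re_d$, then interpolate (i) and (ii) by the truncation $Z=Z_{>t}+Z_{\le t}$.

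Two remarks. For (i) you are overcomplicating things. The paper simply uses the linear profile $\ell(x)=|X-Y|_{\ell^\infty}(2R-x_d)$, which has $D^2\ell=0$, vanishes on the top face $\{x_d=2R\}$, and has $\partial_{e_d}\ell=-|X-Y|_{\ell^\infty}$ on the bottom; Neumann comparison with $u_R(\cdot;X)\pm\ell$ gives $|u_R(Re_d;X)-u_R(Re_d;Y)|\le \ell(Re_d)=R\,|X-Y|_{\ell^\infty}$, hence (i) with constant exactly $1$. No quadratic barrier or appeal to Lemma~\ref{localization:N} is needed.

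Your ``main obstacle'' about the sign of $\beta$ is a legitimate concern, but the paper's proof does not address it either: after the barrier step it writes
\[
u_R(Re_d;Y)\le u_R(Re_d;X)+C\sum_{j}(|j|^2+R^2)^{-\beta/2}\sup_Q|X_j-Y_j|
\]
and then simply asserts that (ii) follows, implicitly bounding $(|j|^2+R^2)^{-\beta/2}\le R^{-\beta}$ termwise, which requires $\beta\ge 0$. Your proposed localization fix does not close the gap as written: truncating to $|j|\le L$ and sending $L\to\infty$, the partial barrier sum $\sum_{|j|\le L}a_j(|j|^2+R^2)^{-\beta/2}$ can still diverge for general $a\in\ell^1$ when $\beta<0$, since the weights grow like $|j|^{|\beta|}$, and the localization remainder you hope to discard does not compensate for this growth. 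So on this point you and the paper are in the same position; both arguments are complete only when $\beta=\tfrac{\lambda}{\Lambda}(d-1)-1\ge 0$.
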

\begin{proof}
The first  estimate follows from the maximum principle, since the difference $u_R(y;X)-u_R(y,Y)$  can be bounded by the linear profiles with slopes $\pm |X-Y|_{\ell^{\infty}}$.  The third claim  follows from interpolating of (i) and (ii) as in the proof of Lemma~\ref{decorrelate}, and thus to conclude we need  to prove (ii). 

\medskip

Let
$$
m = \min[ -\partial_d\Phi(x+e_d) : x \in Q]>0
$$
where $\Phi$ is given by \eqref{fundamental:N}, and define the barrier
$$
\phi_0(x) = 2m^{-1}\Phi( x+e_d ),
$$ 
so that $-\partial_{e_d}\phi_0 \geq 1$ on   $Q$.
\medskip

Then for $X,Y\in \ell^1(\integer^{d-1}; C(\bar{Q}))$ we sum over the sites where $X_j \neq Y_j$,
$$
\phi(x) = \sum_{j \in \integer^{d-1}}  (\sup_Q|X_j-Y_j|)\phi_0(x-j).
$$

\medskip

Then, as in the proof of Lemma~\ref{decorrelate},   
 $$ 
 F(D^2u_R(\cdot;X)+D^2\phi) \geq F(D^2u_R(\cdot;Y) - \mathcal{P}^+_{\lambda,\Lambda}(D^2\phi) 
 =0\ \hbox { in } \  \Pi_{e_d}^{2R}, $$
 $$
 -\partial_{e_d}( u_R(\cdot;X)+\phi) \geq -\partial_{e_d} u_R(\cdot; Y) \ \hbox{ on } \partial P_{e_d},
 $$
and, since  $\phi>0$,  $$u_R (\cdot ;X) +\phi \geq u_R(\cdot; Y) \ \text{on} \  \partial P_{e_d} + 2R e_d.$$ 

\medskip

The comparison principle then yields   $u_R(\cdot;Y) \leq u_R(\cdot; X)+\phi$ in $\Pi_{e_d}^{2R}$, and, in particular, for some universal $C>0$,
$$
 u_R(R e_d;Y) \leq u_R(R e_d; X) + \phi(R e_d) \leq u_R(R e_d;X) + m^{-1}C\sum_{j \in \integer^{d-1}} (|j|^2+R^2)^{-\beta/2}\sup_Q|X_j-Y_j|,
 $$
and  the desired estimate for $f_R = R^{-1}u_R$ follows.
\end{proof}

\subsection*{ Solving the Cell Problem}
We use  the above  estimates to solve the Neumann cell problem. 
\medskip

As before we take $\nu = e_d$, assume  $|\psi| \leq 1$ almost surely and consider the solution $v_{e_d,R}$ to the cell problem,
\begin{equation}\label{discrete2N}
\left\{
\begin{array}{lll}
F(D^2v_{e_d,R}(\cdot, \omega)) = 0  & \text{ in } &   \Pi_{e_d}^{2R},\\[1mm]
v_{e_d,R}(\cdot, \omega) = 0 & \text{ on } & \partial P_{e_d} + 2R\nu,\\[1mm]
\partial_dv_{e_d,R}(\cdot,\omega) = \psi(\cdot,\omega) & \text{ on } & \partial P_{e_d}.
\end{array}
\right.
\end{equation}
\smallskip

We transform to the setting of the previous subsection by considering $\Psi : \Omega \to \Xi$ defined, for each $j \in \integer^{d-1}$ as an element of $C(\overline{Q})$,
$$ \Psi_j(\omega)(\cdot) := \psi(x+\cdot,\omega),$$
and using the uniqueness of solutions we can identify,
$$v_{e_d,R}(\cdot,\omega) = u(\cdot;\Psi).$$

Next we sketch the argument leading to the concentration inequalities as it was described in more detail before.  
Since the $u(Re_d; \cdot)$ is Lipschitz on $\Xi$ and, in view of \eqref{hyp: mixing},  $\Psi$ is a $\phi$-mixing random field on $\integer^{d-1}$ with 
$$ \sum_{j \in \integer^{d-1}} \phi_{\Psi}(|j|)^{1/2} \lesssim  \rho,$$
we apply Theorem~\ref{concentration1} or Theorem~\ref{concentration2} to get concentration of $R^{-1}u(Re_d;\cdot)$ and, hence,  of $R^{-1}v(Re_d,\cdot)$ as well about their means. 

\medskip
 
In the convex case, we have 
$$\P(\{ \omega : R^{-1}|v_{e_d}(Re_d,\omega)-\E v_{e_d}(Re_d)| \geq t \}) \leq C \exp\left(-cR^{\beta}t^2\right) \ \hbox{ for all } \ t >0,$$
while in the non-convex case,
$$\P(\{ \omega : R^{-1}|v_{e_d}(Re_d,\omega)-\E v_{e_d}(Re_d)| \geq t \}) \leq C \exp\left(-cR^{2\beta-(d-1)}t^2\right) \ \hbox{ for all } \ t >0.$$
Define 
\begin{equation}\label{gamma}
\hat{\beta}:=  \frac{\lambda}{\Lambda}(d-1) \hbox{ if }F \hbox{ is convex or concave and, otherwise, }\hat{\beta}:= 2(\frac{\lambda}{\Lambda}- \frac{1}{2})(d-1),
\end{equation}
and note that, if we assume $\frac{\lambda}{\Lambda} > 1/2,$  then $\hat{\beta}>0$ in the non-convex case; this corresponds to \eqref{eqn: beta cond} for the Dirichlet problem.

\medskip

Following the arguments in the Dirichlet case, we prove Theorem~\ref{thm: convex N} and Theorem~\ref{thm: nonconvex N}:

\medskip

{\it Proof of Theorem~\ref{thm: convex N} and Theorem~\ref{thm: nonconvex N}.  } For $N \in 2^{\mathbb{N}}$,  
a large universal constant $A=A(d,\lambda,\Lambda) >>1$ and  $k \in \integer^{d-1}$ we consider the events 
\begin{equation}\label{bad_set:N}
 E_{k}^N = \{ \omega \in \Omega: |N^{-1}v_N(N\nu+N^{1-\hat{\beta}/2} k,\omega)-\mu_N| \geq A^{1/2}(\log N)^{1/2}N^{-\hat{\beta}/2}\}.
 \end{equation}
It follows from the  stationarity  and either Theorem \ref{concentration1} or \ref{concentration2} that 
$$ \P(E_k^N ) = \P(E_0^N ) = \P(\{ \omega : |N^{-1}v_N(N\nu,\omega)-\mu_N| \geq A^{1/2}(\log N)^{1/2}N^{-\hat{\beta}/2}\}) \leq C\exp(-cA\log N).$$
\smallskip

If $ E^{\geq M} = \bigcup_{N \geq M}E^N,$ a simple union bound yields
$$\P(E^N) \leq C N^{3\hat{\beta}(d-1)/4} \exp(-cA\log N).$$

Let  $ E^{\geq M} = \bigcup_{N \geq M}E^N.$  It follows that, for some large $M \in 2^{\mathbb{N}}$ and  as long as $A > 3\hat{\beta}(d-1)/(4c)$,  
$$\P(E^{\geq M}) \leq C \sum_{N \geq M} N^{3\hat{\beta}(d-1)/4-cA} <+\infty. $$

\medskip

Then, for $M$ sufficiently large in a universal way, $P(\Omega \setminus E^{\geq M})>0$, and, on $\Omega \setminus E^{\geq M}$,
$$|N^{-1}v_N(N\nu+N^{1-\hat{\beta}/2} k,\omega)-\mu_N| \lesssim (\log N)^{1/2}N^{-\hat{\beta}/2} \ \hbox{ for all }  \ N \geq M \ \hbox{ and } |k| \leq N^{3\hat{\beta}/4}. $$
\smallskip

We work with $\omega \in \Omega \setminus E^{\geq M}.$ It follows from  the interior oscillation decay estimates (Lemma~\ref{lem:osc_decay:N}) that, for every $N \geq M$ and  $y' \in \partial P_\nu \cap B_{N^{1+\hat{\beta}/4}},$
 $$ |N^{-1}v_N(N\nu+y',\omega) - \mu_N| \lesssim  N^{1-\hat{\beta}/2} N^{-1}+(\log N)^{1/2}N^{-\hat{\beta}/2} \lesssim (\log N)^{1/2}N^{-\hat{\beta}/2},$$ 
while   the localization estimate in Lemma~\ref{localization:N} gives, for $0 \leq t \leq N$,
  \begin{equation}\label{eqn: est3}
    |v_N(t \nu,\omega) - \tfrac{1}{2}\mu_N(2N-t)| \lesssim (\log N)^{1/2}N^{1-\hat{\beta}/2}+NN^{2(1-(1+\hat{\beta}/4))} \lesssim (\log N)^{1/2}N^{1-\hat{\beta}/2}.  
    \end{equation}
    
\smallskip
  
Next we use again a  localization estimate to compare $v_{N}$ with $v_{2N}$ in their common domain.  To this end, notice that $2N\mu_{2N}+v_{N}$ and $v_{2N}$ have the same Neumann boundary data on $\partial P_{\nu}$, while,  since  $v_{N}(y'+2N\nu,\omega) = 0$ for $y'  \in \partial P_{\nu}$,  
  $$ |2N\mu_{2N}+v_{N}(y'+2N\nu,\omega)-v_{2N}(y'+2N\nu,\omega)| \lesssim (\log N)^{1/2}N^{1-\hat{\beta}/2} \ \text{on} \ (P_\nu + 2N\nu)  \cap \overline B_{N^{1+\hat{\beta}/4}} .$$

  \medskip
  
It then follows from Lemma~\ref{localization:N} that
\begin{align}\label{eqn: est2}
 |2N\mu_{2N}+v_{N}(N\nu,\omega)-&v_{2N}(N\nu,\omega)| \nonumber \\
 &\lesssim \sup_{|y'| \leq N^{1+\hat{\beta}/2}} | 2N\mu_{2N}+v_{N}(y'+2N\nu,\omega)-v_{2N}(y'+2N\nu,\omega)| +N^{-\hat{\beta}/2} \nonumber \\
 &\lesssim (\log N)^{1/2}N^{1-\hat{\beta}/2}.
 \end{align}

Combining the previous estimates,  using \eqref{eqn: est2} and \eqref{eqn: est3} for $v_{2N}$ to estimate the three terms below and with the choice of $\omega$ above, for every $N,L \geq M$ in $2^{\mathbb{N}}$, we get
\begin{align*}
 |\mu_{N}-\mu_{2N}| &\leq |N^{-1}v_N(N\nu,\omega) - \mu_N|+|2\mu_{2N}+N^{-1}v_N(N\nu,\omega)-N^{-1}v_{2N}(N\nu,\omega)|\\ 
  & + 2|(2N)^{-1}v_{2N}(N\nu,\omega)-\tfrac{3}{2}\mu_{2N}| \lesssim (\log N)^{1/2}N^{-\hat{\beta}/2}. 
 \end{align*}

Therefore, for every $N,L \geq M$ in $2^{\mathbb{N}}$,
$$|\mu_{N}-\mu_{L} | \lesssim \sum_{K \geq M} (\log K)^{1/2}K^{-\hat{\beta}/2} \lesssim (\log M)^{1/2}M^{-\hat{\beta}/2}.$$

It follows that  $(\mu_N)_{2^{\mathbb{N}}}$ is  a Cauchy sequence and, therefore, has a limit $\mu$.  

\medskip

The extension to an estimate of $R^{-1}\E v_R(R\nu) - \mu$, for all $R>1,$ is omitted as it is just a combination of the above arguments with the ideas from the Dirichlet case.
\qed

\medskip

For the proof for general domains, we need the following spatially uniform concentration estimate. Since the notation and proof parallel that of Lemma~\ref{cone:D} we omit the details.

\begin{lem}\label{cone}
Let $v_R$ be as given above. Then, for any $t>0,$
$$
\P(\{ \omega :\sup_{\Pi^{2R}_\nu \cap \{ |y'| \leq 3R\}}R^{-1}|v_R(y,\omega) - \E v_R(y)|>t\}) \leq CR^{d\hat{\beta}/2}\exp(-CR^{\hat{\beta}} t^2),
$$

\end{lem}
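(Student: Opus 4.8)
The plan is to repeat, almost verbatim, the argument of Lemma~\ref{cone:D}, substituting the Neumann analogues of the ingredients used there and carrying out the discretisation for the rescaled solution rather than for $v_R$ itself. After a rotation take $\nu=e_d$ and set $\tilde v(y,\omega):=R^{-1}v_R(Ry,\omega)$; by the maximum principle $|\tilde v|\le C$ universally, and $\tilde v$ solves $F(D^2\tilde v)=0$ in $\Pi_{e_d}^2$ with Neumann datum $\psi(R\,\cdot,\omega)$ on $\partial P_{e_d}$ (of sup-norm $\le 1$) and zero Dirichlet datum on $\partial P_{e_d}+2e_d$. First I would cover $\Pi^{2R}_\nu\cap\{|y'|\le 3R\}$ by $O(\varepsilon^{-d})$ cubes $\tilde Q_j$ of side $\sim\varepsilon R$, centred at $c(\tilde Q_j)$, with $\varepsilon>0$ fixed at the end.

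Two inputs are needed. The first is an $\omega$-uniform bound on $\osc_{\tilde Q_j}(R^{-1}v_R)$. On cubes lying in $\{R/4\le y\cdot e_d\le 7R/4\}$ the interior Lipschitz estimate applied to $\tilde v$ (whose distance in rescaled coordinates to both faces of $\partial\Pi_{e_d}$ is $\gtrsim 1/4$) gives, as in Lemma~\ref{cone:D}, $\osc_{\tilde Q_j}(R^{-1}v_R)\lesssim\varepsilon$; on cubes meeting $\partial P_{e_d}$ the up-to-the-Neumann-boundary estimate of Lemma~\ref{lem:boundary_holder:N} (whose right side only involves $\sup|\psi|\le 1$ and $\sup|\tilde v|\lesssim 1$) gives $\osc_{\tilde Q_j}(R^{-1}v_R)\lesssim\varepsilon^{\alpha}$ for the boundary Hölder exponent $\alpha=\alpha(d,\lambda,\Lambda)$; and on cubes meeting $\partial P_{e_d}+2Re_d$, Lemma~\ref{lem:boundary_holder} with zero data again yields $\lesssim\varepsilon$. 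The second input is pointwise concentration at the centres: the barrier construction in the proof of Lemma~\ref{decorrelate:N} applies with evaluation point $Re_d$ replaced by any $y$ with $y\cdot e_d\sim R$, since $\max_j(|y'-j|^2+(y\cdot e_d)^2)^{-\beta/2}\lesssim R^{-\beta}$ there, so $R^{-1}v_R(y;\cdot)$ is Lipschitz in the $\ell^2$-norm (resp.\ the weighted Hamming distance) of the data with the same constants as at $Re_d$; Theorem~\ref{concentration1}/Theorem~\ref{concentration2}, applied exactly as in the proof of Theorems~\ref{thm: convex N} and \ref{thm: nonconvex N}, then give
\begin{equation*}
\P(\{\omega:R^{-1}|v_R(y,\omega)-\E v_R(y)|>s\})\le C\exp(-cR^{\hat{\beta}}s^2)\qquad\text{for }R/4\le y\cdot\nu\le 2R,\ s>0,
\end{equation*}
with $\hat{\beta}$ as in \eqref{gamma}; for $c(\tilde Q_j)$ in the thin layers $\{y\cdot\nu<R/4\}$ or $\{y\cdot\nu>7R/4\}$ one first uses Lemma~\ref{lem:osc_decay:N} and the localization Lemma~\ref{localization:N} to compare $R^{-1}v_R(c(\tilde Q_j),\cdot)$ with its values on the slice $\{y\cdot\nu=R\}$ up to an error $\lesssim\varepsilon^{\alpha}+R^{-1}$, and then invokes the displayed bound there.

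With these two inputs the probabilistic step is identical to Lemma~\ref{cone:D}: $\osc_{\tilde Q_j}(R^{-1}v_R)\lesssim\varepsilon^{\alpha}$ together with a union bound over the $O(\varepsilon^{-d})$ cubes gives, for $t>2C_0\varepsilon^{\alpha}$, $\P(\sup R^{-1}|v_R-\E v_R|>t)\le C\varepsilon^{-d}\exp(-cR^{\hat{\beta}}t^2)$, while for $t\le 2C_0\varepsilon^{\alpha}$ one uses $1\le[C^{-1}\varepsilon^{d}e^{c'R^{\hat{\beta}}\varepsilon^{2\alpha}}][C\varepsilon^{-d}e^{-cR^{\hat{\beta}}t^2}]$; choosing $\varepsilon^{2\alpha}\sim R^{-\hat{\beta}}$ makes the first bracket $O(1)$ and produces the stated estimate (with the clean prefactor $R^{d\hat{\beta}/2}$ on the region where the interior Lipschitz estimate applies, i.e.\ the exponent $\alpha=1$ case, and a fixed larger power of $R$ coming from the boundary layers, which is irrelevant for the applications). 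The main obstacle, and the only genuinely new feature relative to Lemma~\ref{cone:D}, is exactly these two boundary layers: next to the Neumann face the solution stays random and is only Hölder — not Lipschitz — up to the boundary, and next to the zero-Dirichlet face the value, though of small oscillation, is not deterministic; both are handled by replacing the single interior estimate used in Lemma~\ref{cone:D} with Lemmas~\ref{lem:boundary_holder:N}, \ref{lem:osc_decay:N}, \ref{localization:N} and \ref{lem:boundary_holder}, after which the rest of the proof goes through unchanged.
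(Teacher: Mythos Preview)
Your proposal is essentially correct and takes the same route the paper intends: the paper omits the proof entirely, saying only that it ``parallels that of Lemma~\ref{cone:D},'' and you have supplied exactly such a parallel argument (cover by $O(\varepsilon^{-d})$ cubes of side $\sim\varepsilon R$, control the oscillation of $R^{-1}v_R$ on each cube, union-bound the pointwise concentration at the centres, optimise over $\varepsilon$). You also correctly isolate the one genuine difference from the Dirichlet case, namely that $\Pi^{2R}_\nu\cap\{|y'|\le 3R\}$ reaches both boundary faces whereas $K(R,\nu)$ does not, and you bring in the right boundary-regularity lemmas (\ref{lem:boundary_holder:N} and \ref{lem:boundary_holder}) to handle the oscillation there.

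One small simplification: your step for the pointwise concentration at centres lying in the thin layers $\{y\cdot\nu<R/4\}$ or $\{y\cdot\nu>7R/4\}$ --- comparing $R^{-1}v_R(c(\tilde Q_j),\cdot)$ with its values on the slice $\{y\cdot\nu=R\}$ via Lemmas~\ref{lem:osc_decay:N} and \ref{localization:N} ``up to an error $\lesssim\varepsilon^\alpha+R^{-1}$'' --- is not needed and, as written, is not quite right (the pointwise values at heights $0$ and $R$ differ by $O(1)$, not by $\varepsilon^\alpha$; what is small is the \emph{deviation from the mean}, but that is precisely what you are trying to bound). It is simpler to observe that the barrier proof of Lemma~\ref{decorrelate:N} applies at \emph{every} evaluation point $y\in\Pi^{2R}_{e_d}$, giving Hamming weights $\alpha_k(y)\sim R^{-1}(|y'-k|^2+(y_d+1)^2)^{-\beta/2}$; their $\ell^2$-norm (or the interpolated $\ell^2$-Lipschitz constant in the convex case) is bounded uniformly in $y$ by the same power of $R$ as at $y=Re_d$, up to universal constants, so Theorem~\ref{concentration1}/\ref{concentration2} yields the displayed pointwise bound directly at every centre, with no detour through the slice $\{y\cdot\nu=R\}$. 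With that replacement the rest of your argument goes through verbatim.
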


\medskip

\subsection*{The  continuity of the homogenized boundary condition }
We sketch here the proof of Theorem~\ref{continuity:N}. 

\begin{proof}
The first assertion is a direct consequence of the comparison principle.  To prove the second,  we first  assume, without any loss of generality,  that $|\psi| \leq 1$.  
\medskip

Fix  $\nu_1, \nu_2 \in S^{d-1}$ and let $v_1, v_2$  and  $\mu_1,\mu_2$ be respectively the  solutions to the corresponding  Neumann cell problems and the associated ergodic constants.

\medskip

Similarly to the proof of Lemma~\ref{continuity00}, we define
$$
E = \{y\in \real^d:  L|\nu_1-\nu_2| <y\cdot \nu_1 < 2R-L|\nu_1-\nu_2|, \quad  |y-(y\cdot \nu_1)\nu_1| \leq L\} \subset P_{\nu_1} \cap P_{\nu_2},
$$
and using  the up to the boundary  $C^{1,\alpha}-$ regularity of the $v_1$ and $v_2$ (Lemma~\ref{lem:boundary_holder:N}), we find that, for every $\omega$,
$$
 \sup_{ y \in \partial E \cap \{y\cdot \nu = L|\nu_1-\nu_2| \}}|\partial_{\nu_i}v_i(y,\omega) - \psi(y,\omega)| \lesssim L^\alpha|\nu_1-\nu_2|^\alpha
 $$ 
and
$$
|\partial_{\nu_1}v_1 - \partial_{\nu_2}v_2| \leq \sup |Dv_i||\nu_1-\nu_2|  \leq C\|\psi\|_{C^\alpha}|\nu_1-\nu_2|.
$$

Therefore, for $i=1,2$,  we have 
$$
 \sup_{ y \in \partial E \cap \{y\cdot \nu_1 = L|\nu_1-\nu_2| \}}|\partial_{\nu_1}v_i(y,\omega) - \psi(y,\omega)| \leq C(\|\psi\|_{C^\alpha}|\nu_1-\nu_2|+L^\alpha |\nu_1-\nu_2|^\alpha), 
 $$ 
and,  moreover,  since $|\psi|\leq 1$, $|v_i| \leq C|\nu_1-\nu_2|L$ on $\{y \in \real^d: y\cdot\nu_1 =2R-L|\nu_1-\nu_2|\} $ and $|v_i| \leq 2R$ in $E$.

\medskip

Finally the localization Lemma \ref{localization:N} that, for $|y'| \leq R$,
$$
\frac{1}{R}|v_1(y,\omega) - v_2(y,\omega)| \leq C(|\psi|_{C^\alpha}|\nu_1-\nu_2|+L^\alpha|\nu_1-\nu_2|^\alpha+ CR^{-1}L|\nu_1-\nu_2|+ R^2L^{-2}).
$$

From here the proof follows that of Lemma~\ref{continuity00}. Without loss we can assume that $L|\nu_1-\nu_2| \leq 1$, since otherwise $\frac{1}{R}|v_1(y,\omega) - v_2(y,\omega)| \leq 2$ is a better bound.

\medskip

Using this observation to consolidate terms and  the cell problem homogenization result  after  taking expectations on both sides, we obtain 
$$|\mu_1-\mu_2| \lesssim (1+|\psi|_{C^\alpha})L^\alpha|\nu_1-\nu_2|^\alpha+R^2L^{-2}+(\log R)^{1/2} R^{-\hat{\beta}/2}.$$

Choosing $R,L$ in terms of $|\nu_1-\nu_2|$ to optimize the bound above gives the desired result.
\end{proof}

\medskip

\subsection*{The homogenization in general domains}

The proof and statement of  Theorem~\ref{thm: main N} are a bit easier than that of Theorem~\ref{thm: main}.  In contrast to the Dirichlet case the convergence rate of $u^\e$ to $\overline{u}$ is uniform in $U$.  In particular the boundary layer, represented by the parameter $R$ in Theorem~\ref{thm: main}, does not appear in the Neumann setting.
 
 \medskip

In spite of  this difference, the proof of Theorem~\ref{thm: main N} parallels the one of  Theorem~\ref{thm: main} and consists of two main steps, namely approximating  $u^\e$ in the general domain with the solution in half-space and in a local neighborhood of the ``base points", and then using the results on the half-space solutions to get the homogenization in each neighborhood.

\medskip

We do not calculate the optimal convergence rate allowed by the method in this case.  It will be evident from what follows that a more careful analysis, as in the case of the Dirichlet problem, will give the full statement of Theorem~\ref{thm: main N} with explicit exponents. 

\medskip

The goal is to show that, if $u^\e$  and $\overline u$ are respectively   the solution of the general domain Neumann problem \ref{main00:N} and the homogenized equation \eqref{mainN} with boundary data as in the statement of Theorem~\ref{thm: main N}, then, for every $p>0$ and any $ k' < k :=\min (\frac{2}{3}\frac{\alpha}{3+\alpha}, \frac{\alpha^2}{3+\alpha}, \frac{\hat{\beta}}{6})$, with $\alpha < \alpha'(\hat{\beta},\lambda,\Lambda)$ from Lemma \ref{continuity:N}, there exists $C$, which depends on universal constants $p$ and $k'$, such that,

\begin{equation}\label{concentration}
\mathbb{P}(\sup_{x \in U\setminus K}|u^\e(x,\omega)-\overline{u}(x)|> \e^{k'}) \leq C \e^p.
\end{equation}

\medskip

{\it Proof of Theorem~\ref{thm: main N}.  } Fix $\e>0$ and $t>0$,  select  a set $\Gamma_{\e^{2/3}}\subset \partial U$ of at most  $C\e^{-2/3(d-1)}$ 
boundary points such that every $\e^{2/3}$-neighborhood of a point on $\partial U$ contains at least one point in $\Gamma_{\e^{2/3}}$, let 
\begin{equation}\label{probability_set}
\Omega^t_\e:= \cup_{x\in \Gamma_{\e^{2/3}}} E^t_\e(\nu_x),
\end{equation}

where 
$$
E^t_\e(\nu) :=\{\omega: R^{-1}\sup_{y\in \Pi_\nu^{2R} \cap \{|y'| \leq 3R\}}|v_{R,\nu}(y) - \E v_{R,\nu}(y) |>t\} \hbox{ with } R=R_\e = \e^{-1/3}.
$$

and note that, in view of Lemma~\ref{cone}, 
\begin{equation}\label{rate0}
P(\Omega^t_\e) \leq C\e^{-(2/3(d-1)+d\hat{\beta}/6)}\exp (-c\e^{-\hat{\beta}/3}t^2).
\end{equation}
Choose $t_\e= c_0\e^{k'}$, with $k'<k$  and a universal $c_0$ to be chosen small.  In particular $k' < k\leq \hat\beta /6$ and, thus,  
\begin{equation}\label{rate1}
\P(\Omega_\e)  \lesssim \e^p\hbox{ for every } p< \infty\hbox{ as }\e \to 0.
\end{equation}

\medskip

We prove \eqref{concentration} by showing that, for $\omega\in\Omega \setminus \Omega_{\e}$,  
$$\phi^- \leq u^\e(\cdot,\omega) \leq \phi^+,$$ 
where $\phi^\pm$ solve \eqref{main00:N} with the modified Neumann boundary data $\mu(g(x,\cdot),F,\nu_x) \pm c_1\e^{k'}$ for $k'<k$ given in the statement of the theorem. Here $c_1$ can be chosen universally small so that $\phi^+ - \phi^- \leq \e^{k'}$, since $\phi^+-\phi^- \leq c \e^{k'} h$, where $h$ is the solution of $\mathcal{P}^+_{\lambda,\Lambda}(D^2h) = 0$ in $U \setminus K$, $h = 0$ on $\partial K$ and Neumann data identically $1$ on $\partial U$, and, therefore, has a universal upper bound.  

\medskip

The concentration estimate \eqref{concentration} then follows since, for $ \omega \in\Omega \setminus \Omega_\e$,
$$
\sup_{x\in U\setminus K}|u^\e(x,\omega) - \overline{u}(x) |\leq \sup_{x\in U\setminus K}|\phi^+-\phi^-| \leq \e^{k'}, 
$$
or, in other words,
$$
\mathbb{P}(\{ \omega : \sup_{x\in U\setminus K}|u^\e(x,\omega) -\overline{u}(x) |> \e^{k'}\}) \leq \mathbb{P}(\Omega_\e) \lesssim \e^{p}.
$$

\medskip

Below we only prove that $u^\e \leq \phi^+$, since the proof of $u^\e \geq \phi^-$ is similar.  We argue by contradiction observing that, if not, then $m:=\max _U (u^\e-\phi^+)>0$. By the maximum principle the maximum must be attained at a boundary point  $x_\e\in\partial U$, which must belong  to the $\e^{2/3}$- neighborhood of  one of ``grid points" $x_0\in \Gamma_{\e^{2/3}}$ on $\partial U$.

\medskip

Let  $\nu = \nu_{0}$ and, for $z_0 =  x_0+\e^{2/3}\nu \hbox{ and } z':= z - (z\cdot\nu)\nu,$ 
$$
T(x):= \phi(z_0)+D\phi(z_0) \cdot (x-z_0)'.
$$
Note that, in view of Lemma~\ref{continuity:N} and the fact that $g(\cdot,y,\omega)\in C^{0,1}(\R^d)$, for any $\alpha < \alpha'(\hat{\beta})$ from Lemma~\ref{continuity:N} we have 
$$
\overline{g}(x) = \mu(g(x,\cdot),F, \nu_x) \in C^{0, \alpha}(\partial U).
$$
 Then Lemma~\ref{lem:boundary_holder:N} yields that $\phi^+\in C^{1,\alpha}(\bar{U}\setminus K)$, and,  in particular,  
\begin{equation}\label{est101}
|\phi^+(x) - T(x)| \leq C|(x-z_0) \cdot \nu|+ C|(x-z_0)'|^{1+\alpha} \quad\hbox{ in }\  U \setminus K .
\end{equation}
Let  $U^\e:= \{ |(x-x_0) \cdot\nu |\leq \e^{2/3}\} \cap U$ and fix $L>1$ to be chosen later in terms of $\e$ and consider the solution  $w_\e$  to 
$$
\left\{\begin{array}{lll}
F(D^2 w_\e) = 0 &\hbox{ in } & U^\e\cap B_{L\e^{2/3}},\\[1mm]
\partial_{\nu}w_\e(\cdot ,\omega) = g(x_0, \cdot/\e,\omega) &\hbox{ on } & \partial U \cap B_{L\e^{2/3}}(x_0),\\[1mm]
w_\e(\cdot,\omega) = 0 &\hbox{ on } &(\partial P_\nu + z_0)\cap B_{L\e^{2/3}}(x_0),\\[1mm]
w_\e(\cdot ,\omega) = 0 &\hbox{ on } & U^\e\cap \partial B_{L\e^{2/3}}(x_0).
\end{array}\right. 
$$
It then follows from \eqref{est101} and the zero Dirichlet condition for $w_\e$ that 
$$
u^\e \leq m+\phi^+ \leq m+ T + w_\e + CL^{1+\alpha}\e^{2/3(1+\alpha)} \quad\hbox{ on } \quad  (\partial P_\nu + z_0) \cap U,
$$ 
and, similarly,
$$
u^\e \leq  m+ T + w_\e + C\e^{2/3} \quad\hbox{ on } \quad  U^\e\cap \partial B_{L\e^{2/3}}(x_0).
$$ 
Choose $L$ so that $L^{1+\alpha}\e^{2/3(1+\alpha)} \leq \e^{2/3}$ holds and observe that, in view of   the continuity of $g$, 
$$ \sup_{x \in B_{L\e^{2/3}}} |g(x,x/\e,\omega) - g(x_0,x/\e,\omega)| \leq C L\e^{2/3}.$$

Arguing as in  Lemma~\ref{localization:N} we estimate the difference of $u^\e$ and $m+T(x)+w_\e(x)$ using a rotated version of the barrier
$$
\varphi(x) =  CL^{1+\alpha}\e^{2/3(1+\alpha)}+CL\e^{2/3}(\e^{2/3}-x_d)+ CL^{-2}\e^{-4/3}\e^{2/3}(|x'|^2 + (d-1)\frac{\Lambda}{\lambda}(1-x_d^2)),
$$
and we get  
\begin{equation*}
u^\e \leq  m+T+ w_\e+ C(L^{1+\alpha}\e^{2/3(1+\alpha)} + L^{-2}\e^{2/3} + L\e^{4/3})\ \hbox{ on }  \ U \cap B_{\e^{2/3}}(x_0) .
\end{equation*}
Choosing $L = \e^{- \tfrac{2\alpha}{9+3\alpha}}$ and, hence, $L\e = \e^{\frac{2}{3+\alpha}}$ gives 
\begin{equation}\label{est:N}
u^\e \leq  m+T+ w_\e + C\e^{\frac{2}{3}+k} \ \hbox{ on }  \ U \cap B_{\e^{2/3}}(x_0) ,
\end{equation}
since $k \leq \frac{2\alpha}{3+\alpha}$.  
\medskip

Next we compare a rescaled version of $w_\e$ and  the solution $v_R=v_{R,\nu}$ of the cell problem \eqref{eqn: neumann cell0} with boundary data 
$$
\psi(y,\omega) : = g(x_0, y, \tau_{x_0/\e}\omega).
$$
Since, in view of \eqref{domain1}, for $c$ sufficiently small depending on the $C^2-$regularity of $\partial U$,
$$
d(\partial U-x_0, \partial P_\nu \cap B_{c\e^{\frac{2}{3+\alpha}}}(x_0))\leq \e^{\frac{4}{3+\alpha}},
$$
it follows from the up to the boundary  H\"{o}lder regularity  of  $v_R$ and $w_\e$, Lemma~\ref{lem:boundary_holder:N}, that
\begin{equation}\label{boundary2}
|\partial_{\nu} w_\e(x_0+ \cdot,\omega) -  \partial_{\nu} v_R(\cdot ,\omega)| \leq  C\e^{\frac{\alpha^2}{3+\alpha}} \  \hbox{ on }  \  (\partial P_\nu + \e^{\frac{2}{3+\alpha}}\nu) \cap B_{c\e^{3/5}}.
\end{equation}
\smallskip

Now we rescale to
$$
\tilde{w}_\e(y, \omega):= \e^{-1}w_\e(x_1+\e y, \omega),
$$
and observe that, due to the facts that $|g|\leq 1$, $R=\e^{-1/3}$ and \eqref{boundary2}, $h:=\tilde{w}_\e - v_R$ solves
$$
\left\{\begin{array}{lll}
-\mathcal{P}^+_{\lambda,\Lambda}(D^2 h) \leq 0,&\hbox{ in } & \Pi_\nu^R \cap B_{R^{\frac{3(1+\alpha)}{3+\alpha}}},\\[1mm]
|\partial_{\nu} h |\leq C\e^{\frac{3+3\alpha}{1+\alpha}} &\hbox{ on }& \partial P_\nu  \cap B_{R^{\frac{3(1+\alpha)}{3+\alpha}}},\\[1mm]
h = 0 &\hbox{ on }& (\partial P_\nu + R\nu)\cap B_{R^{\frac{3(1+\alpha)}{3+\alpha}}},\\[1mm]
h(x) \leq 2R &\hbox{ on }& \partial B_{R^{\frac{3(1+\alpha)}{3+\alpha}}} \cap \Pi_\nu^R.
\end{array}\right.
$$

Using a rotated version of the barrier
$$
\varphi(x) =C\e^{\frac{\alpha^2}{3+\alpha}} (R-x_d) +2R^{1-\frac{3(1+\alpha)}{3+\alpha}}\left(|x'|^2-(d-1)\tfrac{\Lambda}{\lambda} ((x_d)^2-R^2)\right),
$$
we conclude that, for some $C>0$ which is  independent of $\e$ and $x_0$,
\begin{equation}\label{est001}
|\tilde{w}_\e - v_R| \leq  C R(\e^{\frac{\alpha^2}{3+\alpha}} + R^{-\frac{2\alpha}{3+\alpha}}) = CR(\e^{\frac{\alpha^2}{3+\alpha}}+\e^{\frac{2}{3}\frac{\alpha}{3+\alpha}}) \leq CR \e^{k} \ \ \hbox{ in }  \ \ U \cap P_{\nu} \cap B_R.
\end{equation}

\medskip

 Note that, since $\omega \notin \Omega_\e$, the definition of $\Omega_\e$ yields that, for $\mu = \mu(g(x_0,\cdot),F,\nu)$,
\begin{equation}\label{est002}
|R^{-1}v_R(\cdot,\omega) + \mu (\frac{y}{R}\cdot\nu - 1) | \leq c_0 \e^{k'}  \ \ \hbox{ in }  \ \ \Pi_\nu^R \cap \{ |y'| \leq 3R\}.
\end{equation}
  
Rewriting \eqref{est001} and \eqref{est002} in terms of the original variable yields that, for $\e$ sufficiently small depending on $k-k'$ and $c_0$,
\begin{equation}\label{est:N2}
\e^{-2/3}|w_\e +\mu((\cdot -x_0)\cdot\nu - \e^{2/3})| \leq 3c_0 \e^{k'} \ \ \hbox{ in } \ \ U \cap B_{\e^{2/3}}(x_0).
\end{equation}

\medskip

Finally, combining  \eqref{est:N} and \eqref{est:N2}, we obtain, again for $\e$ sufficiently small, the estimate
$$
u^\e \leq m+T-\mu((\cdot-x_1)\cdot\nu - \e^{2/3})+  4c_0\e^{2/3+k'}  \ \ \hbox{ in } \ \ U \cap B_{\e^{2/3}}.
$$ 
 Recall that $-\partial_{\nu} \phi^+ \geq \mu +c_1\e^{k'}$. Then, for sufficiently small $\e>0$ and $c_0 < c_1/8$,  for $x \in \partial U \cap B_{\e^{2/3}}(x_0)$ we have
\begin{align*}
m+\phi^+(x)+ &\geq m+T(x) -\mu(x\cdot\nu - \e^{2/3}) + c_1\e^{2/3+k'} - C\e^{2/3(1+\alpha)}  \\
&\geq m+T(x) -\mu(x\cdot\nu - \e^{2/3}) + \tfrac{1}{2}c_1\e^{2/3+k'} \\
&>u^\e(x),
\end{align*}
which is a contradiction. 

\qed

\medskip

\vspace{30pt}

\bibliographystyle{plain}
\bibliography{DirichletRandomArticles}

\end{document}